\DeclareMathOperator{\re}{{\mathrm{Re}}}
\DeclareMathOperator{\im}{{\mathrm{Im}}}
\def\P{\mathbb{P}}
\def\11{{\mathbf{1}}}
\newcommand{\C}{\mathbb C}
\renewcommand{\bar}{\overline}
\newcommand{\wt}{\widetilde}
\DeclareMathOperator{\OO}{O}
\DeclareMathOperator{\tr}{Tr}
\newcommand{\be}{\begin{equation}}
\newcommand{\ee}{\end{equation}}
\newcommand{\nc}{\normalcolor}
\newcommand{\ba}{{\bf{a}}}
\theoremstyle{plain} %plain, definition, remark
\newtheorem{theorem}{Theorem}[section]
\newtheorem*{theorem*}{Theorem}
\newtheorem{lemma}[theorem]{Lemma}
\newtheorem*{lemma*}{Lemma}
\newtheorem{corollary}[theorem]{Corollary}
\newtheorem*{corollary*}{Corollary}
\newtheorem*{proposition*}{Proposition}
\newtheorem*{claim*}{Claim}
\newtheorem{definition}[theorem]{Definition}  
\newtheorem*{definition*}{Definition}         
\theoremstyle{remark}
\newtheorem{example}[theorem]{Example}
\newtheorem*{example*}{Example}
\newtheorem*{remark*}{Remark}
\newtheorem*{remarks*}{Remarks}
 \numberwithin{equation}{section}
\def\@setthanks{\vspace{-\baselineskip}\def\thanks##1{\@par##1\@addpunct.}\thankses}
\title{Delocalization of One-Dimensional Random Band Matrices}
\author{
    Horng-Tzer Yau\thanks{Harvard University, htyau@math.harvard.edu} \and
  Jun Yin \thanks{UCLA, jyin@math.ucla.edu}
}
\date{\today}  % Or specify a custom date, e.g., \date{October 14, 2024}
\begin{document}
%\addtokomafont{author}{\raggedright}

\maketitle

\begin{abstract}

Consider an $ N \times N$ Hermitian one-dimensional random band matrix with  band width $W > N^{1 / 2 + \frak c} $ for any $ {\frak c} > 0$. In the bulk of the spectrum and in the large $ N $ limit, we obtain the following results: (i) The semicircle law holds up to the scale $ N^{-1 + \varepsilon} $ for any $ \varepsilon > 0 $. (ii) All $ L^2 $- normalized eigenvectors are delocalized, meaning their $ L^\infty$ norms are simultaneously bounded by $ N^{-\frac{1}{2} + \varepsilon} $ with overwhelming probability, for any $ \varepsilon > 0 $. (iii) Quantum unique ergodicity holds in the sense that the local $ L^2 $ mass of eigenvectors becomes equidistributed with high probability. (iv) Universality of eigenvalue statistics holds, i.e., the local eigenvalue statistics of these band matrices are given by those of  Gaussian unitary ensembles.

\end{abstract}

\newpage

\tableofcontents

\newpage

\section{Introduction}

The localization-delocalization transition has been a central question in mathematical physics since Anderson's seminal work on the tight-binding model, which essentially describes a discrete random Schr\"odinger operator. The localization of this operator was rigorously established over four decades ago by Fr{\"o}hlich and Spencer \cite{FroSpen_1983} using a multi-scale analysis argument. Subsequently, a shorter proof was provided by Aizenman and Molchanov \cite{Aizenman1993} via the fractional moment method. Many remarkable results concerning the localization of the Anderson model have been achieved (see, e.g., \cite{FroSpen_1985,Bourgain2005,Carmona1987,DingSmart2020,Damanik2002,Germinet2013,LiZhang2019}). However, despite decades of intensive study, the existence of delocalized states remains unproven.

A prominent “toy model” for the random Schrödinger operator is the random band matrix. These matrices are characterized by the fact that the matrix elements \( H_{ij} \) become negligible when the distance between lattice points \( i \) and \( j \) exceeds a parameter \( W \), known as the \textit{band width}. Here, \( i, j \in \mathbb{Z}_N^d \) are lattice points in a \( d \)-dimensional space. Random band matrices play an important role in random matrix theory, and they serve as local models when \( W \) is small, gradually transitioning to the standard Wigner matrices as \( W \) approaches \( N \). As \( W \) varies from order one to \( N \), these matrices interpolate between local models and mean-field models, represented by Wigner matrices.

Band matrices can be real or complex, with typically no fundamental differences between them. However, complex band matrices are often easier to analyze due to simpler diagrammatic methods. Thus, we focus on complex Hermitian band matrices in this paper.

In the special case of \( d = 1 \), it was conjectured \cite{PhysRevLett.64.1851, PhysRevLett.64.5, PhysRevLett.66.986} and supported by a nonrigorous supersymmetry method \cite{PhysRevLett.67.2405} that the eigenvectors of band matrices undergo a localization-delocalization transition, accompanied by a corresponding transition in the eigenvalue distribution.
 Specifically, the conjecture suggests:

\begin{enumerate}[(i)]
    \item For \( W \gg \sqrt{N} \), the bulk eigenvectors are delocalized, and the eigenvalue statistics follow the Gaussian Unitary Ensemble (GUE).
    \item For \( W \ll \sqrt{N} \), the bulk eigenvectors are localized, and the eigenvalue statistics resemble a Poisson point process.
\end{enumerate}

There are similar statements for the edge cases, with the transition occurring at \( W = N^{5/6} \). While the bulk transition has not yet been fully established, the edge cases were solved in various models of band matrices \cite{Sod2010} by Sodin, using moment methods. Although the conjectured transition at \( W = \sqrt{N} \) in the bulk remains open, there are several partial results \cite{BaoErd2015, https://doi.org/10.48550/arxiv.2206.05545, https://doi.org/10.48550/arxiv.2206.06439, erdHos2013local, EK_band1,ErdKno2011, HeMa2018,bourgade2017universality, bourgade2019random,bourgade2020random,yang2021random,Sch2009,PelSchShaSod,Sod2010,SchMT,Sch1,Sch2,Sch3,Sch2014}, but the localization-delocalization transition in one dimension is still a fundamental problem.

In the case where the covariance of the Gaussian matrix elements follows a specific profile, supersymmetric methods can be applied \cite{BaoErd2015, SchMT, Sch1, Sch2,Shcherbina:2021wz, DisPinSpe2002} (see \cite{Efe1997, Spe} for overviews). With this method, for $d=3$, precise estimates on the density of states \cite{DisPinSpe2002} were first obtained. A transition at \( W = N^{1/2} \) was established in \cite{SchMT,Sch1} for the moments of characteristic polynomials, while a more challenging result regarding the two-point functions was proven in \cite{Shcherbina:2021wz} by Shcherbina and  Shcherbina. 

There have been partial results on delocalization in dimensions \( d > 1 \). In particular, delocalization and quantum diffusion were established in dimensions \( d \geq 7 \) \cite{yang2021delocalization, yang2022delocalization, Xu:2024aa}, using complex graphic expansion methods. Despite the complicated nature of these expansions, they introduced crucial concepts such as the sum-zero property of the self-energy, which will play an important role in our analysis.

A fundamental  quantity in all these works concerning  resolvent estimates of band matrices is the \( T \)-observable, introduced in \cite{erdos2013delocalization},  
\begin{align}\label{T}
T_{xy} \sim \sum_{a} S_{xa} |G_{ay}|^2 , \quad S_{xa} = \mathbb{E} |H_{xa}|^2.
\end{align} 
where $H$ is the band matrix. The \( T \)-observable was analyzed in details in  \cite{yang2021delocalization, yang2022delocalization, Xu:2024aa} via diagramtic method.

Recently, a sufficient condition for delocalization in terms of \( W \) was improved in \cite{DY}. Dubova and  Yang utilized a time-dependent approach (previously applied to Wigner matrices \cite{10.1214/19-ECP278, Sooster2019} by Sooster and Warzel) and a linearization of the stochastic flow of the \( T \)-observable. Recall the standard complex matrix Brownian motion:
\[
dH_{t,ij} = \sqrt{S_{ij}} dB_{t,ij}, \quad H_0 = 0,
\]
where \( B_{t,ij} \) are standard independent complex Brownian motions for all \( i \leq j = 1, \dots, N \), and \( B_{ji} = \bar{B}_{ij} \) for all \( i, j \). Following \cite{10.1214/19-ECP278, Sooster2019, DY}, we consider the Green’s function \( H_t \) with a time-dependent spectral parameter \( z_t \) such that the dynamics of
\[
G_{t}^{(E)} := (H_{t} - z_t)^{-1}
\]
are naturally renormalized up to the leading order. Under this flow, one can easily derive an equation for the \( T \)-observable, which unfortunately depends on higher-order objects.  

A nature class to consider is the generalized \( T \)-observables defined by   
\begin{align}\label{gt}
T_{x_1, \dots, x_n} = \sum_{a_1, \dots, a_n} \left( \prod_{i=1}^n S_{x_i a_i} \right) \prod_{i=1}^n G_{a_i a_{i+1}}(z_i), \quad a_{n+1} := a_1, \quad z_i \in \{z, \bar{z}\}.
\end{align} 
for all $n$. On the other hand, estimating these quantities seems to be even more daunting than  estimating \( T \). 
A nature question arises regarding the sizes of the generalized \( T \)-observables.  
 Since \( \sum_s S_{ab} = 1 \), one might expect the naive bound 
\[
T_{x_1, \dots, x_n} \sim  ( \max_{x \ne y} |G_{xy}(z)|  )^n.
\] 
 It turns out that their  true sizes are
\[
T_{x_1, \dots, x_n} \sim  ( \max_{x \ne y} |G_{xy}(z)| )^{2n-2}.
\]  
In this paper, we will work on the following \( G \)-loop observable instead of  generalized \( T \)-observables for some technical convenience, i.e., 
\[
{\cal L}_{t, \boldsymbol{\sigma}, \textbf{a}} = \big \langle \prod_{i=1}^n G_t(\sigma_i) \cdot E_{a_i} \big\rangle, \quad \text{where} \quad \left\langle A \right\rangle = \text{Tr} A.
\]
The \( G \)-loops satisfy a system of  evolution equations called the loop hierarchy  \eqref{eq:mainStoflow}. The dynamics of an \( n \)-loop depend on the \( n+1 \)-loop and a martingale term, whose quadratic variation depends on \( 2n+2 \) loops.

The absence of a closed equation for the \( n \)-loop is a common feature in many-body dynamics, similar to the well-known BBGKY hierarchy in classical mechanics, which governs \( n \)-point correlation functions. Analyzing such hierarchies often requires truncation, but estimating errors due to truncation in higher correlation functions has proven difficult. This is why no powerful rigorous analysis of the BBGKY hierarchy has been available despite its introduction over a century ago.

A main contribution of \cite{DY} was to provide a controlled truncation of the \( T \)-observable dynamics. In this paper, we instead approximate the loop hierarchy for arbitrary length by introducing the \textit{primitive hierarchy},  consisting  only of the quadratic terms on the right-hand side of the loop hierarchy. Despite the nonlinear nature of the primitive hierarchy, it turns out that there is an explicit expression, the primitive loops,  solving this hierarchy exactly.  Furthermore, we will show that the primitive loops are excellent approximations to  the \( G \)-loops. 

It is insightful to compare the current approach with previous work on high-dimensional band matrices \cite{yang2021delocalization, Xu:2024aa}. While the results in higher dimensions closely parallel those presented here, the methods differ significantly. The high-dimensional analysis relies on intricate graphical expansions derived via integration by parts, whereas our approach centers on the loop hierarchy—a stochastic dynamical structure. Despite this methodological divergence, a crucial feature identified in the high-dimensional setting—the sum-zero property—plays a major  role in our analysis of the primitive hierarchy in Section \ref{Sec:CalK}.

By analyzing the loop hierarchy, we will prove the delocalization of one dimensional band matrices for  \( W \gg  \sqrt{N} \). Additionally, we will establish the accompanying quantum diffusion,  quantum unique ergodicity and universality of local eigenvalue statistics.  We believe that the loop hierarchy method introduced in this paper can be extended to other  band matrices, including models with general variance profiles and higher dimensions,  and   random Schrödinger equations with blocked random potentials. We plan to  address these issues in  future works.

\section{The model and main results}
 \subsection{Band matrix model}\label{subsec_setting}

 In this paper, we focus on the band block matrix model, which is defined as follows. Although our methods can be extended to a larger class of band matrices, this specific model allows us to avoid many technical complications that are not central to the main results.
The model is described by a complex complex Hermitian random band matrix \( H \) whose entries  are independent complex Gaussian random variables (up to the Hermitian condition $H_{ij}=\overline{ H_{ji}}$) such that
\[
H_{ij} \sim \mathcal{CN}(0, S_{ij}), \quad  i,j \in \mathbb{Z}_N,
\]
where \( \mathbb{Z}_N \) denotes the set of integers modulo \( N \), with periodic boundary conditions.  
The matrix \( H \) has block structure, where the block size is given by an integer \( W \in \mathbb{N} \), and the number of blocks is denoted by \( L \in \mathbb{N} \). The total matrix size is \( N \times N \), with
\[
N = W \cdot L.
\]
Let \( \mathcal{I}_a \) denote the interval
\[
\mathcal{I}_a := \left[ (a-1)W + 1, \; aW \right], \quad a \in \mathbb{Z}_L.
\]
The elements of the matrix \( S \) are defined by
\[
S_{ij} = \frac{1}{3W} \sum_a \mathbf{1}(i \in \mathcal{I}_a) \mathbf{1}(j \in \mathcal{I}_a \cup \mathcal{I}_{a+1} \cup \mathcal{I}_{a-1}),
\]
where \( \mathbf{1}(\cdot) \) denotes the indicator function. Clearly, \( S = S^T \), and it satisfies
\[
\sum_j S_{ij} = 1.
\]
We will follow the convention that indices \( a, b, c, \dots \) are elements of \( \mathbb{Z}_L \), while \( i, j, k, \dots \) represent indices in \( \mathbb{Z}_N \). We can express the matrix \( S \) as a Kronecker product:
\[
S = S^{(B)} \otimes S_W,
\]
where \( S^{(B)} \) is an \( L \times L \) matrix, and \( S_W \) is a \( W \times W \) matrix. The entries of these matrices are given by
\[
S^{(B)}_{ab} = \frac{\mathbf{1}}{3}(|a-b| \le 1), \quad (S_W)_{ij} = W^{-1}, \quad a, b \in \mathbb{Z}_L, \quad i, j \in \{1, \dots, W\}.
\]

Let \( \lambda_1 \leq \lambda_2 \leq \dots \leq \lambda_N \) be the eigenvalues of \( H \).  Denote by  \( \left( \psi_k \right)_{k=1}^N \)  the corresponding normalized eigenvectors  so that
\[
H \boldsymbol{\psi}_k = \lambda_k \boldsymbol{\psi}_k,\quad\quad  k=1,2\ldots N. 
\]
It is well known that the empirical spectral measure
$\frac{1}{N} \sum_{k=1}^N \delta_{\lambda_k}$
converges almost surely to the Wigner semicircle law with density
\[
\rho_{\mathrm{sc}}(x) = \frac{1}{2\pi} \sqrt{(4 - x^2)_+}.
\]
Moreover, it is believed that the resolvent of \( H \), i.e., \( (H - z)^{-1} \), exhibits quantum diffusion, meaning that
\[
\mathbb{E} \left| (H - z)^{-1} \right|_{xy}^2 \sim \left( \frac{|m|^2}{1 - |m|^2 S} \right)_{xy},
\]
where \( m = m_{\mathrm{sc}}(z) \) is the Stieltjes transform of the semicircle density \( \rho_{\mathrm{sc}} \), defined by
\[
m(z):=m_{\mathrm{sc}}(z) := \int_{\mathbb{R}} \frac{\rho_{\mathrm{sc}}(x)}{x - z} \, dx.
\]
Using the definitions of \( S \), \( S^{(B)} \), and \( S_W \), we can express 
\[
(1 - |m|^2 \cdot S)^{-1} = I + \frac{|m|^2 S^{(B)}}{1 - |m|^2 \cdot S^{(B)}} \otimes S_W.
\]
We define the diffusion length at the block level by
\be\label{def_ellz}
 \ell(z) := \min \left( \eta^{-1/2}, L \right) + 1, \quad \eta = \mathrm{Im}(z).
\ee
In the following Lemma \ref{lem_propTH}, we show that \( S^{(B)} \) decays exponentially with scale \( \ell(z) \):
\[
\left( \frac{1}{1 - |m|^2 S^{(B)}} \right)_{ab} = O \left( (\eta \ell)^{-1} \cdot e^{-c \frac{|a - b|}{\ell}} \right),
\]
where \( c \) is a constant.

\subsection*{Stochastic Domination}

In this paper, we adopt the convention of stochastic domination introduced in \cite{Average_fluc}. This framework will be used throughout the analysis to control the behavior of random matrices and their spectral properties.

\begin{definition}[Stochastic domination and high probability event]\label{stoch_domination}
	{\rm{(i)}} Let
	\[\xi=\left(\xi^{(N)}(u):N\in\mathbb N, u\in U^{(N)}\right),\hskip 10pt \zeta=\left(\zeta^{(N)}(u):N\in\mathbb N, u\in U^{(N)}\right),\]
	be two families of non-negative random variables, where $U^{(N)}$ is a possibly $N$-dependent parameter set. We say $\xi$ is stochastically dominated by $\zeta$, uniformly in $u$, if for any fixed (small) $\tau>0$ and (large) $D>0$, 
	\[\mathbb P\bigg(\bigcup_{u\in U^{(N)}}\left\{\xi^{(N)}(u)>N^\tau\zeta^{(N)}(u)\right\}\bigg)\le N^{-D}\]
	for large enough $N\ge N_0(\tau, D)$, and we will use the notation $\xi\prec\zeta$. 
		If for some complex family $\xi$ we have $|\xi|\prec\zeta$, then we will also write $\xi \prec \zeta$ or $\xi=\OO_\prec(\zeta)$. 
	
	\vspace{5pt}
	\noindent {\rm{(ii)}} As a convention, for two deterministic non-negative quantities $\xi$ and $\zeta$, we will write $\xi\prec\zeta$ if and only if $\xi\le N^\tau \zeta$ for any constant $\tau>0$. 

 \vspace{5pt}
	\noindent {\rm{(iii)}} Let $A$ be a family of random matrices and $\zeta$ be a family of non-negative random variables. Then, we use $A=\OO_\prec(\zeta)$ to mean that $\|A\|\prec \xi$, where $\|\cdot\|$ denotes the operator norm. 
	
	\vspace{5pt}
	\noindent {\rm{(iv)}} We say an event $\Xi$ holds with high probability (w.h.p.) if for any constant $D>0$, $\mathbb P(\Xi)\ge 1- N^{-D}$ for large enough $N$. More generally, we say an event $\Omega$ holds $w.h.p.$ in $\Xi$ if for any constant $D>0$,
	$\P( \Xi\setminus \Omega)\le N^{-D}$ for large enough $N$.
	%\end{itemize}
\end{definition}

\subsection{Main results}

The following theorems are our main results on delocalization,  local semicircle law, quantum unique ergodicity and quantumn diffusion.

\begin{theorem}[Delocalizaiton]\label{MR:decol}
Suppose  that for some  $ \mathfrak c>0$, 
\begin{equation}\label{Main_DEL_COND}
    W \ge N^{1/2\,+\,\mathfrak c}. 
\end{equation}
For  band matrix defined in this subsection we have the following estimate. 
For any (small) constants $\kappa, \tau>0$ and (large) $D>0$, there exists $N_0$ such that for all $N \geqslant N_0$ we have
$$
\mathbb{P}\left( \max_k\left\|\boldsymbol{\psi}_k\right\|_{\infty}^2 \cdot {\bf1}\big(\lambda_k\in[-2+\kappa,2-\kappa]\big)\leqslant N^{-1+\tau}  \right) \geqslant 1-N^{-D}
$$
\end{theorem}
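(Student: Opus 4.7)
The plan is to reduce eigenvector delocalization to a bulk local law on the Green's function $G(z) = (H-z)^{-1}$ at the finest possible spectral scale. By the spectral decomposition
\[
\im G_{xx}(E + \ii\eta) \;=\; \sum_k \frac{\eta\, |\psi_k(x)|^2}{(\lambda_k - E)^2 + \eta^2} \;\ge\; \frac{|\psi_k(x)|^2}{\eta}\Big|_{E=\lambda_k},
\]
a high-probability bound $\im G_{xx}(E+\ii\eta) \prec 1$, uniform in $x$ and over a fine grid of $E$ in $[-2+\kappa, 2-\kappa]$ and valid down to $\eta = N^{-1+\tau/2}$, immediately implies $\|\psi_k\|_\infty^2 \prec N^{-1+\tau}$ for bulk eigenvalues. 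Thus the entire task reduces to proving a diagonal local law at the near-optimal scale $\eta \approx N^{-1}$ under the critical assumption $W \gg N^{1/2}$.

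To access this scale, I would implement the matrix Brownian motion approach: study $G_t^{(E)} = (H_t - z_t)^{-1}$ along the flow $dH_{t,ij} = \sqrt{S_{ij}}\, dB_{t,ij}$, with $z_t$ chosen so that the self-consistent leading terms in the It\^o expansion cancel. The quantities to propagate are the $G$-loop observables
\[
\mathcal{L}_{t,\boldsymbol\sigma,\mathbf a} \;=\; \Big\langle \prod_{i=1}^n G_t(\sigma_i)\, E_{a_i}\Big\rangle,
\]
which obey the loop hierarchy \eqref{eq:mainStoflow}: the drift of the $n$-loop is the $(n+1)$-loop plus quadratic combinations of shorter loops, and the martingale part has quadratic variation expressible through $(2n+2)$-loops. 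The $1$-loop encodes the semicircle law at block resolution and the $2$-loop, essentially the $T$-observable, controls the quantum diffusion profile from which the isotropic bound on $\im G_{xx}$ will follow.

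The key analytic device is to compare $\mathcal{L}_{t,\boldsymbol\sigma,\mathbf a}$ with the primitive loops obtained from the truncated hierarchy in which only the quadratic drift terms are retained; these primitive loops admit an explicit nonlinear expression. I would then bootstrap simultaneously in $t$ and in the loop length $n$: starting from a regime of large $\eta_t$ where all loops are trivially bounded, propagate estimates forward while tracking that the $n$-loop stays within its expected order $\asymp (\max_{x\neq y}|G_{xy}|)^{2n-2}$ of its primitive prediction. The exponential decay of $(1-|m|^2 S^{(B)})^{-1}$ on the block scale $\ell(z_t)$ established in Lemma~\ref{lem_propTH}, together with the block structure of $S$ and the sum-zero property of the self-energy, provides the summability required to control the spatial convolutions, while Burkholder--Davis--Gundy controls the martingale contributions. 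The flow is driven until $\eta_t = N^{-1+\tau}$, at which point the diagonal local law follows.

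The central obstacle is the criticality at $W \sim N^{1/2}$. At this threshold the expected size of the $2$-loop at scale $\eta \sim N^{-1}$ becomes comparable to $1$, so that naive term-by-term estimation of the corrections to the primitive loops breaks down and the induction on loop length fails to close. Overcoming this requires showing that the non-primitive corrections in the $n$-loop equation cancel at leading order and that the remaining fluctuations gain an additional diffusive factor from the block structure of $S$; this cancellation is what allows the bootstrap to cover the full range $W \ge N^{1/2+\mathfrak c}$ rather than some strictly larger threshold. Managing this cancellation uniformly in $n$, while keeping the martingale increments summable across the entire interval of the flow, is the principal technical challenge that the loop-hierarchy/primitive-hierarchy framework is designed to solve.
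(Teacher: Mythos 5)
Your first paragraph is essentially the paper's entire proof of Theorem~\ref{MR:decol}: one reduces delocalization to the local law via the spectral-decomposition bound $|\psi_k(x)|^2 \le \eta\,\im G_{xx}(\lambda_k+\ii\eta)$, chooses $\eta\sim N^{-1+\tau}$ small enough that $\ell(z)\sim L$ (using $W\ge N^{1/2+\mathfrak c}$), and invokes Theorem~\ref{MR:locSC} to get $G=O(1)$. The remaining paragraphs of your proposal are a correct high-level sketch of how the paper establishes the local law itself (stochastic flow, $G$-loops, primitive hierarchy, sum-zero self-energy), but that lives in separate lemmas; as a proof of the delocalization statement itself, your argument matches the paper's.
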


\begin{theorem}[Local semicircle law]\label{MR:locSC} Suppose that  the assumption of Theorem \ref{MR:decol} holds. We denote the Green's function by 
$ G(z)=(H-z)^{-1}$. 
Then for 
 any fixed small constants $\kappa, \tau>0$, large $D>0$, and 
$$z=E+i\eta,\quad\quad |E|\le  2-\kappa, \quad 1\ge \eta\ge N^{-1+\tau},
$$
there exists $N_0$ such that for all $N \geqslant N_0$  we have the  local law (with $\ell=\ell(z)$ defined in \eqref{def_ellz})
\begin{equation}\label{G_bound}
   \mathbb P\left( \max_{x,y\in \mathbb Z_N}  \left|\left(G(z)-m(z)\right)_{xy}\right|\le  \frac{W^\tau}{(W\ell\eta)^{ 1/2}}\cdot \right)\geqslant 1-N^{-D},\quad \ell=\ell(z) 
\end{equation}
and the partial   tracial  local law
\begin{equation}\label{G_bound_ave}
\mathbb{P}\left(\max_a\Big| W^{-1}\sum_{x\in {\cal I}_a}G_{xx}(z)-m(z) \Big| \le \frac{W^\tau}{W\ell\eta} \right)\geqslant 1-N^{-D},\quad \ell=\ell(z). 
\end{equation}

\end{theorem}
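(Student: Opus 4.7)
The plan is to establish the local semicircle law using the loop hierarchy framework outlined in the introduction, combined with a continuity/bootstrap argument in the spectral parameter $\eta$. First I would introduce the complex matrix Brownian motion $dH_t=\sqrt{S}\,dB_t$ with $H_0=0$, paired with a time-dependent spectral parameter $z_t$ chosen so that the drift in the Itô expansion of $G_t(z_t)=(H_t-z_t)^{-1}$ cancels at leading order. The flow runs from $t=0$, where the local law is trivial, up to $t=1$, at which point $H_1$ is equidistributed with $H$ and $z_1=z$. As $t$ increases, $\Im z_t$ decreases from $O(1)$ down to $\eta$, and the desired local laws are propagated inductively along the trajectory: the bounds at scale $\Im z_t$ serve as a priori input for the dynamics that carries them to the next scale.

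The dynamical objects are the $n$-loops $\mathcal L_{t,\boldsymbol\sigma,\mathbf a}=\langle\prod_{i=1}^n G_t(\sigma_i)E_{a_i}\rangle$. Itô's formula produces the loop hierarchy: $d\mathcal L_n$ consists of a linear coupling to an $(n+1)$-loop, quadratic self-couplings of the form $\mathcal L_k\cdot\mathcal L_{n+2-k}$, and a martingale whose quadratic variation is a $(2n+2)$-loop. I would then exploit the key idea of the paper, namely that the primitive hierarchy obtained by retaining only the quadratic terms admits an explicit closed-form solution $\mathcal L_n^{\mathrm{prim}}$ in terms of $m(z_t)$ and the block propagator $(1-|m|^2 S^{(B)})^{-1}$. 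The crux of the proof is the estimate $\mathcal L_n-\mathcal L_n^{\mathrm{prim}}=\OO_\prec(\text{error})$ at each level: the linear $(n+1)$-loop contribution is absorbed by a Gronwall argument using a priori control at the previous time scale, and the martingale part by a Burkholder--Davis--Gundy/moment bound involving the $(2n+2)$-loop. Tracking loops up to length $n\le C\log N/\log W$ converts $L^{2p}$ estimates into high-probability bounds via Markov's inequality.

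Once the primitive loop estimates are secured, the two local laws follow as specializations. The partial tracial local law \eqref{G_bound_ave} is essentially the $1$-loop statement $\sum_{x\in\mathcal I_a}G_{xx}(z)\approx Wm(z)$: its primitive expression reproduces $Wm(z)$ to the stated precision, while the fluctuation around this value is controlled through the martingale, whose quadratic variation is itself a $2$-loop estimated at the previous step. The entrywise law \eqref{G_bound} follows from bounding $\E|G_{xy}|^{2p}$ via higher-order $2p$-loop estimates, whose primitive values inherit the diffusive decay $(\eta\ell)^{-1}e^{-c|a-b|/\ell}$ encoded in the block propagator; combined with the Ward identity $\sum_y|G_{xy}|^2=\eta^{-1}\Im G_{xx}$ to localize within blocks, this yields the sharp pointwise bound $W^\tau/(W\ell\eta)^{1/2}$. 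The diagonal case $x=y$ is weaker than the averaged bound and follows automatically.

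The main obstacle, I expect, will be the uniform control of $\mathcal L_n-\mathcal L_n^{\mathrm{prim}}$ across all levels simultaneously. The $(n+1)$-loop term in the hierarchy is not individually small by any naive estimate and must be systematically re-injected, producing a self-consistent fixed-point problem reminiscent of fluctuation averaging for Wigner matrices but structurally more delicate because every level contributes a fresh self-energy that has to carry the sum-zero property identified in \cite{yang2021delocalization,yang2022delocalization,Xu:2024aa}. The threshold $W\gg\sqrt N$ appears precisely where the Gronwall losses accumulated through the hierarchy just close, so the error estimates at each level must be nearly optimal in both the loop length and the current scale $\Im z_t$, which is where the bulk of the technical work will lie.
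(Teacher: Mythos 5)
Your overall strategy matches the paper's: the stochastic flow $H_t$ with a compensating time-dependent spectral parameter $z_t$, the loop hierarchy from It\^o's formula, the primitive hierarchy and its explicit solution, and the sum-zero property of the self-energy as the mechanism that prevents exponential loss as $t\to 1$. The deduction of \eqref{G_bound} and \eqref{G_bound_ave} from the loop estimates also proceeds as you suggest: the paper's actual proof of the theorem is a short specialization of Lemma~\ref{ML:GtLocal} (entrywise bound on $G_t - m$) and the $n=1$ case of Lemma~\ref{ML:GLoop}, pulled back to $G(z)$ via the change of variables in Lemma~\ref{zztE}.

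There is, however, one step as you have written it that would not close. You propose ``tracking loops up to length $n\le C\log N/\log W$'' to convert $L^{2p}$ estimates into high-probability bounds. Each step of the hierarchy for a length-$n$ loop couples upward to a length-$(2n+2)$ loop through the martingale's quadratic variation, and each such coupling introduces an $N^{\varepsilon}$ deterioration. If one ascended $O(\log N)$ levels these losses would compound to $N^{C\varepsilon\log N}$, which is not controllable. The paper avoids this by working at a \emph{fixed} loop length $n$ and using the $n$-chain estimates of Lemma~\ref{lem_GbEXP_n2} (Section~4) to close the hierarchy self-consistently: the $(2n+2)$-loop appearing in the quadratic variation is reduced, via a Cauchy--Schwarz splitting into diagonal $2\ell_1$- and $2\ell_2$-chains with $\ell_1+\ell_2=n+1$, back to $(n+1)$-loop data, and a finite bootstrap (Step~3 of the proof of Theorem~\ref{lem:main_ind}, the function $\Psi(n,k,s,u,t)$) kills the accumulated factors $(\ell_t/\ell_s)^{n-1}$ in finitely many rounds. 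Relatedly, your derivation of the entrywise bound \eqref{G_bound} from ``$\E|G_{xy}|^{2p}$ via $2p$-loops'' is a legitimate alternative, but the paper instead uses a time-independent resolvent-decomposition argument (Lemma~\ref{lem_GbEXP}, via $G_{ij}=G_{ii}\sum_k H_{ik}G^{(i)}_{kj}$ and Gaussian large deviations) to bound $\|G_t-m\|_{\max}^2$ directly by a $2$-loop, which avoids tracking large $p$. You would also need an analogue of the paper's kernel estimate for $\mathcal U_{s,t,\boldsymbol\sigma}$ acting on sum-zero, fast-decaying tensors (Lemmas~\ref{lem_+Q},~\ref{lem:sum_decay}, and the $\mathcal Q_t$ projection) to make the Gronwall step lossless; this is the concrete implementation of the sum-zero idea that you allude to but do not spell out.
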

The partial tracial  local law implies  the standard tracial local law 
 $$
\mathbb{P}\left(\max_a\Big| N^{-1}\tr G(z)-m(z) \Big| \le \frac{W^\tau}{W\ell\eta}  \right)\geqslant 1-N^{-D},\quad \ell=\ell(z) . 
$$

\begin{theorem}[Quantum diffusion]\label{MR:QDiff}
Denote by  $E_a$ the  block identity matrix 
\begin{equation}\label{Def_matE}
(E_a)_{ij}=  \delta_{ij}\cdot W^{-1}\cdot\textbf{1}(i\in {\cal I}_a),\quad 1\le i,j\le N
\end{equation}
Under the assumptions of Theorem \ref{MR:locSC} and the notation $m=m(z)$ we have 
 \begin{align}\label{Meq:QdW1}
 &
 \mathbb P\left(\max_{a,b }   \left|\tr G E_a G^\dagger E_b- W^{-1}\left(\frac{|m|^2}{1-|m|^2 S^{(B)}} \right)_{ab}\right|\le\frac{W^\tau}{ \left(W \ell\eta\right)^{ 2}}\right)\ge 1-N^{-D},
 \\ \label{Meq:QdW2}
 &
 \mathbb P\left( \max_{a,b }   \left|\tr G E_a G  E_b- W^{-1}\left(\frac{m^2}{1-m^2 S^{(B)}} \right)_{ab}\right|\le\frac{W^\tau}{ \left(W \ell\eta\right)^{ 2}}\right)\ge 1-N^{-D},
\end{align}
and a stronger bound on the expectation value 
\begin{align}\label{Meq:QdS1}
 \max_{a,b }   \left|\mathbb E\tr G E_a G^\dagger E_b- W^{-1}\left(\frac{|m|^2}{1-|m|^2 S^{(B)}} \right)_{ab}\right|\le  \left(W \ell\eta\right)^{-3} \cdot W^\tau,   \\
 \label{Meq:QdS2}
 \max_{a,b }   \left|\mathbb E\tr G E_a G  E_b- W^{-1}\left(\frac{m^2}{1-m^2 S^{(B)}} \right)_{ab}\right|\le \left(W \ell\eta\right)^{-3} \cdot W^\tau   .
\end{align}
 
\end{theorem}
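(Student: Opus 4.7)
The quantities $\tr G E_a G^\dagger E_b$ and $\tr G E_a G E_b$ are precisely the $2$-loops $\mathcal L_{\boldsymbol\sigma,(a,b)}$ in the notation of the introduction, for $\boldsymbol\sigma=(+,-)$ and $\boldsymbol\sigma=(+,+)$ respectively. The plan is to identify the deterministic right-hand sides in \eqref{Meq:QdW1}--\eqref{Meq:QdS2} as the \emph{primitive $2$-loops} solving the primitive hierarchy, and then to invoke the general $G$-loop versus primitive-loop comparison developed via the loop hierarchy machinery advertised in the introduction.

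First I would verify at the algebraic level that, restricted to $n=2$, the primitive hierarchy reduces to an affine fixed-point equation for the matrix $K^{\boldsymbol\sigma}_{ab}\deq\mathcal L_{\boldsymbol\sigma,(a,b)}$, whose unique solution is
\begin{equation*}
K^{(+,-)}_{ab} = W^{-1}\Bigl(\tfrac{|m|^2}{1-|m|^2 S^{(B)}}\Bigr)_{ab},\qquad K^{(+,+)}_{ab} = W^{-1}\Bigl(\tfrac{m^2}{1-m^2 S^{(B)}}\Bigr)_{ab}.
\end{equation*}
This is consistent with the block-tensor decomposition of $(1-|m|^2 S)^{-1}$ already exhibited in Section~\ref{subsec_setting} and supplies the correct primitive ansatz.

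Next I would transport the problem to the stochastic flow $dH_t=\sqrt S\,dB_t$ with renormalized spectral parameter $z_t$ (so that the initial condition $H_0=0$ makes the 2-loop agree exactly with the primitive 2-loop at $t=0$). Itô's formula yields a loop equation of the schematic form
\begin{equation*}
d\mathcal L_{t,\boldsymbol\sigma,(a,b)} = (\text{primitive quadratic drift}) + (\text{3-loop drift}) + dM_t,
\end{equation*}
where the quadratic variation of $M_t$ is controlled by 4-loops. The primitive quadratic drift is by construction what the primitive hierarchy retains, so subtracting the primitive ansatz and Duhamel-integrating against the corresponding linearized propagator $(1-|m|^2 S^{(B)})^{-1}$ produces a closed representation of $\mathcal L_t$ minus the primitive 2-loop in terms of (i) the 3-loop drift and (ii) the time integral of $dM_t$.

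Finally, I would feed in the uniform a priori estimates on 3-loops and 4-loops that the loop hierarchy induction delivers (namely that the $n$-loop stays within $W^\tau$ of its primitive counterpart of size $(W\ell\eta)^{-(n-1)}$) together with a Burkholder--Davis--Gundy bound on $M_t$; this produces \eqref{Meq:QdW1}--\eqref{Meq:QdW2} with high probability error $W^\tau(W\ell\eta)^{-2}$. For \eqref{Meq:QdS1}--\eqref{Meq:QdS2}, the martingale vanishes in expectation and only the deterministic 3-loop drift survives, which is of size $W^\tau(W\ell\eta)^{-3}$, yielding the claimed one-power improvement. The principal difficulty is the a priori step itself: showing that for \emph{every} $n$ the true $G$-loop stays close to its primitive approximation along the entire stochastic flow, uniformly down to $\eta\sim N^{-1+\tau}$. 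This is the whole point of replacing the non-closing loop hierarchy by the primitive hierarchy, which admits an explicit solution and thereby side-steps the truncation obstruction familiar from BBGKY-type expansions.
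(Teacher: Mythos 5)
Your proposal takes essentially the same route as the paper: express $\tr G E_a G^\dagger E_b$ and $\tr G E_a G E_b$ as $2$-loops with $\boldsymbol\sigma=(+,-)$ and $\boldsymbol\sigma=(+,+)$ (after the time reparametrization $\tr G E_a G^\sharp E_b = t\cdot\mathcal L_{t,\cdot,(a,b)}$), identify the deterministic centering as the primitive $2$-loop $t\cdot\mathcal K_{t,\cdot,(a,b)}$ given by \eqref{Kn2sol}, and invoke the $\mathcal L$-versus-$\mathcal K$ comparison established by the loop-hierarchy induction, i.e.\ \eqref{Eq:L-KGt} for the high-probability bounds and \eqref{Eq:Gtlp_exp} for the expectation bounds.

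There is, however, a gap in the way you justify the one-power improvement in \eqref{Meq:QdS1}--\eqref{Meq:QdS2}. You assert that after taking expectation ``the martingale vanishes in expectation and only the deterministic 3-loop drift survives, which is of size $W^\tau(W\ell\eta)^{-3}$.'' But the drift $\mathcal E^{(\widetilde G)}$ appearing in the $n=2$ loop hierarchy is $W$ times a block sum of $\langle(G-m)E_a\rangle$ against a $3$-loop, so its naive size (typical realization) is
\begin{equation*}
W\cdot\ell\cdot(W\ell\eta)^{-1}\cdot(W\ell\eta)^{-2}\;\sim\;\eta^{-1}(W\ell\eta)^{-2},
\end{equation*}
and its Duhamel integral against $\mathcal U_{u,t}$ is only $(W\ell\eta)^{-2}$ --- the same as the high-probability bound, not one power better. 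The extra power does not come merely from killing the martingale; it requires an additional fluctuation-averaging input at the level of the $1$-loop, namely $\bigl|\mathbb E\langle(G-m)E_a\rangle\bigr|\prec(W\ell\eta)^{-2}$ (Lemma \ref{lem_ELK_n=1} in the paper), which says the \emph{expectation} of the $1$-loop error is quadratically small even though its fluctuation is only linearly small. Inserting that improved expectation into $\mathbb E\mathcal E^{(\widetilde G)}$ (and likewise using $(\mathcal L-\mathcal K)\times(\mathcal L-\mathcal K)$ estimates for the quadratic error) is what actually yields the $(W\ell\eta)^{-3}$ rate; the paper's Step~6 is devoted to exactly this point. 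Without this observation, the outline as written would only reproduce the weaker $(W\ell\eta)^{-2}$ bound for the expectation.
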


Theorems \ref{MR:decol} and \ref{MR:QUE} are simple consequences of Theorems \ref{MR:locSC} and \ref{MR:QDiff}. 
\begin{proof}[Proof of Theorem \ref{MR:decol} ]
Following  a standard delocalization argument, we have 
\begin{equation} \label{kkzxis}
|\psi_k(x)|^2\le 
\sum_{l}
\frac{\eta^2|\psi_l(x)|^2 }{(\lambda_k-\lambda_l)^2+\eta^2} 
\le \eta \im G_{xx}(\lambda_k+\eta i)
    \end{equation}
By  assumption  \eqref{Main_DEL_COND},   $\eta=N^{-1+\tau}$ with  small enough $\tau>0$ satisfying 
$ \eta\le (W/N)^2$.
With this choice of $\eta$, we have  $\ell\sim L$. Applying  \eqref{G_bound} and the fact that  $\ell\sim L$, we obtain $G-m\ll1$ and thus  $G=O(1)$ with high probability. Inserting  it back to  \eqref{kkzxis},  we obtain  
$$
|\psi_k(x)|^2\le C\eta\le N^{-1+\tau}
$$
with high probability.  This  completes the proof of Theorem \ref{MR:decol}.  
\end{proof}

 Quantum diffusion immediately implies the following probabilistic 
generalized quantum unique ergodicity   in high probability.  This notion is an extension of the probabilistic 
generalized quantum unique ergodicity first proved by \cite{vector_flow} in the setting of Wigner matrices. Notice that a deterministic version of quantum unique ergodicity  was a central concept in the work by \cite{RudSar1994}.

\begin{theorem}[Generalized quantum unique ergodicity]\label{MR:QUE} 
Suppose that the assumptions of Theorem \ref{MR:decol} hold and 
\begin{align}\label{tau} 
0< \tau ^*<  \frac {  \mathfrak c } 2, 
\end{align} 
where $ \mathfrak c$ was defined in \eqref{Main_DEL_COND}.
Then generalized quantum unique holds in high probability, i.e.,  for any small constant $\kappa>0$  and large $D>0$,   there exists  a large  $N_0$ such that for all $N \geqslant N_0$, 
\begin{equation}\label{Meq:QUE}
\max_{E: \,|E|<2-\kappa}\; 
\max_{ a\,\in\, \mathbb Z_L}\;\mathbb{P}\left(\max_{i, j \in {\cal J}_E} 
 \left|N (\psi_i^*\left(E_a-N^{-1}\right) \psi_j)\right|^2 \ge N^{-\tau ^*/6} \right) \le  N^{-\tau ^*/6}.
\end{equation}
Here 
$${\cal J}_E:=\left\{x: |x-E|\le N^{-1-\tau ^*} (W^2/N)^{1/3}\right\}.
$$
Furthermore, for  any  subset $A\subset \mathbb Z_L$,
\begin{equation}\label{Meq:QUE2}
\max_{E: \,|E|<2-\kappa}\; 
\max_{ A\,\subset\, \mathbb Z_L}\;\mathbb{P}\left(\max_{k\in {\cal J}_E} 
\left|\,\sum_{x\in {\cal I}_a}\sum_{a\,\in\, A}\left|\boldsymbol{\psi}_k(x)\right |^2 -\frac{|A|\cdot W}{N}\right| \geqslant \frac{|A|\cdot W}{N^{1+\tau ^*/6}}  \right) \le N^{-\tau ^*/6}
\end{equation}

\end{theorem}

\begin{proof}[Proof of Theorem \ref{MR:QUE}]  

We first choose  $  z=E+\eta i, \quad \eta= N^{-1-\tau ^* }(W^2/N)^{1/3}$.
By definition of $ {\cal J}_E$, we have 
\begin{align}\label{ssfa2}
& \sum_{i,j} \textbf{1}(\lambda_i,\lambda_j\in {\cal J}_E)\cdot \left|(\psi_i^*\left(E_a-N^{-1}\right) \psi_j)\right|^2 \nonumber \\
& 
\le C \eta^4 \sum_{i, j  } \frac{\left|(\psi_i^*\left(E_a-N^{-1}\right) \psi_j)\right|^2}{\left|\lambda_i-z\right|^2\left|\lambda_j-z\right|^2}\\\nonumber
& \le 
 C \eta^2\tr \left(\operatorname{Im} G(z)\left(E_a-N^{-1}\right) \operatorname{Im} G(z)\left(E_a-N^{-1}\right)\right)
\end{align}
We can bound the last term by 
\begin{align}\label{que0} 
 &\mathbb{E}  \tr \left(\operatorname{Im} G(z)\left(E_a-N^{-1}\right) \operatorname{Im} G(z)\left(E_a-N^{-1}\right)\right)
  \\ 
=&\frac{1}{L^2}\sum_{b,b'}
\mathbb{E}  
 \tr \left(\operatorname{Im} G(z)\left(E_a-E_b\right) \operatorname{Im} G(z)\left(E_a-E_{b'}\right)\right)
\nonumber \\ \nonumber
\le C&\max_{a,b,a',b'}
\Big|\mathbb{E}\tr \left(\operatorname{Im} G(z) E_a  \operatorname{Im} G(z)E_b \right)
-\mathbb{E}\tr \left(\operatorname{Im} G(z) E_{a'}  \operatorname{Im} G(z)E_{b'}\right) \Big| 
\end{align}
By \eqref{tau},    we have $ \eta^{-1/2} \ge L$ and hence $\ell(z)=L$.
Then from \eqref{Meq:QdS1} and \eqref{Meq:QdS2} in Theorem \ref{MR:QDiff}, 
we have 
 \begin{align}
 \;& \eta^2 \mathbb{E}  \tr \left(\operatorname{Im} G(z)\left(E_a-N^{-1}\right) \operatorname{Im} G(z)\left(E_a-N^{-1}\right)\right)
\nonumber \\
\le \;& \eta^2 (W\ell\eta)^{-3}\cdot W^{\delta} + \eta^2 W^{-1+\delta}\max_{a,b,a',b'}\;\max_{ \xi=|m|^2 \; \rm or\; m^2}
\left| \left(\frac{1}{1-\xi\cdot S^{(B)}}\right)_{ab}-\left(\frac{1}{1-\xi\cdot S^{(B)}}\right)_{a'b'}\right|
\nonumber \\\nonumber
\le \;&   W^{\delta} N^{-2} \big [ N^{-1}\eta^{-1}  +  N^2 \eta^{3/2} W^{-1}  \big ] =   W^{\delta} N^{-2} \big [ N^{\tau ^*-2 \mathfrak c /3}  + N^{-3 \tau ^*/2}  \big ] 
<   N^{-2}  N^{- \tau ^* /3},  
 \end{align}
provided that  $\delta$ is small enough and \eqref{tau} is satisfied. Here  we have used  the $\ell(z)=L$, 
\begin{align}\label{me}
|m(z)|^2 \le 1-c\,\eta, 
\end{align}
and the estimate on $(1-|m|^2 S^{(B)})^{-1}$ in  Lemma \ref{lem_propTH}.  Inserting the  above estimate back to the right hand side  of \eqref{ssfa2}, we have 
\begin{align}\label{que2}
\mathbb E \; \eta^4 \sum_{i, j  } \frac{ \left|N (\psi_i^*\left(E_a-N^{-1}\right) \psi_j)\right|^2}{\left|\lambda_i-z\right|^2\left|\lambda_j-z\right|^2}
 \le N^{- \tau ^* /3}.
\end{align}
 Therefore with probability $1-O(N^{- \tau ^* /6})$, the right hand side  of \eqref{ssfa2} is bounded by $N^{- \tau ^* /6}$. Together with \eqref{ssfa2}, we obtain \eqref{Meq:QUE}.  For \eqref{Meq:QUE2},  we  only need to replace $E_a$ in \eqref{ssfa2} with $|A|^{-1}\sum_{a\in A} E_a$ and use the same argument. This completes the proof Theorem \ref{MR:QUE}

\end{proof}

 \subsection{Universality} 
Recall that the $k$-point correlation functions  of $H$ are defined  by
\[
\rho_H^{(k)}\left(\alpha_1, \alpha_2, \ldots, \alpha_k\right):=\int_{\mathbb{R}^{N-k}} \rho_H^{(N)}\left(\alpha_1, \alpha_2, \ldots, \alpha_N\right) \mathrm{d} \alpha_{k+1} \cdots \mathrm{~d} \alpha_N, 
\]
where $\rho_H^{(N)}\left(\alpha_1, \alpha_2, \ldots, \alpha_N\right)$ is the joint density of all unordered eigenvalues of $H$. 

 \begin{theorem}[Bulk universality]\label{Thm: B_Univ}Suppose that  the assumptions of Theorem \ref{MR:decol} holds. For any fixed $k\in \mathbb N$, the $k$ point correlation function of $H$ converges to that of GUE in the following sense. For any $|E| \leq 2-\kappa$ and smooth test function $\mathcal{O}$ with compact support, we have
 \begin{equation}\label{eq:universality}
\lim _{N \rightarrow \infty} \int_{\mathbb{R}^k} \mathrm{~d} \boldsymbol{\alpha} \mathcal{O}(\boldsymbol{\alpha})\left\{\left(\rho_H^{(k)}-\rho_{\mathrm{GUE}}^{(k)}\right)\left(E+\frac{\boldsymbol{\alpha}}{N}\right)\right\}=0 
 \end{equation}  
 \end{theorem}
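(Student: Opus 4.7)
The plan is to deduce bulk universality from Theorems \ref{MR:locSC}, \ref{MR:decol}, and \ref{MR:QUE} via the three-step strategy pioneered by Erd\H{o}s-Schlein-Yau, which is now a standard machinery in random matrix theory. The key inputs --- local law, delocalization, and quantum unique ergodicity --- are precisely those furnished by the preceding theorems, so the main work is to verify that the existing DBM infrastructure applies in the band setting with the band-adapted overlap estimates provided by Theorem \ref{MR:QDiff}.

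First, I would extract rigidity of eigenvalues from the local law \eqref{G_bound}, applied at the optimal spectral scale $\eta = N^{-1+\tau}$. Under \eqref{Main_DEL_COND} one has $\ell(z) \sim L$ at this scale, so the bound $\max|G_{xy}(z)-m\delta_{xy}| \prec (W\ell\eta)^{-1/2}$ together with a standard Helffer-Sj\"ostrand argument yields the rigidity estimate $|\lambda_k - \gamma_k| \prec N^{-1}$ for bulk indices, where $\gamma_k$ denotes the classical location determined by $\rho_{\rm sc}$.

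Next, I would establish universality for a Gaussian divisible perturbation via Dyson Brownian motion. Consider the interpolating ensemble $H_t = H + \sqrt{t}\, V$, where $V$ is an independent standard GUE matrix; the eigenvalues of $H_t$ satisfy DBM. For $t \ge N^{-1+\delta}$ with $\delta>0$ small, the short-time relaxation / coupling analysis of Bourgade-Erd\H{o}s-Yau-Yin and Landon-Sosoe-Yau shows that the local eigenvalue correlation functions of $H_t$ coincide with those of GUE in the sense of \eqref{eq:universality}. The inputs needed for this step --- rigidity for $H_0=H$, delocalization of bulk eigenvectors, and QUE --- are exactly Theorems \ref{MR:locSC}, \ref{MR:decol}, and \ref{MR:QUE}.

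Finally, a Green's function comparison argument removes the added Gaussian noise. For $t \le N^{-\delta}$, the covariance profile of $H_t$ differs from that of $H$ by at most $t/W$ per entry, which is a tiny perturbation at the intrinsic variance scale $1/W$; since both $H$ and $V$ are Gaussian, a direct Gaussian interpolation (or the Erd\H{o}s-Yau-Yin moment-matching comparison theorem) shows that the local correlation functions of $H$ and $H_t$ agree up to negligible error when tested against any smooth compactly supported $\mathcal O$, thereby yielding \eqref{eq:universality}. The main obstacle will lie in the second step: the DBM homogenization must be carried out uniformly on spatial scales compatible with the band structure, and the quadratic overlap estimates appearing in the relaxation analysis require the spatial QUE of Theorem \ref{MR:QUE} together with the quantum-diffusion identities of Theorem \ref{MR:QDiff}, both of which are available precisely in the regime $W \ge N^{1/2+\mathfrak c}$; once these are fed into the DBM machinery, the argument closes.
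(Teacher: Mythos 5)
Your overall architecture — rigidity from the local law, DBM relaxation for a Gaussian-divisible ensemble, then removal of the Gaussian component by a comparison argument — is the same three-step strategy the paper uses, so the skeleton is right. However, you have misallocated where the delicate input lives, and as a result you treat the genuinely hard step as if it were routine.

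The DBM relaxation (your second step) does not need QUE or Theorem \ref{MR:QDiff}. The short-time homogenization result the paper cites (Theorem 2.2 of Landon--Sosoe--Yau) needs only the local law / rigidity as input; that is exactly what the paper's Step 1 does. The place where QUE is essential is your third step, the Green's function comparison between $H$ and $H_{t_*}$ at $t_* = N^{-1+\tau_*}$ — precisely the step you dismiss as ``a direct Gaussian interpolation'' that works ``since both $H$ and $V$ are Gaussian.'' It does not work naively. The time derivative of $\mathbb{E}\prod_i \operatorname{Im} m_t(z_i)$ along the interpolation contains sums of the form
\[
L_{1,t}(z) \sim \frac{1}{N}\sum_{a,b} (\mathbf G_1^2(z))_{aa}\,S^\circ_{ab}\,(\mathbf G_2(z))_{bb},
\qquad S^\circ_{ab}=S_{ab}-N^{-1},
\]
at spectral scale $\eta\sim N^{-1}$. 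Bounding $(\mathbf G_1^2)_{aa}$ by $\eta^{-1}$ and using only the local law $|G_{bb}-m|\prec (W\ell\eta)^{-1/2}$ (which at this scale is of order $N^{-\delta_0/2}$, not $N^{-c}$ for a fixed $c$) gives $L_{1,t}\lesssim N^{1-3\delta_0/2}$. Multiplying by the prefactor $t_* \sim N^{-1+\tau_*}$ you are left with $N^{\tau_* - 3\delta_0/2}$, which is not small. The comparison closes only because one can improve this to $N^{1-\mathfrak c/18+C\delta_0}$ by expanding $\mathbf G_1^2, \mathbf G_2$ in eigenvectors and controlling the overlap quantities $M_{y,\alpha}=N\sum_x |u_\alpha(x)|^2 S^\circ_{xy}$ (and the two-eigenvector analogue $M_{y,\alpha,\beta}$). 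That control is the content of Theorem \ref{MR:QUE}: one splits into a bad event where some overlap is too large (whose probability is $\lesssim N^{-\mathfrak c/18}$ by QUE) and a good event where every overlap is $\le N^{-\mathfrak c/18}$. Delocalization alone gives only $M_{y,\alpha}\prec 1$, which reproduces the insufficient naive bound. So ``Gaussianity of both sides'' buys you nothing here; the gain comes entirely from QUE-driven cancellation in the off-centered variance $S^\circ$.

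A secondary gap: when you run either the DBM step or the comparison step you need the resolvent and loop estimates (Theorems \ref{MR:locSC}--\ref{MR:QUE}) to hold not only for $H$ but for the interpolating ensembles $H_t$ with $0\le t\le t_*$, whose variance profile is $(1-\zeta)S + \zeta/N$. This is not automatic from the statements for $H$; the paper's Step 5 revisits the stochastic flow with a modified $S^{(B)}\to S^{(B)}_{\mathrm{GUE}}$ regime and checks that the $\mathcal K$-equations and the loop hierarchy still close. You should at least flag that this verification is required.

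Finally, a minor formulation note: you propose $H_t = H + \sqrt{t}\,V$ while the paper uses a matrix Ornstein--Uhlenbeck flow $dH_t = -\tfrac12 H_t\,dt + N^{-1/2}\,dB_t$. These are equivalent up to reparametrization and rescaling, so this difference is cosmetic, but the paper's choice makes $H_\infty = H_{\mathrm{GUE}}$ exactly and keeps the variance normalization fixed, which is a bit cleaner when invoking the cited comparison lemma.
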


This universality result demonstrates that the local eigenvalue statistics at a fixed energy level for band matrices match those of Wigner matrices. However, the distributions of individual eigenvalues of band matrices differ significantly from those of Wigner matrices. As suggested by Theorem 1 of \cite{Sch2015}, the fluctuations of bulk eigenvalues in band matrices are of order at least $(WN)^{-1/2}$, whereas the fluctuations for the eigenvalues of Wigner matrices are of order $ N^{-1}$ \cite{erdHos2012rigidity}.
 
 \begin{proof}[Proof of Theorem \ref{Thm: B_Univ}]
 Our proof adopts the strategy employed in Theorem 1.3 of \cite{Xu:2024aa}, which establishes the bulk universality of certain high-dimensional band matrices. In \cite{Xu:2024aa}, it is demonstrated in  section 1.2 of \cite{Xu:2024aa} that universality follows from the local law (Theorem \ref{MR:locSC} in our setting), 
 delocalization (Theorem \ref{MR:decol}) 
 and QUE estimates (Theorem \ref{MR:QUE}).

To compare the correlation functions of band matrix $H$ and the GUE random matrix $H_{GUE}$,  define  the matrix Ornstein-Uhlenbeck process $H_t$ as the solution to
\begin{equation}\label{defUnivHt}
\mathrm{d} H_t=-\frac{1}{2} H_t \mathrm{~d} t+\frac{1}{\sqrt{N}} \mathrm{~d} B_t, \quad \text { with } H_0=H.   
\end{equation}
By definition, $H_\infty=H_{GUE}$. We aim to  show that 
\begin{equation}\label{0infy}
\lim _{N \rightarrow \infty} \int_{\mathbb{R}^k} \mathrm{~d} \boldsymbol{\alpha} \mathcal{O}(\boldsymbol{\alpha})\left\{\left(\rho_{H_0}^{(k)}-\rho_{H_\infty}^{(k)}\right)\left(E+\frac{\boldsymbol{\alpha}}{N}\right)\right\}=0 
 \end{equation}

\medskip 
\noindent 
\emph{Step 1}: Using the local semicircle law (Theorem \ref{MR:locSC}) as input and applying Theorem 2.2 from \cite{LANDON20191137}, we establish the universality of the correlation functions of $H_{t_*}$ at $t_*=N^{-1+\tau_*}$ for any fixed $\tau_*>0$, explicitly,
\begin{equation}\label{1infyuniv}
\lim_{N \rightarrow \infty} \int_{\mathbb{R}^k} \mathrm{d} \boldsymbol{\alpha}\, \mathcal{O}(\boldsymbol{\alpha}) \left\{\left(\rho_{H_{t_*}}^{(k)} - \rho_{H_\infty}^{(k)}\right)\left(E+\frac{\boldsymbol{\alpha}}{N}\right)\right\}=0,\quad t_*=N^{-1+\tau_*}.
\end{equation}

\medskip 
\noindent 
\emph{Step 2}: Analogously to Proposition 4.17 in \cite{Xu:2024aa}, we assert that there exists a constant $c'>0$ such that the following holds:
Fix a sufficiently small parameter $\tau_U>0$ (temporarily denoted $\tau_U$ for clarity in our universality proof) and assume $|E| \leq 2-\kappa$. Define complex parameters $z_i=E_i+\mathrm{i}\eta_i$ for $i=1,\dots,n$, satisfying
\begin{equation}\label{qkhzle23}
|E_i - E| \leq C N^{-1},\quad N^{-1-\tau_U} \leq \eta_i \leq N^{-1+\tau_U},\quad t_U:=N^{-1+\tau_U}.
\end{equation}
Under the conditions of Theorem~\ref{Thm: B_Univ}, we claim that for any fixed $n \in \mathbb{N}$, there exists $c'>0$ such that for sufficiently small $\tau_U$,
\begin{align}\label{417}
\left|\mathbb{E} \prod_{i=1}^n \operatorname{Im} m_0(z_i) - \mathbb{E} \prod_{i=1}^n \operatorname{Im} m_{t_U}(z_i)\right| \leq N^{-c'+C_n \tau_U},\quad t_U=N^{-1+\tau_U},
\end{align}
where
\[
m_t(z_i)= \frac{1}{N}\operatorname{tr}(H_t-z_i)^{-1}, \quad t\in [0, t_U].
\]

Following the approach in \cite{Xu:2024aa}, for each fixed $n \in \mathbb{N}$, we choose $\tau_U$ to be much smaller than $c'$. By applying the standard correlation function comparison theorem (Theorem 15.3 in \cite{erdHos2017dynamical}) and Proposition 4.17 from \cite{Xu:2024aa}, we conclude that for every fixed integer $k$, there exists a sufficiently small $\tau_U>0$ such that
\begin{equation}\label{01univ}
\lim_{N \rightarrow \infty} \int_{\mathbb{R}^k} \mathrm{d}\boldsymbol{\alpha}\,\mathcal{O}(\boldsymbol{\alpha})\left\{\left(\rho_{H_0}^{(k)}-\rho_{H_{t_U}}^{(k)}\right)\left(E+\frac{\boldsymbol{\alpha}}{N}\right)\right\}=0, \quad t_U=N^{-1+\tau_U}.
\end{equation}
Combining \eqref{01univ} and \eqref{1infyuniv} (choosing $t_*=t_U$), we obtain \eqref{0infy}, which yields the universality result \eqref{eq:universality}. The proof of our key claim \eqref{417} proceeds in the following steps.

\medskip 
\noindent 
\emph{Step 3}: %For \eqref{417}, 
By Lemma 4.18 of \cite{Xu:2024aa} we have for any fixed $n \in \mathbb{N}$ that
\begin{align}\label{EMCTE}
\; &\;   \left|\mathbb{E} \prod_{i=1}^n \operatorname{Im} m_0\left(z_i\right)-\mathbb{E} \prod_{i=1}^n \operatorname{Im} m_{t_U}\left(z_i\right)\right|
\\\nonumber
 \leq \; &\; C\cdot t_U\cdot \max_{0\le t\le t_U}\,\mathbb{E}\left[\sum_u L_{1, t}\left(z_u\right) \prod_{i \neq u} \operatorname{Im} m_t\left(z_i\right)+\sum_{u \neq v} L_{2, t}\left(z_u, z_v\right) \prod_{i \neq u, v} \operatorname{Im} m_t\left(z_i\right)\right],
\end{align}
where  $C>0$ is an absolute constant and 
$$
\begin{aligned}
L_{1, t}(z) & :=\sum_{\mathbf{G}_1, \mathbf{G}_2 \in\left\{G_t, G_t^*\right\}}\left|\frac{1}{N} \sum_{a, b}\left(\mathbf{G}_1^2(z)\right)_{a a} S_{a b}^{\circ}\left(\mathbf{G}_2(z)\right)_{b b}\right|, \\
L_{2, t}\left(z_1, z_2\right) & :=\sum_{\mathbf{G}_1, \mathbf{G}_2 \in\left\{G_t, G_t^*\right\}}\left|\frac{1}{N^2} \sum_{a, b}\left(\mathbf{G}_1^2\left(z_1\right)\right)_{a b} S_{a b}^{\circ}\left(\mathbf{G}_2^2\left(z_2\right)\right)_{b a}\right| , 
\end{aligned}
$$
$$
S_{xy}^{\circ}=S_{xy}-N^{-1}.
$$
Similar to \cite{Xu:2024aa}, we claim that the following weaker versions of Theorems \ref{MR:locSC} and \ref{MR:QUE} hold for $H_t$ defined in \eqref{defUnivHt}.

\begin{enumerate}
    \item (Weak version of Theorem \ref{MR:locSC}): Under the assumptions of Theorem \ref{MR:decol}, for sufficiently small $\tau_U$ (depending on $\mathfrak{c}$ in \eqref{Main_DEL_COND}), and parameters satisfying
    \[
    t \in [0,t_U], \quad t_U = N^{-1+\tau_U}, \quad z = E + \mathrm{i}\eta, \quad |E| < 2 - \kappa, \quad \eta = N^{-1+2\tau_U},
    \]
    we have
    \begin{equation}\label{Weakloclaw}
        \max_{x} |(G_t)_{xx}| \prec 1, \quad G_t := (H_t - z)^{-1}.
    \end{equation}
    (Note: Here, we explicitly state and prove only the case $\eta = N^{-1+4\tau_U}$, although the general case should also hold.)

    \item (Weak version of Theorem \ref{MR:QUE}): For sufficiently small fixed $\tau_U$ (depending on $\mathfrak{c}$ in \eqref{Main_DEL_COND}), we have
    \begin{equation}\label{WeakQUE}
        \text{Theorem \ref{MR:QUE} holds for $H_t$, with $\tau^* = \mathfrak{c}/3$, and $0 \leq t \le N^{-1+\tau_U}$.}
    \end{equation}
\end{enumerate}
We remark that the results above remain valid without the extra condition on $\eta$. However, this simplified form is sufficient for our universality proof and significantly streamlines the argument.

We postpone  the proof  of \eqref{Weakloclaw} and \eqref{WeakQUE} to subsection \ref{Apu}. 
Assuming these two results, we complete the proof of of Theorem \ref{Thm: B_Univ}.

It is well-known that  for any $E$ in the bulk that 
$$
\eta\le \widetilde{\eta}\implies\eta \operatorname{Im} m\left(E +\mathrm{i} \eta \right) \leq \tilde{\eta} \operatorname{Im} m\left(E +\mathrm{i} \tilde{\eta}\right).
$$ 
By choosing $\widetilde{\eta}=N^{-1+2\tau_U}$ and \eqref{Weakloclaw}, we obtain that for $z_i$'s in \eqref{qkhzle23}, 
$$
\max_{0\le t\le t_U}\im m_t (z_i)\prec N^{2\tau_U}\cdot (N\im z_i)^{-1}\prec N^{3\tau_U}.$$
Inserting  it back to \eqref{EMCTE}, we obtain that
\begin{align}\label{EMCTE2}
 \left|\mathbb{E} \prod_{i=1}^n \operatorname{Im} m_0\left(z_i\right)-\mathbb{E} \prod_{i=1}^n \operatorname{Im} m_{t_U}\left(z_i\right)\right|
\;\prec\;  N^{-1+C_n\tau_U}\cdot  \max_{0\le t\le t_U}\max_{u\ne v}\,\mathbb{E}\left[  L_{1, t}\left(z_u\right)  +  L_{2, t}\left(z_u, z_v\right)\right].
\end{align}
%where we used $L_1$, $L_2\ge 0$ as in the definition. 
For bounding $L_1$ and $L_2$, in the following we will prove that, for $\mathbf{G}_1, \mathbf{G}_2 \in\left\{G_t, G_t^*\right\}$,  and $z_i$'s in \eqref{qkhzle23}
\begin{align}\label{jaklsdufowe}
  & \max_i\max_{x,y}\; \mathbb E\left| \sum_{ x}\left(\mathbf{G}_1^2(z_i)\right)_{x x} S_{x y}^{\circ}\left(\mathbf{G}_2(z_i)\right)_{yy}\right|\le N^{1-\mathfrak c/18+C\tau_U} \\\label{uywy7723r3rf}
 & \max_{i\ne j}\max_{x,y}\;\mathbb E\left|  \sum_{  x}\left(\mathbf{G}_1^2\left(z_i\right)\right)_{xy} S_{x y}^{\circ}\left(\mathbf{G}_2^2\left(z_j\right)\right)_{yx}\right| \prec N^{2-\mathfrak c/18+C\tau_U}.
\end{align}
By Lemma 4.20 of \cite{Xu:2024aa} (or use the eigen-decomposition
of $G$) 
\begin{equation}\label{yw982823}
     \left|\sum_{x}\left(\mathbf{G}_1^2(z)\right)_{xx} S_{x y}^{\circ}\left(\mathbf{G}_2(z)\right)_{yy}\right|\prec N^{-2}\sum_{\alpha,\beta} |p_\alpha(z)|^2 \cdot |p_\beta(z)|\cdot \left| M_{y,\alpha} \right| ,
\end{equation}
where $$p_\alpha(z):=\left(\lambda_\alpha-z\right)^{-1}$$
and 
$$
     M_{y,\alpha}:= N \sum_{x}|u_\alpha(x)|^2 S^{\circ}_{xy}= N \sum_{a: |a-a_0|\le 1}\frac13
     \left\langle u_\alpha \left(E_a-N^{-1} I\right) \,u^*_\alpha\right\rangle, \quad y\in {\cal I}_{a_0}.
$$
With the  local law estimate \eqref{Weakloclaw} at the scale $N^{-1+2\tau_U}$, and the  eigenvector $L_\infty$ bound in  \eqref{kkzxis}, one can obtain that 
\begin{equation}\label{80whg}
       \max_{\alpha,y} |M_{y,\alpha}|\prec  N^{C\tau_U},\quad  \sum_{\alpha } |p_\alpha(z)|^2 \prec N^{2+C\tau_U},\quad  \sum_{ \beta}    |p_\beta(z)|\prec N^{1+C\tau_U}
\end{equation} 
On the other hand,  $M_{y,\alpha}$ has a sharper bound  with  high probability  $1-o(1)$). 
To see this, define  the bad event $\cal B$ %and the good event ${\cal B}^c$ by 
$$
{\cal B} = \left\{\exists \alpha \mbox { such that } |\lambda_\alpha-E|\le N^{-1+\mathfrak c/6} \quad \mbox{and} \quad \left| M_{x,\alpha}\right|\ge N^{-\mathfrak c/18}\right\}.
$$
Using the QUE estimate of $H_t$ in \eqref{WeakQUE} and \eqref{Meq:QUE} with $\tau^*=\mathfrak c/3$, we have 
$$
\mathbb P\left({\cal B} \right)=O\left(N^{-\mathfrak c/18}\right). 
$$
Therefore, by  \eqref{80whg}, 
$$
\mathbb E\; \left( {\bf 1}_{{\cal B} }\cdot \sum_{\alpha,\beta} |p_\alpha(z)|^2 \cdot |p_\beta(z)|\cdot \left| M_{x,\alpha} \right| \right)
=O( N^{3-\mathfrak c/18+C\tau_U} ).
$$
On the good  event ${\cal B}^c $, we split the sum $\alpha$ into 
$$
\sum_{\alpha }= \sum^{\le} _{\alpha }+\sum^{>} _{\alpha }\;,\quad \quad \sum^{\le} _{\alpha }:=\sum_{\alpha: |\lambda_\alpha-E|\le N^{-1+\mathfrak c/6}} .
$$
Then
\begin{align}
{\bf 1}_{{\cal B}^c }\cdot \sum_{\alpha } ^{\le}|p_\alpha(z)|^2  \cdot \left| M_{x,\alpha} \right|& \; \prec \;
{\bf 1}_{{\cal B}^c }\cdot N^{-\mathfrak c/18}\cdot  \sum_{\alpha } ^{\le} |p_\alpha(z)|^2  \le N^{2-\mathfrak c/18+C\tau_U} 
\\\nonumber
{\bf 1}_{{\cal B}^c }\cdot \sum_{\alpha } ^{>}|p_\alpha(z)|^2  \cdot \left| M_{x,\alpha} \right|  &\; \prec \;
{\bf 1}_{{\cal B}^c }\cdot N^{C\tau_U}\cdot  \sum_{\alpha } ^{>} |p_\alpha(z)|^2  \le N^{2-\mathfrak c/6+C\tau_U} .
 \end{align}
 Combining  them with the last term in \eqref{80whg},  we obtain 
$$
\mathbb E \left({\bf 1}_{{\cal B}^c }\cdot \sum_{\alpha,\beta} |p_\alpha(z)|^2 \cdot |p_\beta(z)|\cdot \left| M_{x,\alpha} \right|\right) 
\prec  N^{3-\mathfrak c/18+C\tau_U} , 
$$
which implies  \eqref{jaklsdufowe}.

To prove  \eqref{uywy7723r3rf}, we use  Lemma 4.20 of \cite{Xu:2024aa}  to have 
$$
  \left|  \sum_{  x}\left(\mathbf{G}_1^2\left(z_i\right)\right)_{xy} S_{x y}^{\circ}\left(\mathbf{G}_2^2\left(z_j\right)\right)_{yx}\right|
\prec N^{-2} \sum_{\alpha,\beta} |p_\alpha(z_i)|^2|p_\beta(z_j)|^2 \left|M_{y,\alpha,\beta}\right|,
$$
where
 $$
 M_{y,\alpha,\beta}:= N \sum_{x}u_\alpha(x) \overline{u_{\beta}(x)}\cdot S^0_{xy}= \sum_{a: |a-a_0|\le 1}\frac N 3
     \left\langle u_\alpha \left(E_a-N^{-1} I\right) \,u^*_\beta\right\rangle, \quad y\in {\cal I}_{a_0}.
$$
Similar to \eqref{80whg}, we have
$$
    \left|M_{y,\alpha,\beta}\right|\prec N^{C\tau_U}.
$$
Define the bad event  
$$
\widetilde{\cal B} := \left\{\exists \alpha_1,\alpha_2 \mbox { such that } \max_i|\lambda_{\alpha_i}-E|\le N^{-1+\mathfrak c/6} \quad \mbox {and} \quad \left| M_{y,\alpha_1,\alpha_2}\right|\ge N^{ -\mathfrak c/18}\right\}.
$$
Again using  \eqref{WeakQUE} and \eqref{Meq:QUE} with  $\tau^*=\mathfrak c/3$,  we have 
$\mathbb P (\widetilde{{\cal B}}  )=O\left(N^{-\mathfrak c/18}\right)$ and 
$$
\mathbb E\; \left(
{\bf 1}_{\widetilde{{\cal B}} }\cdot \sum_{\alpha,\beta} |p_\alpha(z_i)|^2|p_\beta(z_j)|^2 \left|M_{y,\alpha,\beta} \right|
\right)
=O( N^{4-\mathfrak c/18+C\tau_U} ).
$$
On the good set, we have 
$$
{\bf 1}_{\widetilde{{\cal B}}^c }\cdot \sum_{\alpha,\beta} |p_\alpha(z_i)|^2|p_\beta(z_j)|^2 \left|M_{y,\alpha,\beta} \right|
\le N^{4-\mathfrak c/18+C\tau_U}.  
$$
We have thus proved  \eqref{uywy7723r3rf}.  
Combining it with \eqref{jaklsdufowe} and \eqref{EMCTE2} (and the definitions of $L_1$ and $L_2$), we obtain \eqref{417} (with $c'=\mathfrak c/18$). 
This completes the proof of of Theorem \ref{Thm: B_Univ}  under the assumption that 
\eqref{WeakQUE} and \eqref{Weakloclaw} hold. 

\end{proof}

\subsection{Stochastic flow and   \texorpdfstring{$G$}{G}-loops}
In this section, we  state  a fundamental estimate on ``$G$ loops" which will be the key to prove 
Theorems \ref{MR:locSC} and \ref{MR:QDiff}.  To this end,  recall 
the matrix Brownian  motion   defined by 
\begin{align}\label{def_mainHT}
dH_{t,ij}=\sqrt{S_{ij}}dB_{t,ij}, \quad H_{0}=0, 
\end{align}
where $B_{t,ij}$ are independent standard complex Brownian motions for all $i \le j=1,\ldots,N$ and  $B_{ji} = \bar  B_{ij}$ for all $i, j$. 
We will consider the resolvent with a time dependent spectral parameter $z_t$ given by the following definition. 

\begin{definition}[The $z_t$ flow]\label{def_flow}
For fixed $E \in \mathbb R$,  %with $|E|<2-\kappa$ for some $\kappa>0$,    
denote by $
m^{(E)}:=\lim_{\epsilon\to 0+}m_{sc}(E+i \epsilon)$.
Define the linear flow  $z_t$  $(0\le t\le 1)$ by 
$$
  z^{(E)}_t=E+(1-t) m^{(E)}  ,\quad 0\le t\le 1.   
$$
The imaginary part of $ z^{(E)}_t$ is given by 
\begin{align}\label{eta}
\eta_t = \im  z^{(E)}_t =  (1-t)  \im m^{(E)}.
\end{align}
Denote the resolvent of $H_{t}$ at  $z^{(E)}_t$ by 
\begin{align}
G_{t}^{(E)}:=(H_{t}-z^{(E)}_t)^{-1}.
\end{align}
\end{definition}
 
By Ito's formula, $G_{t} := G_{t}^{(E)}$ satisfies the SDE
\begin{align}\nonumber
dG_{t}=-G_{t}dH_{t}G_{t}+G_{t}\{\mathcal{S}[G_{t}]-m^{(E)}\}G_{t}
dt,
\end{align}
where $\mathcal{S}:M_{N}(\C)\to M_{N}(\C)$ is the linear operator defined by
\begin{align*}
\mathcal{S}[X]_{ij}:=\delta_{ij}\sum_{k=1}^{N}S_{ik}X_{kk}.
\end{align*}
Notice that $G_{t}$ depends on $E$ and  we will use $G^{(E)}_{t}$  to emphasize the $E$ dependence. 
 For any spectral parameter $z$, we are interested in the resolvent  $G(z)=(H-z)^{-1} $. This function can be related to $G_{t}^{(E)}$ by the following lemma. 
\begin{lemma}\label{zztE}
    For any $z\in \mathbb C$ with $0 <  \im z\le 1$ and $|\re z|\le  2-\kappa$ for some $\kappa>0$, there exists  $(E, t)$ satisfying $0 <   t<1$ and  $|E|\le 2- c \kappa$ for some $c > 0$  such that
\begin{equation}\label{eq:zztE}
     z=t^{-1/2}\cdot z^{(E)}_t.
\end{equation}
Denote by $m^{(E)} $ the solution of $m (m+  E) =- 1$ . Then $E$ and $t$ are explicitly given by  
\[
 E =  - 2 \frac {  \re  m_{sc} (z)} {  |m_{sc} (z) |}, \quad t  = m_{sc} (z)^2/(m^{(E)})^2.
\]
For $z$, $E$ and $t$ satisfy \eqref{eq:zztE}, we have 
\begin{equation}\label{mtEmz}
  m_{sc}(z)=t^{1/2}\cdot m^{(E)} %:=t^{1/2}\lim_{\epsilon\to 0+}m_{sc}(E+i \epsilon)
\end{equation}
and 
   \begin{equation}\label{GtEGz}
    G(z) \sim t^{ 1/2}\cdot G_t^{(E)} 
   \end{equation}
in the sense that they have the same distribution. 
Furthermore, there  exists $c_\kappa>0$ (depending only on $\kappa$) such that 
  \begin{equation}\label{eq:zztE2}
       t\ge c_\kappa \quad {\rm and}\quad  c_\kappa\im z\le \im z_t\le c_\kappa^{-1}\im z
\end{equation} 
\end{lemma}
\begin{proof} For \eqref{eq:zztE}, 
we wish to solve 
\[
z=  t^{-1/2} \big [ (E+m^{(E)})- t  m^{(E)} \big ] =  \frac { E+m^{(E)}} { \sqrt t} - \sqrt  t  m^{(E)} = -  \frac { 1} {  \sqrt t m^{(E)} } -  \sqrt  t  m^{(E)}
\] 
This equation can be solved if  we can find $t$ and $m$ so that  $\sqrt  t  m^{(E)}= m_{sc} (z)$.
The last  equation is solved by $\sqrt t  = |m_{sc} (z)|$ with    $E$  solving   $m^{(E)}  = m_{sc} (z) / |m_{sc} (z) |$. Explicitly,  $E =  - 2 \frac {  \re  m_{sc} (z)} {  |m_{sc} (z) |}$. The other properties can be checked directly.  
\end{proof}

For the rest of the paper, we will consider only  $G_t^{(E)}$.  We now define the $G$ loops. 
\begin{definition}[$G$ - Loop]\label{Def:G_loop}
 For fixed $E$ and $\sigma\in \{+,-\}$,  define
 \begin{equation}\nonumber
  G^{(E)}_{t }(\alpha):=  G^{(E)}_{t,\alpha} =
   \begin{cases}
       (H_t-z_t)^{-1}, \quad \alpha=+\\
        (H_t-\bar z_t)^{-1}, \quad \alpha=-
   \end{cases}
 \end{equation}
By definition, $G^{(E)}_{t,+} =\left(G^{(E)}_{t,-}\right)^\dagger$. 
For simplicity of notations,  we sometimes write 
$G_t=G^{(E)}_{t,+}$  which matches the notation in \eqref{GtEGz}.  Recall $E_a$  defined in \eqref{Def_matE}.  
For 
$$
\boldsymbol{\sigma}=(\sigma_1, \sigma_2, \cdots \sigma_n),\quad 
\boldsymbol{a}=(a_1, a_2, \cdots a_n)
,\quad \sigma_i \in \{+,-\}, \quad
a_i\in \mathbb Z_L, \quad 1\le i\le n
$$
we define the $n$-$G$ loop by 
\begin{equation}\label{Eq:defGLoop}
    {\cal L}_{t, \boldsymbol{\sigma}, \textbf{a}}=\big \langle \prod_{i=1}^n G_t(\sigma_i)\cdot E_{a_i}\big \rangle,\quad 
\text{where} \quad \left\langle A\right\rangle=\tr A.
\end{equation}
We also denote by 
\begin{equation}\label{def_mtzk}
m (\sigma ):=m^{(E)}(\sigma ) = \begin{cases}
     m^{(E)}, & \sigma  =+ \\
     \overline{m^{(E)}} , & \sigma = -
\end{cases},
\end{equation}
%and $\widetilde{G}_t $ as follows, 
and
\begin{equation}\label{Eq:defwtG}
 \widetilde{G}_t(\sigma_k) := G_t(\sigma_k) - m (\sigma_k) .
\end{equation}
\end{definition}

With these notations, we can express the quantity 
$\tr G (z) E_a G^\dagger(z)  E_b$ in Theorem \ref{MR:QDiff} by 
%with \eqref{GtEGz} and \eqref{eq:zztE}, we have 
(we drop the  superscript $E$)
$$
\tr G(z) E_a G^\dagger(z)  E_b=t \cdot \tr (G_{t,+})\cdot E_a\cdot (G_{t,-})\cdot  E_b=t\cdot {\cal L}_{t, (+,-),(a,b)}.
$$
In order to derive the loop hierarchy, Lemma \ref{lem:SE_basic} , we need  the following notations.

\begin{definition}[Loop, Cut, and Glue]\label{Def:oper_loop}
Recall the $G$ loops ${\cal L}_{t, \boldsymbol{\sigma}, \textbf{a}}$ defined in Eq.~\eqref{Eq:defGLoop} of Definition~\ref{Def:G_loop}. Here we define some basic operators: cutting and gluing for these $G$ loops.  Assume that
\begin{equation}\label{taahCal}
   {\cal L}_{t, \boldsymbol{\sigma}, \textbf{a}} = \left\langle \prod_{i=1}^n G_t(\sigma_i) E_{a_i} \right\rangle, \quad \sigma_i \in \{+, -\}, \quad 1 \le i \le n 
\end{equation}
 
 \noindent 
 \emph{1.} 
 For $1 \le k \le n$, we define the first cut and glue operator ${\cal G}^{(a)}_{k}$ such that $
    {\cal G}^{(a)}_{k} \circ {\cal L}_{t, \boldsymbol{\sigma}, \textbf{a}}$  is the $G$ loop obtained by replacing $G_t(\sigma_k)$ by $G_{t}(\sigma_k) E_a G_{t}(\sigma_k)$.
  If we consider ${\cal L}_{t, \boldsymbol{\sigma}, \textbf{a}}$ as a loop, then the operator ${\cal G}^{(a)}_{k}$ cuts the $k$-th $G$ edge $G_t(\sigma_k)$ and glues the two new ends with $E_a$. 
We also view  ${\cal G}_k^{(a)}$  as an operator on the indices $\boldsymbol{\sigma},\textbf{a}$ so that 
    $$
     {\cal L}_{t, \;  {\cal G}^{(a)}_{k} (\boldsymbol{\sigma}, \textbf{a})}
     := {\cal G}^{(a)}_{k} \circ {\cal L}_{t, \boldsymbol{\sigma}, \textbf{a}}.
    $$
    
    The new loop is ${\cal G}^{(a)}_{k} \circ {\cal L}_{t, \boldsymbol{\sigma}, \textbf{a}}$  one unit longer than the original ${\cal L}_{t, \boldsymbol{\sigma}, \textbf{a}}$. For example in figure \ref{fig:op_gka}, for $n=4$:
    $$
    {\cal G}^{(a)}_{2} \circ {\cal L}_{t, \boldsymbol{\sigma}, \textbf{a}} = {\cal L}_{t, \; \boldsymbol{\sigma}', \textbf{a}'}, 
    \quad\quad 
    {\cal G}^{(a)}_{2} \circ ( \boldsymbol{\sigma}, \textbf{a}) = ( \boldsymbol{\sigma}', \textbf{a}')
    $$
    and 
    $$ \boldsymbol{\sigma}=(\sigma_1,\sigma_2,\sigma_3,\sigma_4),\quad 
   \textbf{a}= (a_1,a_2,a_3,a_4),\quad \boldsymbol{\sigma}' = (\sigma_1, \sigma_2, \sigma_2, \sigma_3, \sigma_4), \quad \textbf{a}' = (a_1, a, a_2, a_3, a_4)
    $$

   \begin{figure}[ht]
        \centering
        \begin{tikzpicture}
 
    % Left side (Square)
    % Define points for square
    \coordinate (a1) at (0, 0);
    \coordinate (a2) at (2, 0);
    \coordinate (a3) at (2, -2);
    \coordinate (a4) at (0, -2);

    % Draw square with labels
    \draw[thick, blue] (a1) -- (a2) -- (a3) -- (a4) -- cycle;
    \fill[red] (a1) circle (2pt) node[above left] {$a_1$};
    \fill[red] (a2) circle (2pt) node[above right] {$a_2$};
    \fill[red] (a3) circle (2pt) node[below right] {$a_3$};
    \fill[red] (a4) circle (2pt) node[below left] {$a_4$};
    
    % Label edges
    \node[above, blue] at (1, 0.1) {$G_2$};
    \node[right, blue] at (2.1, -1) {$G_3$};
    \node[below, blue] at (1, -2.1) {$G_4$};
    \node[left, blue] at (-0.1, -1) {$G_1$};

    % Arrow between diagrams (shifted right)
    \draw[->, thick] (3, -1) -- (5, -1);

    % Right side (Pentagon, shifted right)
    % Define points for pentagon
    \coordinate (b1) at (7, 0);
    \coordinate (b2) at (9, 0);
    \coordinate (b3) at (9, -2);
    \coordinate (b4) at (7, -2);
    \coordinate (a) at (8, 1);  % New point at top

    % Draw pentagon with labels
    \draw[thick, blue] (b1) -- (a) -- (b2) -- (b3) -- (b4) -- cycle;
    \fill[red] (b1) circle (2pt) node[above left] {$a_1$};
    \fill[red] (b2) circle (2pt) node[above right] {$a_2$};
    \fill[red] (b3) circle (2pt) node[below right] {$a_3$};
    \fill[red] (b4) circle (2pt) node[below left] {$a_4$};
    \fill[red] (a) circle (2pt) node[above] {$a$};

    % Label edges
    \node[above left, blue] at (7.5, 0.5) {$G_2$};
    \node[above right, blue] at (8.5, 0.5) {$G_2$};
    \node[right, blue] at (9.1, -1) {$G_3$};
    \node[below, blue] at (8, -2.1) {$G_4$};
    \node[left, blue] at (6.9, -1) {$G_1$};  % New edge label for left side

   \end{tikzpicture}     
   \caption{Illustration of operator 
${\cal G}^{(a)}_{k}$}
        \label{fig:op_gka}
 \end{figure}

 \noindent 
 \emph{2.}  For $1 \le k < l \le n$, we define the cut and glue operator ${\cal G}^{(a), L}_{k, l}$ as follows: ${\cal G}^{(a), L}_{k, l} \circ {\cal L}_{t, \boldsymbol{\sigma}, \textbf{a}}$ (where $L$ stands for "left") is the $G$ loop obtained by cutting the $k$-th and $l$-th $G$ edges $G_t(\sigma_k)$ and $G_t(\sigma_l)$ (creating four end points and two ``chains"), then gluing the two new ends of the chain that contains $E_{a_n}$ and inserting a new $E_a$ at the gluing point. The length of the new loop will be $k + n - l + 1$. For example in figure \ref{fig:op_gLeft}, for $n=5$:
    $$
    {\cal G}^{(a), L}_{3, 5} \circ {\cal L}_{t, \boldsymbol{\sigma}, \textbf{a}} = {\cal L}_{t, \; \boldsymbol{\sigma}', \textbf{a}'}, \quad 
     {\cal G}^{(a), L}_{3, 5}\circ\left(\boldsymbol{\sigma}, \textbf{a}\right)=\left(\boldsymbol{\sigma}', \textbf{a}'\right)
    $$
and 
$$ \boldsymbol{\sigma}=(\sigma_1,\sigma_2,\sigma_3,\sigma_4,\sigma_5),\quad 
   \textbf{a}= (a_1,a_2,a_3,a_4,a_5),\quad \boldsymbol{\sigma}' = (\sigma_1, \sigma_2, \sigma_3, \sigma_5), \quad \textbf{a}' = (a_1, a_2, a, a_5)
$$
  Notice that $a_n, a_1$ are always in ${\cal G}^{(a), L}_{k, l}$ for any $k, l$.

    \begin{figure}[ht]
        \centering
         \scalebox{0.8}{
       \begin{tikzpicture}
        % Left side (Pentagon)
        % Define points for pentagon
        \coordinate (a1) at (0, 2);
        \coordinate (a2) at (2, 1);
        \coordinate (a3) at (1.5, -1);
        \coordinate (a4) at (-1.5, -1);
        \coordinate (a5) at (-2, 1);

        % Draw pentagon with labels
        \draw[thick, blue] (a1) -- (a2) -- (a3) -- (a4) -- (a5) -- cycle;
        \fill[red] (a1) circle (2pt) node[above] {$a_1$};
        \fill[red] (a2) circle (2pt) node[right] {$a_2$};
        \fill[red] (a3) circle (2pt) node[below right] {$a_3$};
        \fill[red] (a4) circle (2pt) node[below left] {$a_4$};
        \fill[red] (a5) circle (2pt) node[left] {$a_5$};

        % Label edges
        \node[above right, blue] at (1, 1.5) {$G_2$};
        \node[right, blue] at (2, 0) {$G_3$};
        \node[below, blue] at (0, -1.2) {$G_4$};
        \node[below left, blue] at (-1.8, 0) {$G_5$};
        \node[above left, blue] at (-1, 1.5) {$G_1$};

        % Arrow between diagrams
        \draw[->, thick] (3.5, 0) -- (5.5, 0);
        \node[above] at (4, 0.2) {$ {\cal G}^{(a),L}_{3,5}$};

        % Right side (Diamond shape with four points)
        % Define points for diamond shape
        \coordinate (b1) at (9, 2);     % Top
        \coordinate (b2) at (11, 0);     % Right
        \coordinate (a) at (9, -2);     % Bottom (new point a)
        \coordinate (b5) at (7, 0);     % Left

        % Draw diamond shape
        \draw[thick, blue] (b1) -- (b2) -- (a) -- (b5) -- cycle;
        
        % Fill and label points
        \fill[red] (b1) circle (2pt) node[above] {$a_1$};
        \fill[red] (b2) circle (2pt) node[right] {$a_2$};
        \fill[red] (a) circle (2pt) node[below] {$a$};
        \fill[red] (b5) circle (2pt) node[left] {$a_5$};

        % Label edges
        \node[above right, blue] at (10, 1) {$G_2$};
        \node[right, blue] at (10, -1) {$G_3$};
        \node[below left, blue] at (8, -1) {$G_5$};
        \node[above left, blue] at (8, 1) {$G_1$};
    \end{tikzpicture}
    }
        \caption{Illustration of operator ${\cal G}^{(a), L}_{k, l}$}
        \label{fig:op_gLeft}
    \end{figure}

  \noindent 
 \emph{3.}  For $1 \le k < l \le n$, similarly, we define the cut and glue operator ${\cal G}^{(a), R}_{k, l}$ (where $R$ stands for "right") as ${\cal G}^{(a), L}_{k, l}$. The difference is that this time we glue the two new ends of the chain that does not contain $E_{a_n}$. The length of the new loop will be $l - k + 1$. For example in figure \ref{fig:op_gRight} for $n=5$:
      $$
    {\cal G}^{(a), R}_{3, 5} \circ {\cal L}_{t, \boldsymbol{\sigma}, \textbf{a}} = {\cal L}_{t, \; \boldsymbol{\sigma}', \textbf{a}'}, \quad 
     {\cal G}^{(a), R}_{3, 5}\circ\left(\boldsymbol{\sigma}, \textbf{a}\right)=\left(\boldsymbol{\sigma}', \textbf{a}'\right)
    $$
   and  $$
  \boldsymbol{\sigma}=(\sigma_1,\sigma_2,\sigma_3,\sigma_4,\sigma_5),\quad 
   \textbf{a}= (a_1,a_2,a_3,a_4,a_5),\quad \quad   \boldsymbol{\sigma}' = (\sigma_3, \sigma_4, \sigma_5), \quad \textbf{a}' = (a_3, a_4, a)
    $$
    \begin{figure}[ht]
        \centering
       \scalebox{0.8}{  % Scale down to 80%
    \begin{tikzpicture}
        % Left side (Pentagon)
        % Define points for pentagon
        \coordinate (a1) at (0, 2);
        \coordinate (a2) at (2, 1);
        \coordinate (a3) at (1.5, -1);
        \coordinate (a4) at (-1.5, -1);
        \coordinate (a5) at (-2, 1);

        % Draw pentagon with labels
        \draw[thick, blue] (a1) -- (a2) -- (a3) -- (a4) -- (a5) -- cycle;
        \fill[red] (a1) circle (2pt) node[above] {$a_1$};
        \fill[red] (a2) circle (2pt) node[right] {$a_2$};
        \fill[red] (a3) circle (2pt) node[below right] {$a_3$};
        \fill[red] (a4) circle (2pt) node[below left] {$a_4$};
        \fill[red] (a5) circle (2pt) node[left] {$a_5$};

        % Label edges
        \node[above right, blue] at (1, 1.5) {$G_2$};
        \node[right, blue] at (2, 0) {$G_3$};
        \node[below, blue] at (0, -1.2) {$G_4$};
        \node[below left, blue] at (-1.8, 0) {$G_5$};
        \node[above left, blue] at (-1, 1.5) {$G_1$};

        % Arrow between diagrams
        \draw[->, thick] (3.5, 0) -- (5.5, 0);
        \node[above] at (4, 0.2) {${\cal G}^{(a),R}_{3,5}$};

        % Right side (Triangle)
        % Define points for triangle
        \coordinate (a) at (8, 2);      % Top point (a)
        \coordinate (a3) at (9.5, -1);  % Bottom right point (a3)
        \coordinate (a4) at (6.5, -1);  % Bottom left point (a4)

        % Draw triangle with labels
        \draw[thick, blue] (a) -- (a3) -- (a4) -- cycle;

        % Fill and label points
        \fill[red] (a) circle (2pt) node[above] {$a$};
        \fill[red] (a3) circle (2pt) node[below right] {$a_3$};
        \fill[red] (a4) circle (2pt) node[below left] {$a_4$};

        % Label edges
        \node[above right, blue] at (8.75, 0.5) {$G_3$};
        \node[below, blue] at (8, -1.2) {$G_4$};
        \node[above left, blue] at (7.25, 0.5) {$G_5$};

    \end{tikzpicture}
    }  % End of scalebox
        \label{fig:op_gRight}
         \caption{Illustration of operator ${\cal G}^{(a), R}_{k, l}$}
    \end{figure}
Notice that the loop containing  the index $a_n$ is the left loop, another one is right loop.  
\end{definition}

Denote by  
$\partial_{(i,j)}:=\partial_{H_t(i,j)},\quad 1\le i\le j\le N$. By  It\^o's formula, we have the following lemma. 
We will use the convention that $\textbf{a} = (a_1 \ldots a_n)$  is a vector while $a$ will be used  an index independent of $\textbf{a} $. We will use this convention throughout the paper.

\begin{lemma}[The loop hierarchy] \label{lem:SE_basic}
The $G$-loops satisfy  the loop hierarchy 
\begin{align}\label{eq:mainStoflow}
    d\mathcal{L}_{t, \boldsymbol{\sigma}, \textbf{a}} =& 
    \mathcal{E}^{(M)}_{t, \boldsymbol{\sigma}, \textbf{a}} 
    +
    \mathcal{E}^{(\widetilde{G})}_{t, \boldsymbol{\sigma}, \textbf{a}} 
    +
   W\cdot \sum_{1 \le k < l \le n} \sum_{a, b} \left( \mathcal{G}^{(a), L}_{k, l} \circ \mathcal{L}_{t, \boldsymbol{\sigma}, \textbf{a}} \right) S^{(B)}_{ab} \left( \mathcal{G}^{(b), R}_{k, l} \circ \mathcal{L}_{t, \boldsymbol{\sigma}, \textbf{a}} \right) dt, 
\end{align}
 where the martingale term and the $\widetilde{G}$ terms are defined by 
 \begin{align} \label{def_Edif}
\mathcal{E}^{(M)}_{t, \boldsymbol{\sigma}, \textbf{a}} :  = & \sum_{\alpha=(i,j)} 
  \left( \partial_{  \alpha} \; {\cal L}_{t, \boldsymbol{\sigma}, \textbf{a}}  \right)
 \cdot \left(S _\alpha\right)^{1/2} \cdot
  \mathrm{d} \left({\cal B}_t\right)_{\alpha} \\\label{def_EwtG}
  \mathcal{E}^{(\widetilde{G})}_{t, \boldsymbol{\sigma}, \textbf{a}}: = &   {W}\cdot \sum_{1 \le k \le n} \sum_{a, b} \; 
 \left \langle \widetilde{G}_t(\sigma_k) E_a \right\rangle
  \cdot S^{(B)}_{ab} \cdot
  \left( {\cal G}^{(b)}_{k} \circ {\cal L}_{t, \boldsymbol{\sigma}, \textbf{a}} \right) dt ,\quad \quad \widetilde{G}_t = G_t - m
\end{align}
Notice that the factor $W$ comes from that $E_a$ has an $W^{-1}$ factor.

  \end{lemma}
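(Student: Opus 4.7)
The plan is to derive the loop hierarchy as a direct application of It\^o's formula to the matrix trace $\mathcal{L}_{t,\boldsymbol{\sigma},\textbf{a}} = \tr\bigl[\prod_{i=1}^n G_t(\sigma_i) E_{a_i}\bigr]$. First I would recall the SDE for a single resolvent factor: since $G_t(\sigma_k) = (H_t - z_t(\sigma_k))^{-1}$ with $dz_t(\sigma_k) = -m(\sigma_k)\,dt$ coming from Definition~\ref{def_flow}, combining $dH_t$, $dz_t$, and the It\^o self-correction gives
\[
dG_t(\sigma_k) = -G_t(\sigma_k)\,dH_t\,G_t(\sigma_k) + G_t(\sigma_k)\bigl(\mathcal{S}[G_t(\sigma_k)] - m(\sigma_k)\bigr) G_t(\sigma_k)\,dt.
\]
Applying the It\^o product rule to the $n$-fold product inside the trace produces three types of contributions: (i) the martingale pieces from inserting $-G_t(\sigma_k)\,dH_t\,G_t(\sigma_k)$ at a single position $k$; (ii) single-factor drift contributions from the $(\mathcal{S}[G_k]-m(\sigma_k))$ correction at a single $k$; and (iii) cross It\^o corrections that pair the martingale pieces at two distinct positions $k<l$.

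For (i), summing the noise insertions over $k$ and using $\partial_{H_t(i,j)}G_t(\sigma_k) = -G_t(\sigma_k)\,\mathbf{e}_{ij}\,G_t(\sigma_k)$ together with $dH_t = \sum_{(i,j)}\sqrt{S_{ij}}\,\mathbf{e}_{ij}\,d\mathcal{B}_{t,(i,j)}$ immediately identifies the sum with $\sum_\alpha (\partial_\alpha \mathcal{L})\sqrt{S_\alpha}\,d\mathcal{B}_{t,\alpha} = \mathcal{E}^{(M)}_{t,\boldsymbol{\sigma},\textbf{a}}$, matching \eqref{def_Edif}.

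For (ii), the key identity is $\mathcal{S}[G_k] - m(\sigma_k) = \mathcal{S}[\widetilde{G}_t(\sigma_k)]$, which follows from $\mathcal{S}[m I] = m$ because $\sum_j S_{ij}=1$. Using the block product structure $S = S^{(B)}\otimes S_W$, one checks that $\mathcal{S}[\widetilde{G}]$ is diagonal with $\mathcal{S}[\widetilde{G}]_{ii} = \sum_b S^{(B)}_{ab}\langle \widetilde{G}\,E_b\rangle$ for $i\in \mathcal{I}_a$, so as a matrix $\mathcal{S}[\widetilde{G}] = \sum_{a,b}(W E_a)\,S^{(B)}_{ab}\,\langle \widetilde{G}\,E_b\rangle$. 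Substituting into $\langle\cdots G_t(\sigma_k)\,\mathcal{S}[\widetilde{G}_t(\sigma_k)]\,G_t(\sigma_k)E_{a_k}\cdots\rangle$ produces the prefactor $W$ and converts the insertion $G_k (WE_a) G_k$ into the cut-and-glue operator $\mathcal{G}^{(a)}_k$ of Definition~\ref{Def:oper_loop}; summing over $k$ yields $\mathcal{E}^{(\widetilde{G})}_{t,\boldsymbol{\sigma},\textbf{a}}$.

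The real work is step (iii). For fixed $k<l$, using cyclicity of the trace I would rewrite the cross contribution as $\tr[A\,dH_t\,B\,dH_t\,C]$, where $A = G_t(\sigma_k)$, $B = G_t(\sigma_k) E_{a_k}\cdots G_t(\sigma_{l-1}) E_{a_{l-1}} G_t(\sigma_l)$ (the inner chain), and $C = G_t(\sigma_l) E_{a_l}\cdots G_t(\sigma_n) E_{a_n} G_t(\sigma_1) E_{a_1}\cdots G_t(\sigma_{k-1}) E_{a_{k-1}}$ (the outer chain). The It\^o pairing $\mathbb{E}[dH_{pq}dH_{rs}] = S_{pq}\delta_{ps}\delta_{qr}\,dt$ collapses this to $\sum_{p,q} S_{pq}(CA)_{pp} B_{qq}\,dt$. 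Inserting $S_{pq} = W^{-1}\sum_{a,b}\mathbf{1}(p\in\mathcal{I}_a)S^{(B)}_{ab}\mathbf{1}(q\in\mathcal{I}_b)$ and using $\sum_{p\in\mathcal{I}_a}(CA)_{pp} = W\,\langle CA\,E_a\rangle$ (and analogously for $B$) produces the prefactor $W$ and the product $\langle CA\,E_a\rangle\,\langle B\,E_b\rangle$. The main (purely combinatorial) obstacle is then to verify that $\langle CA\,E_a\rangle = \mathcal{G}^{(a),L}_{k,l}\circ\mathcal{L}_{t,\boldsymbol{\sigma},\textbf{a}}$ and $\langle B\,E_b\rangle = \mathcal{G}^{(b),R}_{k,l}\circ\mathcal{L}_{t,\boldsymbol{\sigma},\textbf{a}}$: this amounts to tracking which of the two chains produced by cutting $G_k$ and $G_l$ contains $E_{a_n}$ and checking that the cyclic ordering of the glued loops matches Definition~\ref{Def:oper_loop} (as depicted in Figures~\ref{fig:op_gLeft} and~\ref{fig:op_gRight}). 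Once this identification is in place, summing over $1\le k<l\le n$ produces the final quadratic term in \eqref{eq:mainStoflow} and completes the proof; no analytic estimate is used beyond It\^o's formula.
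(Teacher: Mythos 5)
Your proposal is correct and is exactly the argument the paper implies (the lemma is stated as a consequence of It\^o's formula, with the cut-and-glue operators packaged in Definition~\ref{Def:oper_loop}). The single-resolvent SDE, the block decomposition $\mathcal{S}[\widetilde G] = \sum_{a,b}(WE_a)S^{(B)}_{ab}\langle\widetilde G E_b\rangle$, the complex-Hermitian quadratic-variation identity $dH_{pq}\,dH_{rs} = S_{pq}\delta_{ps}\delta_{qr}\,dt$, the collapse of $\tr[A\,dH\,B\,dH\,C]$ to $\sum_{p,q}S_{pq}(CA)_{pp}B_{qq}\,dt$, the bookkeeping of the $W$ prefactors, and the cyclic identification $\langle CA\,E_a\rangle = \mathcal{G}^{(a),L}_{k,l}\circ\mathcal{L}$, $\langle B\,E_b\rangle = \mathcal{G}^{(b),R}_{k,l}\circ\mathcal{L}$ are all right; the only detail worth making explicit in a write-up is the sign bookkeeping (each noise insertion carries a minus, so the $k<l$ cross term has sign $(-1)^2=+1$), which you implicitly handle correctly.
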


In order to solve this hierarchy, we introduce the primitive loops, denoted by $\cal K$. 

\begin{definition}[The primitive equation]\label{Def_Ktza}
For 
$$
\boldsymbol{\sigma}=(\sigma_1, \sigma_2, \cdots \sigma_n),\quad 
\boldsymbol{a}=(a_1, a_2, \cdots a_n)
,\quad \sigma_i \in \{+,-\}, \quad
a_i\in \mathbb Z_L, \quad 1\le i\le n
$$
and  $m(\sigma)$ defined in \eqref{def_mtzk}, we define  ${\cal K}_{t, \boldsymbol{\sigma}, \textbf{a}}$ to be the unique solution to the equation 
 \begin{align}\label{pro_dyncalK}
       \frac{d}{dt}\,{\cal K}_{t, \boldsymbol{\sigma}, \textbf{a}} 
       =  
       {W}\cdot  \sum_{1 \le k < l \le n} \sum_{a, b} \; \left( {\cal G}^{(a), L}_{k, l} \circ {\cal K}_{t, \boldsymbol{\sigma}, \textbf{a}} \right) S^{(B)}_{ab} \left( {\cal G}^{(b), R}_{k, l} \circ {\cal K}_{t, \boldsymbol{\sigma}, \textbf{a}} \right)  
    \end{align}
with the initial value 
$$
    {\cal K}_{0, \boldsymbol{\sigma}, \textbf{a}} =  {W^{-n+1}} \cdot \prod_{k=1}^n m (\sigma_k) \cdot \textbf{1}(a_1 = a_2 = \cdots = a_n).
$$
Here we define the ${\cal G}$ operator acting on ${\cal K}$ in the same way as it acts on ${\cal L}$,  i.e., ,  
  \be\label{calGonIND}
     {\cal G}^{(a),\,L }_{k,l}  \circ {\cal K}_{t, \boldsymbol{\sigma}, \textbf{a}} = {\cal K}_{t, \; {\cal G}^{(a),\,L }_{k,l}  (\boldsymbol{\sigma}, \textbf{a})}  \quad \forall t, k, \boldsymbol{\sigma}, \textbf{a},   
       \ee
and similarly for  ${\cal G}^{(b),\, R }_{k,l}$. 
For the special case $n=1$, we define % the r.h.s. of \eqref{pro_dyncalK} is zero. Therefore for $n=1$ case
$
{\cal K}_{t,\,+,\, a}=m,\quad {\cal K}_{t,\,-,\, a}=\overline m$ for $0\le t\le 1$. 
\end{definition}

Notice that  ${\cal L}_{t, \boldsymbol{\sigma}, \textbf{a}}$ is an $n$ loop and $\partial_{  \alpha} \; {\cal L}$ is a product of  $n+1$  resolvent. Hence  \eqref{eq:mainStoflow} is not an equation, but  a hierarchy. However, the  lengths of ${\cal G}^{(a), L}_{k, l} \circ {\cal K}_{t, \boldsymbol{\sigma}, \textbf{a}}$ and ${\cal G}^{(a), R}_{k, l} \circ {\cal K}_{t, \boldsymbol{\sigma}, \textbf{a}}$ are no greater than the  length of ${\cal K}_{t, \boldsymbol{\sigma}, \textbf{a}}$. Therefore, 
the system of equations for $\cal K$ can be solved inductively. 
 In the next section, we will provide an explicit  solution to the primitive equation.  

\subsection{Propagator \texorpdfstring{$\Theta_\xi^{(B)}$}{Theta} and examples of \texorpdfstring{${\cal K}$}{ K}}
 
 \begin{definition}[Propagator $\Theta_\xi^{(B)}$]\label{def_Theta}
Define the propagator  $\Theta_\xi^{(B)}$ by 
\begin{equation}\label{def_Thxi}
    \Theta^{(B)}_\xi := \frac{1}{1 - \xi \cdot S^{(B)}}, \quad \xi \in \mathbb{C}, \quad |\xi| < 1
\end{equation}
where the superscript $B$  indicates the block level matrix. Clearly, 
\begin{equation}\label{deri_Thxi}
    \partial_\xi \Theta^{(B)}_\xi = \Theta^{(B)}_\xi \cdot S^{(B)} \cdot \Theta^{(B)}_\xi.
\end{equation}
By definition, $\Theta_\xi^{(B)}$ is an \(L \times L\) matrices at the block level. We will 
omit the subscript \((B)\) for the rest of this paper. We remind the readers that $\Theta$ 
was often  used to denote the \(N \times N\) matrix 
 \((1 - \xi S)^{-1}\) in the literature.   In this paper,  $\Theta_\xi= \Theta_\xi^{(B)}$.
The following  three special  cases are often used in this paper: 
\[
\Theta^{(B)}_{t m^2}, \quad \Theta^{(B)}_{t \bar{m}^2}, \quad \text{and} \quad \Theta^{(B)}_{t |m|^2} = \Theta^{(B)}_{t}.
\]
where we have used  \(|m| = |m^{(E)}|=1\) in our setting.

\end{definition}

The following properties of $ \Theta^{(B)}$ can be easily verified. 
\begin{lemma}\label{lem_propTH}
Suppose that  $\xi\in \mathbb C$, $\im \,\xi>0$ and $|\xi|\le 1$.  Define
$$
 \hat{\ell}(\xi) := \min\left((|1-\xi|)^{-1/2},\; L\right),\quad \xi\in \mathbb C 
$$
Then  $\Theta^{(B)}_\xi$ has the following properties:
\begin{enumerate}
    \item Symmetric: $ (\Theta^{(B)}_{\xi})_{xy}= (\Theta^{(B)}_{\xi})_{yx}$.
    \item Translation invariant: $
    (\Theta^{(B)}_{\xi})_{xy}= (\Theta^{(B)}_{\xi})_{x+1, y+1}$.
    \item Commutativity:  
    $$
    \forall \;  \xi, \; \xi' \quad \quad  [S^{(B)}, \Theta^{(B)}_{\xi}]=[\Theta^{(B)}_{\xi\,'}, \Theta^{(B)}_{\xi}]=0
    $$
     \item Exponential decay at length scale $\hat \ell_\xi $:
     \begin{equation}\label{prop:ThfadC}     (\Theta^{(B)}_{\xi})_{xy}\le \frac{C\cdot e^{-c\cdot|x-y|\big/ \hat{\ell}(\xi)}}{|1-\xi|\cdot \hat \ell (\xi)}   \cdot  \,,\quad   \hat{\ell}(\xi) := \min\left((|1-\xi|)^{-1/2},\; L\right)  
     \end{equation}
     
     \item The following  random walk representation of $\Theta_\xi^{(B)}$ converges for any $ 0 \le \xi < 1$.
     \begin{align}\nonumber
     \Theta_\xi^{(B)}=\sum_{k=0}^{\infty}\;  \xi^k \cdot \left(S^{(B)}\right)^k
     \end{align}

     \item Derivative bounds.   Use above random walk representation,    we have 
     \begin{equation}\label{prop:BD1}     
    \left| (\Theta^{(B)}_{\xi})_{x, y}-(\Theta^{(B)}_{\xi})_{x,{y+1}}\right|\prec \frac{1}{\hat{ \ell}(\xi)\cdot |1-\xi|^{1/2} }   
     \end{equation}
 \begin{equation}\label{prop:BD2}     
     \left|2 (\Theta^{(B)}_{\xi})_{x, y}-(\Theta^{(B)}_{\xi})_{x,{y+1}}-(\Theta^{(B)}_{\xi})_{x,{y-1}}\right|\prec  \frac{1}{\|x-y\|+1 } . 
     \end{equation}
\end{enumerate}
\end{lemma}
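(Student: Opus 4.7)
\medskip
\noindent\textbf{Proof proposal.} The plan is to leverage the fact that $S^{(B)}$ is a real symmetric circulant matrix on the torus $\mathbb Z_L$, so it is diagonalized by the discrete Fourier transform with eigenvalues $\lambda_k = \tfrac13(1+2\cos(2\pi k/L))$ for $k\in\mathbb Z_L$. In particular, $\lambda_k\in[-\tfrac13,1]$ and $\|S^{(B)}\|_{\mathrm{op}}=1$. Properties (1), (2), (3) are then immediate: $\Theta^{(B)}_\xi$ is a norm-convergent analytic function of $S^{(B)}$ on the spectrum (which lies in the open disk $\{|\xi\lambda|<1\}$ under the hypothesis $|\xi|\le 1$, $\operatorname{Im}\xi>0$, since $\operatorname{Im}(1-\xi\lambda_k)\ne 0$ whenever $\lambda_k\ne 0$), so it inherits the symmetry, translation invariance, and mutual commutativity of the algebra generated by $S^{(B)}$. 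Property (5) is the convergence of the Neumann series, which follows immediately from $\|S^{(B)}\|_{\mathrm{op}}=1$ and $0\le \xi<1$.

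For (4), I would work in Fourier coordinates:
\begin{equation*}
(\Theta^{(B)}_\xi)_{xy} \;=\; \frac{1}{L}\sum_{k\in\mathbb Z_L} \frac{e^{2\pi i k(x-y)/L}}{1-\xi\lambda_k}.
\end{equation*}
Near $k=0$ one has $1-\xi\lambda_k = (1-\xi)+\tfrac{2\xi}{3}\sin^2(\pi k/L)$, so the relevant ``momentum'' scale is $|k|/L \sim |1-\xi|^{1/2}$, which gives the length scale $\hat\ell(\xi)=\min(|1-\xi|^{-1/2},L)$. The exponential decay can be obtained by writing the summand as a quadratic-in-$z^k$ expression (with $z=e^{2\pi i/L}$) and recognising $\Theta^{(B)}_{\xi}$ as the discrete Green's function of a 1D nearest-neighbour random walk with mass parameter $\sim|1-\xi|$; the discrete Green's function is explicit in terms of the roots of the characteristic equation $z^2 - \tfrac{3-\xi}{\xi}z + 1 = 0$, which lie outside the unit disk with $||z|-1|\sim |1-\xi|^{1/2}$, yielding the decay $e^{-c|x-y|/\hat\ell(\xi)}$. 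The prefactor $1/(|1-\xi|\hat\ell(\xi))$ comes from the constraint $\sum_y (\Theta^{(B)}_\xi)_{xy}=(1-\xi)^{-1}$ together with the fact that $\Theta^{(B)}_\xi$ is essentially supported on an interval of length $\hat\ell(\xi)$. The saturation regime $\hat\ell(\xi)=L$ (i.e.\ $|1-\xi|\le L^{-2}$) is handled by the wrap-around bound on the torus: the summand becomes essentially constant and the estimate reduces to direct computation.

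For (6), the cleanest route is via the resolvent identity. The equation $(1-\xi S^{(B)})\Theta^{(B)}_\xi=I$ read at off-diagonal $(x,y)$ with $x\ne y$ gives
\begin{equation*}
(\Theta^{(B)}_\xi)_{x+1,y}+(\Theta^{(B)}_\xi)_{x-1,y}-2(\Theta^{(B)}_\xi)_{xy} \;=\; \tfrac{3(1-\xi)}{\xi}(\Theta^{(B)}_\xi)_{xy},
\end{equation*}
and combining with (4) bounds this by $C\,e^{-c|x-y|/\hat\ell(\xi)}/\hat\ell(\xi)$. The elementary inequality $te^{-ct}\lesssim 1$ on $t\ge 0$ converts this to the stated $1/(|x-y|+1)$ bound; the on-diagonal case is separate but reduces to the same estimate after subtracting the constant from $(1-\xi S^{(B)})\Theta^{(B)}_\xi=I$. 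The first-difference estimate (6.1) comes from the Fourier representation applied to the difference operator, which introduces an extra factor $(1-e^{-2\pi ik/L})\lesssim |k|/L$ in the summand; this gains one factor of $|1-\xi|^{1/2}$ over the pointwise bound in (4), up to a harmless logarithm that is absorbed by $\prec$.

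\medskip
\noindent\textbf{Main obstacle.} The only nontrivial step is (4): establishing the exponential decay with the correct prefactor uniformly across the regime $|1-\xi|^{-1/2}\le L$ and the saturation regime $|1-\xi|^{-1/2}>L$, while allowing $\xi\in\mathbb C$ with $\operatorname{Im}\xi>0$ (not just $\xi\in[0,1)$). Once (4) is in hand, (6) follows by algebraic manipulation, and (1)--(3), (5) are formal.
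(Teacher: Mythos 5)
The paper does not actually supply a proof of this lemma: it is introduced with the sentence ``The following properties of $\Theta^{(B)}$ can be easily verified'' and no argument follows, so there is nothing to compare against. Your proof sketch is correct and covers all the parts; I comment only on two small points you should tidy up.

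First, a minor arithmetic slip: with $\lambda_k=\tfrac13(1+2\cos(2\pi k/L))$ one has $1-\lambda_k=\tfrac{4}{3}\sin^2(\pi k/L)$, so the expansion should read $1-\xi\lambda_k=(1-\xi)+\tfrac{4\xi}{3}\sin^2(\pi k/L)$, not $\tfrac{2\xi}{3}$. This changes nothing structurally. Second, when you derive \eqref{prop:BD2} from the off-diagonal resolvent identity
\begin{equation*}
(\Theta^{(B)}_\xi)_{x+1,y}+(\Theta^{(B)}_\xi)_{x-1,y}-2(\Theta^{(B)}_\xi)_{xy} \;=\; \tfrac{3(1-\xi)}{\xi}(\Theta^{(B)}_\xi)_{xy}-\tfrac{3}{\xi}\delta_{xy},
\end{equation*}
the prefactor $1/|\xi|$ is only innocuous if $|\xi|$ is bounded below. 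You should record (or at least note) that the case $|\xi|$ bounded away from $1$ is trivial because $\Theta^{(B)}_\xi=\sum_k\xi^k(S^{(B)})^k$ is then uniformly bounded and decays on an $O(1)$ scale, so the second difference is already exponentially small; the nontrivial regime is $\xi$ near $1$, where $3(1-\xi)/\xi\sim |1-\xi|$, and your $te^{-ct}\lesssim 1$ step then gives exactly $1/(\|x-y\|+1)$. Finally, although $\operatorname{Im}\xi>0$ guarantees $1-\xi\lambda_k\neq 0$ for all $k$ (as you observe), the quantitative bound $|1-\xi\lambda_k|\gtrsim |1-\xi|+(k/L)^2$ uniformly in $\arg\xi$ deserves one line of justification when you write up the Green's-function root estimate $\bigl||z_\pm|-1\bigr|\sim |1-\xi|^{1/2}$; it follows from the explicit formula for $z_\pm$ and holds for all $\xi$ with $|\xi|\le 1$, $\operatorname{Im}\xi>0$, but it is not completely automatic.
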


We can  use $\Theta^{(B)}_\xi$ to solve  the primitive equation  $\mathcal{K}_t$ in the special cases $n = 2$ or $ 3$.

\begin{example} %[$\mathcal{K}_{t, \boldsymbol{\sigma}, \textbf{a}}$ for $n=2$]
For  $n = 2$, the  primitive  equation  \eqref{pro_dyncalK} can be written as 
\begin{align}
    \frac{d}{dt}\, \mathcal{K}_{t, \boldsymbol{\sigma}, (a_1, a_2)} = W \cdot \sum_{a, b} \mathcal{K}_{t, \boldsymbol{\sigma}, (a_1, a)} \cdot S^{(B)}_{ab} \cdot \mathcal{K}_{t, \boldsymbol{\sigma}, (b, a_2)}
\end{align}
Denote 
\begin{equation}\label{eq_defm_i}
    m_i = m(\sigma_i).
\end{equation}
Using the propagator  $\Theta^{(B)}$ defined in equation~\eqref{def_Thxi} and the property \eqref{deri_Thxi}, one can easily verify that
\begin{equation}\label{Kn2sol}
    \mathcal{K}_{t, \boldsymbol{\sigma}, \textbf{a}} = W^{-1} m_1 m_2 \left(\Theta^{(B)}_{t \cdot m_1 m_2}\right)_{a_1 a_2}, \quad \textbf{a} = (a_1, a_2)
\end{equation}
%Now for $2$-$G$-loop, the definition of $\cal K$ in Def. \ref{Def_Ktza} shows (here we dropped the superscript ${(E)}$), 
Explicitly, we have 
\begin{equation}\label{Kn2sol2}
{\cal K}_{t,\boldsymbol{\sigma},(a,b)}
=   W^{-1} \cdot \begin{cases}
     |m |^2\left[\left(1- {  t }|m |^2 S^{(B)}\right)^{-1}\right]_{ab} &, \quad \boldsymbol{\sigma}=(+,-)\\
  m^2\left[\left(1- {  t } m^2 S^{(B)}\right)^{-1}\right]_{ab}  &, \quad \boldsymbol{\sigma}=(+,+)    
\end{cases}
\end{equation}

\end{example}

\begin{example}%[${\cal K}_{t, \boldsymbol{\sigma}, \textbf{a}}$ for $n=3$]
For  $n=3$, let 
$\boldsymbol{\sigma}=(\sigma_1, \sigma_2,\sigma_3), \,
\textbf{a}=(a_1, a_2,a_3)$.
Then  \eqref{pro_dyncalK} becomes 
\begin{align}    \frac{d}{dt}\, \mathcal{K}_{t, \boldsymbol{\sigma}, \textbf{a}} = & W \cdot \sum_{b_1, c_1} \mathcal{K}_{t, (\sigma_1,\sigma_2), (a_1, b_1)} \cdot S^{(B)}_{b_1c_1} \cdot \mathcal{K}_{t, \boldsymbol{\sigma}, (c_1, a_2, a_3)}\nonumber \\
    + & W \cdot \sum_{b_2, c_2} \mathcal{K}_{t, (\sigma_2,\sigma_3), (a_2, b_2)} \cdot S^{(B)}_{b_2c_2} \cdot \mathcal{K}_{t, \boldsymbol{\sigma}, (a_1, c_2, a_3)}\nonumber \\
    + & W \cdot \sum_{b_3, c_3} \mathcal{K}_{t, (\sigma_3,\sigma_1), (a_3, b_3)} \cdot S^{(B)}_{b_3c_3} \cdot \mathcal{K}_{t, \boldsymbol{\sigma}, (a_1, a_2, c_3)} \nonumber
\end{align}
Using \eqref{Kn2sol}, we can rewrite it as 
\begin{align} 
    \frac{d}{dt}\, \mathcal{K}_{t, \boldsymbol{\sigma}, \textbf{a}} = &  \sum_{  c_1} \left( m_1m_2\Theta^{(B)}_{tm_1m_2} \cdot S^{(B)}\right)_{a_1c_1} \cdot \mathcal{K}_{t, \boldsymbol{\sigma}, (c_1, a_2, a_3)}\nonumber \\
    + & \sum_{  c_2} \left(m_2m_3 \Theta^{(B)}_{tm_2m_3} \cdot S^{(B)}\right)_{a_2c_2}  \cdot \mathcal{K}_{t, \boldsymbol{\sigma}, (a_1, c_2, a_3)}\nonumber \\
   + & \sum_{  c_3} \left( m_3m_1\Theta^{(B)}_{tm_3m_1} \cdot S^{(B)}\right)_{a_3c_3}  \cdot \mathcal{K}_{t, \boldsymbol{\sigma}, (a_1, a_2, c_3)} \nonumber
\end{align}
Recall 
$
m_i = m  (\sigma_i)
$ defined in \eqref{eq_defm_i}. Using the propagator  $\Theta^{(B)}$  \eqref{def_Thxi} and  \eqref{deri_Thxi}, one can easily verify that 
$$
{\cal K}_{t, \boldsymbol{\sigma}, \textbf{a}} = \sum_b \left(\Theta^{(B)}_{t \cdot m_1 m_2}\right)_{a_1 b} \left(\Theta^{(B)}_{t \cdot m_2 m_3}\right)_{a_2 b} \left(\Theta^{(B)}_{t \cdot m_3 m_1}\right)_{a_3 b} \cdot W^{-2}\cdot m_1 m_2 m_3, \quad  \textbf{a}= (a_1, a_2,a_3).
$$
Alternatively, it can be expressed as
$$
{\cal K}_{t, \boldsymbol{\sigma}, \textbf{a}} = \sum_{b_1 b_2 b_3} \left(\Theta^{(B)}_{t \cdot m_1 m_2}\right)_{a_1 b_1} \left(\Theta^{(B)}_{t \cdot m_2 m_3}\right)_{a_2 b_2} \left(\Theta^{(B)}_{t \cdot m_3 m_1}\right)_{a_3 b_3} \cdot {\cal K}_{0, \boldsymbol{\sigma}, \textbf{b}}, \quad \textbf{b} = (b_1, b_2, b_3).
$$
\end{example}

In the next section, using the primitive equation and the tree representation of $\mathcal{K}$, we will establish the following estimate \eqref{eq:bcal_k_2} for $\mathcal{K}$. %   stated in Lemma \ref{ML:Kbound+pi}. 
A key ingredient in this proof is the  sum-zero property,  \eqref{eq:bcal_k_pi}, which serves as a critical input.

\begin{lemma}[An upper bound on ${\cal K}$]\label{ML:Kbound}
Recall $\hat \ell $ defined in \eqref{prop:ThfadC}. 
The  primitive loop ${\cal K}_{t, \boldsymbol{\sigma}, \textbf{a}}$ is bounded by
\begin{equation}\label{eq:bcal_k}
  {\cal K}_{t, \boldsymbol{\sigma}, \textbf{a}}  \prec \left(W \ell_t \cdot \eta_t \right)^{-n+1},\quad  \ell_t :  =\hat\ell(t)= \min\left((|1-t|)^{-1/2},\; L\right)       
\end{equation}
\end{lemma}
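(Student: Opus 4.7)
The plan is to prove the estimate by induction on the loop length $n$, exploiting the tree structure produced by iterating the primitive equation \eqref{pro_dyncalK}. The base cases $n=2$ and $n=3$ follow from the explicit formulas displayed immediately before the lemma together with the exponential-decay bound \eqref{prop:ThfadC}: for $n=2$, one has $|1 - t\,m_1 m_2| \gtrsim \eta_t$ in the regime of interest (Lemma \ref{zztE} gives $t$ bounded below, and $\mathrm{Im}\,m^{(E)}$ is of order one in the bulk), so \eqref{Kn2sol2} and Lemma \ref{lem_propTH} immediately yield $\mathcal{K}_{t,\boldsymbol{\sigma},(a,b)} \prec (W\ell_t \eta_t)^{-1}$. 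For general $n$, iterating \eqref{pro_dyncalK} expresses $\mathcal{K}_{t,\boldsymbol{\sigma},\textbf{a}}$ as a finite sum over rooted binary trees with $n$ leaves, where each internal vertex corresponds to a cut-and-glue splitting by $\mathcal{G}^{(a),L}_{k,l}$ and $\mathcal{G}^{(b),R}_{k,l}$. After performing the $s$-integration against the initial data $\mathcal{K}_{0,\boldsymbol{\sigma},\textbf{a}} = W^{-n+1}\prod_k m(\sigma_k)\,\mathbf{1}(a_1=\cdots=a_n)$, each internal edge of the tree contributes exactly one propagator $\Theta^{(B)}_{s\,m_i m_j}$ coupled through $S^{(B)}$, producing a closed expression that directly generalises the $n=3$ formula.

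A naive estimation, bounding each $\Theta^{(B)}$ by $(\eta_t \ell_t)^{-1}$ and each internal block summation by $\ell_t$ via the exponential decay \eqref{prop:ThfadC}, yields a result off by a factor of $\eta_t^{-1}$ per extra edge relative to the target $(W\ell_t \eta_t)^{-n+1}$. This is where the sum-zero property \eqref{eq:bcal_k_pi} is essential: although $\sum_b (\Theta^{(B)}_\xi)_{ab} = (1-\xi)^{-1} \sim \eta_t^{-1}$ is too large, one can decompose $\Theta^{(B)}_\xi$ into its rank-one projection onto the constant mode $\mathbf{1}/L$ (which carries the entire $(1-\xi)^{-1}$ singularity) and an orthogonal remainder that obeys a much better $\ell_t^{-1}$-type summation bound. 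At each internal vertex of the tree, the rank-one part collapses the local configuration into a lower-length primitive loop whose contribution is precisely controlled by \eqref{eq:bcal_k_pi} and produces no $\eta_t^{-1}$ growth, while the orthogonal remainder is handled by direct exponential-decay estimates. Tracking these cancellations tree-by-tree restores the missing factors of $\eta_t$.

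The main obstacle is the inductive bookkeeping: the target bound \eqref{eq:bcal_k} is tight only because of the sum-zero refinement \eqref{eq:bcal_k_pi}, and the latter itself must be carried in the induction hypothesis in order to propagate the tightness through the tree expansion. I would therefore state a reinforced induction containing both \eqref{eq:bcal_k} and \eqref{eq:bcal_k_pi} as hypotheses, substitute them into the right-hand side of \eqref{pro_dyncalK}, perform the $s$-integration (which manufactures the $\Theta$-propagator governing the new internal edge), and verify that the exponential decay of \eqref{prop:ThfadC} together with the sum-zero cancellation produces the claimed $(W\ell_t \eta_t)^{-n+1}$ scaling at the next level. The delicate point is that the two estimates must be closed jointly: \eqref{eq:bcal_k_pi} supplies the cancellations that make \eqref{eq:bcal_k} sharp, while \eqref{eq:bcal_k} is needed to control the subleading remainder that arises when proving \eqref{eq:bcal_k_pi} at the next loop length.
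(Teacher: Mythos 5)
Your proposal correctly identifies that the bound is sharp only because of a subtle cancellation, that the cancellation is tied to a sum-zero structure, and that the argument must pass through the tree representation of ${\cal K}$. But there is a real gap in the mechanism you propose for establishing the cancellation. You write that the paper's estimate \eqref{eq:bcal_k_pi} "is essential" and should be carried as a reinforced induction hypothesis, but \eqref{eq:bcal_k_pi} is the \emph{output} bound on ${\cal K}^{(\pi)}$, not the sum-zero property itself (the labelling in the paper's prose is slightly misleading here). The actual sum-zero input is Lemma \ref{lem:SZ}, stating $L^{-1}\sum_{\textbf{d}}\Sigma^{(\emptyset)}(t,\boldsymbol{\sigma}^{(\mathrm{alt})},\textbf{d})=O(\eta_t)$, and the paper derives it from a Ward identity for ${\cal K}$ (Lemma \ref{lem_WI_K}), which is a non-trivial structural fact about the primitive loops established by its own ODE argument. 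Your proposal never produces this identity. The rank-one decomposition of $\Theta^{(B)}_\xi$ onto the constant mode correctly isolates the $(1-\xi)^{-1}$ singularity, but isolating a singular mode is not the same as showing its coefficient vanishes to first order; that vanishing is exactly what the Ward identity provides (combined with the identity $\sum_b(\Theta_t - 1)_{ab} = t\sum_b(\Theta_t)_{ab}$, which lets one promote an internal edge to a boundary one and cut the graph). Without a replacement for the Ward identity, I do not see how your reinforced induction closes: the quantity you need to be $O(\eta_t)$ at each step has no a priori reason to be small.

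Two further issues. First, the tree representation you invoke is not what comes out of naively iterating the primitive equation \eqref{pro_dyncalK} and integrating; that iteration would give a Dyson-type series indexed by \emph{binary} trees, whereas the paper's formula (Lemma \ref{Lemma_TRofK}) is indexed by canonical partitions of an $n$-gon, with the essential distinction that boundary edges carry $\Theta^{(B)}$ while internal edges carry $\Theta^{(B)}-1$. This distinction is precisely what makes the internal-edge promotion trick and the molecule decomposition into ${\cal K}^{(\pi)}$ work, and it is established by verifying the ODE, not by iterating it. Second, the joint induction on \eqref{eq:bcal_k} and \eqref{eq:bcal_k_pi} is unnecessary: once \eqref{eq:bcal_k_pi} is in hand, \eqref{eq:bcal_k} follows instantly from the decomposition \eqref{KKpi} and the prefactor $W^{-n+1}$, so the entire content lives in proving \eqref{eq:bcal_k_pi}. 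The real inductions in the paper are (i) the length induction inside Lemma \ref{lem:SZ}, powered by the Ward identity, and (ii) the molecule-count induction inside Lemma \ref{ML:Kbound+pi}, which peels off one outer molecule at a time using the $\pi=\emptyset$ bound and exponential decay of $\Theta^{(B)}$. Your base cases $n=2,3$ and the overall scaling heuristic are fine; the missing step is the source of the cancellation.
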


\bigskip

Finally, we note that for any fixed \(|E| < 2 - \kappa\), it is straightforward to verify that 
 \[
\im z_t = \im z_t^{(E)} = \im m^{(E)} \cdot (1 - t) \sim (1 - t).
\]
 Therefore, the \(\ell(z)\) defined in \eqref{def_ellz} and \(\ell_t\) defined in \eqref{eq:bcal_k} are of the same order, as follows:
 \[
\ell(z_t) \sim \ell_t = \hat{\ell}(t).
\]
Furthermore,  for \(z\), \(t\), and \(E\) satisfying the conditions \eqref{eq:zztE} and \eqref{eq:zztE2}, we have 
 \[
\im z\sim \im z^{(E)}_t,\quad \ell(z) \sim \ell(z^{(E)}_t) \sim \ell_t.
\]
Notice that \(\ell_t\) is just a simpler notation for $\hat{\ell}(t)$. 
% Since these quantities are of  the same order, in the following proof, we will use only \(\ell_t\).  

\subsection{Estimates on loops}

Our main estimates for the  $G$-loop  are given in the following lemmas (Recall that $\prec$ and $\ell_t$ are defined in Definition \ref{stoch_domination} and \eqref{eq:bcal_k} respectively). 

\begin{lemma}[loop estimates]\label{ML:GLoop}
%Denote the length scale $\ell_t:=\ell(z_t)$.  
For any small constants $\kappa, \tau>0$ and  
$E \in[-2+\kappa, 2-\kappa], \, 0\le t\le 1-N^{-1+\tau}$,  
we have 
\begin{equation}\label{Eq:L-KGt}
 \max_{\boldsymbol{\sigma}, \textbf{a}}\left|{\cal L}_{t, \boldsymbol{\sigma}, \textbf{a}}-{\cal K}_{t, \boldsymbol{\sigma}, \textbf{a}}\right|\prec (W\ell_t\eta_t)^{-n},\quad \eta_t:=\im z_t,  %\quad \ell_t:=\ell(z_t)
\end{equation}
 \begin{equation}\label{Eq:L-KGt2}
\max_{\boldsymbol{\sigma}, \textbf{a}}\left|{\cal L}_{t, \boldsymbol{\sigma}, \textbf{a}} \right|\prec (W\ell_t\eta_t)^{-n+1}. %\quad \eta_t:=\im z_t,\quad \ell_t:=\ell(z_t)
\end{equation}
\end{lemma}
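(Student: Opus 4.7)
The plan is to treat both estimates simultaneously via a coupled induction on the loop length $n$ and a continuous bootstrap on $t$. First note that (\ref{Eq:L-KGt2}) follows immediately from (\ref{Eq:L-KGt}) combined with the a priori bound $|{\cal K}_{t,\boldsymbol{\sigma},\textbf{a}}| \prec (W\ell_t\eta_t)^{-n+1}$ of Lemma \ref{ML:Kbound}, since $W\ell_t\eta_t \gg 1$ in the regime under consideration. Hence the real task is to prove (\ref{Eq:L-KGt}).

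The starting point is to subtract the primitive equation (\ref{pro_dyncalK}) from the loop hierarchy (\ref{eq:mainStoflow}). Writing $\Delta := \mathcal{L} - \mathcal{K}$ and using the algebraic identity
\begin{equation*}
AB - A'B' = (A-A')B + A'(B-B'),
\end{equation*}
the quadratic term on the right-hand side becomes \emph{linear} in $\Delta$ with coefficients built from $\mathcal{G}^{L/R}\circ \mathcal{L}$ and $\mathcal{G}^{L/R}\circ \mathcal{K}$. This turns the difference equation into an inhomogeneous linear SDE
\begin{equation*}
d\Delta_{t,\boldsymbol{\sigma},\textbf{a}} = \mathcal{E}^{(M)}_{t,\boldsymbol{\sigma},\textbf{a}} + \mathcal{E}^{(\widetilde G)}_{t,\boldsymbol{\sigma},\textbf{a}} + [\text{linear operator on }\Delta]\, dt,
\end{equation*}
and the linear operator has the crucial feature that (via the $S^{(B)}$ factor and the structure of the cut-and-glue operators) its action at the block level is governed by the propagators $\Theta^{(B)}_{t\,m_i m_j}$ appearing in the $\mathcal{K}$ formulas. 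The strategy is then Duhamel: integrate this linear equation against that propagator from $s=0$ to $s=t$, with forcing supplied by $\mathcal{E}^{(M)}$ and $\mathcal{E}^{(\widetilde G)}$. The decay estimate \eqref{prop:ThfadC} should give exactly the factor $(W\ell_t\eta_t)^{-n}$ we are aiming for, provided the forcing terms are themselves suitably bounded.

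For the martingale term, $\mathcal{E}^{(M)}$ is a sum of It\^o differentials whose quadratic variation, by direct computation from \eqref{def_Edif} and the product rule for the entrywise derivative of an $n$-loop, is itself a sum of $(2n+2)$-loops. Applying the Burkholder--Davis--Gundy inequality, combined with the a priori bound \eqref{Eq:L-KGt2} at loop length $2n+2$, supplies stochastic control on the martingale contribution with the desired scaling. For the $\widetilde G$ term $\mathcal{E}^{(\widetilde G)}$, the factor $\langle \widetilde G_t(\sigma_k) E_a\rangle$ is precisely the block-averaged local law quantity controlled in \eqref{G_bound_ave} via the $n=2$ loop estimate, which provides the improvement of size $(W\ell_t\eta_t)^{-1}$ needed to close the induction. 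The bootstrap is set up in the standard way: assume (\ref{Eq:L-KGt}) (and hence (\ref{Eq:L-KGt2})) at all loop lengths up to some large cutoff on a sub-interval $[0,t_\ast]$ with an extra factor $N^\epsilon$, show that the linear-SDE integration actually produces an improved estimate without the extra factor, and conclude by continuity.

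The main obstacle, in my view, is not the martingale bound (which is mostly mechanical once higher-length a priori bounds are available) but the propagation of the $\widetilde G$ error through the Duhamel integral. Because the linear operator driving $\Delta$ is essentially $\sum_{k<l} \mathcal{G}^{L}_{k,l}(\cdot)\cdot \mathcal{K} + \mathcal{K}\cdot \mathcal{G}^{R}_{k,l}(\cdot)$, and the $\widetilde G$ source is itself a product of an average deviation and a cut-and-glue of $\mathcal{L}$ at length $n+1$, one is genuinely worried about cumulative amplification over the long time interval where $\ell_t$ grows from $1$ to $L$. Controlling this requires exploiting the sum-zero property of $S^{\circ}=S-N^{-1}$ (which appears implicitly in $\mathcal{E}^{(\widetilde G)}$ through the block-off-diagonal decay of $\Theta^{(B)}$), together with the exponential-decay bound \eqref{prop:ThfadC} and the derivative bounds \eqref{prop:BD1}--\eqref{prop:BD2}, to cancel the naive growth. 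This cancellation is the heart of the argument, and it is the step that makes the condition $W\gg N^{1/2}$ enter: the gain coming from sum-zero cancellations must beat the loss from the $W$ in front of the hierarchy drift, and a direct power count shows that only in the regime $W^2 \gg N$ does the closed loop of estimates converge.
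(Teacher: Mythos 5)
Your high-level architecture — subtract the primitive equation from the loop hierarchy, linearize, integrate via Duhamel, control the martingale by BDG with higher-length loops, close by bootstrap — is the right skeleton and matches the paper's plan. But you have misidentified the key cancellation mechanism and glossed over a genuine circularity, so the argument as stated does not close.

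First, the circularity: you propose to control the quadratic variation of $\mathcal{E}^{(M)}$, which is a $(2n+2)$-loop, by ``the a priori bound \eqref{Eq:L-KGt2} at loop length $2n+2$,'' and to set up a bootstrap with ``a large cutoff.'' But every length $n$ needs length $2n+2$ in the martingale, so there is no finite cutoff: the chain of dependencies never terminates. The paper breaks this by passing through $G$-\emph{chains} (Lemma \ref{lem_GbEXP_n2}): a diagonal $2n$-chain is bounded by a $2n$-loop, and crucially, a $2n$-loop is bounded by a product of two roughly $n$-chains (Cauchy--Schwarz), which closes the loop of estimates at the same order $n$. Your proposal has no analogue of this, and without it the bootstrap cannot be initiated.

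Second, the cancellation. You attribute the suppression of cumulative amplification to ``the sum-zero property of $S^\circ$'' entering $\mathcal{E}^{(\widetilde G)}$ together with $\Theta^{(B)}$ derivative bounds. This is not what happens. The amplification comes from the Duhamel propagator $\mathcal{U}_{s,t,\boldsymbol{\sigma}}$ itself, whose $\max\to\max$ norm is $\sim(\eta_s/\eta_t)^n$; this blows up if one integrates from $s=0$ to $s=t$ as you propose. The paper deals with this in two interlocking ways that you don't mention: (i) it never integrates over the whole interval, instead splitting $[0,t]$ into multiplicatively spaced pieces $[s_k,s_{k+1}]$ with $1-s_k=W^{-k\tau'}$ (Theorem \ref{lem:main_ind} only goes from $s$ to $t$ with $(W\ell_t\eta_t)^{-1}\le((1-t)/(1-s))^{30}$), and (ii) within each piece, the residual $(\eta_s/\eta_t)^n$ factor is killed by applying the projection $\mathcal{Q}_t$ (Definition \ref{Def:QtPt}), which subtracts off the component of $\mathcal{L}-\mathcal{K}$ along the direction $\vartheta_{t,\textbf{a}}$; the sum-zero output then sees the improved operator bound \eqref{sum_res_2}. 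Ward's identity on $\mathcal{K}$ (Lemma \ref{lem_WI_K}) and the sum-zero property of the self-energies $\Sigma^{(\emptyset)}$ at the tree level (Lemma \ref{lem:SZ}) are what make the $\mathcal{P}\circ(\mathcal{L}-\mathcal{K})$ remainder small enough to live without the improved propagator bound. None of this uses $S^\circ = S - N^{-1}$; that object appears in the universality comparison (Section on $L_1, L_2$), not here. Without the $\mathcal{Q}_t$ projection and the short time steps, your Duhamel integral would produce an exponentially growing factor and the $W\gg\sqrt N$ power count would never close, no matter how good the forcing estimates are.

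Finally, you nowhere use (or mention) the explicit tree representation of $\mathcal{K}$ (Lemma \ref{Lemma_TRofK}), which is what makes the bound $|\mathcal{K}_{t,\boldsymbol{\sigma},\textbf{a}}|\prec(W\ell_t\eta_t)^{-n+1}$ available at all; this is not a soft deterministic estimate but requires the sum-zero property of $\Sigma^{(\emptyset)}$. Since you invoke Lemma \ref{ML:Kbound} to deduce \eqref{Eq:L-KGt2} from \eqref{Eq:L-KGt}, you are implicitly leaning on this, but the structure of your argument for \eqref{Eq:L-KGt} does not reflect that you understand where the nonlinear stability of the primitive hierarchy comes from.
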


\begin{lemma}[$2$-loop estimate]\label{ML:GLoop_expec}
%    For any fixed (small) constants $\kappa, \tau>0$ and 
%$$\quad E \in[-2+\kappa, 2-\kappa], \quad 0\le t\le 1-N^{-1+\tau},
%$$ 
With the notations and assumptions of the previous lemma, 
the expectation of  $2$-$G$-loop is bounded by  
\begin{equation}\label{Eq:Gtlp_exp}
 \max_{\boldsymbol{\sigma}, \textbf{a}}\left|\mathbb E{\cal L}_{t, \boldsymbol{\sigma}, \textbf{a}}-{\cal K}_{t, \boldsymbol{\sigma}, \textbf{a}}\right|\prec (W\ell_t\eta_t)^{-3}
 ,\quad \eta_t:=\im z_t
\end{equation}
for any choice of  $\boldsymbol{\sigma}=\{+,-\}^2$.
In case $\boldsymbol{\sigma}=(+,-)$ and $ \textbf{a}=(a_1, a_2)$, 
we have
\begin{equation}\label{Eq:Gdecay}
 \left| {\cal L}_{t, \boldsymbol{\sigma}, \textbf{a}}-{\cal K}_{t, \boldsymbol{\sigma}, \textbf{a}}\right|\prec (W\ell_t\eta_t)^{-2}\exp \left(-\left|\frac{ a_1-a_2 }{\ell_t}\right|^{1/2}\right)+W^{-D}. 
\end{equation}
\end{lemma}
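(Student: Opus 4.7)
The plan is to study the evolution of $\mathcal{L}_{t,\boldsymbol{\sigma},\textbf{a}} - \mathcal{K}_{t,\boldsymbol{\sigma},\textbf{a}}$ by subtracting the primitive equation \eqref{pro_dyncalK} from the loop hierarchy \eqref{eq:mainStoflow} and exploiting the resulting Duhamel representation against the propagator $\Theta^{(B)}$. Linearizing the quadratic difference via the identity $\mathcal{L}_L \mathcal{L}_R - \mathcal{K}_L \mathcal{K}_R = (\mathcal{L}_L - \mathcal{K}_L)\mathcal{L}_R + \mathcal{K}_L(\mathcal{L}_R - \mathcal{K}_R)$ produces a linear stochastic equation for the 2-loop error, forced by the martingale $\mathcal{E}^{(M)}$ and the $\widetilde{G}$-term $\mathcal{E}^{(\widetilde{G})}$. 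For $n=2$ the only index pair in the hierarchy is $(k,l)=(1,2)$, so the 2-loop error couples to itself through the kernel $S^{(B)}$, whose time-integrated resolvent is precisely $\Theta^{(B)}_{t m_1 m_2}$ as in the explicit formula \eqref{Kn2sol}. The 2-loop error therefore admits an explicit Duhamel expression as a time-integral of the forcing convolved with $\Theta^{(B)}$.

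For the expectation bound \eqref{Eq:Gtlp_exp}, taking $\mathbb{E}$ removes $\mathcal{E}^{(M)}$, reducing the analysis to $\mathbb{E}[\mathcal{E}^{(\widetilde{G})}]$. A typical term has the schematic form $W\sum_{a,b}\mathbb{E}[\langle\widetilde{G}_t(\sigma_k)E_a\rangle\,S^{(B)}_{ab}\,(\mathcal{G}^{(b)}_k\circ\mathcal{L})]$, where the inner 3-loop is bounded $\prec(W\ell_t\eta_t)^{-2}$ by \eqref{Eq:L-KGt2}, while the averaged partial trace $\langle\widetilde{G}_t(\sigma_k)E_a\rangle$ is the main observable of the local law \eqref{G_bound_ave}. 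Its mean is of order $(W\ell_t\eta_t)^{-2}$ through the bootstrap accompanying Theorem \ref{MR:locSC}, and its centered fluctuation, when paired with the centered 3-loop, produces a covariance contribution of the same order. A Gronwall-type integration against $\Theta^{(B)}$ in time, whose operator norm is $O(\eta_t^{-1})$, then gives the target $(W\ell_t\eta_t)^{-3}$.

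For the spatial decay \eqref{Eq:Gdecay}, the magnitude $\prec(W\ell_t\eta_t)^{-2}$ is already supplied by \eqref{Eq:L-KGt}, so the new content is the factor $\exp(-|(a_1-a_2)/\ell_t|^{1/2})$. The key observation is that $\mathcal{K}_{t,(+,-),(a_1,a_2)} = W^{-1}|m|^2(\Theta^{(B)}_{t|m|^2})_{a_1,a_2}$ already decays exponentially at scale $\ell_t$ by \eqref{prop:ThfadC}, and the error $\mathcal{L}-\mathcal{K}$ inherits this decay from the Duhamel integral, since convolution with $\Theta^{(B)}_{tm_1m_2}$ preserves exponential decay while the forcing itself is built from band Green's function entries which decay exponentially. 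The stretched exponent $1/2$ absorbs polynomial losses from iterating the hierarchy, and the additive $W^{-D}$ reflects a high-probability restriction to the good event on which all prior bounds hold.

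The principal obstacle is the gain of one extra factor $(W\ell_t\eta_t)^{-1}$ in the expectation estimate, which cannot come from worst-case pointwise bounds and must be produced by a genuine cancellation in $\mathbb{E}[\mathcal{E}^{(\widetilde{G})}]$ — analogous to the sum-zero property of self-energies invoked in earlier works — coupled with a simultaneous bootstrap across $n$ so that 2-loop estimates use 3-loop bounds and vice versa. Managing the exponentially decaying tails through this bootstrap, while keeping the coupled errors of consistent order, is the delicate part of the argument.
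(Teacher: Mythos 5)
Your overall strategy is correct: subtract the primitive equation from the loop hierarchy, integrate against the propagator $\mathcal{U}_{s,t,\boldsymbol{\sigma}}$ (whose building block is $\Theta^{(B)}$), take $\mathbb{E}$ to kill the martingale forcing for \eqref{Eq:Gtlp_exp}, and propagate the $\ell_t$-scale decay of $\mathcal{K}$ for \eqref{Eq:Gdecay}. This is exactly how the paper proceeds (Steps~2,~5, and~6 of Theorem~\ref{lem:main_ind}). But your account contains one genuine gap and one significant vagueness.

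The gap is the improved bound $\max_a|\mathbb{E}\langle(G_t-m)E_a\rangle|\prec (W\ell_t\eta_t)^{-2}$. You attribute it to ``the bootstrap accompanying Theorem~\ref{MR:locSC}'' and, alternatively, gesture at a cancellation ``analogous to the sum-zero property of self-energies.'' Neither is the actual mechanism. The local law \eqref{G_bound_ave} only gives $\langle\widetilde G E_a\rangle\prec(W\ell_t\eta_t)^{-1}$; stochastic domination does not upgrade to an order-$(W\ell_t\eta_t)^{-2}$ bound on the \emph{expectation}. And the sum-zero property in this paper (Lemmas~\ref{lem:SZ},~\ref{ML:Kbound+pi}) is used only to bound the deterministic tensor $\mathcal{K}$; it has nothing to do with $\mathbb{E}[\mathcal{L}-\mathcal{K}]$. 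The real argument is a self-contained lemma (the paper's Lemma~\ref{lem_ELK_n=1}): one writes $G-m=-m(H+m)G$, performs Gaussian integration by parts, and solves the resulting self-consistent equation for $\mathbb{E}\langle(G-m)E_a\rangle$ against $(1-um^2 S^{(B)})^{-1}$, upgrading the fluctuation-level bound $(W\ell\eta)^{-1}$ to an expectation bound $(W\ell\eta)^{-2}$. Without this separate input, the decomposition $\mathcal{L}^{(3)}=\mathcal{K}^{(3)}+(\mathcal{L}-\mathcal{K})^{(3)}$ in $\mathbb{E}[\mathcal{E}^{(\widetilde G)}]$ does not close — the leading term $\mathbb{E}[\langle\widetilde G E_a\rangle]\cdot\mathcal{K}^{(3)}$ would only be $(W\ell\eta)^{-3}\cdot W\ell\sim\eta^{-1}(W\ell\eta)^{-2}$, one power short.

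For the decay estimate \eqref{Eq:Gdecay}, your intuition (decay of $\mathcal{K}$, preservation under convolution with $\Theta^{(B)}_t$, stretched exponent to absorb iterate losses, $W^{-D}$ from conditioning on a good event) is aimed in the right direction but elides the technical core. In the paper this requires: a tail function $\mathcal{T}_{t,D}(\ell)$ and a tail-to-tail propagation lemma for $\mathcal{U}_{s,t}$ (Lemma~\ref{TailtoTail}), showing that the stretched exponential with scale $\ell_t$ is stable under the kernel even though the kernel's decay scale is itself $\ell_t$; a term-by-term verification that all forcing terms, in particular the quadratic variation of $\mathcal{E}^{(M)}$ (which involves a $2n{+}2$-loop), inherit the decay structure (Lemma~\ref{lem_dec_calE}); and a stopping-time argument because the decay estimate is self-referential through the nonlinear terms. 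The paper also first proves a weak version with a $(\eta_s/\eta_u)^4$ loss (Step~2) and later removes that loss (Step~5) using the sharper loop bounds established in between. Saying ``convolution with $\Theta^{(B)}$ preserves exponential decay'' is true but far from a proof that the full stochastic Duhamel solution has the claimed tail uniformly up to the stopping time.
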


\begin{lemma}[Local law for $G_t$]\label{ML:GtLocal}
   With the notations and assumptions of the previous lemma, we have 
\begin{equation}\label{Gt_bound}
 \|G^{(E)}_{t,+}-m^{(E)}\|_{\max}\prec (W\ell_t\eta_t)^{-1/2}.
   \end{equation} 
\end{lemma}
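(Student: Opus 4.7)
The plan is to deduce Lemma~\ref{ML:GtLocal} from the two-loop estimates (Lemma~\ref{ML:GLoop_expec}) through a Schur-complement / self-consistent-equation argument. The key observation is that because the entries of $H_t$ are complex Gaussian, the conditional variance of the Schur quadratic form is (up to constants) a restriction of the $(+,-)$ two-loop of the minor resolvent, so Lemma~\ref{ML:GLoop_expec} directly furnishes the sharp input needed to produce the target $(W\ell_t\eta_t)^{-1/2}$ bound via the heuristic $\text{standard deviation}=\sqrt{\text{variance}}$.

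First I would handle the diagonal entries. For fixed $x \in {\cal I}_a$, the Schur complement formula gives
\[
(G_{t,xx})^{-1} = H_{t,xx} - z_t - Z_x, \qquad Z_x := \sum_{i,j \neq x} \overline{H_{t,xi}}\, G^{(x)}_{t,ij}\, H_{t,xj},
\]
where $G^{(x)}$ is the resolvent of the minor of $H_t$ with row/column $x$ removed. Split $Z_x$ into its conditional mean and fluctuation. The conditional mean $\mathbb{E}[Z_x\mid H^{(x)}] = \sum_{i\neq x} S_{xi}\, G^{(x)}_{t,ii}$ equals, by the block structure of $S$, the arithmetic mean of the three 1-loops ${\cal L}^{(x)}_{t,+,b}$ with $|b-a|\le 1$ up to an $O(W^{-1})$ correction; Lemma~\ref{ML:GLoop} with $n=1$, together with a standard minor-removal comparison to replace $G^{(x)}$ by $G_t$, then yields $\mathbb{E}[Z_x\mid H^{(x)}] = m^{(E)} + \OO_\prec\!\big((W\ell_t\eta_t)^{-1}\big)$. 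For the fluctuation, a direct Isserlis computation (using circular symmetry of the complex Gaussians) gives
\[
\var\!\big(Z_x\mid H^{(x)}\big) = \sum_{i,j\neq x} S_{xi} S_{xj}\, |G^{(x)}_{t,ij}|^2 \le W^{-2} \sum_{i,j \in {\cal N}(x)} |G^{(x)}_{t,ij}|^2,
\]
where ${\cal N}(x) := {\cal I}_{a-1}\cup{\cal I}_a\cup{\cal I}_{a+1}$. The inner sum is, up to constants, $W^2$ times a sum of $(+,-)$ two-loops ${\cal L}^{(x)}_{t,(+,-),(a',a'')}$ over blocks $a',a''$ adjacent to $a$, so Lemma~\ref{ML:GLoop_expec} bounds it by $\OO_\prec\!\big(W/(\ell_t\eta_t)\big)$ and hence $\var(Z_x\mid H^{(x)})\prec (W\ell_t\eta_t)^{-1}$. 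Gaussian (Hanson--Wright) concentration then delivers $|Z_x - \mathbb{E}[Z_x\mid H^{(x)}]| \prec (W\ell_t\eta_t)^{-1/2}$, so $(G_{t,xx})^{-1} = -z_t - m^{(E)} + \OO_\prec\!\big((W\ell_t\eta_t)^{-1/2}\big)$. Stability of the self-consistent equation $m^{(E)}(z_t + m^{(E)}) = -1$, which is non-degenerate in the bulk (since $|m^{(E)}|<1$ and $\im m^{(E)} \gtrsim_\kappa 1$), converts this into the desired pointwise bound on the diagonal.

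For the off-diagonal entries I would use the identity $G_{t,xy} = -G_{t,xx} \sum_{i\neq x} H_{t,xi}\, G^{(x)}_{t,iy}$ and apply Gaussian concentration a second time to the centered sum on the right, whose conditional variance is $\sum_{i\neq x} S_{xi}|G^{(x)}_{t,iy}|^2$. The main obstacle lies here: Lemma~\ref{ML:GLoop_expec} controls block-on-block averages $W^{-2}\sum_{i\in{\cal I}_{a'},\, j\in{\cal I}_{b'}}|G^{(x)}_{ij}|^2$, whereas this Schur variance is a row-average $W^{-1}\sum_{i\in{\cal N}(x)}|G^{(x)}_{iy}|^2$ with $y$ held pointwise fixed, so a direct appeal to the two-loop loses a factor of $W$. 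My plan to close this gap is a short bootstrap: establish first a coarse a priori bound $\|G_t - m^{(E)}\|_{\max}\prec 1$ by continuity from the trivial estimate $\|G_t\|_{\mathrm{op}}\le \eta_t^{-1}$ at small $t$ (where $1-t\sim 1$), then combine this a priori bound with the exponential-decay form~\eqref{Eq:Gdecay} of the two-loop and a mild dyadic iteration in the target accuracy to upgrade the row-average estimate to the sharp $(W\ell_t\eta_t)^{-1}$, which yields the claimed bound on $|G_{t,xy}|$. A secondary technical point is carrying out the minor-removal comparisons (Woodbury-type identities relating $G_t$ to $G^{(x)}$) without degrading the $(W\ell_t\eta_t)^{-1/2}$ precision; these are routine but must be tracked carefully.
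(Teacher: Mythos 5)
Your plan is on the right track for the diagonal entries, and you have correctly identified the real obstruction for the off-diagonal entries (a single Schur expansion produces a row-average $W^{-1}\sum_{i\in{\cal N}(x)}|G^{(x)}_{iy}|^2$ with $y$ pointwise fixed, which is not a $2$-loop). However, your proposed fix for that obstruction does not contain the idea that the paper actually uses, and as written it would not close the gap.

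The resolution in the paper (Lemma~\ref{lem_GbEXP}, proof of \eqref{GijGEX}) is to iterate the off-diagonal Schur bound a \emph{second time in the other index}. After the first expansion at $x$ and a minor-removal comparison one gets ${\bf 1}_\Omega\,|G_{xy}|^2\prec\sum_k S_{xk}|G_{ky}|^2$; then applying the identical bound to each $|G_{ky}|^2$ with the Schur expansion now performed at $y$ (and handling $k=y$ trivially, since $|G_{yy}|\prec 1$) yields
\[
{\bf 1}_\Omega\,|G_{xy}|^2 \prec \sum_{k,l}S_{xk}\,|G_{kl}|^2\,S_{ly}+W^{-1}{\bf 1}\big(|[x]-[y]|\le 1\big),
\]
and the right-hand side is, by the block structure of $S$, precisely a finite sum of $(+,-)$ $2$-loops ${\cal L}_{t,(+,-),(a',b')}$ over blocks $a'\sim[x]$, $b'\sim[y]$, plus a harmless $W^{-1}$ correction. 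This second iteration is exactly what converts the single-sided row average into the two-sided block-on-block average that Lemma~\ref{ML:GLoop_expec} controls. Your ``dyadic iteration in the target accuracy'' and use of \eqref{Eq:Gdecay} do not supply this: plugging an a~priori bound $|G_{iy}|\prec\Lambda_0$ back into the row average just returns $\Lambda_0^2$ with no gain, and the exponential decay only helps for $|[x]-[y]|\gg\ell_t$, leaving the near-diagonal regime uncontrolled.

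Two further issues. First, your a~priori input $\|G_t-m\|_{\max}\prec 1$ is too weak to run the Schur iteration at all: the minor-removal identity \eqref{GijkSCP} produces error terms like $W^{-c}|G_{ij}|$ that must be absorbed, which forces the hypothesis $\|G_t-m\|_{\max}\prec W^{-c}$ for some $c>0$ (this is exactly assumption \eqref{asGMc} of Lemma~\ref{lem_GbEXP}). The paper obtains this weak local law via the continuity estimate on loops (Lemma~\ref{lem_ConArg}) in Step~1, which is a genuinely different and stronger input than the trivial $\|G_t\|_{\mathrm{op}}\le\eta_t^{-1}$ at small $t$. Second, the sharp $2$-loop input is not literally Lemma~\ref{ML:GLoop_expec} at time $t$ but rather the propagated bound $\max_{a,b}{\cal L}_{u,(+,-),(a,b)}\prec(W\ell_u\eta_u)^{-1}$ over $u\in[s,t]$, obtained in \eqref{lk2safyas} from Step~2's decay estimate \eqref{Eq:Gdecay_w} and the $\cal K$ bound; your proposal treats Lemma~\ref{ML:GLoop_expec} as an independent input, but in the paper both lemmas are proved together through the inductive bootstrap of Theorem~\ref{lem:main_ind}.
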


\medskip 

\begin{proof}[Proof of Theorems \ref{MR:locSC} and \ref{MR:QDiff}]
For each $z$ in Theorems \ref{MR:locSC} and \ref{MR:QDiff}, Lemma \ref{zztE} shows that there exist $E$ and $t$ satisfying \eqref{eq:zztE} and \eqref{eq:zztE2}, along with the following conditions:
$$
(1-t) \sim \im z_t \sim \im z \geq N^{-1+\tau}, \quad \eta_t \sim \eta, \quad \ell_t \sim \ell(z).
$$
Combining \eqref{mtEmz} and \eqref{GtEGz}, we obtain
\begin{equation}\label{Gmt1/2Gm}
G(z) - m_{sc}(z) = t^{1/2} \left(G_t^{(E)} - m^{(E)}\right).
\end{equation}
Thus, the estimate on $G - m$ in \eqref{G_bound} of Theorem \ref{MR:locSC} follows from the estimate on $G_t - m$ in \eqref{Gt_bound} of Lemma \ref{ML:GtLocal}.

Similarly, \eqref{G_bound_ave} follows from \eqref{Eq:L-KGt} for the $1$-$G$-loop, with the definition of $\cal K$ in Definition \ref{Def_Ktza}:
$$
{\cal K}_{t,+,a} = m^{(E)}, \quad 0\le t\le 1. 
$$
Analogously to \eqref{Gmt1/2Gm}, for the $2$-$G$ terms we have:
\begin{equation}\label{Gmt1/2Gm2}
\tr G (z) E_aG (z) E_b = t \cdot {\cal L}_{t, (+,+),(a,b)}, \quad \tr G(z) E_aG^\dagger(z)  E_b = t \cdot {\cal L}_{t, (+,-),(a,b)}.
\end{equation}
Using \eqref{Kn2sol} for rank-$2$ $\cal K$, we derive:
$$
t \cdot {\cal K}_{t, (+,+),(a,b)} = W^{-1} m^2_{sc}(z) \cdot \frac{1}{1 - m^2_{sc}(z) \cdot S^{(B)}}, \quad t \cdot {\cal K}_{t, (+,-),(a,b)} = W^{-1} |m_{sc}(z)|^2 \cdot \frac{1}{1 - |m_{sc}(z)|^2 \cdot S^{(B)}}.
$$
Therefore, \eqref{Meq:QdW1} and \eqref{Meq:QdW2} in Theorem \ref{MR:QDiff} follow from \eqref{Eq:L-KGt} in the case $n=2$, while \eqref{Meq:QdS1} and \eqref{Meq:QdS2} follow from \eqref{Eq:Gtlp_exp}. 
This completes the proof of Theorems \ref{MR:locSC} and \ref{MR:QDiff}.
 \end{proof}

We remark that there is a  subtle difference between the $2$-loop and the  $T$-observable. 
By definition, the $2$-loop is given by 
$$
{\cal L}_{t, (+,-),(a,b)}= \sum_{i, j}   (G_{t,+})_{ij}  E_a(j) (G_{t,-})_{ji}   E_b(i)=  \sum_{i, j}   |(G_{t,+})_{ij}|^2  E_a(j) E_b(i).
$$
Comparing ${\cal L}_{t, (+,-),(a,b)}$ with the  $T$ observable \eqref{T}, we find that there are two averaging over indices in the  loop observable, but  only one averaging in   $T$ .  Here we neglect the  unimportant  difference between $S$ and $E_b$ operators.
While an extra averaging might seem to be insignificant, we remind the reader that in the special case of Wigner matrices,  
\[
N^{-1} \sum_{x y} |G_{x,y}|^2 = (\im z )^{-1}N^{-1}  \sum_x \im G_{xx}. 
\]
The averaging  in the $x$ index is critical for the local law of  Wigner matrices asserting that   the fluctuation of $N^{-1} \tr G$ is one order smaller than that of $G_{xx}$.  For similar reasons, our results for $2$-loop will not hold for the $T$ observable \eqref{T}.

\bigskip

\subsection{Strategy of the proofs of main lemmas}

We now outline the proofs of Lemmas \ref{ML:GLoop}, \ref{ML:GLoop_expec} and \ref{ML:GtLocal}.
By Definitions \ref{Def:G_loop} and \ref{Def_Ktza},   
$$
G_{0}(+)=m\cdot I_{N\times N}, \quad {\cal L}_{0, \boldsymbol{\sigma},\textbf{a}}= {\cal K}_{0, \boldsymbol{\sigma},\textbf{a}},\quad \forall \; \boldsymbol{\sigma},\textbf{a}, 
$$
where $I_{N\times N}$ is the identity matrix. It is easy to check that  
\begin{align}\label{ini)bigasya}
    \hbox{    Lemmas \ref{ML:GLoop}, \ref{ML:GLoop_expec} and \ref{ML:GtLocal} hold at  $t=0$ with no error.}
\end{align}
For $t>0$, we will prove the following theorem.

\begin{theorem}\label{lem:main_ind} Assume for some fixed $E$: $|E|\le 2-\kappa$ and $s\in [0,1]$ that Lemmas \ref{ML:GLoop}, \ref{ML:GLoop_expec}  and \ref{ML:GtLocal} hold at time $s$, namely, %with $\eta_s:=\im z_s,\quad \ell_s:=\ell(z_s)$,   
for $1 \le n\in \mathbb N$ and large $D>0$, 
\begin{align}\label{Eq:L-KGt+IND}
 \max_{\boldsymbol{\sigma}, \textbf{a}}\left|{\cal L}_{s, \boldsymbol{\sigma}, \textbf{a}}-{\cal K}_{s, \boldsymbol{\sigma}, \textbf{a}}\right|&\prec (W\ell_s\eta_s)^{-n}  
\\\label{Eq:Gdecay+IND}
 \left| {\cal L}_{s, \boldsymbol{\sigma}, \textbf{a}}-{\cal K}_{s, \boldsymbol{\sigma}, \textbf{a}}\right|&\prec (W\ell_s\eta_s)^{-2}\exp \left(- \left|\frac{ a_1-a_2 }{\ell_s}\right|^{1/2}\right)+W^{-D} , \quad \boldsymbol{\sigma}=(+,-)
\\\label{Gt_bound+IND}
 \|G^{(E)}_{s ,+}-m^{(E)}\|_{\max} & \prec (W\ell_s\eta_s)^{-1/2},
 \\\label{Eq:Gtlp_exp+IND}
 \max_{\boldsymbol{\sigma}, \textbf{a}}\left|\mathbb E{\cal L}_{s, \boldsymbol{\sigma}, \textbf{a}}-{\cal K}_{s, \boldsymbol{\sigma}, \textbf{a}}\right|&\prec (W\ell_s\eta_s)^{-3},\quad  \forall\; \boldsymbol{\sigma}\in \{+,-\}^2.
\end{align}
 Then for any $t>s$ satisfying 
\begin{equation}\label{con_st_ind}
    (W\ell_t\eta_t)^{-1}\le \left(\frac{1-t}{1-s}\right)^{30}
\end{equation}
%Here $2024$ is just for a large number, the exact value will not affect our final results) then
we have that 
\eqref{Eq:L-KGt+IND}, \eqref{Eq:Gtlp_exp+IND}, \eqref{Eq:Gdecay+IND} and \eqref{Gt_bound+IND} hold with $s$  replaced by $t$.   
\end{theorem}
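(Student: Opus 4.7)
The plan is to prove Theorem~\ref{lem:main_ind} by a continuous-in-time induction on $[s,t]$, propagating all four statements simultaneously and for all loop lengths $n$ at once. Define
$$\Delta_{u,\boldsymbol{\sigma},\textbf{a}} := \mathcal{L}_{u,\boldsymbol{\sigma},\textbf{a}} - \mathcal{K}_{u,\boldsymbol{\sigma},\textbf{a}},\qquad s\le u\le t.$$
Subtracting \eqref{pro_dyncalK} from \eqref{eq:mainStoflow} and expanding the quadratic term about $\mathcal{K}$ yields an SDE of the form
$$d\Delta_{u,\boldsymbol{\sigma},\textbf{a}} = \mathcal{E}^{(M)}_{u,\boldsymbol{\sigma},\textbf{a}} + \mathcal{E}^{(\widetilde G)}_{u,\boldsymbol{\sigma},\textbf{a}} + \big(\text{linear in }\Delta_u\big)\,du + \big(\text{quadratic in }\Delta_u\big)\,du,$$
whose linear part has coefficients built from the ${\cal K}_u$-kernels. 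The corresponding linear propagator is essentially a tensor product of the $\Theta_{um_im_j}$ of Definition~\ref{def_Theta}; Lemma~\ref{lem_propTH} supplies both an $\ell^\infty$ size of order $((1-u)\ell_u)^{-1}$ and exponential decay at scale $\ell_u$ for each such factor. Writing the SDE in Duhamel form against this propagator reduces the proof to controlling three forcings: the non-centered $\mathcal{E}^{(\widetilde G)}$, the martingale $\mathcal{E}^{(M)}$, and the quadratic-in-$\Delta$ feedback.

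The term $\mathcal{E}^{(\widetilde G)}$ in \eqref{def_EwtG} factorises as $\langle\widetilde G_u(\sigma_k)E_a\rangle \cdot ({\cal G}_k^{(b)}\circ\mathcal{L}_u)$, i.e.\ the block-averaged $1$-loop deviation (controlled by \eqref{Eq:L-KGt+IND} at $n=1$) times an $(n+1)$-loop (controlled by \eqref{Eq:L-KGt+IND} at larger length). Pushed through the $\Theta$-kernels and integrated in $u$, the resulting Lebesgue integral $\int_s^t du/((1-u)\ell_u)\cdot(\ldots)$ is under control precisely because of \eqref{con_st_ind}, which bars any loss from the ratios $\ell_s/\ell_u$. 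The martingale $\mathcal{E}^{(M)}$ in \eqref{def_Edif} is handled via the Burkholder--Davis--Gundy inequality: its quadratic variation is $\sum_\alpha S_\alpha|\partial_\alpha\mathcal{L}_u|^2$, and a short algebraic rearrangement of the $G$-products appearing in $\partial_\alpha\mathcal{L}_u$ reorganises this sum into a bounded combination of $G$-loops of length at most $2n+2$. Applying the inductive bound \eqref{Eq:L-KGt+IND} (and \eqref{Gt_bound+IND} where a free $G$-matrix entry remains after the regrouping) to those larger loops, then square-rooting and integrating, yields the target $(W\ell_t\eta_t)^{-n}$. The quadratic-in-$\Delta$ feedback is absorbed by a standard continuity/bootstrap: starting from \eqref{ini)bigasya} we run a slightly relaxed target on $[s,t]$, and the quadratic term gains a factor $(W\ell_u\eta_u)^{-n}$ over the linear one, which is $\ll 1$ throughout $[s,t]$.

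The remaining three claims follow from \eqref{Eq:L-KGt+IND} once it is established at time $t$. For the expectation bound \eqref{Eq:Gtlp_exp+IND} with $n=2$, taking expectation in the Duhamel identity kills the martingale; the $\mathbb{E}\mathcal{E}^{(\widetilde G)}$ contribution gains one order of $(W\ell_u\eta_u)^{-1}$ after a Gaussian integration by parts on the Brownian increments that converts $\mathbb E[\widetilde G \cdot \mathcal{L}]$ into expectations of longer loops, and the quadratic feedback is already smaller by the same order. For the spatial decay \eqref{Eq:Gdecay+IND}, the exponential decay at scale $\ell_u$ of $\Theta_u$ in \eqref{prop:ThfadC} is threaded through the Duhamel integral; because different $u\in[s,t]$ produce different decay scales, the best one can retain uniformly in $u$ is the stretched exponential $\exp(-(|a_1-a_2|/\ell_t)^{1/2})$, plus the $W^{-D}$ tail coming from the high-moment cutoff implicit in $\prec$. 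Finally, the pointwise local law \eqref{Gt_bound+IND} is deduced from the $2$-loop bound at $n=2$ together with an elementary Ward/polarisation identity expressing $|G_{xy}-m\delta_{xy}|^2$ in terms of entries of $G E_a G^\dagger E_b$, modulo a $1$-loop tracial remainder already controlled by \eqref{Eq:L-KGt+IND} at $n=1$.

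The main obstacle, and the reason the induction must be global in $n$, is the BDG step: the quadratic variation of $\mathcal{E}^{(M)}$ at length $n$ requires an $\ell^\infty$ bound on loops of length up to $2n+2$. Consequently \eqref{Eq:L-KGt+IND} can only be closed as a whole hierarchy, not length by length. The scale-matching condition \eqref{con_st_ind} on $(W\ell_t\eta_t)^{-1}$ versus $((1-t)/(1-s))^{30}$ is precisely the budget that absorbs the powers of $\ell_s/\ell_u$ and $\eta_s/\eta_u$ produced by the $\Theta$-propagators along $[s,t]$, and guarantees that one induction step of Theorem~\ref{lem:main_ind} does not lose any factors.
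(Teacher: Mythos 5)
Your outline captures the correct skeleton of the argument — Duhamel form against a $\Theta$-product propagator, BDG on the martingale forcing loops of length $2n+2$, a continuity/bootstrap absorbing the quadratic feedback, and the staged use of weak bounds to feed sharp ones — and you correctly identify the BDG-induced coupling of all loop lengths as the reason the induction must be run globally in $n$. But there is one critical missing idea that prevents this proposal from closing: the \emph{sum-zero structure} of $\mathcal{K}$, implemented in the paper via the operator $\mathcal{Q}_t$ of Definition~\ref{Def:QtPt} and the sum-zero Lemma~\ref{lem:SZ}.

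Your final paragraph claims that \eqref{con_st_ind} ``is precisely the budget that absorbs the powers of $\ell_s/\ell_u$ and $\eta_s/\eta_u$'' and ``guarantees that one induction step of Theorem~\ref{lem:main_ind} does not lose any factors.'' That is not true, and the distinction matters. The operator norm of $\mathcal{U}_{s,t,\boldsymbol{\sigma}}$ on general tensors is $(\eta_s/\eta_t)^n$ (Lemma~\ref{lem:sum_Ndecay}); on \emph{fast-decay} tensors it is $(\ell_t/\ell_s)(\ell_s\eta_s/\ell_t\eta_t)^n$ (Lemma~\ref{lem:sum_decay}), still an $(\ell_t/\ell_s)$ loss. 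The condition \eqref{con_st_ind} ensures this single-step loss is small compared to $W\ell_t\eta_t$, but the theorem's \emph{conclusion} is required to have \emph{no} such loss, because Theorem~\ref{lem:main_ind} is applied $n_0\sim1/\tau'$ times in the proof of Lemmas~\ref{ML:GLoop}--\ref{ML:GtLocal}; an $(\eta_s/\eta_t)^C$ factor per step would accumulate to an $N^{C}$ blow-up over the whole dyadic chain. The mechanism the paper uses to upgrade from the lossy bounds of Steps~1--2 to the lossless bounds of Steps~4--5 is to project off the ``mass mode'' with $\mathcal{Q}_t$ and exploit that both the forcing terms and the rank-two tail $\dot\vartheta$ are sum-zero, so Lemma~\ref{lem:sum_decay} case~2 gains the extra $\ell_s/\ell_t$ that cancels the loss exactly; the explicit computation of Lemma~\ref{lem:SZ} for the primitive loops $\Sigma^{(\emptyset)}$ is the deterministic input that makes this work. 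Your proposal never invokes any form of cancellation, so the Duhamel estimates will not close without a cumulative loss.

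A secondary gap: your BDG step needs an $\ell^\infty$ bound on $(2n+2)$-loops in terms of shorter objects, and you phrase this as ``a short algebraic rearrangement.'' In the paper this is a genuine lemma (Lemma~\ref{lem_GbEXP_n2}, off-diagonal and diagonal $G$-chain estimates), proved by the Schur-complement machinery of Lemma~\ref{lem_GbEXP}; it in turn feeds the bootstrap through the function $f(m,\alpha,\beta)$ in Step~3, and the fast-decay property (Lemma~\ref{lem_decayLoop}) must be established first so that Lemma~\ref{lem:sum_decay} applies. Neither of these preparatory steps is visible in the proposal. The overall shape is right, but as written the argument loses factors it cannot afford and leans on a chain-to-loop conversion that needs a real proof.
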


\begin{proof}[Proof of Lemmas \ref{ML:GLoop}, \ref{ML:GLoop_expec},  and \ref{ML:GtLocal}]
For any fixed $\tau$ and  $t \le 1-N^{-1+\tau}$,  choose $\tau'>0$ and $n_0 \in \mathbb N$ such that 
$$
(W\ell_t\eta_t)^{-1}\le  W^{- 30 \tau'} , \quad (1-t) = W^{-n_0 \tau'}
$$
Let 
\[
1-s_{k} = W^{-k \tau'}, \quad k \le n_0, \quad s_{n_0} = t
\]
 
Since  $W\ell_s\eta_s$ is decreasing in  $s\in [0,1]$, we have 
   $$
(W\ell_{s_{k+1}}\eta_{s_{k+1}})^{-1}\le (W\ell_{t}\eta_{t})^{-1}\le W^{- 30\tau'}
\le \left(\frac{1-s_{k}}{1-s_{k+1}}\right)^{30}
$$
for all $k$ such that  $k+1\le n_0$. 
We can now apply  Theorem  \ref{lem:main_ind} from $s_k$ to $s_{k+1}$ for $k=0$ until $k=n_0-1$ so that the conclusions of  Theorem  \ref{lem:main_ind} hold for  $s_{n_0}=t$.  We have thus proved Lemmas \ref{ML:GLoop}, \ref{ML:GLoop_expec}, and \ref{ML:GtLocal}.  Notice that for any $\tau$ fixed, $n_0$ is a finite number depending on $\tau$.  Thus we only have finite iterations. This is important because every time we apply  Theorem  \ref{lem:main_ind} our inequalities deteriate by a factor 
$N^\varepsilon$. At the end, we will have a factor $N^{n_0 \varepsilon}$ at the time $t$. Since $\varepsilon$ is arbitrary small, this factor is still harmless for any $\tau$ fixed. 
\end{proof}

Theorem \ref{lem:main_ind} will be proved in six steps, with their detailed proofs provided in Section \ref{Sec:Stoflo}. Throughout these steps, we assume that the conditions of Theorem \ref{lem:main_ind} are satisfied. In addition, each step builds on the conclusions established in the preceding steps.

\medskip 
\noindent 
\textbf{Step 1}   (A priori loop bounds): The $n$-loop is bounded by 
 \begin{equation}\label{lRB1}
   {\cal L}_{u,\boldsymbol{\sigma}, \textbf{a}}\prec (\ell_u/\ell_s)^{(n-1)}\cdot 
   (W\ell_u\eta_u)^{-n+1},\quad  s\le u\le t.
\end{equation}
Furthermore,  the  weak local law holds in the sense 
\begin{equation}\label{Gtmwc}
    \|G_u-m\|_{\max}\prec  (W\ell_u\eta_u)^{-1/4},\quad s\le u\le t .
\end{equation} 
Here the exponent  is $1/4$ instead of $1/2$  in \eqref{Gt_bound+IND}.

 \bigskip
\noindent 
\textbf{Step 2}  (A priori $2$-loop decay): 
 The following local law holds for $u\in [s,t]$, namely, 
 \begin{equation}\label{Gt_bound_flow}
     \|G_u-m\|_{\max}\prec  (W\ell_u\eta_u)^{-1/2},\quad s\le u\le t.
\end{equation} 
Hence \eqref{Gt_bound+IND} in Theorem \ref{lem:main_ind} holds.  
In addition,  with 
 $\boldsymbol{\sigma}=(+,-)$, we have for any   $s\le u\le t$ and $D>0$ that 
 \begin{equation}\label{Eq:Gdecay_w}
\left| {\cal L}_{u, \boldsymbol{\sigma}, \textbf{a}}-{\cal K}_{u, \boldsymbol{\sigma}, \textbf{a}}\right| \prec \left(\eta_s/\eta_u\right)^4\cdot (W\ell_u\eta_u)^{-2}\exp \left(- \left|\frac{ a_1-a_2 }{\ell_u}\right|^{1/2}\right)+W^{-D} . \quad 
\end{equation}
   \bigskip
 \noindent 
\textbf{Step 3}   (Sharp  loop  bounds): 
 The following sharp estimate on $n$-$G$-loop holds: 
\begin{equation}\label{Eq:LGxb}
\max_{\boldsymbol{\sigma}, \textbf{a}}\left| {\cal L}_{u, \boldsymbol{\sigma}, \textbf{a}} \right|
\prec 
  (W\ell_u\eta_u)^{-n+1} ,\quad \quad s\le u\le t, \quad\forall\, n\in \mathbb N.
\end{equation}

 \bigskip
 \noindent 
\textbf{Step 4}  (A sharp  $\cal L - \cal K$  bound): 
The following sharp estimate on $ {\cal L}_t-{\cal K}_t$ of length $n$ holds: 
\begin{equation}\label{Eq:L-KGt-flow}
 \max_{\boldsymbol{\sigma}, \textbf{a}}\left|{\cal L}_{u, \boldsymbol{\sigma}, \textbf{a}}-{\cal K}_{u, \boldsymbol{\sigma}, \textbf{a}}\right|\prec (W\ell_u\eta_u)^{-n},\quad \quad s\le u\le t, \quad \forall\, n\in \mathbb N
\end{equation}
This implies \eqref{Eq:L-KGt+IND} in Theorem  \ref{lem:main_ind}.

 \bigskip
 \noindent 
\textbf{Step 5}  (A sharp decay bound): For  $ \boldsymbol{\sigma}=(+,-)$,
 \begin{equation}\label{Eq:Gdecay_flow}
\left| {\cal L}_{u, \boldsymbol{\sigma}, \textbf{a}}-{\cal K}_{u, \boldsymbol{\sigma}, \textbf{a}}\right| \prec   (W\ell_u\eta_u)^{-2}\exp \left(- \left|\frac{ a_1-a_2 }{\ell_u}\right|^{1/2}\right)+W^{-D}. 
\end{equation}
The last bound implies \eqref{Eq:Gdecay+IND} in Theorem  \ref{lem:main_ind}.

\bigskip
\noindent 
\textbf{Step 6} (A sharp  $\mathbb E \cal L - \cal K$  bound): 
%In this step, under the assumption of lemma \ref{lem:main_ind},   \eqref{Gt_bound_flow}, \eqref{Eq:Gdecay_flow}  and \eqref{Eq:L-KGt-flow},  we will prove the following bound for $\mathbb E{\cal K-L}$, which implies \eqref{Eq:Gtlp_exp+IND} in lemma \ref{lem:main_ind}. 
%For $2$-$G$-loop, i.e., $\boldsymbol{\sigma}=\{+,-\}^2$, we have
The following estimate on $2$-$G$-loop with $\boldsymbol{\sigma}=\{+,-\}^2$ holds: 
\begin{equation}\label{Eq:Gtlp_exp_flow}
 \max_{\boldsymbol{\sigma}, \textbf{a}}\left|\mathbb E{\cal L}_{u, \boldsymbol{\sigma}, \textbf{a}}-{\cal K}_{t, \boldsymbol{\sigma}, \textbf{a}}\right|\prec (W\ell_t\eta_t)^{-3},\quad  
 s \le u \le t, \quad\boldsymbol{\sigma}=\{+,-\}^2.
\end{equation}
We will use Steps 1-5 to  prove  that \eqref{Eq:L-KGt+IND}, \eqref{Eq:Gdecay+IND}, and \eqref{Gt_bound+IND} of Theorem \ref{lem:main_ind} hold with $s$ replaced by $t$. Here \eqref{Eq:Gtlp_exp+IND} will not be needed for Steps 1-5, i.e.,  Theorem \ref{lem:main_ind}  holds if \eqref{Eq:Gtlp_exp+IND} was removed from both the assumption and statement.

\subsection{Sum zero properties} %Some heuristics for  the proof of Theorem \ref{lem:main_ind}}

Recall the loop hierarchy \eqref{eq:mainStoflow} of the $n$-loop  is of the form 
\begin{align}
    d\mathcal{L}_{t, \boldsymbol{\sigma}, \textbf{a}} =&
    \mathcal{E}^{(M)}_{t, \boldsymbol{\sigma}, \textbf{a}} 
    +
    \mathcal{E}^{(\widetilde{G})}_{t, \boldsymbol{\sigma}, \textbf{a}} 
    + \mbox{ quadratic terms} . 
\end{align}
The first two terms are linear in the loops (assuming $\tilde G$ is given) and will be shown to be error terms. The primitive hierarchy drop these two error terms but keep the quadratic terms.  The  term $  \mathcal{E}^{(\widetilde{G})}_{t, \boldsymbol{\sigma}, \textbf{a}} $  involves $n+1$-loop and the quadratic variation of  $ \mathcal{E}^{(M)}_{t, \boldsymbol{\sigma}, \textbf{a}} $ depends on $2n+2$ loops.  The quadratic term, however,  involves only loops up to length $n$.  Therefore, we can solve the primitive equation stating from $n=1, 2 \ldots$. This procedure clearly cannot be applied to the loop hierarchy. 

It turns out  that both $\mathcal K$ and $\mathcal L$ have similar singularities as $t \to 1$ in the form 
\[
\mathcal{L}  \sim 
\left( 1-t \right)^{-C_n} \sim {\mathcal K },    \quad \im z \sim 1-t.
\]
This singularity  at $t \to 1$ is  difficult to control.  It  is a common phenomenon for  quadratic differential equations which  typically  are  unstable under perturbation. Since perturbations of  quadratic differential equations are governed by a linear one, we consider a toy  equation
\[
\partial_t f = 2f - c \cdot t + 1, \quad f(0) = a.
\]
This equation can be solved  explicitly 
\[
f(t)= e^{2 t} \big [a  + \frac 1 2    - \frac c 4    \big ] + \frac  { 2 ct + c-2}  4.
\]
If $a  + \frac 1 2    -  \frac c 4   =0$  then \( f(t) = O(t) \).  The subtle condition 
\[
a  + \frac 1 2    -  \frac c 4   =0
\]
changes the exponential growth of $f$ to a linear growth!  Without explicit solutions, it is not easy to prove the sub-exponential bound of the last toy equation. In our setting,   \(\mathcal{K}\)-loops can be solved by an explicit  \emph{tree representation  formula} (Lemma~\ref{Lemma_TRofK}) and  the previous subtle condition will be implemented by a \emph{sum-zero property} of the \(\mathcal{K}\)-loops.   We will show that both \(\mathcal{L}\) and  \(\mathcal{K}\)-loops satisfy   Ward's identity  (Lemma~\ref{lem_WI_K}). From these  Ward's identities, 
we will prove a {sum-zero property} for the \(\mathcal{K}\)-loops.

\subsection{Notations}
Here we summarize  global notations used in this paper. 
\begin{itemize}
    \item $W$ is band width, $L$ is the number of blocks, $N=W\times L$
    \item ${\cal I}_a$ is the $a$-th block, $[i]$ is the block where index $i$ is. $E_a$ is the following matrix only supported on ${\cal I}_a$. 
    $$
    i\in {\cal I}_{[i]}, \quad (E_a)_{ij}=\delta_{ij}{\bf 1}(i\in {\cal I}_a)$$
    \item $S\in \mathbb R^{N\times N}$, $S^{(B)}\in \mathbb R^{L\times L}$, $S_W\in \mathbb R^{W\times W}$ are all related to the variances of the matrix entries, and 
    $$
S= S^{(B)}\otimes S_{W}
$$
\item In the proof involving the stochastic flow, we typically omit the superscript $(E)$ for simplicity. As a result, the notations $z_t$, $G_t$, and $m$ are defined as follows:
\[
z_t = z_t^{(E)}, \quad G_t := (\sqrt{t} H - z_t)^{-1}, \quad m = \lim_{\varepsilon \searrow 0} m_{sc}(E + i \varepsilon).
\]
Additionally, we use $G_t$ to denote $(H_t - z_t)^{-1}$, where $H_t$ has the same distribution as $\sqrt{t} H$. The context will make it clear which interpretation of $G_t$ is being applied in the proof.
\item Follow the $z_t$, the $\eta_t=\im z_t$ and $\ell_t$ is defined in \eqref{eq:bcal_k} as 
$$ \ell_t:=\hat \ell(t)=\min(|1-t|^{-1/2}, L)$$
Here $\ell_t\sim \ell(z_t)\sim \ell(z)$ is  the decay  length. 

    \item $\cal L$ is for $G$-loop, $\cal K$ is  for the deterministic partner of $G$-loop,  and $\cal C$ is for $G$-chain. 

    \item The propagator $\Theta_\xi = \Theta^{(B)}_{\xi}$ is defined in Definition \ref{def_Theta}. It is used to explicitly define the solution of $\mathcal{K}$ in Definition \ref{def33}.

    \item The $\Xi^{({\cal L})}$, $\Xi^{({\cal L-K})}$, $\Xi^{({\cal C})}$ are ratios between these quantities and their heuristic size.  
    
    \begin{align}
\Xi^{({\cal L})}_{t, m}&:= \max_{\boldsymbol{\sigma}, \textbf{a}} 
\left|{\cal L}_{t, \boldsymbol{\sigma}, \textbf{a}} \right|\cdot \left(W\ell_t \eta_t\right)^{m-1} \cdot {\bf 1}(\sigma \in \{+,-\}^m),
\nonumber \\
\Xi^{({\cal L-K})}_{t, m}&:= \max_{\boldsymbol{\sigma}, \textbf{a}}
\left|{ ({\cal L-K})}_{t, \boldsymbol{\sigma}, \textbf{a}} \right|\cdot \left(W\ell_t \eta_t\right)^{m }\cdot {\bf 1}(\sigma \in \{+,-\}^m) ,
\nonumber \\
\Xi^{({{\cal C},\;diag })}_{t, m} &:= \max_{\boldsymbol{\sigma}, \textbf{a}} \max_i \left|\left({\cal C}^{(m)}_{t, \boldsymbol{\sigma}, \textbf{a}}\right)_{ii}\right| \cdot \left(W\ell_t \eta_t\right)^{m-1} \cdot {\bf 1}(\sigma \in \{+,-\}^m), \nonumber \\
\Xi^{({{\cal C},\;off})}_{t, m} &:= \max_{\boldsymbol{\sigma}, \textbf{a}} \max_{i \neq j}
\left|\left({\cal C}^{(m)}_{t, \boldsymbol{\sigma}, \textbf{a}}\right)_{ij}\right| \cdot \left(W\ell_t \eta_t\right)^{m-1/2} \cdot {\bf 1}(\sigma \in \{+,-\}^m).
\end{align}
\item The operators $\varTheta_{t,\boldsymbol{\sigma}}$ and ${\cal U}_{s, t,\boldsymbol{\sigma}}$ are defined in Def. \ref{DefTHUST} as the linear operators for the integrated loop hierarchy for $\cal L-\cal K$ in Lemma \ref{Sol_CalL}. 
\item  The \(\cal G\) operators, such as \({\cal G}^{(a)}_k\), \({\cal G}^{(a),\,L}_{k,l}\), and \({\cal G}^{(a),\,R}_{k,l}\), are defined in Definition \ref{Def:oper_loop}. These operators represent the cutting and gluing of Loop operators within the loop hierarchy.

\item The \(\Gamma_{t,\boldsymbol{\sigma}, \textbf{a}}\) represents the tree graph used in the tree representation of \(\cal K\). 
\item  The sets \({\cal F}(\Gamma)\) and \({\cal F}_{\text{long}}(\Gamma)\) correspond to the non-neighboring internal edges and the neighboring long internal edges of $\Gamma$, respectively.

\item The \({\cal K}^{(\pi)}\) is defined in Definition \ref{def_Kpi} as the sum of certain tree graphs. 
\item The \(\Sigma^{(\pi)}\) is further defined as the self-energy of \({\cal K}^{(\pi)}\) in Definition \ref{def_Kpi}. Certain specific \(\Sigma^{(\pi)}\) exhibit the sum-zero property, as demonstrated in Lemma \ref{lem:SZ}.
 
\item  The \(\cal E\) terms represent the non-leading terms that arise in the (integrated) loop hierarchy \eqref{eq:mainStoflow} and \eqref{int_K-LcalE}. Specifically, \({\cal E}^{(M)}\) and \({\cal E}^{(\widetilde{G})}\) are defined in \eqref{def_Edif} and \eqref{def_EwtG}, respectively. The term \(\mathcal{E}^{((\mathcal{L}-\mathcal{K}) \times (\mathcal{L}-\mathcal{K}))}\) is defined in \eqref{def_ELKLK}. Additionally, \(\left( \mathcal{E} \otimes \mathcal{E} \right)\) and \(\left( \mathcal{E} \otimes \mathcal{E} \right)^{(k)}\) are defined in Definition \ref{def:CALE}.

\item  The \({\cal T}^{(\cal L-\cal K)}_{t}\) is defined as the tail function of \(\cal L - \cal K\) in \eqref{def_WTu}. The \({\cal T}_{t,D}\) represents the deterministic rough tail function, as defined in \eqref{def_WTuD}. 
\item The terms \({\cal J}_{u,D}\) and \(\cal J^*\) are introduced in \eqref{def_Ju} and \eqref{shoellJJ}, respectively, to describe the ratio between \({\cal T}^{(\cal L-\cal K)}_{t}\) and \({\cal T}_{t,D}\).

\item The scale $\ell_t^*$ is define as $\ell_t^*=(\log W)^{3/2}\ell_t$. In this scale $\Theta_t$ is exponentially small, while ${\cal T}_t$ is not. 

\item The operators \(\cal P\) and \({\cal Q}_t\), along with the function \(\vartheta\), are used to define and construct a sum-zero tensor. Their definitions can be found in Definition \ref{Def:QtPt}.

\end{itemize}

\section{Definition and properties of \texorpdfstring{$\cal K$}{K}}\label{Sec:CalK}

The primitive loop ${\cal K}$ has an exact formula in terms of summation over tree graphs which we now present. 

\subsection{Tree representation of \texorpdfstring{${\cal K}_{t, \boldsymbol{\sigma}, \textbf{a}}$}{K}}

\begin{definition}[Canonical partition of polygon]
   \begin{figure}[ht]     \centering
      \scalebox{0.8}{
   \begin{tikzpicture} 
        % Define vertices of the hexagon
        \coordinate (a1) at (-1, 2);   % Top left
        \coordinate (a2) at (1, 2);   % Top right
        \coordinate (a3) at (2.5, 0); % Middle right
        \coordinate (a4) at (1, -2);  % Bottom right
        \coordinate (a5) at (-1, -2); % Bottom left
        \coordinate (a6) at (-2.5, 0);% Middle left
        \coordinate (b1) at (0, 1);   % Inner point 1
        \coordinate (b2) at (-1, 0); % Inner point 2
        \coordinate (b3) at (0, -1);  % Inner point 3

        % Draw hexagon outline
        \draw[thick, black] (a1) -- (a2) -- (a3) -- (a4) -- (a5) -- (a6) -- cycle;

        % Draw lines from each vertex to inner points
        \draw[thick, blue] (a1) -- (b1) -- (a2); % a1 to b1 to a2
        \draw[thick, blue] (b1) -- (a3) ; %  b1 to a3
        \draw[thick, blue] (a6) -- (b2)  ; % a6 to b2 to a3
        \draw[thick, blue] (a5) -- (b3) -- (a4); % a5 to b3 to a4

        % Draw purple segments for inner lines
        \draw[thick, purple] (b1) -- (b2);
        \draw[thick, purple] (b2) -- (b3);

        % Label outer vertices
        \node[above left] at (a1) {$a_1$};
        \node[above right] at (a2) {$a_2$};
        \node[right] at (a3) {$a_3$};
        \node[below right] at (a4) {$a_4$};
        \node[below left] at (a5) {$a_5$};
        \node[left] at (a6) {$a_6$};

        % Label inner points
        \node[left] at (b1) {$b_1$};
        \node[below] at (b2) {$b_2$};
        \node[ right] at (b3) {$b_3$};
    
        % Label edges of the hexagon
        \node[above] at (0, 2) {$e_2$};         % Edge between a1 and a2
        \node[right] at (2, 1) {$e_3$};      % Edge between a2 and a3
        \node[below right] at (2, -1) {$e_4$}; % Edge between a3 and a4
        \node[below left] at (0, -2) {$e_5$};  % Edge between a4 and a5
        \node[left] at (-1.75 , -1) {$e_6$};     % Edge between a5 and a6
        \node[above left] at (-1.75, 1) {$e_1$}; % Edge between a6 and a1

        % Draw dots at vertices and inner points
        \fill[black] (a1) circle (2pt);
        \fill[black] (a2) circle (2pt);
        \fill[black] (a3) circle (2pt);
        \fill[black] (a4) circle (2pt);
        \fill[black] (a5) circle (2pt);
        \fill[black] (a6) circle (2pt);
        \fill[black] (b1) circle (2pt);
        \fill[black] (b2) circle (2pt);
        \fill[black] (b3) circle (2pt);
    \end{tikzpicture}
    }
    $${\cal V}(\Gamma)=\{ a_1,a_2,a_3,a_4,a_5,a_6,b_1,b_2,b_3 \}$$
    $${\cal E}(\Gamma)=\{\{a_1,b_1\}, \{a_2,b_1\}, 
    \{a_3,b_1\}, \{a_4,b_3\},\{a_5,b_3\},\{a_6,b_2\},\{b_1,b_2\}, \{b_2,b_3\}, \}$$
    $$
    {\cal F}(\Gamma)=\{\{1,4\},\{4,6\}\}$$
      \caption{Illustration of canonical partition of the polygons}
      \label{fig_SPPol}
\end{figure}

Let ${{\cal P} }_{\textbf{a}}$ be an oriented polygon with vertices $\textbf{a}=(a_1,a_2,$ $\ldots ,a_n)$ such that $a_k$ and $a_{k+1}$ are next to each other. 
We will use periodic  convention so that $a_0=a_n$. The edge $\overline{a_{k-1}a_k}$ is called the $k$-th edge of ${{\cal P} }_{\textbf{a}}$. By definition, 
${{\cal P} }_{(a,b,c)}\ne {{\cal P} }_{(b,c,a)}$, 
since the  2nd edge of  ${{\cal P} }_{(a,b,c)}$  is $\overline{ab}$, while the the 2nd edge $of {{\cal P} }_{( b,c,a)}$  is $\overline{bc}$. 

A partition of  ${{\cal P} }_{\textbf{a}}$ is called canonical  %canonical partition 
if and only if 
\begin{itemize}
\item Each  sub-region in the partition is also a polygon. 
     \item There is one to one correspondence between the edges of the polygon and the  sub-regions. Each edge $e_k:=\overline {a_{k-1}a_{k}}$ belongs to  exactly one sub-region, and each  sub-region contains exactly one edge  $e_k$.   We denote the subregion containing $e_i$ by $R_i$.  
   \item Each vertex $a_i$  belongs to exactly two regions, i.e.,  $R_i$ and $R_{i+1}$ (with $R_1=R_{n+1}$). 
  
\end{itemize}
Note that following a canonical partition, the $n$-polygon (i.e., the black edges in Fig. \ref{fig_SPPol}) can be compressed into a zero-area loop along the interior boundaries (i.e., the blue and purple edges in Fig. \ref{fig_SPPol}).

 We define the equivalent class of the canonical  partition as follows: for partitions $P$ and $\widetilde{P}$, 
 $$P\sim \widetilde{P}\;\iff
\quad \left(\forall\; 1\le  i,j\le n, \quad \quad \quad \; R_{i}\cap R_j=\emptyset \iff
  \widetilde{R}_{i}\cap \widetilde{R}_j=\emptyset \right)$$
i.e., the sub-regions have the same neighbors. 
We denote $SP({\cal P}_{\textbf{a}})$ the collection of equivalent classes of the canonical partition of polygon ${\cal P}_{\textbf{a}}$: 
$$
SP({\cal P}_{\textbf{a}}):= \{[P]: \; \hbox{$[P]$ is equivalent class  of the canonical partition of polygon ${\cal P}_{\textbf{a}}$}\}$$
For each class of canonical partition of $n$ polygon, we assign {a tree structure by removing the edges of the  polygon. }  We denote $TSP({\cal P}_{\textbf{a}})$  the collection of  trees for the classes of the canonical partitions of the polygon ${\cal P}_{\textbf{a}}$:  
$$
TSP({\cal P}_{\textbf{a}}):=\left\{\Gamma: \Gamma\sim [P]\in SP({\cal P}_{\textbf{a}})\right\}$$

\end{definition}

{We divide edges of a tree $\Gamma$ into two classes: 1. boundary edges consisting of any edge with a vertex in the polygon. 2. internal edges consisting of the rest.  Finally, polygon edges are those edges in the original polygon. }

\begin{lemma}[Classification of canonical partitions]\label{lem_CSP}
Let $\Gamma_\textbf{a}\in TSP({\cal P}_\textbf{a})$ be the  tree for a canonical partition   polygon ${\cal P}_{a}$. 
Denote by ${\cal F}(\Gamma_{\textbf{a}})$    as the collection of the pairs of subregions that are non-adjacent but sharing an internal edge in $\Gamma_a$, i.e., 
\begin{equation}\label{set_csp}
{\cal F}(\Gamma_{\textbf{a}}):=
\Big\{\{i,j\}: |R_i\,\cap\,R_j|\in {\cal E} (\Gamma_{\textbf{a}}),    \quad |i-j|>1 \mod n \Big\}.  
\end{equation}
Here  $\mod n$ implies that $|1- n |= 1$. 
 E.g., for the partition in Fig. \ref{fig_SPPol}, we have ${\cal F}=\{\{1,4\},\{6,4\} \}$. 

Then for  $\Gamma'_\textbf{a}$, $\Gamma_\textbf{a} \in TSP({\cal P}_\textbf{a})$, we have 
$$
\Gamma'_\textbf{a}=\Gamma _\textbf{a}\iff {\cal F}(\Gamma_{\textbf{a}})={\cal F}(\Gamma'_{\textbf{a}})
$$
Futhermore,  we says $\{i,j\}$ and  $\{k,l\}$ (with $i<j$, $k<l$) are crossing pairs if they satisfy 
 $$
 i<k<j<l \quad \text{or} \;  k<i<l<j
 $$
 where the ordering is on $\mathbb Z$ instead of $\mathbb Z_n$. 
 Then we have the following properties 
 \begin{enumerate}
     \item  ${\cal F}(\Gamma_{\textbf{a}})$ contains  {\textit no} crossing pairs.
     \item If ${\cal F}^*$ is a subset of $\left\{\{i, j\}:   | i-j|>1, \mod  \, n\right\}$ and there is {\textit no} crossing pairs in ${\cal F}^*$, then there exists a 
     canonical partition $\Gamma_{\textbf{a}}$ such that  ${\cal E}^*={\cal F}(\Gamma_{\textbf{a}})$.
 \end{enumerate}
\end{lemma}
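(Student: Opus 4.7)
The plan is to treat the classification statement as a version of the well-known bijection between non-crossing structures on a polygon and planar dissections, and to exploit strongly that the subregions $R_1,\dots,R_n$ are in canonical one-to-one correspondence with the polygon edges $e_1,\dots,e_n$. I would organize the proof in three stages corresponding to the three claims: uniqueness of $\Gamma_{\mathbf a}$ from ${\cal F}(\Gamma_{\mathbf a})$, the non-crossing property of ${\cal F}(\Gamma_{\mathbf a})$, and the surjectivity onto non-crossing families.

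For uniqueness, I would argue that the equivalence class $[P]\in SP({\cal P}_{\mathbf a})$ is fully recovered from the list of pairs $\{i,j\}$ with $R_i\cap R_j\ne \emptyset$. By the last bullet in the definition of canonical partition, adjacent subregions $R_i$ and $R_{i+1}$ always intersect (in the vertex $a_i$), so these adjacencies are universal. The non-adjacent intersections are exactly the pairs recorded in ${\cal F}(\Gamma_{\mathbf a})$. Since equivalence of canonical partitions was defined precisely by the pattern of pairwise intersections of the $R_i$'s, ${\cal F}(\Gamma_{\mathbf a})={\cal F}(\Gamma'_{\mathbf a})$ forces $[P]=[P']$, and hence $\Gamma_{\mathbf a}=\Gamma'_{\mathbf a}$ after removing the polygon edges.

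For the non-crossing property, I would rely on planarity. Suppose $\{i,j\}$ and $\{k,l\}$ both lie in ${\cal F}(\Gamma_{\mathbf a})$ with $i<k<j<l$; then the internal edge $e_{ij}:=R_i\cap R_j$ and the internal edge $e_{kl}:=R_k\cap R_l$ are two chords of the disk bounded by ${\cal P}_{\mathbf a}$ after contracting internal faces. Because $R_i$ (containing $e_i$) and $R_j$ (containing $e_j$) are separated along the polygon boundary by the arcs $e_{i+1}\cdots e_{j-1}$ and $e_{j+1}\cdots e_{i-1}$ (indices mod $n$), the chord $e_{ij}$ separates the disk into two components; the labels $k,l$ with $i<k<j<l$ force $e_k$ and $e_l$ to lie in opposite components, so any continuous chord from $R_k$ to $R_l$ must cross $e_{ij}$, contradicting that the $R_m$'s are disjoint open polygons. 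This gives item 1.

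For item 2, I would construct $\Gamma_{\mathbf a}$ from a given non-crossing set ${\cal F}^*$ by induction on $n$. If ${\cal F}^*$ is empty, take the canonical partition to be the single polygon ${\cal P}_{\mathbf a}$ itself (a star tree with central region containing all edges—after which the bullet "one subregion per polygon edge" forces a further trivial refinement at each $a_i$; alternatively one treats small $n$ as base cases). If ${\cal F}^*\ne\emptyset$, pick any $\{i,j\}\in {\cal F}^*$; non-crossing-ness implies that ${\cal F}^*$ splits into two parts, one supported on the cyclic interval $[i,j]$ and the other on $[j,i]$ (indices mod $n$). Draw a chord in the polygon separating the edges $e_{i+1},\dots,e_j$ from $e_{j+1},\dots,e_i$; this chord will be the internal edge $R_i\cap R_j$ (so I in fact choose $\{i,j\}$ to be an \emph{outermost} pair to ensure it can be realized as the common boundary of two regions at the next inductive step). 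Recursing in each side polygon with the induced non-crossing families produces canonical partitions of the two halves, which glue along the chord to yield a canonical partition of ${\cal P}_{\mathbf a}$ whose ${\cal F}$ is exactly ${\cal F}^*$.

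The main obstacle is the bookkeeping in the inductive construction: one has to check carefully that the canonical-partition axioms (one polygon edge per subregion, each vertex in exactly two regions) are preserved when gluing two canonical partitions of sub-polygons along an internal chord, and that the induced non-crossing subfamilies on the two halves contain exactly the pairs needed so that the glued ${\cal F}$ equals the original ${\cal F}^*$. Once that verification is set up, uniqueness and non-crossing-ness are essentially planarity arguments and the surjectivity follows by straightforward induction on $|{\cal F}^*|$.
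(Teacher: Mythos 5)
Your overall strategy is essentially the same as the paper's: establish the bijection by a split-and-recurse induction on a chord (internal edge). The paper packages all three claims into a single induction (base case $\mathcal{F}=\emptyset$ gives the star tree; for $\mathcal{F}\ne\emptyset$ pick a pair, split the polygon into two sub-polygons, and recurse), so your surjectivity step and the paper's argument coincide, and your planarity argument for the non-crossing property is a valid way to make explicit what the paper leaves implicit.

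However, there is a genuine gap in your uniqueness argument. You claim that ``the non-adjacent intersections are exactly the pairs recorded in $\mathcal{F}(\Gamma_{\mathbf a})$,'' and conclude that $\mathcal{F}(\Gamma_{\mathbf a})=\mathcal{F}(\Gamma'_{\mathbf a})$ forces the same intersection pattern and hence $[P]=[P']$. This identification is false: in the star partition (Figure \ref{fig_star} of the paper), every two non-adjacent regions $R_i$ and $R_j$ intersect at the single interior vertex $b_1$, so $R_i\cap R_j\ne\emptyset$, yet no pair shares an internal \emph{edge}, so $\mathcal{F}(\Gamma_{\mathbf a})=\emptyset$. Thus the equivalence relation as written in the paper (same pattern of \emph{non-empty} intersections) is not the same predicate as $\mathcal{F}$ (same pattern of shared \emph{internal edges}), and your uniqueness step cannot be read off the definitions. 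You can repair this either by reinterpreting the paper's equivalence (the parenthetical ``same neighbors'' suggests edge-sharing was intended) or, more robustly, by deriving uniqueness from the same induction you use for surjectivity: once $\{i,j\}\in\mathcal{F}$ is chosen, the chord $R_i\cap R_j$ separates the polygon into two halves whose trees are determined inductively by the induced subfamilies of $\mathcal{F}$, which is precisely the route the paper takes. The rest of your proposal, including the ``outermost pair'' precaution in the inductive construction, is correct but slightly more cautious than necessary, since for a non-crossing family any chord already yields a clean separation.
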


\begin{proof}[Proof of lemma \ref{lem_CSP}] We prove this lemma by  mathematical induction. First if \eqref{set_csp} is empty, then the tree graph inside must be a star as in Figure \ref{fig_star}. 
\begin{figure}[ht]
    \centering
      \scalebox{0.5}{
    \begin{tikzpicture}
        % Define vertices of the hexagon
        \coordinate (A1) at (0, 3);      % Top
        \coordinate (A2) at (2.6, 1.5);  % Top-right
        \coordinate (A3) at (2.6, -1.5); % Bottom-right
        \coordinate (A4) at (0, -3);     % Bottom
        \coordinate (A5) at (-2.6, -1.5);% Bottom-left
        \coordinate (A6) at (-2.6, 1.5); % Top-left
        \coordinate (Center) at (0, 0);  % Center

        % Draw hexagon outline
        \draw[thick, black] (A1) -- (A2) -- (A3) -- (A4) -- (A5) -- (A6) -- cycle;

        % Draw lines from each vertex to the center
        \draw[thick, blue] (A1) -- (Center);
        \draw[thick, blue] (A2) -- (Center);
        \draw[thick, blue] (A3) -- (Center);
        \draw[thick, blue] (A4) -- (Center);
        \draw[thick, blue] (A5) -- (Center);
        \draw[thick, blue] (A6) -- (Center);

        % Draw dots at vertices and center
        \fill[black] (A1) circle (2pt);
        \fill[black] (A2) circle (2pt);
        \fill[black] (A3) circle (2pt);
        \fill[black] (A4) circle (2pt);
        \fill[black] (A5) circle (2pt);
        \fill[black] (A6) circle (2pt);
        \fill[black] (Center) circle (2pt);

    \end{tikzpicture}
    }
    \caption{Star graph}
    \label{fig_star}
\end{figure}
From now on, we assume that the set in \eqref{set_csp} is nonempty. Assume  for example that $(1,3)\in {\cal F}$. By assumption,  there exists an internal edge connecting with $R_1$ and $R_3$. Then the partition can be reduced to two partitions in the smaller polygons:
$$
 {\cal P}_{(a_1,a_2,a)} ,\quad \quad  {\cal P}_{(a_3,a_4,a_5,a_6,b)}. %\quad \color{blue}{fixed}
$$
where $a$ and $b$ are two additional vertices to form two polygons. The rationale for this construction is that once $(1,3)\in {\cal F}$ is given, the original polygon
will be divided into two regions which will not ``communicate". The vertices $a$ and. $b$ are added to get back to polygon language. 
Based on this observation, one can easily finish the induction proof.  
$$
 \scalebox{0.8}{\begin{minipage}{0.55\textwidth}
    \begin{tikzpicture}
        % Define vertices of the hexagon
        \coordinate (A1) at (0, 3);      % Top
        \coordinate (A2) at (2.6, 1.5);  % Top-right
        \coordinate (A3) at (2.6, -1.5); % Bottom-right
        \coordinate (A4) at (0, -3);     % Bottom
        \coordinate (A5) at (-2.6, -1.5);% Bottom-left
        \coordinate (A6) at (-2.6, 1.5); % Top-left

        % Define inner intersection points
        \coordinate (B1) at (-1, 1.5);          % Near center but higher
        \coordinate (B2) at (-0.5, 1.0);        % Right-center
        \coordinate (B3) at (0, 0.5);       % Bottom-right center
        \coordinate (B4) at (0.5,  0);      % Bottom-left center
        \coordinate (B5) at (1, -1.5);       % Left-center

        % Draw hexagon outline with edge labels
        \draw[thick, black] (A1) -- (A2) node[midway, above right, blue] {$e_3$};
        \draw[thick, black] (A2) -- (A3) node[midway, right, blue] {$e_4$};
        \draw[thick, black] (A3) -- (A4) node[midway, below right, blue] {$e_5$};
        \draw[thick, black] (A4) -- (A5) node[midway, below left, blue] {$e_6$};
        \draw[thick, black] (A5) -- (A6) node[midway, left, blue] {$e_1$};
        \draw[thick, black] (A6) -- (A1) node[midway, above left, blue] {$e_2$};

        % Draw red lines from vertices to the intermediate points
       \draw[thick, blue] (A1) -- (B1) -- (A6);
        \draw[ultra thick, purple] (B1) -- (B4); 
        \draw[thick, blue] (B4) -- (A5);
        \draw[thick, purple] (B4) -- (B5) ;
        \draw[thick, blue] (B5) -- (A2);
        \draw[thick, blue] (A3) -- (B5) -- (A4);

        % Draw dashed lines from vertices to the opposite inner points
        \draw[thick, black, dashed] (A1) -- (B2)--(A6);
        \draw[thick, black, dashed] (A2) -- (B3)--(A5);

        % Draw dots at vertices and inner points
        \fill[black] (A1) circle (2pt);
        \fill[black] (A2) circle (2pt);
        \fill[black] (A3) circle (2pt);
        \fill[black] (A4) circle (2pt);
        \fill[black] (A5) circle (2pt);
        \fill[black] (A6) circle (2pt);
        \fill[black] (B1) circle (2pt);
        %\fill[black] (B2) circle (2pt);
       % \fill[black] (B3) circle (2pt);
        \fill[black] (B4) circle (2pt);
        \fill[black] (B5) circle (2pt);
        
        % Label outer vertices
        % Label outer vertices
        \node[above left] at (A1) {$a_2$};
        \node[above right] at (A2) {$a_3$};
        \node[right] at (A3) {$a_4$};
        \node[below right] at (A4) {$a_5$};
        \node[below left] at (A5) {$a_6$};
        \node[left] at (A6) {$a_1$};
    \end{tikzpicture}
\end{minipage}
\hfill
\begin{minipage}{0.45\textwidth}
    \begin{tikzpicture}
        % Define vertices of the hexagon
        \coordinate (A1) at (0, 3);      % Top
        \coordinate (A2) at (2.6, 1.5);  % Top-right
        \coordinate (A3) at (2.6, -1.5); % Bottom-right
        \coordinate (A4) at (0, -3);     % Bottom
        \coordinate (A5) at (-2.6, -1.5);% Bottom-left
        \coordinate (A6) at (-2.6, 1.5); % Top-left

        % Define inner intersection points
        \coordinate (B1) at (-1, 1.5);          % Near center but higher
        \coordinate (B2) at (-0.5, 1.0);        % Right-center
        \coordinate (B3) at (0, 0.5);       % Bottom-right center
        \coordinate (B4) at (0.5,  0);      % Bottom-left center
        \coordinate (B5) at (1, -1.5);       % Left-center

        % Draw hexagon outline with edge labels
        %\draw[thick, blue] (A1) -- (A2) node[midway, above right, blue] {$e_3$};
        \draw[thick, black] (A2) -- (A3) node[midway, right, blue] {$e_4$};
        \draw[thick, black] (A3) -- (A4) node[midway, below right, blue] {$e_5$};
        \draw[thick, black] (A4) -- (A5) node[midway, below left, blue] {$e_6$};
       % \draw[thick, blue] (A5) -- (A6) node[midway, left, blue] {$e_1$};
        \draw[thick, black] (A6) -- (A1) node[midway, above left, blue] {$e_2$};

        % Draw red lines from vertices to the intermediate points
        \draw[thick, blue] (A1) -- (B1) -- (A6);
        \draw[ very thick, blue] (B1) -- (B2); 
        \draw[ very thick, blue] (B3) -- (B4); 
        \draw[thick, blue] (B4) -- (A5);
        \draw[thick, purple] (B4) -- (B5) ;
        \draw[thick, blue] (B5) -- (A2);
        \draw[thick, blue] (A3) -- (B5) -- (A4);

        % Draw dashed lines from vertices to the opposite inner points
        \draw[thick, black, dashed] (A1) -- (B2)--(A6);
        \draw[thick, black, dashed] (A2) -- (B3)--(A5);

        % Draw dots at vertices and inner points
        \fill[black] (A1) circle (2pt);
        \fill[black] (A2) circle (2pt);
        \fill[black] (A3) circle (2pt);
        \fill[black] (A4) circle (2pt);
        \fill[black] (A5) circle (2pt);
        \fill[black] (A6) circle (2pt);
        \fill[black] (B1) circle (2pt);
        %\fill[black] (B2) circle (2pt);
       % \fill[black] (B3) circle (2pt);
        \fill[black] (B4) circle (2pt);
        \fill[black] (B5) circle (2pt);

        % Label outer vertices
        \node[above left] at (A1) {$a_2$};
        \node[above right] at (A2) {$a_3$};
        \node[right] at (A3) {$a_4$};
        \node[below right] at (A4) {$a_5$};
        \node[below left] at (A5) {$a_6$};
        \node[left] at (A6) {$a_1$};
 \node[below left] at (B2) {$a $}; 
 \node[above right] at (B3) {$b $};
        
    \end{tikzpicture}
\end{minipage}
}
$$

\end{proof}
\bigskip

\begin{definition}[Representation of ${\cal K}_{t, \boldsymbol{\sigma}, \textbf{a}}$]\label{def33} 
Assume that  $\Gamma_{\textbf{a}}\in TSP({\cal P}_{\textbf{a}})$. 
Associated with each edge $e_i$ (or the corresponding region) there is charge $\sigma_i$ and we denote by  $\boldsymbol{\sigma}\in \{+,-\}^n$ the collection of all charges. 
Let $b_1\cdots b_m$ be internal vertices of the $\Gamma_{\textbf{a}}$. Given $\boldsymbol{\sigma}$, 
$ \textbf{a}=(a_1, a_2,\cdots, a_n)$,  $\textbf{b}=(b_1\cdots b_m)$  (here we slightly abuse the notations for the vertices and their  values) and 
$t\in[0,1]$, define 
\begin{align}\label{defGabst}
\Gamma_{\textbf{a}}^{(\textbf{b})}\left( t,\boldsymbol{\sigma}\right)=& 
\prod_{e\in {\cal E} (\Gamma)} \left(f_{ t}(e)\right)_{e_i,\,e_f},\quad \quad m_i=m(\sigma_i)\in \{m, \bar m \}
\end{align}
here $e_i$ and $e_f$ are the vertices of the edge $e$,  and $f(e)$ is a matrix depending on the edge $e$ defined as follows:  
\begin{enumerate}
    \item If $e=\{a_i, b_j\}$, %, i.e., one ending point is one of the external points (i.e., $a$'s), 
    then $e$ is the boundary between $R_i$ and $R_{i+1}$, and $f_t(e)$ is defined as: 
    $$ 
    f_t(e)= \Theta^{(B)}_{t m_i m_{i+1}},\quad\quad  e=\{a_i, b_j\}
    $$
     \item If $e=\{b_i, b_j\}$, and $e$ is the boundary between   $R_k$ and $R_{l}$, then % $f(e)$ is defined as: 
    $$ 
    f_t(e)= \Theta^{(B)}_{t m_k m_{l}}-1,\quad \quad e=\{b_i, b_j\},\quad R_k\cap R_l =e. 
    $$
\end{enumerate}
A compact definition is
\begin{equation}\label{def_feRR}
    f(e) =
 \Theta^{(B)}_{t m_k m_{l}}-{\bf 1}\Big(|k  - l|\ne 1,\mod n\Big),\quad  \quad R_k\cap R_l =e 
\end{equation}
For $\Gamma_{\textbf{a}}\in TSP({\cal P}_{\textbf{a}})$, define  %(with $m$ internal vertices $b$'s), we define
\begin{equation}\label{def_Gatsiga}
   \Gamma_{t, \boldsymbol{\sigma}, \textbf{a}}
 :=
 \Gamma_{\textbf{a}}(t, \boldsymbol{\sigma})
 : = \sum_{\textbf{b}\,\in \,\mathbb Z_L^m} \Gamma^{(\textbf{b})}_{\textbf{a}}(t, \boldsymbol{\sigma}) .
\end{equation}
 
\end{definition}

For example $\Gamma_{\textbf{a}}^{(\textbf{b})}$ in Fig. \ref{fig_SPPol} is given by 
\begin{align}   \nonumber
\Gamma_{\textbf{a}}^{(\textbf{b})}
=& (\Theta^{(B)}_{t, m_1 m_2})_{a_1, b_1} \cdot (\Theta^{(B)}_{t, m_2 m_3})_{a_2, b_1} \cdot (\Theta^{(B)}_{t, m_3 m_4})_{a_3, b_1} \cdot (\Theta^{(B)}_{t, m_4 m_5})_{a_4, b_3} \nonumber \\  \nonumber
& 
\times (\Theta^{(B)}_{t, m_5 m_6})_{a_5, b_3} \cdot (\Theta^{(B)}_{t, m_6 m_1})_{a_6, b_2}\cdot 
\left(\Theta^{(B)}_{t, m_1 m_4} -1\right)_{b_1, b_2}
\cdot
\left(\Theta^{(B)}_{t, m_4 m_6} -1\right)_{b_2, b_3} 
 \end{align}
The key result in this subsection is the following  representation of ${\cal K}_{t, \boldsymbol{\sigma}, \textbf{a}}$. % with  $\Gamma_{t, \boldsymbol{\sigma}, \textbf{a}}$. 

\begin{lemma}[Tree Representation of $\cal K$]\label{Lemma_TRofK}
   For $n \ge 2$, we have
   \begin{equation}\label{eq_trcalK}
    {\cal K}_{t, \boldsymbol{\sigma}, \textbf{a}} =m_{\boldsymbol{\sigma}}\cdot W^{-n+1}\sum_{\Gamma_{\textbf{a}}\, \in\,  {TSP}({\cal P}_{\textbf{a}})} \Gamma_{ \textbf{a}}(t, \boldsymbol{\sigma}),\quad \quad \quad m_{\boldsymbol{\sigma}}:=\prod_{i=1}^n m(\sigma_i);
   \end{equation}  
\end{lemma}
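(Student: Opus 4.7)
The plan is to show that both sides of \eqref{eq_trcalK} solve the same initial-value problem for the primitive equation \eqref{pro_dyncalK}, and then invoke ODE uniqueness. The right-hand side is a polynomial (in fact quadratic) function of the $\cal K$-variables, hence locally Lipschitz, so once initial datum and ODE are matched the two sides must coincide on any interval on which both are bounded (which holds for $t<1$ by the explicit finiteness of the tree sum and analyticity of $\Theta^{(B)}_\xi$). Denote the right-hand side by $F_{t,\boldsymbol{\sigma},\textbf{a}} := m_{\boldsymbol{\sigma}} W^{-n+1} \sum_{\Gamma \in TSP({\cal P}_{\textbf{a}})} \Gamma_{\textbf{a}}(t,\boldsymbol{\sigma})$.

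For the initial condition at $t=0$: since $\Theta^{(B)}_0 = I$, every internal-edge weight $f_0(e) = \Theta^{(B)}_0 - I$ vanishes, so only the star tree (for $n\ge 3$, the unique tree with one interior vertex $b$ joined to all $a_i$; for $n=2$ the single edge $\{a_1,a_2\}$) survives. It contributes $\sum_b \prod_i (\Theta^{(B)}_0)_{a_i,b} = \mathbf{1}(a_1 = \cdots = a_n)$, and multiplying by $m_{\boldsymbol{\sigma}} W^{-n+1}$ recovers ${\cal K}_{0,\boldsymbol{\sigma},\textbf{a}}$.

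For the ODE, differentiate $F$ term by term. By \eqref{deri_Thxi}, for any edge $e$ separating regions $R_k$ and $R_l$ we have $\partial_t f_t(e) = m_k m_l (\Theta^{(B)})_{\cdot\, c_1}\, S^{(B)}_{c_1 c_2}\, (\Theta^{(B)})_{c_2\, \cdot}$, independently of whether $e$ is boundary or internal (the $-I$ subtracted for internal edges has zero derivative). For each pair $(\Gamma,e)$, removing $e$ splits $\Gamma$ into two subtrees, and the two new summed vertices $c_1,c_2$ (linked by $S^{(B)}$) attach on the two sides. By the classification Lemma \ref{lem_CSP}, these subtrees are precisely the canonical-partition trees of the two sub-polygons obtained by cutting the loop at positions $k$ and $l$, i.e.\ the polygons associated with ${\cal G}^{(c_1),L}_{k,l}\circ {\cal L}$ and ${\cal G}^{(c_2),R}_{k,l}\circ {\cal L}$; the newly introduced edges incident to $c_1,c_2$ are boundary edges in their sub-polygons (since the cut makes $R_k$ and $R_l$ adjacent there), so their $f_t$-weights are pure $\Theta^{(B)}$, matching $\partial_t \Theta^{(B)}$. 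The prefactor bookkeeping works cleanly: $\sigma_k$ and $\sigma_l$ each appear once in each of the two sub-charge vectors, giving $m_{\boldsymbol{\sigma}_L} m_{\boldsymbol{\sigma}_R} = m_{\boldsymbol{\sigma}}\cdot m_k m_l$, which absorbs the $m_k m_l$ produced by $\partial_t \Theta^{(B)}_{t m_k m_l}$; and $n_L + n_R = n+2$ gives $W^{-(n_L-1)} W^{-(n_R-1)} = W\cdot W^{-(n-1)}$, producing the leading $W$-factor in \eqref{pro_dyncalK}. Summing over $(k,l)$ and all tree-edge pairs reproduces $\partial_t F = W \sum_{k<l}\sum_{a,b} ({\cal G}^{(a),L}_{k,l}\circ F)\, S^{(B)}_{ab}\, ({\cal G}^{(b),R}_{k,l}\circ F)$, so $F$ satisfies the same ODE as $\cal K$.

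The main obstacle is establishing the bijection $(\Gamma,e) \leftrightarrow ((k,l),\Gamma_L,\Gamma_R)$ between the set of trees on ${\cal P}_{\textbf{a}}$ with a marked edge separating $R_k$ and $R_l$ (with $k<l$) and the set of triples consisting of a non-crossing cut pair $(k,l)$ together with canonical-partition trees on the two sub-polygons. The forward map is transparent (cut $\Gamma$ at $e$); the inverse requires that the recombined ${\cal F}(\Gamma_L)\cup\{(k,l)\}\cup {\cal F}(\Gamma_R)$ be non-crossing, since Lemma \ref{lem_CSP} then guarantees it determines a unique tree in $TSP({\cal P}_{\textbf{a}})$. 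This holds because $(k,l)$ partitions the remaining region-indices into two disjoint cyclic arcs, so any pair inside $\Gamma_L$ lies in one arc and any pair inside $\Gamma_R$ lies in the other, and neither can cross $(k,l)$; pairs within each subtree are non-crossing by Lemma \ref{lem_CSP} applied to the subtrees. Once this bijection is in place, combining the steps above and uniqueness yields \eqref{eq_trcalK}.
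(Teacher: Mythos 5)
Your proposal is correct and follows essentially the same route as the paper's proof: identify the tree sum $F$ as the solution of the same initial-value problem for the primitive hierarchy \eqref{pro_dyncalK}, verify the $t=0$ value via the collapse to the star tree, differentiate edge by edge using \eqref{deri_Thxi}, and match the result to the ${\cal G}^L$--$S^{(B)}$--${\cal G}^R$ structure. The one place you are slightly more explicit than the paper is the bijection $(\Gamma,e)\leftrightarrow((k,l),\Gamma_L,\Gamma_R)$: the paper asserts the forward direction and implicitly sums over all $(\Gamma',\Gamma'')$, while you spell out why recombining ${\cal F}(\Gamma_L)\cup\{(k,l)\}\cup{\cal F}(\Gamma_R)$ is non-crossing and hence determines a unique tree via Lemma \ref{lem_CSP}; this is a small but genuine clarification rather than a different method.
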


%\textbf{Example:} 
As an example, we give the tree graph representation of $\cal K$ for $n = 4$.
\begin{figure}[ht]
    \centering
    \scalebox{0.8}{
    \begin{tikzpicture}
        % First Diagram
        % Define vertices
        \coordinate (A1) at (0, 2);
        \coordinate (A2) at (2, 0);
        \coordinate (A3) at (0, -2);
        \coordinate (A4) at (-2, 0);
        \coordinate (B1) at (0, 0);

        % Draw the diamond
        \draw[thick, black] (A1) -- (A2) -- (A3) -- (A4) -- cycle;

        % Draw blue lines
        \draw[thick, blue] (A1) -- (B1);
        \draw[thick, blue] (A2) -- (B1);
        \draw[thick, blue] (A3) -- (B1);
        \draw[thick, blue] (A4) -- (B1);

        % Label vertices
        \node[above] at (A1) {$a_1$};
        \node[right] at (A2) {$a_2$};
        \node[below] at (A3) {$a_3$};
        \node[left] at (A4) {$a_4$};
        \node[below right] at (B1) {$b_1$};

        % Second Diagram
        % Shifted to the right
        \begin{scope}[shift={(7, 0)}]
            % Define vertices
            \coordinate (A1) at (0, 2);
            \coordinate (A2) at (2, 0);
            \coordinate (A3) at (0, -2);
            \coordinate (A4) at (-2, 0);
            \coordinate (B1) at (0, 0.75);
            \coordinate (B2) at (0, -0.75);

            % Draw the diamond
            \draw[thick, black] (A1) -- (A2) -- (A3) -- (A4) -- cycle;

            % Draw blue lines
            \draw[thick, blue] (A1) -- (B1);
            \draw[thick, blue] (A2) -- (B1);
            \draw[thick, blue] (A3) -- (B2);
            \draw[thick, blue] (A4) -- (B2);

            % Draw the thick purple line
            \draw[very thick, purple] (B1) -- (B2);

            % Label vertices
            \node[above] at (A1) {$a_1$};
            \node[right] at (A2) {$a_2$};
            \node[below] at (A3) {$a_3$};
            \node[left] at (A4) {$a_4$};
            \node[below right] at (B1) {$b_1$};
            \node[below left] at (B2) {$b_3$};
        \end{scope}

        % Third Diagram
        % Shifted further to the right
         \begin{scope}[shift={(14, 0)}]
            % Define vertices
            \coordinate (A1) at (0, 2);
            \coordinate (A2) at (2, 0);
            \coordinate (A3) at (0, -2);
            \coordinate (A4) at (-2, 0);
            \coordinate (B1) at (0, 0.75);
            \coordinate (B2) at (0, -0.75);

            % Draw the diamond
            \draw[thick, black] (A1) -- (A2) -- (A3) -- (A4) -- cycle;

            % Draw blue lines
            \draw[thick, blue] (A1) -- (B1);
            \draw[thick, blue] (A4) -- (B1);
            \draw[thick, blue] (A3) -- (B2);
            \draw[thick, blue] (A2) -- (B2);

            % Draw the thick purple line
            \draw[very thick, purple] (B1) -- (B2);

            % Label vertices
            \node[above] at (A1) {$a_1$};
            \node[right] at (A2) {$a_2$};
            \node[below] at (A3) {$a_3$};
            \node[left] at (A4) {$a_4$};
            \node[below right] at (B1) {$b_1$};
            \node[below left] at (B2) {$b_2$};
        \end{scope}

    \end{tikzpicture}
    }
    \caption{Graphs for $n=4$}
    \label{fig_n=4}
\end{figure}
There are three graphs for the case $n = 4$, as in Figure \ref{fig_n=4}.  The blue edges are boundary edges and equal to $\Theta^{(B)}$. The purple edges are internal edges equal to  $\Theta^{(B)} - 1$.  The r.h.s. of \eqref{eq_trcalK} equals to 

\begin{align}\nonumber
\sum_{\Gamma_{\textbf{a}}\, \in\,  {TSP}({\cal P}_{\textbf{a}})} \Gamma_{ \textbf{a}}(t, \boldsymbol{\sigma})= & \sum_{b_1, b_2, b_3, b_4}  \left(\prod_{i=1}^4 \left(\Theta^{(B)}_{t m_{i-1} m_i}\right)_{a_i b_i}\right) \times
\\\nonumber
& \times \left( \delta_{b_1 b_2 b_3 b_4}
+ \delta_{b_1 b_2} \delta_{b_3 b_4} \left(\Theta^{(B)}_{t m_1 m_3} - 1\right)_{b_1 b_3}
+ \delta_{b_1 b_4} \delta_{b_2 b_3} \left(\Theta^{(B)}_{t m_2 m_4} - 1\right)_{b_1 b_2}\right)
\end{align}

\begin{corollary}[Pure loop $\cal K$]\label{lem_pureloop} In the special case %$\sigma_k=1$ for $1\le k\le n$, i.e, 
$\sigma=(+,+\cdots +)$, we have 
\begin{align}\label{res_pureKes}\left|{\cal K}_{t, \boldsymbol{\sigma}, \textbf{a}}\right|\le C_n \exp\left(-c_n\max_{ij}\|a_i-a_j\|\right),\quad \|a_i-a_j\|=(a_i-a_j) \mod L
\end{align} 
\end{corollary}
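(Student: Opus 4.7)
The plan is to apply the tree representation of Lemma~\ref{Lemma_TRofK}: when $\boldsymbol{\sigma}=(+,\ldots,+)$, every $m_i=m$, so each edge factor is either $\Theta^{(B)}_{tm^2}$ (boundary edge) or $\Theta^{(B)}_{tm^2}-\mathbf{1}$ (internal edge). The whole task reduces to bounding each tree $\Gamma_{\textbf{a}}(t,\boldsymbol{\sigma})$ by an exponential in $D:=\max_{i,j}\|a_i-a_j\|$, then summing over the $C_n$ trees in $TSP({\cal P}_{\textbf{a}})$.

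First I would establish \emph{uniform exponential decay} of $\Theta^{(B)}_{tm^2}$ on $t\in[0,1]$. Using $m^2+Em+1=0$ and $|m|=1$, a direct computation gives
\[
|1-tm^2|^2 = (1-t)^2 + t(4-E^2),
\]
so for $|E|\le 2-\kappa$ one has $|1-tm^2|\ge c(\kappa)>0$ uniformly in $t$. Plugging into Lemma~\ref{lem_propTH} yields $\hat\ell(tm^2)\le C(\kappa)$ and therefore
\[
|(\Theta^{(B)}_{tm^2})_{xy}|\le C(\kappa)\, e^{-c(\kappa)\|x-y\|},
\]
with decay rate independent of $t$ and $L$. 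The internal-edge factor $\Theta^{(B)}_{tm^2}-\mathbf{1}=tm^2 S^{(B)}\Theta^{(B)}_{tm^2}$ obeys the same kind of bound because $S^{(B)}$ is nearest-neighbor. Note this is the one place where purity of $\boldsymbol{\sigma}$ is essential: mixed signs would produce factors $\Theta^{(B)}_{t|m|^2}=\Theta^{(B)}_{t}$, for which $|1-t|$ can be arbitrarily small.

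Next, for each fixed tree $\Gamma\in TSP({\cal P}_{\textbf{a}})$, I would extract the decay along a single tree path. Let $(i^*,j^*)$ achieve $D=\max_{i,j}\|a_i-a_j\|$ and let $P\subset E(\Gamma)$ denote the unique tree path from $a_{i^*}$ to $a_{j^*}$. The triangle inequality on $\mathbb{Z}_L$ gives $\sum_{e\in P}\|e\|\ge D$. Splitting each edge exponential as $e^{-c\|e\|}=e^{-(c/2)\|e\|}\cdot e^{-(c/2)\|e\|}$ and using the first half only on edges of $P$ yields
\[
\prod_{e\in E(\Gamma)}|f_t(e)|\le C^{|E(\Gamma)|} e^{-(c/2)D}\prod_{e\in E(\Gamma)}e^{-(c/2)\|e\|}.
\]

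Finally, the residual sum over the internal vertices $\textbf{b}$ is handled by the trivial estimate $\sum_{b\in\mathbb{Z}_L}e^{-(c/2)\|b-p\|}\le C_\kappa$ uniformly in $p$. Processing the internal vertices one at a time (dropping all but one decaying factor touching each newly-integrated vertex) gives $\sum_{\textbf{b}}\prod_e e^{-(c/2)\|e\|}\le C_n$. Combining $|m|^n=1$, the prefactor $W^{-n+1}\le 1$, and $|TSP({\cal P}_{\textbf{a}})|\le C_n$, one obtains $|{\cal K}_{t,\boldsymbol{\sigma},\textbf{a}}|\le C_n e^{-c_n D}$ as desired. The only subtle step is the uniform lower bound on $|1-tm^2|$; the rest is standard combinatorics on finite trees, and no cancellations between different trees are required.
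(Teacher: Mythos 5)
Your proof is correct and takes essentially the same route as the paper: tree representation from Lemma~\ref{Lemma_TRofK}, then the exponential decay of every edge factor $\Theta^{(B)}_{tm^2}$ (or $\Theta^{(B)}_{tm^2}-\mathbf{1}$) via Lemma~\ref{lem_propTH}, and finally summing out the internal vertices. Your explicit identity $|1-tm^2|^2=(1-t)^2+t(4-E^2)$ is a nice way of making precise the uniform-in-$t$ lower bound that the paper invokes more tersely when it asserts that $\Theta^{(B)}_{tm^2}$ decays exponentially at a $t$-independent rate.
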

\begin{proof}[Proof of Corollary \ref{lem_pureloop}]
By  assumption that  $\sigma_i=+$ for all $i$,   $f_t(e)$ in \eqref{def_feRR} is either  $\Theta^{(B)}_{tm^2}$ or $\Theta^{(B)}_{tm^2}-1$. Applying  \eqref{prop:ThfadC} with $\xi=tm^2$, we obtain that $\|\Theta^{(B)}_{tm^2}\|_{\max}=O(1)$ and $\Theta^{(B)}_{tm^2}$decays exponentially. Thus 
$$  \left|\Gamma^{(\textbf{b})}_{\textbf{a}}(t, \boldsymbol{\sigma})\right|\le C_n \exp\left(-c_n\max_{ij}\|b_i-b_j\|-c_n \max_{ij}\|a_i-b_j\|\right).
 $$
Together with the \eqref{def_Gatsiga} and \eqref{eq_trcalK}, we obtain the desired result \eqref{res_pureKes}. 
\end{proof}
\begin{proof}[Proof of Lemma \ref{Lemma_TRofK}]
 In this proof, we temporally denote 
   $$
   \widetilde{ {\cal K}}_{t, \boldsymbol{\sigma}, \textbf{a}}:= m_{\boldsymbol{\sigma}}\cdot W^{-n+1}    \sum_{\Gamma_{\textbf{a}}\, \in\,  {TSP}({\cal P}_{\textbf{a}})}  \Gamma_{t, \boldsymbol{\sigma}, \textbf{a}},\quad \quad \quad \quad m_{\boldsymbol{\sigma}}:=\prod_{i=1}^n m(\sigma_i);
   $$
   We will show that 
  \begin{equation}\label{jlajus}
     \widetilde{ {\cal K}}_{t, \boldsymbol{\sigma}, \textbf{a}}
   =
    { {\cal K}}_{t, \boldsymbol{\sigma}, \textbf{a}},\quad  \text{for} \; \quad  t=0   \end{equation}
and   $\widetilde{\cal K}$ satisfies the dynamics equation for $ \cal K$ in \eqref{pro_dyncalK}, i.e., 
  \begin{align}\label{pjlacalK}
       \frac{d}{dt}\, \widetilde{ {\cal K}}_{t, \boldsymbol{\sigma}, \textbf{a}} 
       =  
       {W}\cdot  \sum_{1 \le k < l \le n} \sum_{a, b} \; \left( {\cal G}^{(a), L}_{k, l} \circ  \widetilde{ {\cal K}}_{t, \boldsymbol{\sigma}, \textbf{a}} \right) S^{(B)}_{ab} \left( {\cal G}^{(b), R}_{k, l} \circ  \widetilde{ {\cal K}}_{t, \boldsymbol{\sigma}, \textbf{a}} \right)  
    \end{align}
    here
    $$
    {\cal G}^{(a), L}_{k, l} \circ  \widetilde{ {\cal K}}_{t, \boldsymbol{\sigma}, \textbf{a}}  \,=\,\widetilde{ {\cal K}}_{t, \; {\cal G}^{(a), L}_{k, l}\left( \boldsymbol{\sigma}, \; \textbf{a}\right) } ,\quad \quad 
    \quad 
    {\cal G}^{(b), R}_{k, l} \circ  \widetilde{ {\cal K}}_{t, \boldsymbol{\sigma}, \textbf{a}} 
    \,=\,\widetilde{ {\cal K}}_{t, \; {\cal G}^{(b), R}_{k, l}\left( \boldsymbol{\sigma}, \; \textbf{a}\right) }.
    $$
    For  $t = 0$, we have  by definition that 
    $$
    t = 0 \implies \Theta^{(B)}_{t\, \xi} = I 
    $$
    If the tree graph $\Gamma$ has an internal edge,  then $\Gamma_{t, \boldsymbol{\sigma}, \textbf{a}} = 0$ when $t = 0$. Hence for $t=0$, the  only non-trivial graph is the star-shaped graph (figure. \ref{fig_star}), which has only one internal vertex.
Hence  \eqref{jlajus} can be explicitly verified, i.e., 
$$
 \widetilde{{\cal K}}_{0, \boldsymbol{\sigma}, \textbf{a}} 
  =   m_{\boldsymbol{\sigma}} W^{ -n+1}\cdot\delta_{a_1, a_2, \dots, a_n}  
  = 
   W^{ -n+1}\Gamma^{(\text{star})}_{0, \boldsymbol{\sigma}, \textbf{a}} 
     = {\cal K}_{0, \boldsymbol{\sigma}, \textbf{a}} .
$$

\medskip
Now we prove \eqref{pjlacalK}. By definition,
$$
      \frac{d}{dt}\, \widetilde{ {\cal K}}_{t, \boldsymbol{\sigma}, \textbf{a}} =
     m_{\boldsymbol{\sigma}} W^{-n+1}\cdot \sum_{\Gamma_{\textbf{a}}\, \in\,  {TSP}({\cal P}_{\textbf{a}})} 
       \frac{d}{dt}\, \Gamma_{t, \boldsymbol{\sigma}
       , \textbf{a}}
       =   m_{\boldsymbol{\sigma}} W^{-n+1}\cdot \sum_{\Gamma_{\textbf{a}}\, \in\,  {TSP}({\cal P}_{\textbf{a}})}
       \sum_{\textbf{b}}
       \frac{d}{dt} \, \Gamma^{(\textbf{b})}_{t, \boldsymbol{\sigma}
       , \textbf{a}}
$$
 Recall $\Gamma^{(\textbf{b})}_{t, \boldsymbol{\sigma}, \textbf{a}}$ defined in \eqref{defGabst}. 
 Due to $ d m_i/dt = 0$, the derivative $d/dt$  acts only on the $f_t(e)$.   By definition, 
       % of $\Gamma_{t, \boldsymbol{\sigma}, \textbf{a}}$ in \eqref{defGabst}. 
$$
\frac{d}{dt} \Theta^{(B)}_{t \xi} = \Theta^{(B)}_{t \xi} \cdot \xi S^{(B)} \cdot \Theta^{(B)}_{t \xi}
$$
Therefore, 
$$
\frac{d}{dt} \Theta^{(B)}_{t m_i m_j} = m_i m_j \left( \Theta^{(B)}_{t m_i m_j} \cdot S^{(B)} \cdot \Theta^{(B)}_{t m_i m_j}\right)
$$
In other words, the derivative of a blue edge $\Theta^{(B)}$ or a purple edge $\Theta^{(B)} - 1$ equals $m_i m_j$ times two blue edges with $S^{(B)}$ in the middle.

\begin{figure}[ht]
    \centering
    \begin{tikzpicture}[baseline=(current bounding box.center)]
        % First row: Blue derivative
        \node[blue] at (-2, 1) {\(\frac{d}{dt}\)};
        \draw[thick, blue] (-1.5, 1) -- (1.5, 1); % Blue horizontal line

        % Second row: Purple derivative
        \node[purple] at (-2, 0) {\(\frac{d}{dt}\)};
        \draw[thick, purple] (-1.5, 0) -- (1.5, 0); % Purple horizontal line

        % Equal sign
        \node at (2, 0.5) {\(=\)};

        % Right side: S^(b) and lines
        % Left blue line
        \draw[thick, blue] (2.5, 0.5) -- (4, 0.5);
        % Middle S^(b)
        \node at (4.5, 0.5) {\(S^{(B)}\)};
        % Right blue line
        \draw[thick, blue] (5 , 0.5) -- (6.5, 0.5);

        % Final blue line with "m j"
        
        \node at (7.5,0.5) {\( \cdot\; m_i m_j \)};
    \end{tikzpicture}
    \caption{Derivatives of edges}
\end{figure}

 On the other hand, we know that for fixed $\Gamma_\textbf{a}\in TSP({\cal P}_\textbf{a})$, $\boldsymbol{\sigma}$, $\textbf{a}$, $i$, $j$, there is at most one edge $e\in {\cal E}(\Gamma_\textbf{a})$ such that $e=R_i \cap R_j$. Then we can write the derivative of $\Gamma_{t, \boldsymbol{\sigma}, \textbf{a}}$  as follows
\begin{equation}\label{9aksufuo44}
  \frac{d}{dt} \, \Gamma^{(\textbf{b})}_{t, \,\boldsymbol{\sigma}
       ,\, \textbf{a}} = \sum_{1\le i < j\le n }\; \; \sum_{e\,\in \,{\cal E}(\Gamma_\textbf{a})} \textbf{1} \left(e=R_i\cap R_j\right) 
       \cdot \Gamma^{(\textbf{b})}_{t, \,\boldsymbol{\sigma}
       ,\, \textbf{a}}
       \cdot \frac{m_i \,m_j \,\left(\Theta^{(B)}_{t m_i m_j} \cdot S^{(B)} \cdot \Theta^{(B)}_{t m_i m_j}\right)_{e_i,e_f}}{\left(f(e)\right)_{e_i,e_f}} 
\end{equation}
where $f_t(e)$ is defined in \eqref{def_feRR}.
%$$
%f_t(e)=\Theta^{(B)}_{t m_i m_j}-{\bf 1}(|i-j|>1)
%$$
Suppose  that there  exists $e\,\in \,{\cal E}(\Gamma_\textbf{a})$ such that  $e=R_i\cap R_j$.  Then 
$$
\Gamma^{(\textbf{b})}_{t, \,\boldsymbol{\sigma}
       ,\, \textbf{a}}
       \cdot \frac{  \,\left(\Theta^{(B)}_{t m_i m_j} \cdot S^{(B)} \cdot \Theta^{(B)}_{t m_i m_j}\right)_{e_i,e_f}}{\left(f(e)\right)_{e_i,e_f}} ,\quad e=R_i\cap R_j 
       $$
    is equal to removing the edge $e$ in $\Gamma_\textbf{a}$ and adding two edges $\{e_i, a\}$, $\{e_f, b\}$, and a $S^{(B)}_{ab}$ in the middle. Here is an example with $i=1$ and $j=3$ in Figure \ref{fig:hexagons}. Note: this statement also holds for the case $|i-j|=1$. For example, if $i=1$, $j=2$, the triangle in the r.h.s. of Figure \ref{fig:hexagons} will become a $n=2$ polygon ${\cal P}_{(\sigma_1,\sigma_2), (a_1,a)}$ with a blue edge $(a_1,a)$ inside. 
   \begin{figure}[h!]
   \hspace{0.5in}
 \scalebox{0.8}{\begin{minipage}{0.55\textwidth}
   \begin{tikzpicture}
        % Define vertices of the hexagon
        \coordinate (A1) at (0, 3);      % Top
        \coordinate (A2) at (2.6, 1.5);  % Top-right
        \coordinate (A3) at (2.6, -1.5); % Bottom-right
        \coordinate (A4) at (0, -3);     % Bottom
        \coordinate (A5) at (-2.6, -1.5);% Bottom-left
        \coordinate (A6) at (-2.6, 1.5); % Top-left

        % Define inner intersection points
        \coordinate (B1) at (-1, 1.5);          % Near center but higher
        \coordinate (B2) at (-0.5, 1.0);        % Right-center
        \coordinate (B3) at (0, 0.5);       % Bottom-right center
        \coordinate (B4) at (0.5,  0);      % Bottom-left center
        \coordinate (B5) at (1, -1.5);       % Left-center

        % Draw hexagon outline with edge labels
        \draw[thick, black] (A1) -- (A2) node[midway, above right, blue] {$\sigma_3$};
        \draw[thick, black] (A2) -- (A3) node[midway, right, blue] {$\sigma_4$};
        \draw[thick, black] (A3) -- (A4) node[midway, below right, blue] {$\sigma_5$};
        \draw[thick, black] (A4) -- (A5) node[midway, below left, blue] {$\sigma_6$};
        \draw[thick, black] (A5) -- (A6) node[midway, left, blue] {$\sigma_1$};
        \draw[thick, black] (A6) -- (A1) node[midway, above left, blue] {$\sigma_2$};

        % Draw red lines from vertices to the intermediate points
       \draw[thick, blue] (A1) -- (B1) -- (A6);
        \draw[ultra thick, purple] (B1) -- (B4); 
        \draw[thick, blue] (B4) -- (A5);
        \draw[thick, purple] (B4) -- (B5) ;
        \draw[thick, blue] (B5) -- (A2);
        \draw[thick, blue] (A3) -- (B5) -- (A4);

        % Draw dashed lines from vertices to the opposite inner points
       % \draw[thick, black, dashed] (A1) -- (B2)--(A6);
       % \draw[thick, black, dashed] (A2) -- (B3)--(A5);

        % Draw dots at vertices and inner points
        \fill[black] (A1) circle (2pt);
        \fill[black] (A2) circle (2pt);
        \fill[black] (A3) circle (2pt);
        \fill[black] (A4) circle (2pt);
        \fill[black] (A5) circle (2pt);
        \fill[black] (A6) circle (2pt);
        \fill[black] (B1) circle (2pt);
        %\fill[black] (B2) circle (2pt);
       % \fill[black] (B3) circle (2pt);
        \fill[black] (B4) circle (2pt);
        \fill[black] (B5) circle (2pt);
        
        % Label outer vertices
        \node[above left] at (A1) {$a_2$};
        \node[above right] at (A2) {$a_3$};
        \node[right] at (A3) {$a_4$};
        \node[below right] at (A4) {$a_5$};
        \node[below left] at (A5) {$a_6$};
        \node[left] at (A6) {$a_1$};
        
    \end{tikzpicture}
\end{minipage}
\hfill
\begin{minipage}{0.45\textwidth}
    \begin{tikzpicture}
     
        % Define vertices of the hexagon
        \coordinate (A1) at (0, 3);      % Top
        \coordinate (A2) at (2.6, 1.5);  % Top-right
        \coordinate (A3) at (2.6, -1.5); % Bottom-right
        \coordinate (A4) at (0, -3);     % Bottom
        \coordinate (A5) at (-2.6, -1.5);% Bottom-left
        \coordinate (A6) at (-2.6, 1.5); % Top-left

        % Define inner intersection points
        \coordinate (B1) at (-1, 1.5);          % Near center but higher
        \coordinate (B2) at (-0.5, 1.0);        % Right-center
        \coordinate (B3) at (0, 0.5);       % Bottom-right center
        \coordinate (B4) at (0.5,  0);      % Bottom-left center
        \coordinate (B5) at (1, -1.5);       % Left-center

        % Draw hexagon outline with edge labels
        %\draw[thick, blue] (A1) -- (A2) node[midway, above right, blue] {$e_3$};
        \draw[thick, black] (A2) -- (A3) node[midway, right, blue] {$\sigma_4$};
        \draw[thick, black] (A3) -- (A4) node[midway, below right, blue] {$\sigma_5$};
        \draw[thick, black] (A4) -- (A5) node[midway, below left, blue] {$\sigma_6$};
       % \draw[thick, blue] (A5) -- (A6) node[midway, left, blue] {$e_1$};
        \draw[thick, black] (A6) -- (A1) node[midway, above left, blue] {$\sigma_2$};

        % Draw red lines from vertices to the intermediate points
        \draw[thick, blue] (A1) -- (B1) -- (A6);
        \draw[ very thick, blue] (B1) -- (B2); 
        \draw[ very thick, blue] (B3) -- (B4); 
        \draw[thick, blue] (B4) -- (A5);
        \draw[thick, purple] (B4) -- (B5) ;
        \draw[thick, blue] (B5) -- (A2);
        \draw[thick, blue] (A3) -- (B5) -- (A4);

        % Draw dashed lines from vertices to the opposite inner points
        \draw[thick, black ] (A1) -- (B2) 
        node[midway, below right, blue]{$\sigma_3$};
         \draw[thick, black ] (B2)--(A6)
         node[midway, below left, blue]{$\sigma_1$};
        \draw[thick, black ] (A2) -- (B3) 
        node[midway, above  , blue]{$\sigma_3$};
         \draw[thick, black ] (B3)--(A5)
         node[midway, above left , blue]{$\sigma_1$};

        % Draw dots at vertices and inner points
        \fill[black] (A1) circle (2pt);
        \fill[black] (A2) circle (2pt);
        \fill[black] (A3) circle (2pt);
        \fill[black] (A4) circle (2pt);
        \fill[black] (A5) circle (2pt);
        \fill[black] (A6) circle (2pt);
        \fill[black] (B1) circle (2pt);
        %\fill[black] (B2) circle (2pt);
       % \fill[black] (B3) circle (2pt);
        \fill[black] (B4) circle (2pt);
        \fill[black] (B5) circle (2pt);

       % Label outer vertices
        \node[above left] at (A1) {$a_2$};
        \node[above right] at (A2) {$a_3$};
        \node[right] at (A3) {$a_4$};
        \node[below right] at (A4) {$a_5$};
        \node[below left] at (A5) {$a_6$};
        \node[left] at (A6) {$a_1$};
 \node[below left] at (B2) {$a $}; 
 \node[above right] at (B3) {$b $};
 
 \end{tikzpicture}

   \end{minipage}
 
}
  \caption{Derivative of tree}
    \label{fig:hexagons}
\end{figure}

The new edges created by the partition will inherit the original charges. We denote them by 
\begin{equation}
\label{zzzrelation}
(\boldsymbol{\sigma}', \textbf{a}')= {\cal G}^{(a), L}_{i, j}(\boldsymbol{\sigma}, \textbf{a}) \quad\quad 
(\boldsymbol{\sigma}'', \textbf{a}'')={\cal G}^{(b), R}_{i, j}(\boldsymbol{\sigma}, \textbf{a}).
\end{equation}
%If $e=R_i\cap R_j$, then by definition, it is easy to see that 
Therefore, there exist 
$\Gamma'_{\textbf{a}'}\in TSP({\cal P}_{\textbf{a}'})$ and $ \Gamma''_{\textbf{a}''}\in TSP({\cal P}_{\textbf{a}''})$  such that (as in Figure \ref{fig:hexagons})
$$
  m_{\boldsymbol{\sigma}}\cdot \sum_{\textbf{b}}\Gamma^{(\textbf{b})}_{t, \,\boldsymbol{\sigma}
       ,\, \textbf{a}}
       \cdot \frac{m_i \,m_j \,\left(\Theta^{(B)}_{t m_i m_j} \cdot S^{(B)} \cdot \Theta^{(B)}_{t m_i m_j}\right)_{e_i,e_f}}{\left(f(e)\right)_{e_i,e_f}}     = \sum_{a, b} 
        m_{\boldsymbol{\sigma}'}\cdot \Gamma'_{t, \boldsymbol{\sigma}', \textbf{a}'} \cdot S^{(B)}_{ab} \cdot 
         m_{\boldsymbol{\sigma}''}\Gamma''_{t, \boldsymbol{\sigma}'', \textbf{a}''}.
$$
Thus,
$$
 m_{\boldsymbol{\sigma}}\cdot \frac{d}{dt} \sum_{\Gamma} \Gamma_{t, \boldsymbol{\sigma}, \textbf{a}} = \sum_{i < j} \sum_{a, b} \left(m_{\boldsymbol{\sigma}'}\sum_{\Gamma'} \Gamma'_{t, \boldsymbol{\sigma}', \textbf{a}'}\right) \cdot S^{(B)}_{ab} \cdot \left(m_{\boldsymbol{\sigma}''}\sum_{\Gamma''} \Gamma''_{t, \boldsymbol{\sigma}'', \textbf{a}''}\right)
$$
where $i$, $j$, $a$, $b$, $\boldsymbol{\sigma}$, $\textbf{a}$, $\boldsymbol{\sigma}'$, $\textbf{a}'$, $\boldsymbol{\sigma}''$, and $\textbf{a}''$ satisfy relation \eqref{zzzrelation}. Clearly it is equivalent to \eqref{pjlacalK} and this completes the proof of Lemma  \ref{Lemma_TRofK}.  

 \end{proof}

\subsection{Ward's identity on \texorpdfstring{$\cal K$}{K}}
Recall  Ward's identity on resolvents stating that 
$$
G \cdot G^\dagger =G(z)\cdot G^\dagger(z)=G(z)\cdot G(\bar z)=\frac{G(z)-G(\bar z)}{z-\bar z}= \frac{G-G^\dagger}{2\eta}, \quad \eta=\im z.
$$
In our setting, $G_t(+)=(H_t-z_t)^{-1}=\left(G_t(-)\right)^\dagger $ and thus
\begin{equation}
    \label{WI_2G}
G_t(+)\cdot G_t(-)= \frac{G_t(+)-G_t(-)}{2\eta_t}=\frac{G_t(+)-G_t(-)}{ 2 (1-t)\im m},\quad \eta_t=\im z_t.
\end{equation}
 From Ward's identity, we  have the following identity for $2-G$ loop: 
$$
\sum_a {\cal L}_{t, (+,-), (a,b)}=\frac{1}{2W\eta_t}\left({\cal L}_{t, (+), (b)}-{\cal L}_{t, (-), (b)}\right).
$$
We extend this identity  to all $n-G$ loops ${\cal L}$  in the following lemma. The main purpose of this subsection is show that the same identity holds for  $\cal K$
loop as well. 

\begin{lemma}\label{lem_WI_K} For $n-G$ loop ${\cal L}_{t, \boldsymbol{\sigma}, \textbf{a}}$ with 
$\sigma_1=+$ and $\sigma_n=-$, 
we have
\begin{equation}\label{WI_calL}
  \sum_{a_n}{\cal L}_{t, \boldsymbol{\sigma}, \textbf{a}}=
\frac{1}{2W\eta_t}\left({\cal L}_{t, \; \boldsymbol{\sigma}+,  \; \textbf{a}/a_n}- {\cal L}_{t, \; \boldsymbol{\sigma}-,  \; \textbf{a}/a_n}\right)  
\end{equation}
and
\begin{equation}\label{WI_calK}
  \sum_{a_n}{\cal K}_{t, \boldsymbol{\sigma}, \textbf{a}}=
\frac{1}{2W\eta_t}\left({\cal K}_{t, \; \boldsymbol{\sigma}+,  \; \textbf{a}/a_n}
- {\cal K}_{t, \; \boldsymbol{\sigma}-,  \; \textbf{a}/a_n}\right)  
\end{equation}
where 
\begin{itemize}
\item  $\boldsymbol{\sigma}\pm$ is obtained by removing $\sigma_n$ from $\boldsymbol{\sigma}$ and replacing $\sigma_1$ with $\pm$, i.e., 
    $$
\boldsymbol{\sigma}\pm=(\pm, \sigma_2, \sigma_3,\cdots \sigma_{ n-1}).$$
Notice that  the length of $\boldsymbol{\sigma}\pm$ is $n-1$. 
\item  $\textbf{a}/a_n$ is obtained by removing $a_n$ from $\textbf{a}$:
 $$
\textbf{a}/a_n=(a_1, a_2,a_3,\cdots, a_{n-1})
$$
\end{itemize}
\end{lemma}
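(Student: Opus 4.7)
The plan is to handle \eqref{WI_calL} and \eqref{WI_calK} separately, using very different tools: the first is an exact algebraic identity at the level of random resolvents, while the second is a deterministic identity that must be extracted from the tree representation.

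For \eqref{WI_calL}, I would start from the expanded definition
\[
\mathcal{L}_{t,\boldsymbol{\sigma},\textbf{a}}=\left\langle G_t(+)E_{a_1}G_t(\sigma_2)E_{a_2}\cdots G_t(\sigma_{n-1})E_{a_{n-1}}G_t(-)E_{a_n}\right\rangle
\]
and sum over $a_n$. Since $\sum_{a_n}E_{a_n}=W^{-1}I_N$, this replaces the final block projector by the scalar $W^{-1}$. Cyclicity of the trace brings $G_t(-)$ to the front, after which the resolvent Ward identity \eqref{WI_2G}, $G_t(-)G_t(+)=(G_t(+)-G_t(-))/(2\eta_t)$, together with linearity of the trace, immediately splits the resulting trace into a $+$ part and a $-$ part, yielding \eqref{WI_calL}.

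For \eqref{WI_calK}, I would exploit the tree representation from Lemma \ref{Lemma_TRofK}: $\mathcal{K}_{t,\boldsymbol{\sigma},\textbf{a}}=m_{\boldsymbol{\sigma}}W^{-n+1}\sum_{\Gamma\in TSP({\cal P}_\textbf{a})}\Gamma_\textbf{a}(t,\boldsymbol{\sigma})$. The crucial structural observation is that in every such tree, $a_n$ is a leaf attached by a unique edge $e_\ast$ to some interior vertex $b^\ast$; by Definition \ref{def33} the edge $e_\ast$ carries the propagator $\Theta^{(B)}_{tm_n m_1}$. Because $\sigma_1=+$, $\sigma_n=-$ and the bulk relation $|m|=1$ gives $m_n m_1=\bar m\cdot m=1$, this propagator collapses to $\Theta^{(B)}_t$. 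Summing over $a_n$ then contracts $(\Theta^{(B)}_t)_{a_n b^\ast}$ against the all-ones vector and returns the row sum $(1-t)^{-1}=\im m/\eta_t$, by the row-stochasticity of $S^{(B)}$.

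The matching step is to identify the resulting "$a_n$-contracted" sum of trees on ${\cal P}_\textbf{a}$ with the right-hand side of \eqref{WI_calK}. Writing the $(n-1)$-loops via their tree representations gives $\mathcal{K}_{t,\boldsymbol{\sigma}\pm,\textbf{a}/a_n}=m_{\boldsymbol{\sigma}\pm}W^{-n+2}\sum_{\Gamma'\in TSP({\cal P}_{\textbf{a}/a_n})}\Gamma'_{\textbf{a}/a_n}(t,\boldsymbol{\sigma}\pm)$, and the crucial relation $m_{\boldsymbol{\sigma}+}/m_{\boldsymbol{\sigma}}=m$, $m_{\boldsymbol{\sigma}-}/m_{\boldsymbol{\sigma}}=\bar m$ again rests on $m_1 m_n=|m|^2=1$. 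I would then set up a bijection between trees $\Gamma\in TSP({\cal P}_\textbf{a})$ (with $a_n,e_\ast$ removed) and $\Gamma'\in TSP({\cal P}_{\textbf{a}/a_n})$, corresponding to merging the adjacent regions $R_n$ and $R_1$ into a single region $R_1'$ of the reduced polygon that inherits charge $+$ in $\boldsymbol{\sigma}+$ and $-$ in $\boldsymbol{\sigma}-$; the factor $(m-\bar m)/(2i)=\im m$ from the difference on the right side then matches the scalar $\im m/\eta_t$ produced by the row sum above. The main obstacle is the combinatorial bookkeeping of this bijection when $b^\ast$ has further neighbors in the reduced tree, because the collapsed region $R_1'$ then sits on multiple $\Theta^{(B)}$ edges of $\Gamma'$, and one must verify that $m\,\Gamma'(\boldsymbol{\sigma}+)-\bar m\,\Gamma'(\boldsymbol{\sigma}-)$ telescopes cleanly to the contracted contribution of $\Gamma$. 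As a safety net, I would fall back on an induction on $n\geq 2$: the base case is the direct check $\sum_{a_2}\mathcal{K}_{t,(+,-),(a_1,a_2)}=W^{-1}(1-t)^{-1}=(m-\bar m)/(2iW\eta_t)$, and in the inductive step both sides of \eqref{WI_calK} can be shown to satisfy the same first-order linear ODE in $t$ coming from the primitive equation \eqref{pro_dyncalK} together with the inductively assumed Ward identities for loops of length $<n$, with matching initial values at $t=0$ (using $m_1 m_n=1$).
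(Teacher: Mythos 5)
Your argument for \eqref{WI_calL} — sum over $a_n$ to turn $E_{a_n}$ into $W^{-1}I$, cyclic trace, then the resolvent Ward identity \eqref{WI_2G} — is exactly what the paper does and is correct.

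For \eqref{WI_calK}, your \emph{primary} route (tree representation plus a $\Gamma\leftrightarrow\Gamma'$ bijection) is not the one the paper uses, and the obstacle you flag at the end is a genuine gap, not a bookkeeping nuisance. After contracting the leaf $a_n$ via $\sum_{a_n}(\Theta^{(B)}_t)_{a_n b^\ast}=(1-t)^{-1}$, the remaining tree is not a canonical tree on $\mathcal{P}_{\textbf{a}/a_n}$ for a \emph{single} charge assignment: every internal edge that separated $R_1$ or $R_n$ from some other region $R_j$ carries either $\Theta^{(B)}_{t\,m\,m_j}-1$ or $\Theta^{(B)}_{t\,\bar m\,m_j}-1$, and after merging $R_1$ and $R_n$ these would all have to become $\Theta^{(B)}_{t\,m\,m_j}-1$ (in the $\boldsymbol{\sigma}+$ tree) or all $\Theta^{(B)}_{t\,\bar m\,m_j}-1$ (in the $\boldsymbol{\sigma}-$ tree). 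You would have to produce a resummation showing that, over the full family $TSP(\mathcal{P}_{\textbf{a}})$, the mixed assignments telescope into exactly $m\,\Gamma'(\boldsymbol{\sigma}+)-\bar m\,\Gamma'(\boldsymbol{\sigma}-)$. That identity is not evident, and you give no mechanism for it; as written the primary route does not close.

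Your \emph{fallback}, however, is precisely the paper's argument: verify the identity by hand for $n=2$, and for larger $n$ show that the difference between the two sides of \eqref{WI_calK} (call them $\mathcal{K}^{(*)}$ and $\mathcal{K}^{(**)}$) solves a homogeneous linear ODE in $t$ with zero initial data, using the primitive equation \eqref{pro_dyncalK} and the inductive hypothesis for shorter loops. Two caveats so that you know what you are signing up for. First, the paper also does $n=3$ as a separate base case, because for $n=3$ the index pairs $(1,2),(1,n-1)$ and $(2,n),(n-1,n)$ degenerate, so the general $n\ge 4$ bookkeeping does not apply verbatim. Second, the ODE for $\mathcal{K}^{(*)}-\mathcal{K}^{(**)}$ (the paper's \eqref{finK*-K}) is extracted by a fairly lengthy term-by-term comparison of $\frac{d}{dt}\mathcal{K}^{(*)}$ and $\frac{d}{dt}\mathcal{K}^{(**)}$: the pair $(k,l)=(1,n)$ produces the $\frac{1}{1-t}$ drift; pairs with $2\le k<l\le n-1$ and $l-k\ge 2$ drop out by induction; pairs $(k,k+1)$ with $2\le k\le n-2$ feed the linear $\mathcal{K}^{(*)}-\mathcal{K}^{(**)}$ term; pairs $(1,m)$ and $(m,n)$ with $3\le m\le n-2$ cancel each other; and the four boundary pairs $(1,2),(1,n-1),(2,n),(n-1,n)$ need a separate treatment. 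Your outline does not acknowledge any of this pairing structure, so while the strategy is correct, calling it a ``safety net'' understates the work; this \emph{is} the proof.
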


\begin{corollary}\label{lem_sumAinK}
    Under the assumption of Lemma \ref{lem_WI_K}, we have 
\begin{align}\label{sumallAinK}
      \sum_{a_2}\cdots\sum_{a_{n-1}}\sum_{a_n}{\cal K}_{t, \boldsymbol{\sigma}, \textbf{a}}=O(W\eta_t)^{-n+1}
\end{align}
 \end{corollary}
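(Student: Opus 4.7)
The plan is to prove, by strong induction on $n$, the stronger statement that
\begin{equation*}
\Big|\sum_{a_2, \ldots, a_n} {\cal K}_{t, \boldsymbol{\sigma}, \textbf{a}}\Big| \leq C_n (W\eta_t)^{-(n-1)},
\end{equation*}
uniformly over all $\boldsymbol{\sigma} \in \{+,-\}^n$ and all $a_1 \in \mathbb{Z}_L$. This strengthening (dropping the hypothesis $\sigma_1 = +$ and $\sigma_n = -$) is necessary because one application of Lemma \ref{lem_WI_K} produces $(n-1)$-loops whose end charges are no longer constrained, so the corollary's own hypothesis cannot be propagated inductively.

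The base case $n=1$ is immediate from ${\cal K}_{t,(\sigma),(a)} = m(\sigma)$, and $n=2$ follows from the explicit formula \eqref{Kn2sol} together with the identities $\sum_{b}(\Theta^{(B)}_\xi)_{ab} = (1-\xi)^{-1}$ and $|1-tm_im_j|^{-1} \lesssim \eta_t^{-1}$. For the inductive step I split by the structure of $\boldsymbol{\sigma}$. If $\boldsymbol{\sigma}$ is \emph{pure} (all $\sigma_i$ equal), Corollary \ref{lem_pureloop}, combined with the $W^{-(n-1)}$ prefactor from Lemma \ref{Lemma_TRofK}, gives $|{\cal K}_{t,\boldsymbol{\sigma},\textbf{a}}| \lesssim W^{-(n-1)} \exp(-c_n \max_{ij}\|a_i - a_j\|)$; since $|1 - tm^2|$ stays bounded away from $0$ in the bulk, the decay length is $O(1)$ and summing over $a_2, \ldots, a_n$ yields $O(W^{-(n-1)}) \leq (W\eta_t)^{-(n-1)}$ because $\eta_t \leq 1$. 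If $\boldsymbol{\sigma}$ is \emph{mixed}, there exists $i$ with $\sigma_i \neq \sigma_{i+1\bmod n}$; using the cyclic invariance of ${\cal K}$ (manifest in the tree representation of Definition \ref{def33}, where the polygon ${\cal P}_{\textbf{a}}$ has no distinguished vertex), I cyclically relabel the indices so that this adjacent opposite pair sits at positions $(n,1)$, and then apply Lemma \ref{lem_WI_K}---or its complex-conjugate version, obtained from the identity $\overline{{\cal K}_{t,\boldsymbol{\sigma},\textbf{a}}} = {\cal K}_{t,-\boldsymbol{\sigma},\textbf{a}}$ whenever the pair is oriented $(-,+)$ rather than $(+,-)$. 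This rewrites $\sum_{a_n}{\cal K}$ as the difference of two $(n-1)$-loops divided by $2W\eta_t$; the induction hypothesis applied to each delivers $O((W\eta_t)^{-(n-2)})$, and multiplying by the Ward prefactor produces the desired $(W\eta_t)^{-(n-1)}$.

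The point I expect to be most delicate is why a direct tree-by-tree estimate via Lemma \ref{Lemma_TRofK} cannot replace iterated Ward. For a fully alternating $\boldsymbol{\sigma}$ (possible when $n$ is even) the star-shaped tree alone contributes $O(W^{-(n-1)}\eta_t^{-n})$, exceeding the target bound by a factor of $\eta_t^{-1}$; the remaining trees in $TSP({\cal P}_{\textbf{a}})$ yield comparable terms whose sum cancels this leading $\eta_t^{-n}$ singularity down to $\eta_t^{-(n-1)}$. This subtle cancellation is exactly the content of Lemma \ref{lem_WI_K}, and iterating the Ward identity realises it automatically without any explicit tree-level book-keeping.
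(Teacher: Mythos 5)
Your argument follows essentially the same route as the paper: translation invariance to make all indices symmetric, iterated Ward identity (Lemma~\ref{lem_WI_K}) to reduce to pure loops, then the pure-loop estimate. You have simply unpacked the paper's ``applying \eqref{WI_calK} repeatedly'' into an explicit strong induction, and correctly identified that this ``repeatedly'' tacitly requires cyclic relabeling of the loop (since the $(n-1)$-loops produced by one Ward application need not have $\sigma_1=+$, $\sigma_n=-$). That observation, and the closing remark about the $\eta_t^{-n}$ singularity of the star tree being cancelled by the remaining trees, are accurate and clarify what the Ward route accomplishes.

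Two small points are worth correcting. First, the cyclic relabeling step also needs \emph{translation} invariance, which the paper invokes explicitly and you omit. Cyclic invariance of ${\cal K}_{t,\boldsymbol{\sigma},\textbf{a}}$ relates ${\cal K}_{t,\boldsymbol{\sigma},\textbf{a}}$ to ${\cal K}_{t,\text{shift}(\boldsymbol{\sigma}),\text{shift}(\textbf{a})}$, which after your relabeling leaves a \emph{different} index fixed from the one you started with; the way to reconcile this is to note (via translation invariance) that $\sum_{a_2,\ldots,a_n}{\cal K}_{t,\boldsymbol{\sigma},\textbf{a}}=L^{-1}\sum_{\textbf{a}}{\cal K}_{t,\boldsymbol{\sigma},\textbf{a}}$, and the latter is manifestly invariant under cyclic shifts of $\boldsymbol{\sigma}$. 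Second, your parenthetical ``the polygon ${\cal P}_{\textbf{a}}$ has no distinguished vertex'' contradicts the paper's own remark that ${\cal P}_{(a,b,c)}\neq{\cal P}_{(b,c,a)}$; the polygon is oriented and labelled, so cyclic invariance of ${\cal K}$ is true but not literally ``manifest'' from Definition~\ref{def33} --- one should either trace through how the labelled edges, subregions, and charges all shift coherently, or appeal to cyclic invariance of ${\cal L}$ and uniqueness of the primitive hierarchy solution.
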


\begin{proof}[Proof of Corollary \ref{lem_sumAinK}]
By definition of $\cal K$, we know that ${\cal K}$ is  translation invariant. Then the left side  of equation \eqref{sumallAinK} is independent of $a_1$. 
Hence it is equivalent to 
$$
 \frac1L\sum_{\textbf{a}\in\mathbb Z_L^n}{\cal K}_{t, \boldsymbol{\sigma}, \textbf{a}}=O(W\eta_t)^{-n+1}.
$$
 Applying \eqref{WI_calK} repeatedly, we can reduce it to shorter pure loops to get  
\begin{align}\label{sfauuu}
 \Big | L^{-1}\cdot \sum_{\textbf{a}\in\mathbb Z_L^n}{\cal K}_{t, \boldsymbol{\sigma}, \textbf{a}}\Big |
  \le C_n \sum_{1\le m\le n}(W\eta_t)^{-n+m}\cdot L^{-1}\cdot
  \max_{\boldsymbol{\sigma}'\in \{+\}^m \cup \{-\}^m}\left| \sum_{\textbf{a}'\in\mathbb Z_L^m}{\cal K}_{t, \boldsymbol{\sigma}', \textbf{a}'}\right|,\quad 
\end{align}
Here $\boldsymbol{\sigma}'\in \{+\}^m \cup \{-\}^m$ means that $\boldsymbol{\sigma}'=(+,+,\cdots,+)\; \hbox{or }(-,-,\cdots,-)$.
By the estimate of pure loop $\cal K$  in   Lemma \ref{lem_pureloop}, we have 
$$
\sum_{\textbf{a}'\in\mathbb Z_L^m}{\cal K}_{t, \boldsymbol{\sigma}', \textbf{a}'}=O(1).
$$
Together with \eqref{sfauuu}, this  completes the proof of the corollary. 

\end{proof}

\bigskip

 \begin{proof}[Proof of Lemma \ref{lem_WI_K}]
 The equation \eqref{WI_calL} for  $\cal L$ follows from \eqref{WI_2G} directly.  For the \eqref{WI_calK}, we consider first that  $n=2$. 
By the explicit formula for $\cal K$ in this case, we have 
 \begin{align}\label{hsgw2}
 \sum_{a_2} {\cal K}_{t, (+,-), (a_1,a_2)}
 =\frac1W \sum_{a_2}
\left(\frac{|m^2|}{1-t|m|^2S^{(B)}}\right)_{a_1a_2}
=\frac{1}{W(1-t)}    
 \end{align}
 On the other hand, by definition, we have 
 $$
 \frac{1}{2W\eta_t}\left({\cal K}_{t, (+ ), ( a_1)}-{\cal K}_{t, (- ), ( a_1)}\right)=\frac{m-\bar m}{2W\eta_t}=\frac{1}{W(1-t)}, \quad m:= m^{(E)}
 $$
 Here we used $|m|=1$ and $\eta_t=(1-t)\im m$ .  Combining  these two identities, we have proved   \eqref{WI_calK} for $n=2$. 
 Similarly, the case $n=3$ follows from a direct calculation and  the following identities 
 $$
 \left(\frac{|m^2|}{1-t|m|^2S^{(B)}}\right)-\left(\frac{m^2}{1-tm^2S^{(B)}}\right)=
 \left(1-m^2\right)\cdot\left(\frac{1}{1-t|m|^2S^{(B)}}\right)\cdot \left(\frac{1}{1-tm^2S^{(B)}}\right).
 $$
 and
 $$
 1-m^2=m(\bar m-m)
 $$
 
 For $n\ge 4$,   we will use the primitive equation instead of the tree representation. For simplicity, we temporally denote
 $$
{\cal K}^{(*)}_{t, \boldsymbol{\sigma}, \textbf{a}}
=\sum_{a_1}{\cal K} _{t, \boldsymbol{\sigma}, \textbf{a}},\quad
{\cal K}^{(**)}_{t, \boldsymbol{\sigma}, \textbf{a}}
=\frac1{2W\eta_t}\left({\cal K}^{(+)}_{t, \boldsymbol{\sigma}, \textbf{a}}-
{\cal K}^{(-)}_{t, \boldsymbol{\sigma}, \textbf{a}}\right),
$$
\begin{equation}\label{def_calKpm}
 {\cal K}^{(+)}_{t, \boldsymbol{\sigma}, \textbf{a}}: = {\cal K}_{t, \; \boldsymbol{\sigma}+,  \; \textbf{a}/a_n}
\quad\quad 
{\cal K}^{(-)}_{t, \boldsymbol{\sigma}, \textbf{a}}: = {\cal K}_{t, \; \boldsymbol{\sigma}-,  \; \textbf{a}/a_n}
   \end{equation}
Our goal is  to prove ${\cal K}^{(*)}={\cal K}^{(**)}$.  With
 $$
 m_n=\bar m, \quad m_1= m, \quad |m|=1, \quad \eta_t\big|_{t=0}=\im m. 
 $$
By Definition  \ref{Def_Ktza},    
 $$
  \sum_{a_n}{\cal K}_{0, \boldsymbol{\sigma}, \textbf{a}}=W^{-n+1}\cdot \left(\prod_{i=1}^n m_i \right){\bf 1}(a_1=a_3=\cdots =a_{n-1}) 
  =
  \frac{1}{2W\eta_0}\left(
  {\cal K}^{(+)}_{0,\; \boldsymbol{\sigma},\; \textbf{a}}
  - 
  {\cal K}^{(-)}_{0,\; \boldsymbol{\sigma},\; \textbf{a}}
  \right) .
 $$
This implies that 
 $${\cal K}^{(*)}_{0, \boldsymbol{\sigma}, \textbf{a}} ={\cal K}^{(**)}_{0, \boldsymbol{\sigma}, \textbf{a}}.$$
 In the remainder of this subsection,  under the inductive assumption that ${\cal K}^{(*)} ={\cal K} ^{(**)}$ holds for ${\cal K}^{(*)}$ of lengths strictly less than $n$,  we will  prove  the following identity: 
  \begin{align}\label{finK*-K}
     \frac d{dt}\left({\cal K}^{(*)}-{\cal K}^{(**)}\right)_{t, \boldsymbol{\sigma}, \textbf{a}} &\;=\;\sum_{ k=1}^{n-1} \sum_{a\;b}
        \left[\left({\cal K}^{(*)}-{\cal K}^{(**)}\right)_{t, \boldsymbol{\sigma}, \textbf{a}}\Bigg|(a_k\to a)\right]
        \cdot S^{(B)}_{ab}\cdot  {\cal K}_{t, ( \sigma _k,  \sigma _{k+1}), (a_k,b)}
        \\\nonumber
       &\;+\; \frac1{1-t}\cdot \left({\cal K}^{(*)}-{\cal K}^{(**)}\right)_{t, \boldsymbol{\sigma}, \textbf{a}} 
 \end{align}
 where $(a_k\to a)$ means replacing $a_k$ in $\textbf{a}$ by $a$.  Together with ${\cal K}^{(*)}-{\cal K}^{(**)}=0$ at $t=0$, this linear differential equation  has only the trivial solution, i.e., 
$$
{\cal K}^{(*)}_{t, \boldsymbol{\sigma}, \textbf{a}} ={\cal K}^{(**)}_{t, \boldsymbol{\sigma}, \textbf{a}}, \quad 0\le t\le 1.
$$
 
 For the l.h.s. of \eqref{finK*-K}, by the primitive equation for  $\cal K$, we have 
  \begin{align} \label{gsasawwe}
     \frac{d}{dt} {\cal K}^{(*)}_{t, \boldsymbol{\sigma}, \textbf{a}}=\sum_{a_n} \frac{d}{dt}\,{\cal K}_{t, \boldsymbol{\sigma}, \textbf{a}} 
       =  
       {W}\sum_{a_n}  \sum_{1 \le k < l \le n}  
       \sum_{a\;b} 
        {\cal J}^{L}_{k,l,a}\cdot{\cal J}^{R}_{k,l,b}\cdot S^{(B)}_{ab},  
    \end{align}
    where %we temporally denote ${\cal J}^{L}$ and ${\cal J}^{R}$ as follows
    $$
    {\cal J}^L_{k,l,a}:= \left( {\cal G}^{(a), L}_{k, l} \circ {\cal K}_{t, \boldsymbol{\sigma}, \textbf{a}} \right), \quad
    {\cal J}^R_{k,l,b}:=\left( {\cal G}^{(b), R}_{k, l} \circ {\cal K}_{t, \boldsymbol{\sigma}, \textbf{a}} \right).
    $$
By definition, the index $a_n$ appears  in above $  {\cal J}^{L} $.    On the other hand,  $\frac{d}{dt} \eta_t^{-1}=(1-t)^{-1}\eta_t^{-1}$ and thus 
  \begin{align} \label{gsasawII}
     \frac{d}{dt} {\cal K}^{(**)}_{t, \boldsymbol{\sigma}, \textbf{a}}=\frac1{1-t}\cdot {\cal K}^{(**)}_{t, \boldsymbol{\sigma}, \textbf{a}}+ 
       \frac{W}{2W\eta_t}  \sum_{1\le k<l\le n-1} 
       \sum_{a\;b} 
       \left(
       {\cal J}^{L,(+)}_{k,l,a}\cdot{\cal J}^{R,(+)}_{k,l,b}
       -
      {\cal J}^{L,(-)}_{k,l,a}\cdot{\cal J}^{R,(-)}_{k,l,b}
       \right)\cdot S^{(B)}_{ab},
       \quad   
    \end{align}
 where (recall $\cal K^{(\pm)}$ defined in \eqref{def_calKpm})
 $$
    {\cal J}^{L,(\pm)}_{k,l,a}:= 
    \left( {\cal G}^{(a), L}_{k, l} \circ {\cal K}^{(\pm)}_{t, \boldsymbol{\sigma}, \textbf{a}} \right), \quad
    {\cal J}^{R,(\pm)}_{k,l,b}:=\left( {\cal G}^{(b), R}_{k, l} \circ  {\cal K}^{(\pm)}_{t, \boldsymbol{\sigma}, \textbf{a}} \right).
$$
We now demonstrate that the right-hand sides of \eqref{gsasawwe} and \eqref{gsasawII} are identical, up to terms involving \(({\cal K}^* - {\cal K}^{**})\). Specifically, the special case where \(k = 1\) and \(l = n\) in \eqref{gsasawwe} contributes the term \(\frac{1}{1-t} \cdot {\cal K}^{(*)}_{t, \boldsymbol{\sigma}, \textbf{a}}\). In most other cases, we find that 
$
{\cal J}^{R,(+)}_{k,l,b} = {\cal J}^{R,(-)}_{k,l,b} = {\cal J}^{R}_{k,l,b},
$
while 
$
{\cal J}^{L}_{k,l,a}
$  
can be expressed using 
$
{\cal J}^{L,(\pm)}_{k,l,a}
$
by inductive assumption. For the remaining few cases (e.g., \(k = 1\), \(l = 2\)), direct cancellations occur between the right-hand sides of \eqref{gsasawwe} and \eqref{gsasawII}. The following proof provides a detailed, albeit tedious,  argument. 
%Readers interested in the key ideas may construct the proof for the cases \(n = 4\) and \(n = 5\) for clarity.

The following  identities  can be easily verified from their definitions. These identities rely on the fact that ${\cal K}^{(+)}$ and ${\cal K}^{(-)}$ differ only in the first component of $\sigma$  being $+$ or  $-$. Similarly, ${\cal K}^{(\pm)}$ and ${\cal K}$ differ slightly in their definitions. 

\begin{itemize}
    \item  For $k,l\in[[2, n-1]]$,  
 \begin{equation}\label{kdjuwsz}
  {\cal J}^{R,(+)}_{k,l,b}=  {\cal J}^{R,(-)}_{k,l,b}={\cal J}^{R }_{k,l,b} ,\quad k,l\in[[2, n-1]]   
 \end{equation}

\item For $k,l\in[[2, n-1]]$ and $ l-k  \ge 2$, the length of  $ {\cal J}^{L }_{k,l,a} $ is no longer than  $ n-1$. By induction, we have 
\begin{equation}\label{a';slw}
    \sum_{a_n}{\cal J}^{L}_{k,l,a}-\frac{1}{2W\eta_t}\left({\cal J}^{L,(+)}_{k,l,a} -{\cal J}^{L,(-)}_{k,l,a} \right)=0 ,\quad  k,l\in[[2, n-1]], \;  l-k \ge 2
\end{equation}

\item For $k,l\in[[2, n-1]]$ and $l=k+1$,
 $$
{\cal J}^L_{k,l,a} ={\cal K}_{t, \boldsymbol{\sigma}, \textbf{a}}\Big|(a_k\to a)
 ,\quad  {\cal J}^R_{k,l,b}={\cal K}_{t, (\sigma_k, \sigma_{k+1}), (a_k, b)},\quad k,l\in[[2, n-1]],\quad l=k+1
$$
 
Similarly, 
$$
{\cal J}^{L,(\pm)}_{k,l,a} ={\cal K}^{(\pm)}_{t, \boldsymbol{\sigma}, \textbf{a}}\Big|(a_k\to a).
% ,\quad  {\cal J}^{R,(\pm)}_{k,l,b} ={\cal K}_{t, ( \sigma _k,  \sigma _{k+1}), (a_k, b)}.
$$
Therefore, under the same conditions on $k, l$, 
\begin{equation}\label{uaiw03}
    \sum_{a_n}{\cal J}^{L}_{k,l,a}-\frac{1}{2W\eta_t}\left({\cal J}^{L,(+)}_{k,l,a} -{\cal J}^{L,(-)}_{k,l,a} \right)=
    \left({\cal K}^{(*)}-{\cal K}^{(**)}\right)_{t, \boldsymbol{\sigma}, \textbf{a}}\Bigg|(a_k\to a). %\quad \quad  k,l\in[[2, n-1]], \;  l-k=1
\end{equation}

\end{itemize}

Combining the identities \eqref{uaiw03}, \eqref{kdjuwsz} and \eqref{a';slw}, we can bound the following parts in \eqref{gsasawII} and \eqref{gsasawwe} with $\cal K^{(*)}-\cal K^{(**)}$ 
\begin{align}\label{WI-pr-Mid}
    & \sum_{  2\le k<l\le n-1} \sum_{a\;b}
    \left( {W}\sum_{a_n} 
        {\cal J}^{L}_{k,l,a}\cdot{\cal J}^{R}_{k,l,b}
        -\frac{W}{2W\eta_t} 
        \left(
        {\cal J}^{L,(+)}_{k,l,a}\cdot{\cal J}^{R,(+)}_{k,l,b}
        -
        {\cal J}^{L,(-)}_{k,l,a}\cdot{\cal J}^{R,(-)}_{k,l,b}
        \right)
        \right)\cdot S^{(B)}_{ab} 
        \\\nonumber
      =  & \sum_{ k=2 }^{n-2} \sum_{a\;b}
        \left[\left({\cal K}^{(*)}-{\cal K}^{(**)}\right)_{t, \boldsymbol{\sigma}, \textbf{a}}\Bigg|(a_k\to a)\right]
        \cdot S^{(B)}_{ab}\cdot  {\cal K}_{t, ( \sigma _k,  \sigma _{k+1}), (a_k,b)}
\end{align}

\bigskip

Next, we estimate the cases that $k=1$ or $l=n$.

\begin{itemize}
\item For $k=1$ and $l=n$, we have 
$$
{\cal J}^L_{k,l,a}  =
 {\cal K}_{t, \,(+,\,-),\, (a,\,a_n)},\quad 
 {\cal J}^R_{k,l,b}={\cal K}_{t, \boldsymbol{\sigma}, \textbf{a}}\Bigg|(a_n\to b)
 $$
 and thus 
 \begin{align}\label{nssfku}
     {W}\sum_{a_n} 
        {\cal J}^{L}_{k,l,a}\cdot{\cal J}^{R}_{k,l,b}\cdot S_{ab}^{(B)}=\frac{1}{1-t}\cdot {\cal K}^{(*)}_{t,\boldsymbol{\sigma},\textbf{a}}.
 \end{align}

\item 
\begin{equation}\label{consjahz}
    k=1,\; l=m,  \quad \hbox{and}\quad   k=m, \;  l=n
\end{equation} 
for some $3\le m\le n-2 $. By induction, 
\begin{align}\label{awslw22}
    &\sum_{a_n}{\cal J}^{L}_{k,l,a}-\frac{1}{2W\eta_t}\left({\cal J}^{L,(+)}_{1,\,m,\,a} -{\cal J}^{L,(-)}_{1,\,m,\,a} \right)=0,\quad  k=1, \; l=m
    \\\nonumber
    &\sum_{a_n}{\cal J}^{L}_{k,l,a}-\frac{1}{2W\eta_t}\left({\cal J}^{R,(+)}_{1,\,m,\,a} -{\cal J}^{R,(-)}_{1,\,m,\,a} \right)=0,\quad  k=m, \; l=n
\end{align} 
and 
\begin{align}\label{awslw33}
    &{\cal J}^R_{k,l,b}={\cal J}^{R,(+)}_{1,m,b},\quad  k=1, \; l=m
    \\\nonumber
    &{\cal J}^R_{k,l,b}={\cal J}^{L,(-)}_{1,m,b},\quad  k=m, \; l=n
\end{align}
Hence  for calculating the  $\sum_{ab}\sum_{a_n} 
        {\cal J}^{L}_{k,l,a}\cdot{\cal J}^{R}_{k,l,b}S_{ab}$, one will see the following terms for $(k,l)=(1,m)$ and $(k,l)=(m,n)$, 
  \begin{align}\nonumber
   \frac{1}{2W\eta_t}   \sum_{ab}  {\cal J}^{L,(-)}_{1,\,m,\,a}\cdot {\cal J}^{R,(+)}_{1,\,m,\,b} S_{ab}\quad  & for \quad (k,l)=(1,m)\\\nonumber
\quad  \frac{-1}{2W\eta_t} \sum_{ab}  {\cal J}^{R,(+)}_{1,\,m,\,a}\cdot {\cal J}^{R,(-)}_{1,\,m,\,b} S_{ab}\quad &  for \quad (k,l)=( m,n)
  \end{align}
   They  cancel each other,  therefore %in this case, similar to \eqref{WI-pr-Mid}, we have 
 \begin{align}\label{uisiip}
    \sum_{k,l}^*  \sum_{a\;b}
    \left( {W}\sum_{a_n} 
        {\cal J}^{L}_{k,l,a}\cdot{\cal J}^{R}_{k,l,b}
        -\frac{W}{2W\eta_t} 
        \left(
        {\cal J}^{L,(+)}_{k,l,a}\cdot{\cal J}^{R,(+)}_{k,l,b}
        -
        {\cal J}^{L,(-)}_{k,l,a}\cdot{\cal J}^{R,(-)}_{k,l,b}
        \right)
        \right)\cdot S^{(B)}_{ab} =0; 
 \end{align}
 here  $\sum_{kl}^*$ denote   summing  over  $k,l$ satisfying  \eqref{consjahz}.  

 \item Since we assume that  $n\ge 4$, there are only four cases left, i.e., 
 \begin{equation}\label{keayew}
  (k,l)=(1,2), \; (1,n-1), \;(2,n), \;(n-1, n).
 \end{equation}
 
By definition,  
\begin{align} 
 (k,l)=(1,2) \quad &\quad   {\cal J}^L_{k,l,a}  ={\cal K}_{t, \boldsymbol{\sigma}, \textbf{a}}\Bigg|(a_1\to a) ,  
 &{\cal J}^R_{k,l,b}=  {\cal J}^{R,(+)}_{1,\,2,\,b}\quad 
 \\\nonumber
 (k,l)=(1,n-1)\quad &\quad {\cal J}^L_{k,l,a}  =
 {\cal K}_{t, \,(+,\;\sigma_{n-1},\,-),\, (a,\,a_{n-1},\,a_n)},  
 &{\cal J}^R_{k,l,b}=  {\cal J}^{R,(+)}_{1,\,{n-1},\,b}
  \\\nonumber
 (k,l)=(2,n)  \quad &\quad  {\cal J}^L_{k,l,a}  =
 {\cal K}_{t, \,(+,\;\sigma_{2},\,-),\, (a_1,\,a,\,a_n)} ,  
 &{\cal J}^R_{k,l,b}=  {\cal J}^{L,(-)}_{1,\,2,\,b}\quad 
 \\\nonumber
 (k,l)=(n-1,n)\quad &\quad {\cal J}^L_{k,l,a}  ={\cal K}_{t, \boldsymbol{\sigma}, \textbf{a}}\Bigg|(a_{n-1}\to a), 
 &{\cal J}^R_{k,l,b}=  {\cal J}^{R,(+)}_{1,\,{n-1},\,b}
\end{align}
 Summing  up $a_n$ and multiplying  $W$,  we obtain that
 \begin{align} 
 (k,l)=(1,2) \quad &\quad  W\sum_{a_n} {\cal J}^L_{k,l,a}  ={\cal K}^{(*)}_{t, \boldsymbol{\sigma}, \textbf{a}}\Bigg|(a_1\to a) ,  
   \\\nonumber
 (k,l)=(1,n-1)\quad &\quad W \sum_{a_n}{\cal J}^L_{k,l,a}  =
 \frac{W}{2W\eta_t} 
 \left({\cal K}_{t, \,(+,\;\sigma_{n-1}),\, (a,\,a_{n-1} )} 
 - {\cal K}_{t, \,( \sigma_{n-1},-),\, ( a_{n-1}, \, a )}  \right)
 \\\nonumber
 & \quad \quad \quad\quad \quad \quad\;=  \frac{W}{2W\eta_t} 
 \left({\cal J}^{L, (+)}_{1,n-1,a} 
 - {\cal J}^{L, (-)}_{1,n-1,a} \right)
   \\\nonumber
 (k,l)=(2,n)  \quad &\quad  W\sum_{a_n} {\cal J}^L_{k,l,a}  =
\frac{W}{2W\eta_t} 
 \left({\cal K}_{t, \,(+,\;\sigma_{2}),\, (a,\,a_{2} )} 
 - {\cal K}_{t, \,( \sigma_{2},-),\, ( a_{2}, \, a )}  \right)
 \\\nonumber
 & \quad \quad \quad\quad \quad \quad\;=  \frac{W}{2W\eta_t} 
 \left({\cal J}^{R, (+)}_{1,\,2,\,a} 
 - {\cal J}^{R, (-)}_{1,\,2,\,a} \right)  \\\nonumber
 (k,l)=(n-1,n)\quad &\quad  W\sum_{a_n}{\cal J}^L_{k,l,a}  ={\cal K}^{(*)}_{t, \boldsymbol{\sigma}, \textbf{a}}\Bigg|(a_{n-1}\to a), 
 \end{align}
  On the other hand, we have 
  \begin{align}
      {\cal K}^{(**)}_{t, \boldsymbol{\sigma}, \textbf{a}}\Bigg|(a_1\to a) 
      & = \frac{W}{2W\eta_t}
      \left({\cal J}^{L, (+)}_{1,\,2,\,a}-{\cal J}^{L, (-)}_{1,\,2,\,a}\right)
      \\\nonumber
       {\cal K}^{(**)}_{t, \boldsymbol{\sigma}, \textbf{a}}\Bigg|(a_{n-1}\to a) 
      & = \frac{W}{2W\eta_t}
      \left({\cal J}^{R, (+)}_{1,\,n-1,\,a}-{\cal J}^{R, (-)}_{1,\,n-1,\,a}\right)
  \end{align}
  Therefore, with $\sum_{kl}^{**}$ denoting  summing  $k,l$ in \eqref{keayew}, we have  
  \begin{align}\label{uisiip222}
    &\sum_{k,l}^{**}  \sum_{a\;b}
    \left( {W}\sum_{a_n} 
        {\cal J}^{L}_{k,l,a}\cdot{\cal J}^{R}_{k,l,b}
        -\frac{W}{2W\eta_t} 
        \left(
        {\cal J}^{L,(+)}_{k,l,a}\cdot{\cal J}^{R,(+)}_{k,l,b}
        -
        {\cal J}^{L,(-)}_{k,l,a}\cdot{\cal J}^{R,(-)}_{k,l,b}
        \right)
        \right)\cdot S^{(B)}_{ab} 
        \\\nonumber
       &=\sum_{k}\left({\bf 1}_{k=1}+{\bf 1}_{k=n-1}\right) \sum_{a\;b}
        \left[\left({\cal K}^{(*)}-{\cal K}^{(**)}\right)_{t, \boldsymbol{\sigma}, \textbf{a}}\Bigg|(a_k\to a)\right]
        \cdot S^{(B)}_{ab}\cdot  {\cal K}_{t, ( \sigma _k,  \sigma _{k+1})_{a_k,b}}
 \end{align}
 
\end{itemize}
At last,  combining  the identities \eqref{uisiip222}, \eqref{uisiip}, \eqref{nssfku}, \eqref{WI-pr-Mid}, \eqref{gsasawwe} and \eqref{gsasawII}, we obtain  the desired result  \eqref{finK*-K} and prove the Lemma \ref{lem_WI_K} by induction. 
\end{proof}

\bigskip

\subsection{Sum zero property of \texorpdfstring{$\cal K$}{K}}
Recall the tree representation of  $\cal K$ in  Lemma \ref{Lemma_TRofK}. 
The key quantity $\Gamma_{ \textbf{a}}(t, \boldsymbol{\sigma})$ in this representation contains only three types of edges, namely,  %(depending on $t$ and $z_t$) 
$$
S^{(B)},\quad \Theta^{(B)}_{tm^2}, \quad \Theta^{(B)}_{t|m|^2}
$$
which  commute one another. By explicit computations, we have 
 \begin{align}
     & \left\|\Theta^{(B)}_{t\,m^2}\right\|_{\max} \prec 1, \quad \left\|\Theta^{(B)}_{t\,|m|^2}\right\|_{\max}\prec \frac{1}{\ell_t\eta_t}; \\
   & \left\|\Theta^{(B)}_{tm^2}\right\|_{1} \prec 1, \quad \left\|\Theta^{(B)}_{t|m|^2}\right\|_1\prec \frac{1}{\eta_t}
 \end{align}
Due to the big difference in ranges  between $\Theta^{(B)}_{tm^2}$ and $\Theta^{(B)}_{t|m|^2}$, we  call them short and long edges respectively:
$$
\Theta^{(B)}_{\,t\xi} \; \hbox{or} \; \left(\Theta^{(B)}_{\,t\xi}-1\right)=\begin{cases}
   \;  \hbox{short $\Theta$ edge}, \quad &\xi=m^2, \; \overline m^2
     \\
     \;\hbox{long $\Theta$ edge}, \quad &\xi=|m|^2 
\end{cases}
$$

\begin{definition} 
Fix two sequences \(\boldsymbol{\sigma} = (\sigma_1, \sigma_2, \cdots, \sigma_n)\) and \(\textbf{a} = (a_1, a_2, \cdots, a_n)\). For a partition \(\Gamma_{\textbf{a}} \in TSP(\mathcal{P}_{\textbf{a}})\), recall that  \({\cal F}(\Gamma_{\textbf{a}})\) \eqref{set_csp} is  the collection of pairs of sub-regions that are non-adjacent but share an internal edge in \(\Gamma_{\textbf{a}}\). There is a one-to-one correspondence between the elements of \({\cal F}(\Gamma_{\textbf{a}})\) and the internal edges in \({\cal E}(\Gamma_{\textbf{a}})\).

Given \(\boldsymbol{\sigma}\), we define \({\cal F}_{\text{long}}(\Gamma_{\textbf{a}}, \boldsymbol{\sigma})\) to be  the subset of \({\cal F}(\Gamma_{\textbf{a}})\) corresponding to long internal edges, i.e., 
\[
{\cal F}_{\text{long}}(\Gamma_{\textbf{a}}, \boldsymbol{\sigma}) := \Big\{\{i, j\} \in {\cal F}(\Gamma_{\textbf{a}}) : \{\sigma_i, \sigma_j\} = \{+, -\}\Big\}.
\]
In other words, for \(\{i, j\} \in {\cal F}_{\text{long}}(\Gamma_{\textbf{a}}, \boldsymbol{\sigma})\), there exists an internal (i.e., purple) edge in \(\Gamma_{\textbf{a}}\) separating \(R_i\) and \(R_j\) with \(\sigma_i \neq \sigma_j\).

By definition, 
\[
{\cal F}_{\text{long}}(\Gamma_{\textbf{a}}, \boldsymbol{\sigma}) \subset {\cal F}(\Gamma_{\textbf{a}}) \subset \mathbb{Z}_n^{\text{off}} := \big\{\{i, j\} : 1 \leq i < j \leq n, \quad |i - j| \neq 1 \mod n\big\}.
\]
For a subset \(\pi\) satisfying \(\pi \subset \mathbb{Z}_n^{\text{off}}\), we denote by 
\[
TSP(\mathcal{P}_{\textbf{a}}, \boldsymbol{\sigma}, \pi) := \big\{\Gamma_{\textbf{a}} \in TSP(\mathcal{P}_{\textbf{a}}) : {\cal F}_{\text{long}}(\Gamma_{\textbf{a}}, \boldsymbol{\sigma}) = \pi\big\}, \quad \pi \subset \mathbb{Z}_n^{\text{off}},
\]
the subset of \(TSP(\mathcal{P}_{\textbf{a}})\) with \(\pi\) as the collection of long internal  edges (which may be empty, i.e., \(\pi = \emptyset\)).
\end{definition}

\noindent 
{\bf Example}: In the case  $\pi=\emptyset$,    $\Gamma_\textbf{a}$ contains  no internal long edges. We will ignore all  short edges
and use a big dot representing  some tree structure of consisting entirely of short edges. 
In previous band papers \cite{yang2021delocalization} and \cite{yang2022delocalization}, we  called this dot a  molecule.

\begin{figure}[ht]
    \centering
    \begin{tikzpicture}[scale=1 ]
\begin{scope}[shift={(-3, 0)}]
        % Define center points
        \coordinate (D1) at (0, 1);         % Center for d1

        % Define vertices for d1
        \coordinate (A1) at (-1.5, 1);       % Left of d1
        \coordinate (A2) at (-0.5, 2);        % Top of d1
        \coordinate (A3) at (.5, 2);        % Top-right of d1
        \coordinate (A4) at (1.5, 1);        % Right of d1
        \coordinate (A5) at (0, -.5);

        % Draw blue arrows from d1 to its vertices
        \draw[- , thick, blue] (D1) -- (A1) node[ above ] {$a_1$};
        \draw[- , thick, blue] (D1) -- (A2) node[  above] {$a_2$};
        \draw[- , thick, blue] (D1) -- (A3) node[ above] {$a_3$};
        \draw[- , thick, blue] (D1) -- (A4) node[ above] {$a_4$};
         \draw[- , thick, blue] (D1) -- (A5) node[ left] {$a_5$};

 % Draw circles for d1, d2, d3, d4
        \fill[thick, purple] (D1) circle (0.3) node {$d_1$};
           \end{scope}

       \begin{scope}[shift={( 3, 0)}]
        % Define center points
        \coordinate (D1) at (0, 1);         % Center for d1

        % Define vertices for d1
        \coordinate (A1) at (-1.5, 1);       % Left of d1
        \coordinate (A2) at (-0.5, 2);        % Top of d1
        \coordinate (A3) at (.5, 2);        % Top-right of d1
        \coordinate (A4) at (1.5, 1);        % Right of d1
        \coordinate (A5) at (0, -.5);

        % Draw blue arrows from d1 to its vertices
        \draw[- , thick, blue] (D1) -- (A1) node[ left ] {$a_1$};
        \draw[- , thick, blue] (D1) -- (A2) node[  above] {$a_2$};
        \draw[- , thick, blue] (D1) -- (A3) node[ above] {$a_3$};
        \draw[- , thick, blue] (D1) -- (A4) node[ right] {$a_4$};
         \draw[- , thick, blue] (D1) -- (A5) node[ below] {$a_5$};

         \draw[thick, black] (A1) -- (A2) node[midway, above left, black] {$\sigma_2$};
        \draw[thick, black] (A2) -- (A3) node[midway, below,black] {$\sigma_3$};
        \draw[thick, black] (A3) -- (A4) node[midway, above right, black] {$\sigma_4$};
        \draw[thick, black] (A4) -- (A5) node[midway, right, black] {$\sigma_5$};
        \draw[thick, black] (A5) -- (A1) node[midway, left, black] {$\sigma_1$};

 % Draw circles for d1, d2, d3, d4
        \fill[thick, purple] (D1) circle (0.3) node {$d_1$};
           \end{scope}
    \end{tikzpicture}
    \caption{Single molecule partition}
    \label{fig:circle_arrows_0}
\end{figure}

\noindent 
{\bf Example}: In the following example in Figure \ref{fig:circle_arrows}, we have 
\begin{equation}\label{exn10}
  \textbf{a}=(a_1,a_2\cdots a_{10}), \quad \pi=\left\{ \{1,5\}, \{5,7\},\{8,1\}\right\}  
\end{equation} 
Then  $\Gamma_{\textbf{a}}$ has the following structure. The big dots are connected via internal long edges. Locally, each sub-tree containing a red dot ${\cal M}$ and the edges connecting with this $\cal M$ matches a tree structure of a single-molecule partition. More precisely, for these 4 local sub-trees, we have  
\begin{align}
 \Gamma^{(top)}\in \;&\;TSP ({\cal P}_{(a_1,a_2,a_3,a_4, c_1)}, (\sigma_1, \sigma_2,\sigma_3,\sigma_4,\sigma_5) , \pi=\emptyset )
 \\\nonumber 
 \Gamma^{(left)}\in \;&\;TSP ({\cal P}_{(a_9,a_{10},c_2,a_8)}, (\sigma_9, \sigma_{10},\sigma_1,\sigma_8) , \pi=\emptyset )
 \\\nonumber  
 \Gamma^{(middle)}\in \;&\;TSP ({\cal P}_{(a_7,c_2,c_1,c_3)}, (\sigma_7, \sigma_8,\sigma_1,\sigma_5) , \pi=\emptyset )
   \\\nonumber   
\Gamma^{(right)}\in \;&\;TSP ({\cal P}_{(a_5,a_6,c_3)}, (\sigma_5, \sigma_6,\sigma_7) , \pi=\emptyset )
\end{align}
Here $c_1$, $c_2$ and $c_3$ are not in the initial tree $\Gamma_{\textbf{a}}$. We only use them to represent the local structure. 
\begin{figure}[ht]
    \centering
    \begin{tikzpicture}[scale=1 ]
\begin{scope}[shift={(-4, 0)}]

        % Define center points
        \coordinate (D1) at (0, 1);         % Center for d1
        \coordinate (D2) at (-2, -1);    % Center for d2
        \coordinate (D3) at (0, -1);       % Center for d3
        \coordinate (D4) at (2, -1);     % Center for d4

        % Define vertices for d1
        \coordinate (A1) at (-1.5, 1);       % Left of d1
        \coordinate (A2) at (-0.5, 2);        % Top of d1
        \coordinate (A3) at (.5, 2);        % Top-right of d1
        \coordinate (A4) at (1.5, 1);        % Right of d1

        % Define vertices for d2
        \coordinate (A8) at (-2 , -2);  % Bottom-left for d2
        \coordinate (A9) at (-3, -1);    % Bottom-center for d2
        \coordinate (A10) at (-2, 0);   % Bottom-left far for d2

        % Define vertices for d3
        \coordinate (A7) at (0, -2);       % Bottom for d3

        % Define vertices for d4
        \coordinate (A5) at (2.5, 0);   % Bottom-right for d4
        \coordinate (A6) at (2.5, -2);     % Bottom-right far for d4

        % Draw blue arrows from d1 to its vertices
        \draw[- , thick, blue] (D1) -- (A1) node[ above ] {$a_1$};
        \draw[- , thick, blue] (D1) -- (A2) node[  above] {$a_2$};
        \draw[- , thick, blue] (D1) -- (A3) node[ above] {$a_3$};
        \draw[- , thick, blue] (D1) -- (A4) node[ above] {$a_4$};

        % Draw blue arrows from d2 to its vertices
        \draw[- , thick, blue] (D2) -- (A8) node[below] {$a_8$};
        \draw[- , thick, blue] (D2) -- (A9) node[ above] {$a_9$};
        \draw[- , thick, blue] (D2) -- (A10) node[ above] {$a_{10}$};

        % Draw blue arrows from d3 to its vertex
        \draw[- , thick, blue] (D3) -- (A7) node[below] {$a_7$};

        % Draw blue arrows from d4 to its vertices
        \draw[- , thick, blue] (D4) -- (A5) node[ above] {$a_5$};
        \draw[- , thick, blue] (D4) -- (A6) node[below] {$a_6$};

        % Draw purple arrows connecting the centers
       % \draw[- , ultra thick, purple] (D1) -- (D2);
        \draw[- , ultra thick, purple] (D1) -- (D3);
      %  \draw[- , ultra thick, purple] (D1) -- (D4);
        \draw[- , ultra thick, purple] (D2) -- (D3);
        \draw[- , ultra thick, purple] (D3) -- (D4);

 % Draw circles for d1, d2, d3, d4
        \fill[thick, purple] (D1) circle (0.3) node {$d_1$};
        \fill[thick, purple] (D2) circle (0.3) node {$d_2$};
        \fill[thick, purple] (D3) circle (0.3) node {$d_3$};
        \fill[thick, purple] (D4) circle (0.3) node {$d_4$};
        \end{scope}

        \begin{scope}[shift={(4, 0)}]
        % Define center points
        \coordinate (D1) at (0, 1);         % Center for d1
        \coordinate (D2) at (-2, -1);    % Center for d2
        \coordinate (D3) at (0, -1);       % Center for d3
        \coordinate (D4) at (2, -1);     % Center for d4

% Define center points
        \coordinate (C1) at (0, 0);         % Center for C1
        \coordinate (C2) at (-1, -1);    % Center for C2
        \coordinate (C3) at (1, -1);       % Center for C3

        % Define vertices for d1
        \coordinate (A1) at (-1.5, 1);       % Left of d1
        \coordinate (A2) at (-0.5, 2);        % Top of d1
        \coordinate (A3) at (.5, 2);        % Top-right of d1
        \coordinate (A4) at (1.5, 1);        % Right of d1

        % Define vertices for d2
        \coordinate (A8) at (-2 , -2);  % Bottom-left for d2
        \coordinate (A9) at (-3, -1);    % Bottom-center for d2
        \coordinate (A10) at (-2, 0);   % Bottom-left far for d2

        % Define vertices for d3
        \coordinate (A7) at (0, -2);       % Bottom for d3

        % Define vertices for d4
        \coordinate (A5) at (2.5, 0);   % Bottom-right for d4
        \coordinate (A6) at (2.5, -2);     % Bottom-right far for d4

        \draw[- , thick, black] (C1)--(A1) -- (A2)--(A3) -- (A4)--(C1) node[ left ] {$c_1$};
        
        \draw[- , thick, black] (C3) -- (A5) -- (A6)--(C3) node[ above] {$c_3$} ;
        
        \draw[- , thick, black]  (C2)--(C1) -- (C3)--(A7) -- (C2)node[ above] {$c_2$} ;
        
        \draw[- , thick, black] (A9) -- (A10)--(C2) -- (A8)--(A9);

        % Draw blue arrows from d1 to its vertices
        \draw[- , thick, blue] (D1) -- (A1) node[ above ] {$a_1$};
        \draw[- , thick, blue] (D1) -- (A2) node[  above] {$a_2$};
        \draw[- , thick, blue] (D1) -- (A3) node[ above] {$a_3$};
        \draw[- , thick, blue] (D1) -- (A4) node[ above] {$a_4$};

        % Draw blue arrows from d2 to its vertices
        \draw[- , thick, blue] (D2) -- (A8) node[below] {$a_8$};
        \draw[- , thick, blue] (D2) -- (A9) node[ above] {$a_9$};
        \draw[- , thick, blue] (D2) -- (A10) node[ above] {$a_{10}$};

        % Draw blue arrows from d3 to its vertex
        \draw[- , thick, blue] (D3) -- (A7) node[below] {$a_7$};

        % Draw blue arrows from d4 to its vertices
        \draw[- , thick, blue] (D4) -- (A5) node[ above] {$a_5$};
        \draw[- , thick, blue] (D4) -- (A6) node[below] {$a_6$};

        % Draw purple arrows connecting the centers
       % \draw[- , ultra thick, purple] (D1) -- (D2);
        \draw[- , ultra thick, purple] (D1) -- (D3);
      %  \draw[- , ultra thick, purple] (D1) -- (D4);
        \draw[- , ultra thick, purple] (D2) -- (D3);
        \draw[- , ultra thick, purple] (D3) -- (D4);

 % Draw circles for d1, d2, d3, d4
        \fill[thick, purple] (D1) circle (0.3) node {$d_1$};
        \fill[thick, purple] (D2) circle (0.3) node {$d_2$};
        \fill[thick, purple] (D3) circle (0.3) node {$d_3$};
        \fill[thick, purple] (D4) circle (0.3) node {$d_4$};
        \end{scope}
    \end{tikzpicture}
    \caption{Multiple molecule Tree}
    \label{fig:circle_arrows}
\end{figure}

\begin{definition}[Definition of $\cal K^{(\pi)}$ and $\Sigma^{(\pi)}$]\label{def_Kpi}
Given a subset 
 \begin{equation}\label{defpiij}
  \pi \subset \mathbb Z_n^{off} 
 \end{equation}
 define 
 \begin{equation} 
     {\cal K}^{(\pi)}_{t, \boldsymbol{\sigma}, \textbf{a}} = \sum_{\Gamma_{\textbf{a}}\, \in\,  {TSP}({\cal P}_{\textbf{a}},\; \boldsymbol{\sigma},\; \pi )} \Gamma_{ \textbf{a}}(t, \boldsymbol{\sigma}).
   \end{equation} 
   where $\Gamma_{ \textbf{a}}(t, \boldsymbol{\sigma})$ was defined in Definition \ref{def33}.
Clearly, 
\begin{equation}\label{KKpi}
  {\cal K}_{t, \boldsymbol{\sigma}, \textbf{a}} =W^{-n+1}\cdot m_{\boldsymbol{\sigma}}\cdot \sum_{\pi}{\cal K}_{t, \boldsymbol{\sigma}, \textbf{a}} ^{(\pi)},\quad m_{\boldsymbol{\sigma}}=\prod_i m_i. 
\end{equation}
Notice that there is a factor $m_{\boldsymbol{\sigma}}W^{-n+1}$ in the last equation due to our convention that $\cal K^{(\pi)}$ is independent of $W$. 
 Next, we define the self energy $\Sigma^{(\pi)}$  of ${\cal K}^{(\pi)}$.   To this end, we relabel vertices by  introducing 
  $d_i$ as the ending vertex of the boundary edge starting from  $a_i$.  When two edges ending at $d_i, d_j$ join, we identify them by adding a delta function.  We then label all other internal vertices by $s_1$, $s_2, \ldots$.  
   Define the self energy by removing from $ {\cal K}^{(\pi)}_{t, \boldsymbol{\sigma}, \textbf{a}}$ the boundary edges and then summing all $s$ indices.  
Clearly, we have 
\begin{equation} 
\Sigma^{(\pi)}(t, \boldsymbol{\sigma}, \textbf{d}): 
\quad  
{\cal K}^{(\pi)}_{t, \boldsymbol{\sigma}, \textbf{a}}
=
%\left(\prod_{i=1}^n m_i\right) \cdot 
\sum_{\textbf{d}}\left(\Sigma^{(\pi)}(t, \boldsymbol{\sigma}, \textbf{d})\right)
\cdot 
\prod_{i=1}^n \left(\Theta^{(B)}_{tm_im_{i+1}}\right)_{a_i, d_i}   
\end{equation} 
\end{definition}

\noindent 
{\bf Example}: For Figure \ref{fig_n=4} with $n=4$ and 
$\boldsymbol{\sigma}=(+,-,+,-) $, 
$$
\Sigma^{(\emptyset)}(t, \boldsymbol{\sigma}, \textbf{d}) = \delta_{d_1 d_2 d_3 d_4}
+ \delta_{d_1 d_2} \delta_{d_3 d_4} \left(\Theta^{(B)}_{t m^2} - 1\right)_{d_1 d_3}
+ \delta_{d_1 d_4} \delta_{d_2 d_3} \left(\Theta^{(B)}_{t \bar m^2} - 1\right)_{d_1 d_2}.
$$
On the other hand, if  $\boldsymbol{\sigma}=(+,-,+,-)$  then 
$\Sigma^{(\pi)}(t, \boldsymbol{\sigma}, \textbf{d})=0$ if $\pi\ne \emptyset$.

By \eqref{prop:ThfadC} and  explicit calculations,   we have 
$\Sigma^{(\emptyset)}(t, \boldsymbol{\sigma}, \textbf{d}) =O(1) $
and  that $\Sigma^{(\emptyset)}(t, \boldsymbol{\sigma}, \textbf{d})$ is short-ranged in the sense that 
\begin{align}\label{res_SIGempdd}
  \Sigma^{(\emptyset)}(t, \boldsymbol{\sigma}, \textbf{d})\le C_n\exp\left(-c_n \max_{ij}\|d_i-d_j\|\right)   .
\end{align}
In addition,  we have the following sum zero property :
\[
\sum_{d_1, d_2, d_3} \Sigma^{(\emptyset)}(t, \boldsymbol{\sigma}, \textbf{d}) = O(1-t) = O(\eta_t).
\]  
(The name sum zero comes from the fact that the above quantity equals to $0$ when $t=1$) 
Due to the translation invariance, the last  bound  is equivalent to 
\[
L^{-1} \cdot \sum_{d_1, d_2, d_3, d_4} \Sigma^{(\emptyset)}(t, \boldsymbol{\sigma}, \textbf{d}) = O(1-t) = O(\eta_t).
\] 
It turns out that this property holds  for all alternating  $\boldsymbol{\sigma}$ with  $\pi=\emptyset$.  

\begin{lemma}[Sum zero]\label{lem:SZ}
For fixed $4 \le n\in 2\mathbb Z$ and an alternating  loop $\boldsymbol{\sigma}^{(alt)}$ with 
\begin{equation}\label{assum_SZ_SinMole}
1\le k\le n, \quad \quad {\sigma}^{(alt)}_k =\begin{cases}
    +, \quad k\in 2\mathbb Z-1
    \\\nonumber
    -, \quad k\in 2\mathbb Z 
\end{cases}\; \;,\quad t\in[0,1]  ,  
\end{equation}
the single molecule tree graphs (i.e., $\pi=\emptyset$) have the following sum zero property  
\begin{equation}\label{res_SZ_SinMole}
L^{-1} \cdot\sum_{\textbf{d}\,\in \,\mathbb Z^{n}_L } \Sigma^{(\emptyset)}(t, \boldsymbol{\sigma}^{(alt)}, \textbf{d}) = O(1-t)=O(\eta_t)
\end{equation}
\end{lemma}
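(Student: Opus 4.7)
I would prove Lemma~\ref{lem:SZ} by strong induction on the even integer $n \ge 4$, combining the Ward identity (Lemma~\ref{lem_WI_K}) with the tree representation (Lemma~\ref{Lemma_TRofK}). For the induction to close I strengthen the inductive statement to every admissible $\pi$, not only $\pi = \emptyset$: for every even $n'$ with $4 \le n' \le n$ and every $\pi$ realizable as some $\mathcal{F}_{\mathrm{long}}(\Gamma, \boldsymbol{\sigma}^{(alt)})$, one has $L^{-1} \sum_{\textbf{d}} \Sigma^{(\pi)}(t, \boldsymbol{\sigma}^{(alt)}, \textbf{d}) = O(1-t)$. The base case $n'=4$ is a direct computation: the explicit formula for $\Sigma^{(\emptyset)}$ displayed just after Definition~\ref{def_Kpi} sums (using translation invariance and $\sum_a(\Theta^{(B)}_\xi)_{a,d}=(1-\xi)^{-1}$) to $L(1-t^2)/|1-tm^2|^2$, which is $O(L(1-t))$ in the bulk $|E|\le 2-\kappa$.

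For the inductive step, the first observation is that alternating $\boldsymbol{\sigma}^{(alt)}$ makes every boundary propagator in Definition~\ref{def_Kpi} equal to $\Theta^{(B)}_{t|m|^2}=\Theta^{(B)}_{t}$, with row sum $\sum_a(\Theta^{(B)}_t)_{a,d}=(1-t)^{-1}$. Combined with the decomposition $\mathcal{K}=W^{-n+1}m_{\boldsymbol{\sigma}}\sum_\pi\mathcal{K}^{(\pi)}$, this yields the exact identity
\[
\sum_{\textbf{a}}\mathcal{K}_{t,\boldsymbol{\sigma}^{(alt)},\textbf{a}}\;=\;W^{-n+1}\,m_{\boldsymbol{\sigma}^{(alt)}}(1-t)^{-n}\sum_{\pi}\sum_{\textbf{d}}\Sigma^{(\pi)}(t,\boldsymbol{\sigma}^{(alt)},\textbf{d}).
\]
On the other hand, since $\sigma^{(alt)}_1=+$ and $\sigma^{(alt)}_n=-$, iterating \eqref{WI_calK} exactly $n-1$ times reduces $\sum_{\textbf{a}}\mathcal{K}_{t,\boldsymbol{\sigma}^{(alt)},\textbf{a}}$ to a length-$1$ pure loop of size $O(1)$ (by Corollary~\ref{lem_pureloop}), each Ward step contributing one factor of $(2W\eta_t)^{-1}$; after absorbing one $L$ from translation invariance this gives
\[
\sum_{\textbf{a}}\mathcal{K}_{t,\boldsymbol{\sigma}^{(alt)},\textbf{a}}\;=\;O\bigl(L\,W^{-n+1}(1-t)^{-n+1}\bigr).
\]
Comparing the two expressions (and using $|m_{\boldsymbol{\sigma}^{(alt)}}|=1$) yields the aggregate bound $\sum_{\pi}L^{-1}\sum_{\textbf{d}}\Sigma^{(\pi)}=O(1-t)$.

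To extract the $\pi=\emptyset$ contribution it suffices to show that each $\pi\ne\emptyset$ already satisfies $L^{-1}\sum_{\textbf{d}}\Sigma^{(\pi)}=O(1-t)$, since subtracting the finitely many such terms leaves the desired case. For this I pick a \emph{leaf} long edge $\{i,j\}\in\pi$, i.e.\ one whose shorter arc between $i$ and $j$ contains no further element of $\pi$; existence follows from the non-crossing property recorded in Lemma~\ref{lem_CSP}. Removing this edge from every $\Gamma\in TSP(\mathcal{P}_{\textbf{a}},\boldsymbol{\sigma}^{(alt)},\pi)$ splits it into two sub-trees that are valid trees for two alternating sub-polygons $\mathcal{P}_1,\mathcal{P}_2$ of even lengths $j-i+1$ and $n-j+i+1$, both in $[4,n-2]$ (even parity from $\sigma^{(alt)}_i\ne\sigma^{(alt)}_j$), carrying $\pi_1=\emptyset$ and $\pi_2=\pi\setminus\{\{i,j\}\}$ respectively. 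This produces a factorization of the shape
\[
\Sigma^{(\pi)}(t,\boldsymbol{\sigma}^{(alt)},\textbf{d})\;=\;\sum_{u,v}\Sigma^{(\emptyset)}_{\mathcal{P}_1}\bigl(t,\boldsymbol{\sigma}_1,\textbf{d}_1,u\bigr)\bigl(\Theta^{(B)}_t-1\bigr)_{uv}\Sigma^{(\pi_2)}_{\mathcal{P}_2}\bigl(t,\boldsymbol{\sigma}_2,\textbf{d}_2,v\bigr),
\]
after which $\sum_u(\Theta^{(B)}_t-1)_{uv}=t/(1-t)$, translation invariance absorbs one factor of $L$, and the two inductive $O(1-t)$ factors combine to give $L^{-1}\sum_{\textbf{d}}\Sigma^{(\pi)}=O((1-t)\cdot(1-t)\cdot t/(1-t))=O(1-t)$.

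The main obstacle will be justifying the factorization across the leaf cut rigorously: one must verify that once $\{i,j\}$ is removed the residual sub-trees are precisely elements of $TSP(\mathcal{P}_k,\boldsymbol{\sigma}_k,\pi_k)$ with the internal endpoints $u,v$ playing the role of pinned internal vertices of the $\Sigma^{(\pi_k)}_{\mathcal{P}_k}$, and the $\delta$-identifications built into Definition~\ref{def_Kpi} at $u,v$ are handled consistently. The cleanest way to arrange this is probably to run the induction with a slightly refined quantity in which one internal vertex of $\Sigma^{(\pi)}$ is held fixed rather than summed, so that the factorization above reads literally as a convolution in $u,v$ and the translation-invariance cancellation of one factor of $L$ is completely unambiguous.
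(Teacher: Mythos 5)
Your plan is correct and follows essentially the same route as the paper: establish an aggregate bound on $\sum_\pi L^{-1}\sum_{\textbf{d}}\Sigma^{(\pi)}$ via Ward identities (the paper's Corollary~\ref{lem_sumAinK}, which you invoke through $\sum_{\textbf{a}}\mathcal K$), then peel off each $\pi\ne\emptyset$ by cutting a leaf long edge and invoking a two-way factorization, and finally subtract to isolate $\pi=\emptyset$. The paper phrases the intermediate induction in terms of $L^{-1}\sum_{\textbf{a}}\mathcal{K}^{(\pi)}=O(\eta_t^{-n+1})$ for \emph{arbitrary} $\boldsymbol{\sigma}$ and $n\ge 3$, extracting a scalar $f^*(c_1)$ from the leaf molecule and bounding it by the inductive hypothesis; you instead run the induction directly on $L^{-1}\sum_{\textbf{d}}\Sigma^{(\pi)}=O(1-t)$ restricted to alternating $\boldsymbol{\sigma}$ with base case $n=4$. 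These are the same argument up to the normalization $\sum_{\textbf{a}}\mathcal K^{(\pi)}=(1-t)^{-n}\sum_{\textbf{d}}\Sigma^{(\pi)}$ (valid in the alternating case since every boundary factor is $\Theta^{(B)}_t$), and your restriction to alternating $\boldsymbol{\sigma}$ is legitimate because a leaf cut of an alternating polygon at $\{i,j\}$ with $\sigma_i\ne\sigma_j$ produces two alternating sub-polygons of even size in $[4,n-2]$. Your base-case computation $L^{-1}\sum_{\textbf{d}}\Sigma^{(\emptyset)}=(1-t^2)/|1-tm^2|^2$ is correct, and your "refined quantity" caveat about the factorization can in fact be dispatched more simply by translation invariance, exactly as the paper does via the observation $f^*(c_1)=f^*(1)$. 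The one imprecision is the claim that the Ward identity reduces $\sum_{\textbf{a}}\mathcal K$ all the way to a length-$1$ loop; in reality (cf.~\eqref{sfauuu}) the iteration stops once the loop becomes pure, but the resulting $O(LW^{-n+1}(1-t)^{-n+1})$ bound is the same. In summary, the proposal is sound; the only genuine difference from the paper is that you specialize to alternating $\boldsymbol{\sigma}$ from the outset, which the paper cannot do because it reuses the intermediate bound \eqref{LKsigmas} for general $\boldsymbol{\sigma}$ elsewhere (in Lemma~\ref{ML:Kbound+pi}).
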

This sum-zero property is the key input for the  following estimates on $\cal K^{(\pi)}$ and $\cal K$. 
\begin{lemma}[Bound on ${\cal K}$]\label{ML:Kbound+pi}
For any ${\cal K}^{(\pi)}_{t, \boldsymbol{\sigma}, \textbf{a}}$ defined in Definition \ref{def_Kpi},   we have 
\begin{equation}\label{eq:bcal_k_pi}
  {\cal K}^{(\pi)}_{t, \boldsymbol{\sigma}, \textbf{a}} =O_\prec \left( \ell_t \cdot \eta_t \right)^{-n+1}    
\end{equation}
Together with \eqref{KKpi}, 
we have  
\begin{equation}\label{eq:bcal_k_2}
  {\cal K}_{t, \boldsymbol{\sigma}, \textbf{a}} =O_\prec\left(W \ell_t \cdot \eta_t \right)^{-n+1}    .
\end{equation}
Notice that there is no $W$ factor in ${\cal K}^{(\pi)}$ due to its definition.
\end{lemma}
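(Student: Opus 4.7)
The plan is to deduce \eqref{eq:bcal_k_2} from \eqref{eq:bcal_k_pi}: since $|m_{\boldsymbol\sigma}|=1$ and the number of subsets $\pi\subset \mathbb Z_n^{\rm off}$ is bounded by a constant depending only on $n$, the decomposition \eqref{KKpi} reduces matters to proving $\mathcal K^{(\pi)}_{t,\boldsymbol\sigma,\textbf a}\prec (\ell_t\eta_t)^{-(n-1)}$ for every $\pi$. I would proceed by a molecule decomposition of the tree representation given in Lemma \ref{Lemma_TRofK}. Each long internal edge in $\pi$ corresponds to a factor $\Theta^{(B)}_{t|m|^2}-1$ with $\|\cdot\|_{\max}\lesssim (\ell_t\eta_t)^{-1}$. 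Cutting the tree at these long edges factorizes $\mathcal K^{(\pi)}$ into a ``convolution'' (indexed by the identification of cut endpoints) of single molecule pieces of the type $\mathcal K^{(\emptyset)}$, glued by $\Theta^{(B)}_{t|m|^2}-1$. Bookkeeping the contributions of long internal edges (using both the $\ell^\infty$ and $\ell^1$ norms of $\Theta^{(B)}_{t|m|^2}$ from Lemma \ref{lem_propTH}) against the loss/gain from each molecule then leaves the main task: prove the single molecule bound $\mathcal K^{(\emptyset)}_{t,\boldsymbol\sigma',\textbf a'}\prec (\ell_t\eta_t)^{-(k-1)}$ for a polygon of length $k$ with charges $\boldsymbol\sigma'$.

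For a single molecule I start from
\[
\mathcal K^{(\emptyset)}_{t,\boldsymbol\sigma',\textbf a'}=\sum_{\textbf d}\Sigma^{(\emptyset)}(t,\boldsymbol\sigma',\textbf d)\prod_{i=1}^k\left(\Theta^{(B)}_{tm'_im'_{i+1}}\right)_{a'_i,d_i},
\]
and use that $\Sigma^{(\emptyset)}$ is $O(1)$ and exponentially localized in $\max_{ij}\|d_i-d_j\|$ by \eqref{res_SIGempdd}. When $\boldsymbol\sigma'$ contains two equal neighbors, the corresponding boundary edge is short ($\Theta^{(B)}_{tm^2}$ or $\Theta^{(B)}_{t\bar m^2}$, both of $O(1)$ max norm and $O(1)$ range), which confines one of the $a'_i$'s; a direct power count using $\|\Theta^{(B)}_{t|m|^2}\|_{\max}\lesssim(\ell_t\eta_t)^{-1}$ and $\|\Theta^{(B)}_{t|m|^2}\|_1\lesssim \eta_t^{-1}$ gives the target (in fact stronger) bound. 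The delicate case is when $\boldsymbol\sigma'$ is fully alternating and hence all boundary edges are long. Exploiting the translation invariance, I write $\Sigma^{(\emptyset)}(\textbf d)=\widetilde\Sigma(d_2-d_1,\dots,d_k-d_1)$ and Taylor expand the product $\prod_i \Theta^{(B)}_{t|m|^2}(a'_i,d_1+y_i)$ around $\textbf y=0$. The zeroth order term factorizes into $\sum_{d_1}\prod_i\Theta^{(B)}_{t|m|^2}(a'_i,d_1)\lesssim \ell_t(\ell_t\eta_t)^{-k}$ times $\sum_{\textbf y}\widetilde\Sigma(\textbf y)=L^{-1}\sum_\textbf{d}\Sigma^{(\emptyset)}$, which by the sum zero property of Lemma \ref{lem:SZ} equals $O(\eta_t)$; combined, they produce exactly $(\ell_t\eta_t)^{-(k-1)}$. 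The first order correction in $\textbf y$ vanishes by the inversion symmetry $\Sigma^{(\emptyset)}(\textbf d)=\Sigma^{(\emptyset)}(-\textbf d)$ inherited from the symmetry of the propagators $\Theta^{(B)}$, and the second order remainder is controlled using the improved second difference bound \eqref{prop:BD2}, producing at worst a logarithmic correction that is absorbed into $\prec$.

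The principal obstacle is precisely the alternating single molecule case: the naive $\|\Theta^{(B)}_{t|m|^2}\|_{\max}^k$ estimate overshoots the target by a factor $(\ell_t\eta_t)^{-1}$, and recovering this loss hinges on the sum zero property, which in turn must be proved via Ward's identity \eqref{WI_calK} — the conceptual heart of the whole argument. A secondary technical issue is that the pointwise derivative estimate \eqref{prop:BD1} gives only $|\Theta^{(B)}_{t|m|^2}(x,y)-\Theta^{(B)}_{t|m|^2}(x,y+1)|\lesssim 1$, which is the same order as $\|\Theta^{(B)}_{t|m|^2}\|_{\max}$ and hence does not by itself suffice to control the Taylor remainder; this is why the symmetry cancellation at first order and the second order estimate \eqref{prop:BD2} are both necessary. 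Once the single molecule bound is in place, the overall power counting for general $\pi$ follows by a straightforward induction on $|\pi|$, with each factor of $\Theta^{(B)}_{t|m|^2}-1$ consumed by exactly one unit of $(\ell_t\eta_t)^{-1}$ from the required $(\ell_t\eta_t)^{-(n-1)}$ budget.
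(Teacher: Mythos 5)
Your proposal follows the same route as the paper: reduce \eqref{eq:bcal_k_2} to \eqref{eq:bcal_k_pi}, decompose $\mathcal K^{(\pi)}$ along long internal edges into single-molecule pieces, handle non-alternating molecules by locating a short $\Theta^{(B)}_{tm^2}$ edge, and handle the alternating molecule by the sum-zero property of $\Sigma^{(\emptyset)}$ combined with even-odd cancellation at the linear order and the second-difference bound \eqref{prop:BD2} for the remainder. The key ideas — Lemma \ref{lem:SZ}, the inversion symmetry $\Sigma^{(\emptyset)}(\textbf d)=\Sigma^{(\emptyset)}(-\textbf d)$, and the second-difference estimate — all match.

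One point is stated too loosely: the summary "each factor of $\Theta^{(B)}_{t|m|^2}-1$ is consumed by exactly one unit of $(\ell_t\eta_t)^{-1}$" is not by itself a closed power counting. If you feed only the full single-molecule bound $\mathcal K^{(\emptyset)}\prec(\ell_t\eta_t)^{-(k-1)}$ into the induction on $|\pi|$, the sums over the internal $c$-vertices cost extra factors of $\ell_t$ (or $\eta_t^{-1}$ if you use the $\ell^1$ norm of $\Theta_t-1$) and the count comes out off for $M\ge 3$ molecules. What the paper actually uses, and what your Taylor-expansion proof produces as an intermediate step before the final $d_1$-sum, is the \emph{inner-part} bound \eqref{spwow3}: with one $d$-vertex $d_1=c_1$ held fixed, the molecule is $O(\ell_t\eta_t)^{-(k-2)}\cdot(\min_j\|a_j-c_1\|+1)^{-1}$. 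It is precisely this polynomial-in-$c_1$ decay that makes $\sum_{c_1}$ convergent (up to a harmless log absorbed into $\prec$), after which the recursion — strip off a leaf molecule with a single $c$-vertex and apply the inductive bound to the remainder $\mathcal B$ — closes. So you need to record and carry forward this stronger intermediate estimate, not just the final single-molecule bound, for the $\pi\neq\emptyset$ induction to go through.
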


\subsection{Proof of Lemma  \ref{lem:SZ}}

\begin{proof}[ Proof of Lemma  \ref{lem:SZ}]
By   definitions of ${\cal K}^{(\pi)}$ and $\Sigma^{(\pi)}$, for any $n\ge 3$ we have that 
\begin{equation} 
 \sum_{\textbf{a}\,\in \,\mathbb Z^{n}_L } 
{\cal K}^{(\pi)}(t, \boldsymbol{\sigma}, \textbf{a})
= \sum_{\textbf{a}\,\in \,\mathbb Z^{n}_L }  \prod_{i=1}^n \left(\Theta^{(B)}_{tm_im_{i+1}}\right)_{a_i,d_i}\cdot\sum_{\textbf{d}\,\in \,\mathbb Z^{n}_L } \Sigma^{(\pi)}(t, \boldsymbol{\sigma}, \textbf{d})    
\end{equation}
Use $S^{(B)} \bf 1 = \bf 1$,  we have  
$$
\sum_{a_i}\left(\Theta^{(B)}_{tm_im_{i+1}}\right)_{a_i,d_i}=\frac{1}{1-tm_im_{i+1}}.
$$
Therefore, we have 
\begin{equation}\label{KSIGetat}
 \sum_{\textbf{a}\,\in \,\mathbb Z^{n}_L } 
{\cal K}^{(\pi)}(t, \boldsymbol{\sigma}, \textbf{a})
 \Bigg/\sum_{\textbf{d}\,\in \,\mathbb Z^{n}_L } \Sigma^{(\pi)}(t, \boldsymbol{\sigma}, \textbf{d})    \sim \eta_t^{-\big|\{\;i: \;\sigma_i\ne \sigma_{i+1}, \; i\in \mathbb Z_L\}\big|}  .
\end{equation}
By\eqref{KKpi} and   Corollary \ref{lem_sumAinK}, we have  
\begin{align}\label{LKsigm0}
 L^{-1}\cdot \sum_{\pi}\sum_{\textbf{a}\,\in \,\mathbb Z^{n}_L } 
{\cal K}^{(\pi)}(t, \boldsymbol{\sigma}, \textbf{a})= O(\eta_t)^{-n+1}.
\end{align}
We now  show that it holds without sum over $\pi$, i.e.,    for  any fixed $\boldsymbol{\sigma}$, $n\ge 3$ and   $\pi$   in \eqref{defpiij}, 
\begin{align}\label{LKsigmas}
%\forall \boldsymbol{\sigma},\; \pi, \; n\ge 3, \quad \quad\quad  
 L^{-1}\cdot \sum_{\textbf{a}\,\in \,\mathbb Z^{n}_L } 
{\cal K}^{(\pi)}(t, \boldsymbol{\sigma}, \textbf{a})=O (\eta_t)^{-n+1}  
\end{align}
Assuming that  the last equation  holds in the case that
$\pi=\emptyset$ and $\boldsymbol{\sigma}=\boldsymbol{\sigma}^{(alt)}$, 
  together with \eqref{KSIGetat},   we have 
\begin{equation}
   L^{-1} \cdot \sum_{\textbf{d}\,\in \,\mathbb Z^{n}_L } \Sigma^{(\emptyset)}(t, \boldsymbol{\sigma}^{(alt)}, \textbf{d}) =O(\eta_t).
\end{equation}
This  implies the desired result \eqref{res_SZ_SinMole}. 

We now prove \eqref{LKsigmas} by  induction. If $n=3$, then $\pi$ can only be $\emptyset$. Hence \eqref{LKsigm0} implies \eqref{LKsigmas} in the case $n=3$. Next, we assume that \eqref{LKsigmas} holds with \( n \) replaced by \( m < n \). Under this assumption, we first prove that if  $\pi\ne \emptyset$ then \eqref{LKsigmas} holds, i.e., 
\begin{align} 
 \pi\ne\emptyset\implies  L^{-1}\cdot \sum_{\textbf{a}\,\in \,\mathbb Z^{n}_L } 
{\cal K}^{(\pi)}(t, \boldsymbol{\sigma}, \textbf{a})=O (\eta_t)^{-n+1}  
\end{align}
If $\pi\ne \emptyset$, we can always represent 
$ 
{\cal K}^{(\pi)}(t, \boldsymbol{\sigma}, \textbf{a})
$ with the molecule structure and the self-energy  $\Sigma^{(\emptyset)}$. For example, for $n=10$ and $\pi$ in \eqref{exn10}, the molecule structure is the one in Figure \ref{fig:circle_arrows}. Then as shown in the Figure \ref{fig:Decp}, where $d_i$ is the vertex connecting with $a_i$'s (which ere not marked  in the figure) and $c_i$ are the vertices of edges connecting molecules (i.e., big dots) :
$$
{\cal K}^{(\pi)}(t, \boldsymbol{\sigma}, \textbf{a})=
\sum_{\textbf{d}}\sum_{\textbf{c}}\left(\prod_{i=1}^{10}\left(\Theta^{(B)}_{tm_im_{i+1}}\right)_{a_id_i}\right)\cdot \prod_{k=1}^3\Big(\Theta_{{t|m|^2}}-1\Big)_{c_{2k-1}c_{2k}}
\cdot
 \prod_{k=1}^4{\Sigma}^{(\emptyset)}(t, \boldsymbol{\sigma}^{(k)}, \textbf{d}^{(k)})
$$
\begin{figure}[ht]
    \centering
    \begin{tikzpicture}[scale=1 ]
\begin{scope}[shift={(-8, 0)}]

        % Define center points
        \coordinate (D1) at (0, 1);         % Center for d1
        \coordinate (D2) at (-2, -1);    % Center for d2
        \coordinate (D3) at (0, -1);       % Center for d3
        \coordinate (D4) at (2, -1);     % Center for d4

        % Define vertices for d1
        \coordinate (A1) at (-1.5, 1);       % Left of d1
        \coordinate (A2) at (-0.5, 2);        % Top of d1
        \coordinate (A3) at (.5, 2);        % Top-right of d1
        \coordinate (A4) at (1.5, 1);        % Right of d1

        % Define vertices for d2
        \coordinate (A8) at (-2 , -2);  % Bottom-left for d2
        \coordinate (A9) at (-3, -1);    % Bottom-center for d2
        \coordinate (A10) at (-2, 0);   % Bottom-left far for d2

        % Define vertices for d3
        \coordinate (A7) at (0, -2);       % Bottom for d3

        % Define vertices for d4
        \coordinate (A5) at (2.5, 0);   % Bottom-right for d4
        \coordinate (A6) at (2.5, -2);     % Bottom-right far for d4

        % Draw blue arrows from d1 to its vertices
        \draw[- , thick, blue] (D1) -- (A1) node[ above ] {$a_1$};
        \draw[- , thick, blue] (D1) -- (A2) node[  above] {$a_2$};
        \draw[- , thick, blue] (D1) -- (A3) node[ above] {$a_3$};
        \draw[- , thick, blue] (D1) -- (A4) node[ above] {$a_4$}
       ; 

        % Draw blue arrows from d2 to its vertices
        \draw[- , thick, blue] (D2) -- (A8) node[below] {$a_8$};
        \draw[- , thick, blue] (D2) -- (A9) node[ above] {$a_9$};
        \draw[- , thick, blue] (D2) -- (A10) node[ above] {$a_{10}$};

        % Draw blue arrows from d3 to its vertex
        \draw[- , thick, blue] (D3) -- (A7) node[below] {$a_7$};

        % Draw blue arrows from d4 to its vertices
        \draw[- , thick, blue] (D4) -- (A5) node[ above] {$a_5$};
        \draw[- , thick, blue] (D4) -- (A6) node[below] {$a_6$};

        % Draw purple arrows connecting the centers
       % \draw[- , ultra thick, purple] (D1) -- (D2);
       \draw[-, ultra thick, purple] (D1) -- (D3)
    node[black, above, xshift=6pt, yshift=5pt] {$c_2$}
    node[black, below left, xshift=-3pt] {$c_5$}
    node[black, below right, xshift=3pt] {$c_4$};
      \draw[-, ultra thick, purple] (D3) -- (D1)
    node[black, below left, xshift=-1pt, yshift=-5pt] {$c_1$};
      %  \draw[- , ultra thick, purple] (D1) -- (D4);
        \draw[- , ultra thick, purple] (D3) -- (D2)node[black, below right, xshift=2pt, yshift=-2pt] {$c_6$};
        \draw[- , ultra thick, purple] (D3) -- (D4)node[black, below left, xshift=-2pt, yshift=-2pt] {$c_3$};

 % Draw circles for d1, d2, d3, d4
        \fill[thick, purple] (D1) circle (0.3) node {$d_1$};
        \fill[thick, purple] (D2) circle (0.3) node {$d_2$};
        \fill[thick, purple] (D3) circle (0.3) node {$d_3$};
        \fill[thick, purple] (D4) circle (0.3) node {$d_4$};
        \end{scope}
      \begin{scope}[shift={(0, 0)}]

        % Define center points
        \coordinate (D1) at (-1, 1.5);         % Center for d1
        \coordinate (D2) at (-2, -1);    % Center for d2
        \coordinate (D3) at (0, -1);       % Center for d3
        \coordinate (D4) at (2, -1); 
          \coordinate (D5) at (0, 1);% Center for d4

        % Define vertices for d1
        \coordinate (A1) at (-2.5, 1.5);       % Left of d1
        \coordinate (A2) at (-1.5, 2.5);        % Top of d1
        \coordinate (A3) at (-.5, 2.5);        % Top-right of d1
        \coordinate (A4) at ( .5, 1.5);        % Right of d1

        % Define vertices for d2
        \coordinate (A8) at (-2 , -2);  % Bottom-left for d2
        \coordinate (A9) at (-3, -1);    % Bottom-center for d2
        \coordinate (A10) at (-2, 0);   % Bottom-left far for d2

        % Define vertices for d3
        \coordinate (A7) at (0, -2);       % Bottom for d3

        % Define vertices for d4
        \coordinate (A5) at (2.5, 0);   % Bottom-right for d4
        \coordinate (A6) at (2.5, -2);     % Bottom-right far for d4

        % Draw blue arrows from d1 to its vertices
        \draw[- , thick, blue] (D1) -- (A1) node[ above ] {$a_1$};
        \draw[- , thick, blue] (D1) -- (A2) node[  above] {$a_2$};
        \draw[- , thick, blue] (D1) -- (A3) node[ above] {$a_3$};
        \draw[- , thick, blue] (D1) -- (A4) node[ above] {$a_4$}
       ;  \draw[- , thick, blue] (A4) -- (D1)
    node[black, below left, xshift=-1pt, yshift=-5pt] {$c_1$};

        % Draw blue arrows from d2 to its vertices
        \draw[- , thick, blue] (D2) -- (A8) node[below] {$a_8$};
        \draw[- , thick, blue] (D2) -- (A9) node[ above] {$a_9$};
        \draw[- , thick, blue] (D2) -- (A10) node[ above] {$a_{10}$};

        % Draw blue arrows from d3 to its vertex
        \draw[- , thick, blue] (D3) -- (A7) node[below] {$a_7$};

        % Draw blue arrows from d4 to its vertices
        \draw[- , thick, blue] (D4) -- (A5) node[ above] {$a_5$};
        \draw[- , thick, blue] (D4) -- (A6) node[below] {$a_6$};

        % Draw purple arrows connecting the centers
       % \draw[- , ultra thick, purple] (D1) -- (D2);
        \draw[-, ultra thick, purple] (D3) -- (D5)
    node[black, below left] {$c_1$};
       \draw[-, ultra thick, purple] (D5) -- (D3)
    node[black, above, xshift=6pt, yshift=5pt] {$c_2$}
    node[black, below left, xshift=-3pt] {$c_5$}
    node[black, below right, xshift=3pt] {$c_4$};
        %  \draw[- , ultra thick, purple] (D1) -- (D4);
        \draw[- , ultra thick, purple] (D3) -- (D2)node[black, below right, xshift=2pt, yshift=-2pt] {$c_6$};
        \draw[- , ultra thick, purple] (D3) -- (D4)node[black, below left, xshift=-2pt, yshift=-2pt] {$c_3$};

 % Draw circles for d1, d2, d3, d4
        \fill[thick, purple] (D1) circle (0.3) node {$d_1$};
        \fill[thick, purple] (D2) circle (0.3) node {$d_2$};
        \fill[thick, purple] (D3) circle (0.3) node {$d_3$};
        \fill[thick, purple] (D4) circle (0.3) node {$d_4$};
        \end{scope}
    \end{tikzpicture}
    \caption{Decomposition of $\cal K^{(\pi)}$}
    \label{fig:Decp}
\end{figure}
Here  ${\Sigma}^{(\emptyset)}(t, \boldsymbol{\sigma}^{(k)}, \textbf{d}^{(k)})$, $1\le k\le 4$, 
represent the four self-energies   (i.e., four big dots, top, bottom, left, right). More precisely,  
$$
\textbf{d}^{(top)}=(c_1,d_1,d_2,d_3,d_4),\quad
\textbf{d}^{(left)}=(c_6,d_8,d_9,d_{10}),\quad
\textbf{d}^{(bottom)}=(c_2,c_4,d_7,c_5),\quad
\textbf{d}^{(right)}=(c_3,d_5,d_6),\quad
$$
and   $\cal K^{(\pi)}$ is decomposed  into three parts: 
\begin{itemize}
    \item The edges connect with external vertices, i.e., (blue) boundary edges.  
    \item The edges connect two different molecules, which is always $\left(\Theta_{{t|m|^2}}-1\right)$
    \item The cores ${\Sigma}^{(\emptyset)}(t, \boldsymbol{\sigma}^{(k)}, \textbf{d}^{(k)})$ for each single molecule. 
\end{itemize}

One can easily extend it to the general cases. Given a set 
$\pi$ (which can be the empty set), we label all vertices of long internal edges for $\pi$ by $c_i$, $(1\le i\le 2|\pi|)$. Recall that the vertex connecting with the 
boundary vertex $a_i$ is denoted by  $d_i$. We now denote all internal vertices other than $d_i, c_j$ by $s_k$. 
The indices $k$ is a finite set less than $n$, but we will not specify it. 
We now explain how to construct the molecule and their tree structure. Given $ \{i, j \} \in \pi$, we draw a line in the polygon from the center of edge $i$ to 
that of $j$.  In this way, we have a partition of the polygon. The set $\pi$ for which there is $\Gamma_{\textbf{a}}$ such that $ {\cal F}_{long}(\Gamma_{\textbf{a}}, \sigma)=\pi $ satisfies that these lines representing the pairing are non-crossing. From now on, we will call  $\pi$ non-crossing pairing. 
Given a non-crossing pairing, we divide the polygon into several regions, say, $M$ regions (note $M=|\pi|+1$. We represent each region by a big dot (molecule), and there is an edge connecting two dots if and only if these two regions are neighboring.    Notice that each dot typically has many vertices connecting to it, as shown in Figure \ref{fig:clockwise-decagon}. 
\usetikzlibrary{calc}
\begin{figure}[ht]
    \centering 
  \begin{tikzpicture}[scale=0.8]

   %first one
   \begin{scope} 
  
    % Define the radius of the circle
    \def\radius{2.5}
    \def\labeldist{0.3} % Distance of the labels from the vertices

    % Loop to place the 10 nodes and label them
    \foreach \i [evaluate=\i as \angle using {144 - (\i-1)*36}] in {1,...,10} {
        % Place the vertex
        \node (A\i) at ({\angle}:\radius) {};
        % Add the label slightly offset from the vertex
        \node at ({\angle}:{\radius + \labeldist}) {\(a_{\i}\)};
   
     \fill[black] ({\angle}:\radius) circle[radius=0.06];
     }

    % Draw the polygon by connecting the nodes
    \foreach \i [count=\j from 2] in {1,...,9} {
        \draw [thick] (A\i) -- (A\j);
    }
    \draw [thick] (A10) -- (A1); % Connect the last node to the first

    \usetikzlibrary{calc}
    % Calculate the midpoints of a_1a_10 and a_3a_4
    \coordinate (M1) at ($ (A1)!0.5!(A10) $);
    \coordinate (M2) at ($ (A4)!0.5!(A5) $);
    \coordinate (M3) at ($ (A6)!0.5!(A7) $);
    \coordinate (M4) at ($ (A7)!0.5!(A8) $);

    \coordinate (N1) at (0.4,-0.4);
    \coordinate (N2) at (-1,-1);
    \coordinate (N3) at (1.7,-0.6);
    \coordinate (N4) at (0,1.6);

    % Draw a curve connecting the midpoints
    \draw[black, thick, bend left=10] (M1) to (M2);
    \draw[black, thick, bend right=60] (M2) to (M3);
    \draw[black, thick, bend right=45] (M4) to (M1);

 % Add a big red dot in the middle of the graph
    \fill[purple] (N1) circle[radius=0.4];
    \fill[purple] (N2) circle[radius=0.4];
    \fill[purple] (N3) circle[radius=0.4];
    \fill[purple] (N4) circle[radius=0.4];
     
   \end{scope} 

   %second one
     \begin{scope}[xshift=8cm]
  
    % Define the radius of the circle
    \def\radius{2.5}
    \def\labeldist{0.3} % Distance of the labels from the vertices

    % Loop to place the 10 nodes and label them
    \foreach \i [evaluate=\i as \angle using {144 - (\i-1)*36}] in {1,...,10} {
        % Place the vertex
        \node (A\i) at ({\angle}:\radius) {};
        % Add the label slightly offset from the vertex
        \node at ({\angle}:{\radius + \labeldist}) {\(a_{\i}\)};
     \fill[black] ({\angle}:\radius) circle[radius=0.06];}

    % Draw the polygon by connecting the nodes
    \foreach \i [count=\j from 2] in {1,...,9} {
        \draw [thick] (A\i) -- (A\j);
    }
    \draw [thick] (A10) -- (A1); % Connect the last node to the first

    % Calculate the midpoints of a_1a_10 and a_3a_4
    \coordinate (M1) at ($ (A1)!0.5!(A10) $);
    \coordinate (M2) at ($ (A4)!0.5!(A5) $);
    \coordinate (M3) at ($ (A6)!0.5!(A7) $);
    \coordinate (M4) at ($ (A7)!0.5!(A8) $);

    \coordinate (N1) at (0.4,-0.4);
    \coordinate (N2) at (-1,-1);
    \coordinate (N3) at (1.7,-0.6);
    \coordinate (N4) at (0,1.6);

    % Draw a curve connecting the midpoints
    \draw[black, thick, bend left=10] (M1) to (M2);
    \draw[black, thick, bend right=60] (M2) to (M3);
    \draw[black, thick, bend right=45] (M4) to (M1);

 \draw[red, thick ] (N1) to (N2);
 \draw[red, thick] (N4) to (N1);
 \draw[red, thick ] (N3) to (N1);

 \draw[blue, thick ] (N1) to (A7);
\draw[blue, thick ] (N2) to (A8);
\draw[blue, thick ] (N2) to (A9);
\draw[blue, thick ] (N2) to (A10);
 \draw[blue, thick ] (N3) to (A5);
  \draw[blue, thick ] (N3) to (A6);
     \draw[blue, thick ] (N4) to (A1);
     \draw[blue, thick ] (N4) to (A2);
     \draw[blue, thick ] (N4) to (A3);
     \draw[blue, thick ] (N4) to (A4);

 % Add a big red dot in the middle of the graph
   \fill[purple] (N1) circle[radius=0.4];
    \fill[purple] (N2) circle[radius=0.4];
    \fill[purple] (N3) circle[radius=0.4];
    \fill[purple] (N4) circle[radius=0.4];
       
   \end{scope} 
    \end{tikzpicture}
    \caption{$\pi=\left\{ \{1,5\}, \{5,7\},\{8,1\}\right\} $ }
     \label{fig:clockwise-decagon}
\end{figure}

In general, there are complicated structures inside these molecules; there are short edges and other vertices labeled by $s_k$. All vertices labeled by $s_k$
 are required to be summed. 
With this convention,  for $\pi$  with   $M$ molecules, we can write 
\begin{align}\label{KThThSigma}
{\cal K}^{(\pi)}(t, \boldsymbol{\sigma}, \textbf{a})= 
\sum_{\textbf{d}}\sum_{\textbf{c}}\left(\prod_{i=1}^{n} \left(\Theta^{(B)}_{tm_im_{i+1}}\right)_{a_id_i}\right)\cdot \prod_{k=1}^{M-1}\Big(\Theta_{{t|m|^2}}-1\Big)_{c_{2k-1}c_{2k}}
\cdot
 \prod_{k=1}^{M}{\Sigma}^{(\emptyset)}(t, \boldsymbol{\sigma}^{(k)}, \textbf{d}^{(k)})   
\end{align}
Here the vertices labelled by $s_k$ are summed and thus they no longer appear explicitly in the formula above. 
Summing over  $\textbf{a}$,  we have 
\begin{align}\label{sumkpitsa}
\sum_{\textbf{a}}{\cal K}^{(\pi)}(t, \boldsymbol{\sigma}, \textbf{a})=
\sum_{\textbf{a},\,\textbf{d},\,\textbf{c}}\left(\prod_{i=1}^{n}  \left(\Theta^{(B)}_{tm_im_{i+1}}\right)_{a_id_i}\right)\cdot \prod_{k=1}^{M-1}\Big(\Theta_{{t|m|^2}}-1\Big)_{c_{2k-1}c_{2k}}
\cdot
 \prod_{k=1}^{M}{\Sigma}^{(\emptyset)}(t, \boldsymbol{\sigma}^{(k)}, \textbf{d}^{(k)})   
\end{align}
 Given a molecule structure (or equivalently a set $\pi$ representing non-crossing pairings), there must be a molecule  containing just one $c$ vertex, i.e., the big dot for this molecule  connects to only  one  $(\Theta^{(B)}_{t|m^2|-1})  _{c_{2k-1},c_{2k}}$ edge. (For example, the top, left and right molecules in Figure \ref{fig:Decp}). 
  In a different language, this molecule represents a region with exactly  one  paring line.  
For simplicity, we assume that it is the first molecule  containing  $c_1$ and  connecting  with $a_1,a_2\cdots a_{m-1}$. 
With these notations, we have 
\begin{equation} \label{jjshsaa0}
\{1,m \}\in \pi, \quad \sigma_1\ne \sigma_{m }
\end{equation}
The expression in the formula of ${\cal K}^{(\pi)}$ related to this molecule is 
$$
\sum_{d_1,\cdots, d_{m-1}}\Sigma^{(\emptyset)}\Bigg(t, (\sigma_1, \cdots \sigma_m), (d_1,\cdots, d_{m-1}, c_1)\Bigg)\cdot  \left(\prod_{i=1}^{m-1} \left(\Theta^{(B)}_{tm_im_{i+1}}\right)_{a_id_i}\right) 
$$
(For example: the top right part in Figure \ref{fig:Decp} is for the case $m=5$.) Notice that  $d_i, a_i, 1\le i\le m-1 $ do not  appear in other molecules.  The following part is separated  from  other parts in \eqref{sumkpitsa}, i.e., 
 $$
f^*(c_1):=\sum_{a_1,\cdots, a_{m-1}}\sum_{d_1,\cdots, d_{m-1}}\Sigma^{(\emptyset)}\Bigg(t, (\sigma_1, \cdots \sigma_m), (d_1,\cdots, d_{m-1}, c_1)\Bigg)\cdot  \left(\prod_{i=1}^{m-1}\left(\Theta^{(B)}_{tm_im_{i+1}}\right)_{a_id_i}\right) 
$$
Now we can write the $\cal K^{(\pi)}$ in \eqref{sumkpitsa} in the terms of $f^*(c_1)$ as follows: 
\begin{align}\label{sumkp2itsa2}
\sum_{\textbf{a}}{\cal K}^{(\pi)}(t, \boldsymbol{\sigma}, \textbf{a})=& \sum_{c_1} f^*(c_1)\cdot  
\sum_{a_m,\cdots , a_n}
\sum_{d_m,\cdots , d_n}
\sum_{c_2,\cdots,c_{2M-2}}\\\nonumber
&\left(\prod_{i=m}^{n}\left(\Theta^{(B)}_{tm_im_{i+1}}\right)_{a_id_i}\right)\cdot \prod_{k=1}^{M-1}\Big(\Theta_{{t|m|^2}}-1\Big)_{c_{2k-1}c_{2k}}
\cdot
 \prod_{k=2}^{M}{\Sigma}^{(\emptyset)}(t, \boldsymbol{\sigma}^{(k)}, \textbf{d}^{(k)})   
\end{align}
In the Figure \ref{fig:Decp}, the upper right part represents the $f ^*(c_1)$ and the lower right part presents the 2nd line of the \eqref{sumkp2itsa2}. 

Due to the translation invariant, $ f^*(c_1)$ does not depend on $c_1$, i.e.,  
$f^*(c_1)= f^*(1)\in \mathbb C$.
Inserting it  back to \eqref{sumkp2itsa2}, we obtain that
\begin{align}\label{sumkp3itsa3}
\sum_{\textbf{a}}{\cal K}^{(\pi)}(t, \boldsymbol{\sigma}, \textbf{a})=f^*(1) &\cdot  
\sum_{a_m,\cdots , a_n}\;
\sum_{d_m,\cdots , d_n}\;
\sum_{\textbf{c}}\\\nonumber
&\left(\prod_{i=m}^{n}\left(\Theta^{(B)}_{tm_im_{i+1}}\right)_{a_id_i}\right)\cdot \prod_{k=1}^{M-1}\Big(\Theta_{{t|m|^2}}-1\Big)_{c_{2k-1}c_{2k}}
\cdot
 \prod_{k=2}^{M}{\Sigma}^{(\emptyset)}(t, \boldsymbol{\sigma}^{(k)}, \textbf{d}^{(k)})   
\end{align}
Since   $c_1$  appears only  in the internal edge $ (\Theta_{{t|m|^2}}-1 )_{c_1c_2}$, we can sum over $c_1$. By definition of 
$\Theta_{{t|m|^2}}$,  $S^{(B)} \bf 1 = \bf 1$ and $|m|=1$, we have 
$$  \sum_{a}\left(\Theta_{t|m|^2}-1\right)_{ab}= \frac t { 1-t} =t \sum_{a}\left(\Theta_{t|m|^2}\right)_{ab}
$$

Thus  we can replace $(\Theta^{(B)}_{t|m|^2}-1)_{c_1c_2}$ in \eqref{sumkp3itsa3} with $t(\Theta^{(B)}_{t|m|^2})_{c_1c_2}$.  
This replacement shows that after  summing  over $c_1$, an internal edge $c_1-c_2$ edge becomes an external edge with a factor $t$. This will be crucial later on when we split the graph.  We can now rewrite 
\begin{align}\label{sumkp4itsa4}
\sum_{\textbf{a}}{\cal K}^{(\pi)}(t, \boldsymbol{\sigma}, \textbf{a}) \;=\;&t\cdot f^*(1) \cdot  
\sum_{a_m,\cdots , a_n}\;
\sum_{d_m,\cdots , d_n}\;
\sum_{\textbf{c}}\\\nonumber
&\left(\prod_{i=m}^{n}\left(\Theta^{(B)}_{tm_im_{i+1}}\right)_{a_id_i}\right)\cdot 
\left(\Theta^{(B)}_{t|m|^2}\right)_{c_1c_2}
\prod_{k=2}^{M-1}\Big(\Theta_{{t|m|^2}}-1\Big)_{c_{2k-1}c_{2k}}
\cdot
 \prod_{k=2}^{M}{\Sigma}^{(\emptyset)}(t, \boldsymbol{\sigma}^{(k)}, \textbf{d}^{(k)})   
\end{align}
  Define $\pi'$ to be $\pi$ with the pair for edge $\{c_1, c_2\}$ removed, i.e. 
  $$
  \pi'=\pi\setminus\{\{1,m\}\}
  $$
  Denote 
$$
\boldsymbol{\sigma}'=(\sigma_1,  \,\sigma_{m}, \,\sigma_m, \,\cdots  \,\sigma_{n}) ,\quad \textbf{a}'=(c_1, \,a_{m},\,a_{m+1},\,\cdots\, a_{n})
$$
%As show in the right hand side  of  Figure \ref{fig:Decp}, there exists $\pi'$ s.t. the r.h.s of \eqref{sumkp4itsa4} can written in terms of $\cal K^{(\pi')}$. More precisely
Then we have (see  Figure \ref{fig:Decp} for an example) 
\begin{align}\label{sumkp5itsa5}
\sum_{\textbf{a}}{\cal K}^{(\pi)}(t, \boldsymbol{\sigma}, \textbf{a})=t\cdot f^*(1) &\cdot  
\sum_{\textbf{a}'}{\cal K}^{(\pi')}(t, \boldsymbol{\sigma}', \textbf{a}') 
\end{align}
\nc 
Notice that   $\{c_1, c_2\}$  is  a boundary  edge in  
$\mathcal{K}^{\left(\pi^{\prime}\right)}\left(t, \boldsymbol{\sigma}^{\prime}, \mathbf{a}^{\prime}\right)$ and it needs to be of the form $\left(\Theta_{t|m|^2}^{(B)}\right)_{c_1 c_2}$.
Since $\boldsymbol{a}'\in \mathbb Z_L^{n-m+2}$, we can apply   induction  assumption  \eqref{LKsigmas} to  $\cal K^{(\pi)'}$. Thus 
\begin{align}\label{sumkp6itsa6}
\sum_{\textbf{a}}{\cal K}^{(\pi)}(t, \boldsymbol{\sigma}, \textbf{a})= {O(t\cdot f^*(1) \cdot L\cdot (\eta_t)^{-n+m-1})}
\end{align}

%{Now we estimate $f^*(1)$} by m
Multiplying  $(\Theta^{(B) }_t|m|^2)_{c_1,a}=(\Theta^{(B) }_tm_1m_m)_{c_1,a}$ to   $f^*(c_1)$ and summing up $c_1$ and $a$,  we obtain 
\begin{align}\label{sumkp7itsa7}
   & \sum_{a}\sum_{c_1}(\Theta^{(B) }_{tm_1m_m})_{c_1 ,a}\cdot f^*(c_1)
    \\\nonumber
     =& \sum_{a,\,c_1}\;\sum_{a_1,\cdots, a_{m-1}}\sum_{d_1,\cdots, d_{m-1}}\Sigma^{(\emptyset)}\Bigg(t, (\sigma_1, \cdots \sigma_m), (d_1,\cdots, d_{m-1}, c_1)\Bigg)\cdot  \left(\prod_{i=1}^{m-1}\left(\Theta^{(B)}_{tm_im_{i+1}}\right)_{a_id_i}\right)(\Theta^{(B) }_{tm_1m_m})_{c_1 ,a} 
\end{align}
By definition, the right hand side  can be written in terms of  ${\cal K}^{(\emptyset)} \big(t, (\sigma_1, \cdots \sigma_m), (a_1,\cdots, a_{m-1}, a)\big)$, namely, 
\begin{align}\label{jyya}
 \sum_a\sum_{c_1}f^*(c_1)(\Theta^{(B) }_t)_{c_1,a}&=\sum_{a_1,\cdots, a_{m-1},\;a}{\cal K}^{(\emptyset)} \Bigg(t, (\sigma_1, \cdots \sigma_m), (a_1,\cdots, a_{m-1}, a)\Bigg) .
\end{align}
By  inductive assumption on \eqref{LKsigmas},  the right hand side  of \eqref{jyya} is equal to $O(L\cdot\eta_t^{-m+1})$. Thus 
\begin{align}\label{jyasdfya}
\sum_a\sum_{c_1}f^*(c_1)(\Theta^{(B) }_t)_{c_1,a} = O(L\cdot\eta_t^{-m+1})
\end{align}
On the other hand, since $f^*(c_1)=f^*(1)$, we have the identity 
\begin{align}
\sum_a\sum_{c_1}f^*(c_1)(\Theta^{(B) }_t)_{c_1,a}=L\cdot f^*(c_1)\cdot (1-t)^{-1}.
\end{align}
Hence
 $$
  f^*(c_1)=f^*(1)=O(\eta_t^{-m+2}) $$
Together with \eqref{sumkp6itsa6}, we have proved   \eqref{LKsigmas}  if  $\pi\ne \emptyset$.

For  $\pi=\emptyset$, we write ${\cal K}^{(\emptyset)}$  as
\begin{align}\label{LKsigmasES}
 \sum_{\textbf{a}\,\in \,\mathbb Z^{n}_L } 
{\cal K}^{(\emptyset)}(t, \boldsymbol{\sigma}, \textbf{a})
=
 \sum_{\pi}\sum_{\textbf{a}\,\in \,\mathbb Z^{n}_L } 
{\cal K}^{(\pi)}(t, \boldsymbol{\sigma}, \textbf{a})
-
\sum_{\pi\ne\emptyset}  \sum_{\textbf{a}\,\in \,\mathbb Z^{n}_L } 
{\cal K}^{(\pi)}(t, \boldsymbol{\sigma}, \textbf{a}) 
=
  O(L\cdot \eta_{t}^{-n+1}) 
\end{align}
Here  we have used \eqref{LKsigm0} to bound the first term on the right hand side and  \eqref{LKsigmas} for the second term. 
We  have thus proved  \eqref{LKsigmas} and Lemma \ref{lem:SZ}. 
 
\end{proof} 

\subsection{Proof of Lemma   \ref{ML:Kbound+pi}}

\begin{proof}[Proof of Lemma  \ref{ML:Kbound+pi}]
 We first focus the case $\pi=\emptyset$. By symmetry, without loss of generality, we can split it into three cases
 \begin{enumerate}
     \item Pure loop, i.e., $\sigma_k=+$ for all $1\le k\le n$ or  $\sigma_k=-$ for all $1\le k\le n$. In this case, all edges are short edges, hence we can easily obtain \eqref{eq:bcal_k_pi}. 
     \item $\sigma_1=+$, $\sigma_2=-$, and  there exists $j$ s.t. $\sigma_j=\sigma_{j+1}=+$. 
    \item $\boldsymbol{\sigma}$ is alternative as in \eqref{assum_SZ_SinMole}, i.e. $\boldsymbol{\sigma}=\boldsymbol{\sigma}^{(alt)}$. 
 \end{enumerate}
  Recall  that for $\sigma_1\ne\sigma_2$, and $\pi=\emptyset$, we have 
\begin{align*}
{\cal K}^{(\emptyset)}_{t, \boldsymbol{\sigma}, \textbf{a}}
=\;&
 \sum_{\textbf{d}}\left(\Sigma^{(\emptyset)}(t, \boldsymbol{\sigma}, \textbf{d})\right)
\cdot 
\prod_{i=1}^n \left(\Theta^{(B)}_{tm_im_{i+1}}\right)_{a_i, d_i}
\\\nonumber
=\;&\sum_{d_1}\left(\Theta^{(B)}_{t|m|^2}\right)_{a_1, d_1}
\cdot \sum_{d_2,\,\cdots,\, d_n}\left(\Sigma^{(\emptyset)}(t, \boldsymbol{\sigma}, \textbf{d})\right)
\cdot 
\prod_{i=2}^n \left(\Theta^{(B)}_{tm_im_{i+1}}\right)_{a_i, d_i}  
\end{align*}
We are going to prove the following statement, which is  slightly stronger than \eqref{eq:bcal_k_pi}:
\begin{align}\label{spwow3}
    \sum_{d_2,\,\cdots,\, d_n}\left(\Sigma^{(\emptyset)}(t, \boldsymbol{\sigma}, \textbf{d})\right)
\cdot 
\prod_{i=2}^n \left(\Theta^{(B)}_{tm_im_{i+1}}\right)_{a_i, d_i}
=O(\ell_t\eta_t)^{-n+2}\cdot \left(\min_{2\le k\le n} \|a_k-d_1\|+1\right)^{-1}
\end{align}
First, if there exists $2\le j\le n$ s.t.  $\sigma_j=\sigma_{j+1}=+$, then  \eqref{prop:ThfadC} shows that  
$$
\left(\Theta^{(B)}_{tm_jm_{j+1}}\right)_{a_j, d_j}= \left(\Theta^{(B)}_{tm^2}\right)_{a_j, d_j}=O(1)
$$
It implies that
$$
\prod_{i=2}^n \left(\Theta^{(B)}_{tm_im_{i+1}}\right)_{a_i, d_i}=O(\ell_t\eta_t)^{-n+2}
$$
On the other hand, due to the short range property of $\Sigma^{(\emptyset)}(t, \boldsymbol{\sigma}, \textbf{d})$ (in \eqref{res_SIGempdd}), we have
$$
\sum_{d_2,\,\cdots,\, d_n}\left(\Sigma^{(\pi)}(t, \boldsymbol{\sigma}, \textbf{d})\right)
\cdot 
\prod_{i=2}^n \left(\Theta^{(B)}_{tm_im_{i+1}}\right)_{a_i, d_i}\le C \prod_{i=2}^n \left\|\Theta^{(B)}_{tm_im_{i+1}}\right\|_{\max}
$$
By combining these two bounds, we derive \eqref{spwow3} for this case.

Next we prove \eqref{spwow3}  in the case that $\pi=\emptyset$ and $\boldsymbol{\sigma}=\boldsymbol{\sigma}^{(alt)}$. 
In this case, 
$$
\prod_{i=2}^n \left(\Theta^{(B)}_{tm_im_{i+1}}\right)_{a_i, d_i}
=
\prod_{i=2}^n \left(\Theta^{(B)}_{t|m|^2}\right)_{a_i, d_i} 
$$
With short range property of $\Sigma^{(\emptyset)}(t, \boldsymbol{\sigma}, \textbf{d})$ (in \eqref{res_SIGempdd}), and the estimate of $(\Theta^{(B)}_{t|m|^2})_{a_id_i}$  in \eqref{prop:ThfadC}, one can easily bound the l.h.s. of \eqref{spwow3} with 
$ 
  O (\ell_t\eta_t)^{-n+1} 
$. To obtain the missing factor for \eqref{spwow3}, we need to apply the sum zero property which  we proved in Lemma \ref{lem:SZ}, i.e., 
\begin{equation}\label{SZjadljsk}
   \sum_{d_2,\,\cdots,\,d_n}\left(\Sigma^{(\emptyset)}(t, \boldsymbol{\sigma}, \textbf{d})\right)=O(\eta_t) 
\end{equation}
 For simplicity, for fixed $d_1$ and $\textbf{a}$, we temporally denote $s$, $f$ and $g$ as follows 
$$
s_i:=d_i-d_1,\quad \quad f(a_i, s_i):=\left(\Theta^{(B)}_{t|m|^2}\right)_{a_i,(d_1+s_i)}, \quad g(s_2, \cdots,s_n):=\Sigma^{(\emptyset)}(t, \boldsymbol{\sigma}, \textbf{d})$$
Then the l.h.s. of \eqref{spwow3} can be written as 
\begin{align} \label{Kfadgd}
 \sum_{d_2,\,\cdots,\, d_n}\left(\Sigma^{(\pi)}(t, \boldsymbol{\sigma}, \textbf{d})\right)
\cdot 
\prod_{i=2}^n \left(\Theta^{(B)}_{tm_im_{i+1}}\right)_{a_i, d_i}
=\sum_{d_1}\sum_{s_2\,\cdots,\,s_n} \prod_{i=2}^nf(a_i, s_i)\cdot g(\textbf{\textbf{s}})
\end{align}
Note due the fast decay of $g(\textbf{\textbf{s}})$, we can focus on the case that $\max_i|s_i|\prec 1$. 
For each $i$, we write 
$$
f(a_i, s_i)=f_0(a_i,s_i)+f_1(a_i,s_i )+f_2(a_i,s_i)
$$
where 
\begin{align}
f_0(a_i,s_i)=&f(a_i,0)
\\\nonumber
f_1(a_i,s_i)=&\frac12\left(f(a_i,s_i)-f(a_i, -s_i)\right)  
\\\nonumber
f_2(a_i,s_i)=&f(a_i,s_i)-f(a_i,0)-\frac12\left(f(a_i,s_i)-f(a_i,-s_i)\right)  
\end{align}
%Use  $$
%f(x-y)=\sum_{k=0}^\infty \left(tS^{(B)}\right)^k_{xy}$$
%and  the smoothness of random walk, we have 
By Lemma \ref{lem_propTH}
\begin{align}
f_0(a_i,s_i)=&O(\ell_t\eta_t)^{-1}, 
\\\nonumber
f_1(a_i,s_i)=&O(\ell_t^{-1}\eta_t^{-1/2}),
\\\nonumber
f_2(a_i,s_i)=&O(\|a_i-d_1\|^{-1}) .
\end{align}
Inserting 
$$\prod_if(a_i,s_i)=\prod_i \left(f_0(a_i,s_i)+f_1(a_i,s_i)+f_2(a_i,s_i)\right)$$
into \eqref{Kfadgd}, we obtain 
\begin{align} \label{Kfs2dgd}
 \sum_{d_2,\,\cdots,\, d_n}\left(\Sigma^{(\pi)}(t, \boldsymbol{\sigma}, \textbf{d})\right)
\cdot 
\prod_{i=2}^n \left(\Theta^{(B)}_{tm_im_{i+1}}\right)_{a_i, d_i}
= 
\sum_{s_2\,\cdots,\,s_n}\;  \sum_{0\le \xi_2, \,\xi_3, \cdots,  \xi_n \le 2 }\;  \prod_{i=2}^nf_{\xi_i}(a_i, s_i)\cdot g(\textbf{\textbf{s}})
\end{align}
We claim that for any fixed $\xi$'s, 
\begin{equation}\label{aauduyuadw}
\sum_{s_2\,\cdots,\,s_n}    \prod_{i=2}^nf_{\xi_i}(a_i, s_i)\cdot g(\textbf{\textbf{s}})= O(\ell_t\eta_t)^{-n+2}\cdot \left(\min_{2\le k\le n} \|a_k-d_1\|+1\right)^{-1} .
\end{equation}
We estimate the r.h.s. in the following cases: 
\begin{itemize}
    \item If one of $\xi_k=2$, without loss of generality, let  $\xi_n=2$, then 
    $$
     \prod_{i=2}^nf_{\xi_i}(a_i, s_i)=O\left(\ell_t\eta_t\right)^{-n+2}\cdot  \|a_n-d_1\|^{-1}
    $$
    It implies that in this case \eqref{aauduyuadw} holds.  
  
\item If $0\le \xi_k\le 1$ for all $2\le k\le n$, and there two of $\xi_k$ equal to $1$.  Similar to above case, 
$$
    \prod_{i=2}^nf_{\xi_i}(a_i, s_i)=O\left(\ell_t\eta_t\right)^{-n+1}\cdot \eta_t
    =O\left(\ell_t\eta_t\right)^{-n+2}\cdot \ell_t^{-1}
    $$
 If $\|a_k-d_1\|=O(\ell_t)$ for all $2 \le k \le n$, this  implies  \eqref{aauduyuadw};   if $\|a_k-d_1\|\gg \ell_t$ for some $2 \le k \le n$, then the l.h.s. of \eqref{aauduyuadw} is  exponentially small. 

    \item If $0\le \xi_k\le 1$ for all $2\le k\le n$, and only one  of $\xi_k$ equal to $1$, without loss of generality, let $\xi_n=1$.  Similar to above case, 
$$
   \prod_{i=2}^nf_{\xi_i}(a_i, s_i)g(\textbf{s})=
\cdot \prod_{i=2}^{n-1}f(a_i,0) f_{1}(a_n,s_n)  g(\textbf{s})
    $$
By the definition, we know 
$$
f_{1}(a_n,s_n) =-f_{1}(a_n,-s_n) 
$$
and due to the symmetric, we have 
$$
g(\textbf{s})=g(-\textbf{s})
$$
 Then by symmetry,  the r.h.s. of \eqref{aauduyuadw} equals to zero in this case. 
    \item At last, if  $\xi_k=0$ for all $2\le k\le n$. Then 
    $$
    \prod_{i=2}^nf_{\xi_i}(a_i, s_i)g(\textbf{s})=
  \prod_{i=2}^{n }f(a_i,0)   g(\textbf{s})
$$
Applying \eqref{SZjadljsk}, we have 
$$
 \sum_{\textbf{s}}g(\textbf{s})=O(\eta_t)
$$
 It implies that in this case \eqref{aauduyuadw} holds. 
\end{itemize}

Therefore, we have proved that for any $\xi$'s, the \eqref{aauduyuadw} holds, which complete the proof of \eqref{spwow3} for the case of $\pi=\emptyset$ and $\sigma_1=+$, $\sigma_2=-$. 

\bigskip

We  now prove  \eqref{eq:bcal_k_pi} in the case $\pi\ne \emptyset$. As above, we use the decomposition method as in Figure \ref{fig:Decp}.  By \eqref{KThThSigma} we write  \begin{align} 
{\cal K}^{(\pi)}(t, \boldsymbol{\sigma}, \textbf{a})=
\sum_{\textbf{d}}\sum_{\textbf{c}}\left(\prod_{i=1}^{n}\left(\Theta^{(B)}_{tm_im_{i+1}}\right)_{a_id_i}\right)\cdot \prod_{k=1}^{M-1}\Big(\Theta_{{t|m|^2}}-1\Big)_{c_{2k-1}c_{2k}}
\cdot
\prod_{k=1}^{M}{\Sigma}^{(\emptyset)}(t, \boldsymbol{\sigma}^{(k)}, \textbf{d}^{(k)})   \end{align}
Among the molecules in this molecule structure, there must be one molecule containing only  one $c-$vertex, i.e., the big dot of this molecule  connects with only  one  $(\Theta^{(B)}_{t|m^2|-1})  _{c_{2k-1},c_{2k}}$ edge. (For example, the top, left and right molecules in Figure \ref{fig:Decp}). For simplicity, we assume that it is the first molecule, and it contains $c_1$ and  connects with $a_1,a_2\cdots a_{m-1}$. Note that in this case 
\begin{equation} \label{jjshsaa}
\{1,m \}\in \pi, \quad \sigma_1\ne \sigma_{m }
\end{equation}
As in the r.h.s of Figure \ref{fig:Decp}, we decompose  ${\cal K}^{(\pi)}(t, \boldsymbol{\sigma}, \textbf{a})$ into a  product of two parts,  ${\cal A}$ and ${\cal B}$, such that 
\begin{align} \label{aklsdjfyuadsfyo}
{\cal K}^{(\pi)}(t, \boldsymbol{\sigma}, \textbf{a})
& =
\sum_{c_1} \left({\cal A}_{a_1,\cdots,a_{m-1},c_1}\right)\cdot \left({\cal B}_{c_1, a_m,\cdots,a_{n}}\right)
\\\nonumber
{\cal A}_{a_1,\cdots,a_{m-1},c_1}& =\sum_{d_1,\,\cdots,\,d_{m-1}} \left(\prod_{i=1}^{m-1}\left(\Theta^{(B)}_{tm_im_{i+1}}\right)_{a_id_i}\right)
\cdot
 {\Sigma}^{(\emptyset)}(t, \boldsymbol{\sigma}^{(1)}, \textbf{d}^{(1)}),
 \quad  \textbf{d}^{(1)}  =(d_1, d_2\cdots, d_{m-1}, c_1)  
\\\nonumber
 {\cal B}_{c_1, a_m,\cdots,a_{n}} &=\sum_{d_m,\cdots,d_n}\; \sum_{c_2,\cdots,c_{2M-2}}\left(\prod_{i=m}^{n}\left(\Theta^{(B)}_{tm_im_{i+1}}\right)_{a_id_i}\right)\cdot \prod_{k=1}^{M-1}\Big(\Theta_{{t|m|^2}}-1\Big)_{c_{2k-1}c_{2k}}
\cdot
 \prod_{k=2}^{M}{\Sigma}^{(\emptyset)}(t, \boldsymbol{\sigma}^{(k)}, \textbf{d}^{(k)}). 
\end{align}
Using \eqref{spwow3}, we have 
$$
 {\cal A}_{a_1,\cdots,a_{m-1},c_1}=O(\ell_t\eta_t)^{-m+2}\cdot \left(\min_{1\le k\le m} \|a_k-c_1\|+1\right)^{-1}
$$
On the other hand, with inductive assumption and $(\Theta_t-1)_{c_1c_2}=t\left(S^{(B)}\cdot \Theta_t\right)_{c_1c_2}$,   we have 
$$
{\cal B}_{c_1, a_m,\cdots,a_{n}}=O(\ell_t\eta_t)^{-n+m-1}
$$
Combining  these two bounds and inserting  them back to \eqref{aklsdjfyuadsfyo}, we obtain \eqref{eq:bcal_k_pi} for the case that $\pi\ne \emptyset$, and complete the proof of Lemma  \ref{ML:Kbound+pi}

\end{proof}

\section{\texorpdfstring{$G$}{G}-entry estimates}
  In this section, we estimate $G$ entries and 1-loops with the 2-loops.    The proof  of the following  Lemma \ref{lem_GbEXP} will be via a time-independent method. 
It relies on a standard decomposition widely employed for Wigner matrices (e.g., \cite{erdHos2012rigidity}) and band matrices (e.g., \cite{Average_fluc}).

\begin{lemma}[Resolvent entry estimate]\label{lem_GbEXP}  
Recall   $z_t$ and  $G_t$ (which depend on  $N$ ) from  Definitions \ref{def_flow} and \ref{Def:G_loop}. 
Suppose that $|E| < 2 - \kappa, 0 \leq t <  1$.
For a fixed constant $c > 0$, define the event
\begin{equation}\label{def_asGMc}
    \Omega(t, c) := \big\{\|G_t - m\|_{\max} \leq W^{-c} \big\}.
\end{equation}
Then the entries of $G_t$ can be bounded in terms of $2$-$G$-loops as follows:
\begin{equation}\label{GijGEX}
    {\bf 1}_{\Omega(t, c)} \cdot \max_{i \in {\cal I}_a} \max_{j \in {\cal I}_b} |(G_t)_{ij}|^{\,2} \prec 
    \sum_{a' = a - 1}^{a + 1} \; \sum_{b' = b - 1}^{b + 1} {\cal L}_{t, (+,-), (a', b')}
    + W^{-1} \cdot {\bf 1}(|a - b| \leq 1),
\end{equation}
\begin{equation}\label{GiiGEX}
    {\bf 1}_{\Omega(t, c)} \cdot \max_{i} \left|(G_t)_{ii} - m\right|^{\,2} \prec \max_{a, b} {\cal L}_{t, (+,-), (a, b)}.
\end{equation}
 In particular, if for some $c > 0$, 
\begin{equation}\label{asGMc} 
    \|G_t - m\|_{\max} \prec W^{-c},
\end{equation}
then \eqref{GijGEX} and \eqref{GiiGEX} hold without the indicator ${\bf 1}_{\Omega(t, c)}$. 
Furthermore, under the same assumption, the $1$-loop estimate (interpreted as an average local law) holds:
\begin{equation}\label{GavLGEX}
    \max_{a} \left| \left\langle \left(G_t - m\right) E_a \right\rangle \right| \prec \max_{a, b} {\cal L}_{t, (+,-), (a, b)}.
\end{equation} 
\end{lemma}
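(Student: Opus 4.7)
The approach is to combine the Schur complement formula with Hanson--Wright concentration for the quadratic forms that result, and then convert the resulting averaged sums of $|G_{kl}|^2$ into $2$-loops via the identity ${\cal L}_{t,(+,-),(a,b)} = W^{-2}\sum_{k \in {\cal I}_b,\, l \in {\cal I}_a} |(G_t)_{kl}|^2$ (which is the defining formula of the $2$-loop).

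For the off-diagonal bound \eqref{GijGEX}, I would apply Schur twice, removing row/column $i$ and then $j$: for $i \neq j$ with $i \in {\cal I}_a$, $j \in {\cal I}_b$,
$$
G_{ij} \;=\; -\,G_{ii}\,(H_t)_{ij}\,G^{(i)}_{jj} \;+\; G_{ii}\,G^{(i)}_{jj}\!\sum_{k,l \neq i,j} (H_t)_{ik}\, G^{(ij)}_{kl}\, (H_t)_{lj},
$$
where $G^{(I)}$ denotes the resolvent with rows/columns in $I$ removed. Under $\Omega(t,c)$ the prefactors $G_{ii},G^{(i)}_{jj}$ are $O(1)$. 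The first term contributes $|(H_t)_{ij}|^2 \prec tS_{ij}$, which is nonzero only for $|a-b|\leq 1$ and of size $W^{-1}$, producing the $W^{-1}\mathbf{1}(|a-b|\leq 1)$ piece. For the bilinear form, since $i \neq j$ the Gaussian families $\{(H_t)_{ik}\}_k$ and $\{(H_t)_{lj}\}_l$ involve disjoint entries and are independent, so Hanson--Wright yields
$$
\Bigl|\sum_{k,l} (H_t)_{ik}\,G^{(ij)}_{kl}\,(H_t)_{lj}\Bigr|^{2} \;\prec\; \sum_{k,l} t^2\, S_{ik}\,S_{lj}\,|G^{(ij)}_{kl}|^2 \;=\; \tfrac{t^2}{9W^2}\!\sum_{\substack{k \in {\cal I}_{a-1}\cup {\cal I}_a \cup {\cal I}_{a+1} \\ l \in {\cal I}_{b-1}\cup {\cal I}_b \cup {\cal I}_{b+1}}}\!|G^{(ij)}_{kl}|^{2}.
$$
Replacing $G^{(ij)}$ by $G$ via the rank-two resolvent identity (the correction has the same functional form and is absorbed by a standard bootstrap under $\Omega(t,c)$) turns the right side into $\sum_{a'=a-1}^{a+1}\sum_{b'=b-1}^{b+1} {\cal L}_{t,(+,-),(a',b')}$, giving \eqref{GijGEX}.

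For the diagonal bound \eqref{GiiGEX}, I would use the one-sided Schur formula
$$\tfrac{1}{G_{ii}} \;=\; -z_t + (H_t)_{ii} - \sum_{k,l}(H_t)_{ik}\,G^{(i)}_{kl}\,(H_t)_{li},$$
compare with the self-consistent relation $\tfrac{1}{m} = -z_t - tm$ (which follows from $z_t = E + (1-t)m$ and $m(m+E)=-1$), and write $G_{ii}-m = -m\,G_{ii}\bigl(\tfrac{1}{G_{ii}} - \tfrac{1}{m}\bigr)$. The dominant contribution is a Hanson--Wright concentration error of the same form as above, yielding $|G_{ii}-m|^2 \prec \max_{a,b}{\cal L}_{t,(+,-),(a,b)}$ after closing the residual linear-in-$(G_{kk}-m)$ term via a contraction argument using the assumption $\Omega(t,c)$ and the block structure of $S$.

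The genuinely delicate point is \eqref{GavLGEX}: a naive triangle inequality from \eqref{GiiGEX} delivers only $\sqrt{\max{\cal L}}$, which is far too weak. The claim asks for $\max{\cal L}$, requiring a cancellation of order $W^{1/2}$ in averaging over $i \in {\cal I}_a$. My plan is to average the Schur identity for $G_{ii}-m$ and use $\tfrac{1}{W}\sum_{i \in {\cal I}_a} S_{ik} = \tfrac{1}{3}\mathbf{1}(k \in \cup_{a'\sim a}{\cal I}_{a'})$ to obtain a self-consistent equation of the schematic form
$$
\langle (G_t-m)E_a\rangle \;\approx\; -m^2\Bigl[\langle H_t\,E_a\rangle + \tfrac{1}{W}\!\sum_{i \in {\cal I}_a}\!\mathrm{err}_i\Bigr] \;+\; \tfrac{t\,m^2}{3}\!\sum_{a'=a-1}^{a+1}\!\langle (G_t-m)E_{a'}\rangle,
$$
which I would then solve by inverting $1 - \tfrac{tm^2}{3}S^{(B)}$ (controlled via Lemma~\ref{lem_propTH}). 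The remaining task is the fluctuation-averaging bound $\tfrac{1}{W}\sum_i\mathrm{err}_i \prec \max{\cal L}$; since for different $i$ the Hanson--Wright errors involve essentially independent rows of $H_t$, this can be established by Gaussian integration by parts with cumulant expansion, or by a high-moment calculation exploiting Wick pairings. I expect this last step to be the main obstacle, since it is precisely what distinguishes the claim from the square-root bound; the block structure of $S$ (common variance $1/(3W)$ within each block) is the essential ingredient that makes the cancellation clean.
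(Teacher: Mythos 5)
Your treatment of \eqref{GijGEX} and \eqref{GiiGEX} is essentially the paper's route, with a cosmetic difference: the paper removes only row $i$, applies the one-sided large-deviation bound $|\sum_k H_{ik}G^{(i)}_{kj}| \prec (\sum_k S_{ik}|G^{(i)}_{kj}|^2)^{1/2}$, replaces $G^{(i)}$ by $G$ via \eqref{GijkSCP} absorbing the $O(W^{-c}|G_{ij}|)$ correction, and then \emph{iterates} the one-sided bound on the remaining $G_{kj}$ to produce the double sum $\sum_{kl}S_{ik}|G_{kl}|^2 S_{lj}$, while you remove both rows at once and invoke bilinear Hanson--Wright. Both yield the same $\sum_{a',b'}\mathcal L_{t,(+,-),(a',b')}$ and the $W^{-1}\mathbf 1(|a-b|\le 1)$ piece from the $|(H_t)_{ij}|^2$ term. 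That part is fine.

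The genuine gap is exactly where you flag it: \eqref{GavLGEX}. Your self-consistent reformulation and the inversion against $1-\tfrac{tm^2}{3}S^{(B)}$ are structurally correct, but the heart of the claim --- that the centered average $\tfrac{1}{W}\sum_{i\in\mathcal I_a}\mathrm{err}_i$ is $\prec\max\mathcal L$ rather than $\prec\sqrt{\max\mathcal L}$ --- is deferred with ``Gaussian integration by parts with cumulant expansion, or a high-moment calculation.'' As written, this is a placeholder, not an argument. The paper does not re-derive this either; it appeals to the established fluctuation-averaging bound (equation~(4.11) of \cite{erdHos2013local}, also (3.7) of \cite{Average_fluc}) which states that for weights $|t_k|\le W^{-1}$, $\sum_k|t_k|\le 1$,
\[
\sum_k t_k\,(1-\mathbb E_k)\bigl(G_{kk}-m\bigr)\ \prec\ \Psi^2 = \max_{a,b}\mathcal L_{t,(+,-),(a,b)},
\]
and then closes by computing $\mathbb E_i(G_{ii}-m)$ via $G-m=m(H-m)G$ and Gaussian integration by parts, yielding a linear self-consistent equation in $\mathbb E_i(G_{ii}-m)$ with $O_\prec(\Psi^2)$ forcing. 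You should cite that lemma directly rather than promise to re-derive it; re-proving fluctuation averaging from scratch is a nontrivial high-moment / cumulant computation and is not worth redoing here, and until it is either cited or carried out, your proof of \eqref{GavLGEX} is incomplete.
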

 We first recall the following perturbation formulas from  Lemma 4.2 of \cite{erdHos2012bulk}:

\begin{lemma} Let $H$ be a Hermitian matrix and $H^{(i)}$ denote the $N-1$ by $N-1$ submatrix of $H$ after removing the $i-t h$ rows and columns.  Define 
\begin{align}\label{def_G(i)}
 G_{kl}^{(i)}:=\left[H^{(i)}-z\right]^{-1}(k,l). 
 \end{align}
Then we have 
\begin{align}\label{GiiSCP}
    G_{ii} &\;\;= \left(H_{ii}-z-\sum_{kl}H_{ik}G^{(i)}_{kl}H_{li}\right)^{-1},
    \\\label{GijSCP2}
   G_{ij} &\;\;= G_{ii} \sum_{k}H_{ik}G^{(i )}_{kj},
\\\label{GijkSCP}
   G^{(i)}_{jk} &\;\;= G_{jk}-\frac{G_{ji}G_{ik}}{G_{ii}} .
   \end{align}
\end{lemma}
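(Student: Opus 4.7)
The plan is to derive all three identities from the single block-matrix (Schur complement) inversion formula, applied with the $i$-th row and column separated out. Write
\[
H - z \;=\; \begin{pmatrix} H_{ii} - z & \mathbf h_i^* \\ \mathbf h_i & H^{(i)} - z \end{pmatrix},
\]
where $\mathbf h_i \in \mathbb C^{N-1}$ is the vector $(H_{ki})_{k \neq i}$ and the block decomposition is with respect to the splitting $\{i\} \sqcup (\{1,\dots,N\}\setminus\{i\})$.

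First I would establish \eqref{GiiSCP}. The Schur complement formula for the $(i,i)$ entry of the inverse of a block matrix gives directly
\[
G_{ii} \;=\; \Bigl(H_{ii} - z - \mathbf h_i^* \, (H^{(i)}-z)^{-1} \, \mathbf h_i\Bigr)^{-1},
\]
and the quadratic form equals $\sum_{k,l \neq i} H_{ik} G^{(i)}_{kl} H_{li}$ by definition of $G^{(i)}$ in \eqref{def_G(i)}. Next, for \eqref{GijSCP2}, the off-diagonal block formula for the inverse of a 2$\times$2 block matrix yields $G_{ij} = - G_{ii}\,\bigl[(H^{(i)}-z)^{-1}\mathbf h_i\bigr]_j$ (with an appropriate sign convention), which rearranges to $G_{ij} = G_{ii} \sum_k H_{ik} G^{(i)}_{kj}$ after absorbing the sign via $H_{ik} = -(\mathbf h_i)_k$ in the chosen convention; I would verify the sign carefully by writing out the 2$\times$2 block inverse explicitly.

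Finally, \eqref{GijkSCP} follows by comparing $G$ and $G^{(i)}$ on the complement of $\{i\}$. One approach is to use the resolvent identity: the difference $(H-z)^{-1} - (H^{(i)}-z)^{-1}$ (extended by zero) equals $-(H-z)^{-1}\Delta (H^{(i)}-z)^{-1}$, where $\Delta$ restores the removed row and column; evaluating the $(j,k)$ entry with $j,k \neq i$ and using \eqref{GijSCP2} (and its transpose) to express $G_{ji}$ and $G_{ik}$ collapses the sum down to the single product $G_{ji}G_{ik}/G_{ii}$.

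These are all purely algebraic manipulations, so there is no real analytic obstacle; the only point requiring care is bookkeeping of signs and block indices in the Schur complement identities, especially when translating between $\mathbf h_i$ and $H_{ik}$. I would carry out the sign check in \eqref{GijSCP2} first, since the other two identities follow cleanly once that convention is fixed.
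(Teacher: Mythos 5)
The paper does not actually prove this lemma; it cites it from Lemma~4.2 of Erd\H{o}s--Yau--Yin, whose proof is exactly the Schur complement argument you propose, so your route is the standard one. Your derivations of \eqref{GiiSCP} and \eqref{GijkSCP} are fine: \eqref{GiiSCP} is the scalar Schur complement, and \eqref{GijkSCP} is the lower-right block $D^{-1}+D^{-1}CS^{-1}BD^{-1}$ after identifying $(D^{-1}C)_j = -G_{ji}/G_{ii}$ and $(BD^{-1})_k = -G_{ik}/G_{ii}$, which is equivalent to your resolvent-identity computation.

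The one place to be careful is \eqref{GijSCP2}, and the specific sign-fixing mechanism you suggest is wrong. With $\mathbf{h}_i = (H_{ki})_{k\neq i}$, Hermiticity gives $(\mathbf{h}_i^*)_k = \overline{H_{ki}} = H_{ik}$; there is no relation $H_{ik} = -(\mathbf{h}_i)_k$ to absorb a sign. The upper-right block of the inverse is $-S^{-1}BD^{-1}$ with $B = \mathbf{h}_i^*$, so the block inversion genuinely produces
\[
G_{ij} \;=\; -\,G_{ii}\sum_{k} H_{ik}\,G^{(i)}_{kj},
\]
\emph{with} the minus sign (a $2\times2$ example confirms this). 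The formula as printed in the lemma omits the sign; this is a typo inherited from the citation and is harmless for the paper's later use of \eqref{GijSCP2}, which only involves $|G_{ij}|$. Separately, note that the quantity you wrote, $\bigl[(H^{(i)}-z)^{-1}\mathbf{h}_i\bigr]_j = \sum_k G^{(i)}_{jk} H_{ki}$, is the entry of the lower-left block $-D^{-1}CS^{-1}$, i.e.\ it computes $G_{ji}$ rather than $G_{ij}$; since $G^{(i)}$ is not symmetric for non-real $z$, you want the row vector $\mathbf{h}_i^*(H^{(i)}-z)^{-1}$ instead. Both points would surface in the careful $2\times2$ check you flag, so the plan is sound once those two bookkeeping slips are corrected.
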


\begin{proof}[Proof of Lemma \ref{lem_GbEXP}]
For simplicity, we ignore the subscript \( t \) in our proof and write \( \Omega = \Omega(t, c) \). For \( i \neq j \), using \eqref{GijSCP2}, we have
\[
|G_{ij}| = |G_{ii}| \left| \sum_{k} H_{ik} G^{(i)}_{kj} \right|.
\]
Here \( \mathbb{E} |H_{ij}|^2 = t \cdot S_{ij} \leq S_{ij} \). By definition in \eqref{asGMc}, we have $
{\bf 1}_{\Omega} |G_{ii}| = O(1)$. 
Since \( G^{(i)} \) is independent of the \( i \)-th row of \( H \), we can apply Lemma 3.3 in \cite{erdHos2012rigidity} to get
\[
\left| \sum_{k} H_{ik} G^{(i)}_{kj} \right| \prec 
\left( \sum_{k} S_{ik} \left| G^{(i)}_{kj} \right|^2 \right)^{1/2}.
\]
Using \eqref{GijkSCP}, we further deduce that
\[
{\bf 1}_{\Omega} \cdot |G^{(i)}_{kj}| \leq |G_{kj}| + O\left(W^{-c} |G_{ij}|\right).
\]
Combining these estimates, we obtain
\begin{align}
{\bf 1}_{\Omega} \cdot |G_{ij}|^2 & \;\prec\; \sum_{k} S_{ik} \left| G_{kj} \right|^2 + O\left(W^{-c} |G_{ij}|^2\right) 
 \;\prec\; \sum_{k} S_{ik} \left| G_{kj} \right|^2.
\end{align}
Applying this bound on \( G_{kj} \) and iterating the process (for the special case \( k = j \), we can bound \( G_{kk} \prec 1 \)), we find
\begin{align}
{\bf 1}_{\Omega} \cdot |G_{ij}|^2 & \;\prec\; \sum_{kl} S_{ik} \left| G_{kl} \right|^2 S_{lj} +
W^{-1} {\bf 1}\Big(|[j] - [i]| \leq 1\Big).
\end{align}
This implies \eqref{GijGEX}.

Now we prove \eqref{GiiGEX}. Using \eqref{GiiSCP} and Lemma 3.3 in \cite{erdHos2012rigidity}, we get
\[
{\bf 1}_{\Omega} \cdot \sum_{kl} H_{ik} G^{(i)}_{kl} H_{li} =
\sum_{k} S_{ik} G^{(i)}_{kk} +
O_\prec\left(\sum_{kl} S_{ik} \left| G^{(i)}_{kl} \right|^2 S_{li} \right)^{1/2}.
\]
As above, using \eqref{GijkSCP} to remove the \( (i) \) superscript and applying \eqref{GijGEX}, we find
\[
{\bf 1}_{\Omega} \cdot \sum_{kl} H_{ik} G^{(i)}_{kl} H_{li} =
{\bf 1}_{\Omega} \cdot \sum_{k} S_{ik} G_{kk} + O_\prec\left({\bf 1}_{\Omega} \cdot {\cal E}\right),
\]
where
\[
{\cal E}^2 = {\bf 1}_{\Omega} \cdot \max_{a,b} {\cal L}_{t,(+,-),(a,b)} +
\left(\max_{a,b} {\cal L}_{t,(+,-),(a,b)}\right)^2 + W^{-1}.
\]
On the other hand, it is easy to verify that in \( \Omega(t, c) \),
\[
c W^{-1} \leq \max_{a,b} {\cal L}_{t,(+,-),(a,b)} \leq C W^{-1} + W^{-c}.
\]
Substituting back into \eqref{GiiSCP}, we get
\[
{\bf 1}_{\Omega} \cdot G_{ii} =
{\bf 1}_{\Omega} \left(-z - \sum_{k} S_{ik} G_{kk} - O_\prec \left(\max_{a,b} {\cal L}_{t,(+,-),(a,b)}\right)^{1/2} \right)^{-1}.
\]
By definition, one can easily check that \( m = -(m + z)^{-1} \). Expanding the right-hand side around \( (-z - m)^{-1} \), we have
\[
{\bf 1}_{\Omega} \cdot \left( G_{ii} - m \right) =
\sum_j \left[ (1 - m^2 S)^{-1} \right]_{ij} \cdot {\cal E}_j, \quad {\cal E}_j \prec \left( \max_{a,b} {\cal L}_{t,(+,-),(a,b)} \right)^{1/2}+W^{-c}\cdot \max_j|G_{jj}-m|.
\]
Together with the fact that \( \|(1 - m^2 S)^{-1}\|_{\max \to \max} = O(1) \), we conclude \eqref{GiiGEX}.

Next, we prove \eqref{GavLGEX}, which is a type of estimate commonly referred to as fluctuation averaging. A brief historical context is provided in Section 10.3.1 of \cite{erdHos2017dynamical}. In particular, very similar results are established in equation (3.7) of \cite{Average_fluc} and equation (4.11) of \cite{erdHos2013local}. 

Recall the definition of $H^{(i)}$ introduced above \eqref{def_G(i)}. Denote $\mathbb{E}_i[X] = \mathbb{E}[X \mid H^{(i)}]$, i.e., the conditional expectation with respect to the $i$-th row and column of $H$. Previously, we showed that 
\[
\|G_{ij} - m\|_{\max}^2 \prec \Psi^2 := \max_{\mathbf{a}} \mathcal{L}_{t, (+,-), \mathbf{a}},
\]
where $\Psi$ aligns with the notation in \cite{erdHos2013local}. It is established in equation (4.11) of \cite{erdHos2013local} that, for any $\{t_k\}_{k \in \mathbb{Z}_N}$ satisfying 
\[
0 \leq \left|t_k\right| \leq W^{-1}, \quad \sum_k \left|t_k\right| \leq 1,
\]
we have
\begin{align} \label{jasdu}
\sum_k t_k (1 - \mathbb{E}_k)(G_{kk} - m) \prec \Psi^2 = \max_{\mathbf{a}} \mathcal{L}_{t, (+,-), \mathbf{a}}.
\end{align}

On the other hand, using the identity $G - m = m(H - m)G$ and applying Gaussian integration by parts, we obtain 
\[
\mathbb{E}_i(G_{ii} - m) = \mathbb{E}_i[m(H - m)G] = \sum_k \mathbb{E}_i[m(G_{kk} - m) S_{ki} G_{ii}] = m^2 \sum_k S_{ik} \cdot \mathbb{E}_i(G_{kk} - m) + O_\prec(\Psi^2).
\]

Using \eqref{GijkSCP}, we deduce that
\[
\mathbb{E}_i(G_{kk} - m) = \mathbb{E}_i(G_{kk}^{(i)} - m) + O_\prec(\Psi^2) = (G_{kk}^{(i)} - m) + O_\prec(\Psi^2) = G_{kk} - m + O_\prec(\Psi^2).
\]

Substituting this into the earlier equation, we find
\[
\mathbb{E}_i(G_{ii} - m) = m^2 \sum_k S_{ik} \cdot (G_{kk} - m) + O_\prec(\Psi^2) 
= m^2 \sum_k S_{ik} \cdot \mathbb{E}_k(G_{kk} - m) + O_\prec(\Psi^2),
\]
where the last estimate follows from \eqref{jasdu}. Solving this equation, we conclude that
\[
\mathbb{E}_i(G_{ii} - m) = O_\prec(\Psi^2).
\]

Combining this with \eqref{jasdu}, we derive \eqref{GavLGEX} with $t_k = W^{-1}\cdot \mathbf{1}(k \in \mathcal{I}_a)$.

 \end{proof}

\section{Analysis of loop hierarchy}\label{Sec:Stoflo}

In this section, we prove Theorem \ref{lem:main_ind}. Except for Step 1, the proof primarily relies on analyzing the G-loop hierarchy.

\subsection{Proof of Theorem \ref{lem:main_ind}, step 1}

\begin{proof}

Our goal is to establish \eqref{lRB1} and \eqref{Gtmwc}. Using the assumption \eqref{con_st_ind} and the definitions \(\ell_t = \ell(z_t)\) and \(\eta_t = \im z_t\), we have 
$$
1 - u \gg N^{-1}, \quad \eta_u \gg N^{-1}, \quad u \ge t.
$$
Given that \(\partial_z (H - z)^{-1} = (H - z)^{-2}\) and the entries of \(H_t\) follow a Gaussian distribution, it follows that for any \(C > 0\), there exists a constant \(C'\) such that 
\begin{equation}\label{Gopboundu}
    \max_{u \ge N^{-1}} \max_{|u - u'| \leq N^{-C'}} \| G_u - G_{u'} \|_{\max} \leq N^{-C},
\end{equation}
holds up to events that are exponentially small (negligible). Hence, through a standard \(N^{-C}\) net argument, we can reduce the proof of \eqref{lRB1} and \eqref{Gtmwc} for all \(u\) to the case \(u = t\). This standard procedure, which we will refer to as a continuity argument, will be used repeatedly in this paper. We now focus on the proof for the case \(u = t\).

\paragraph{Case 1: \(s < t \leq 1/2\).} 
Under this assumption on \(t\),  
$$
\|G_t\|_{op} = 1/\eta_t = O(1).
$$
Combined with the fact that \(\|E_a\|_{op} \leq W^{-1}\) for all \(a \in \mathbb{Z}_L\), we have  
\begin{equation}\label{Lboundfor1/2}
    {\cal L}_{t, \boldsymbol{\sigma}, \textbf{a}} = \left\langle \prod_{i=1}^n G_t(\sigma_i) E_{a_i} \right\rangle = O(W^{-n+1}), \quad t \leq 1/2, 
\end{equation}
which implies \eqref{lRB1} for \(t \leq 1/2\). Applying \eqref{GijGEX} and \eqref{GiiGEX} from Lemma \ref{lem_GbEXP} to \(\|G_t - m\|_{\max}\), and using the bound \(\cal L\) in \eqref{Lboundfor1/2}, we find that for \(t \leq 1/2\),
$$
{\bf 1}\left(\|G_t - m\|_{\max} \leq W^{-1/10}\right) \cdot \|G_t - m\|_{\max} \prec W^{-1/2}, \quad t \leq 1/2, 
$$
where we have used the fact that \(\ell_t\) and \(\eta_t\) are of order one. On the other hand, using the assumption \eqref{Gt_bound+IND} for \(s\), we have \(\|G_s - m\| \prec W^{-1/2}\). Then, applying a standard continuity argument (with an \(N^{-C}\) net between \(s\) and \(t\)) and \eqref{Gopboundu}, we obtain that for any \(u \in [s, t]\), 
\begin{equation}\label{Gtmt12}
    \|G_u - m\|_{\max} \prec W^{-1/2}, \quad u \leq 1/2.
\end{equation}
Hence, we have proved \eqref{lRB1} and \eqref{Gtmwc} for the case \(t \leq 1/2\).

\paragraph{Case 2: \(t > s \geq 1/2\).} 
Combining the assumption \eqref{Eq:L-KGt+IND} and \eqref{eq:bcal_k} from Lemma \ref{ML:Kbound}, i.e., 
$$
{\cal K}_{s, \boldsymbol{\sigma}, \textbf{a}} \prec (W\ell_s\eta_s)^{-n+1}, 
$$
we obtain that for any fixed \(n\) and \(s\), 
\begin{equation}\label{Lboundfors}
    {\cal L}_{s, \boldsymbol{\sigma}, \textbf{a}} = O((W\ell_s\eta_s)^{-n+1}).
\end{equation}
The following lemma will be proved in Section \ref{sec:Inh_LE}.

\begin{lemma}[Continuity  estimate on loops]\label{lem_ConArg}
Suppose that \(c < t_1 \leq t_2 \leq 1\) for some constant \(c > 0\). Assume that for any fixed \(n \in \mathbb{N}\), the following bounds hold at time \(t_1\): 
\begin{equation}\label{55}
    \max_{\boldsymbol{\sigma}, \textbf{a}} {\cal L}_{t_1, \boldsymbol{\sigma}, \textbf{a}} \prec \left( W \ell_1 \eta_1 \right)^{-n+1}, \quad \eta_i = \eta_{t_i}, \quad \ell_i = \ell_{t_i}.
\end{equation}
Define \(\Omega\) as 
\begin{equation}\label{usuayzoo}
    \Omega := \left\{\|G_{t_2}\|_{\max} \leq 2\right\}.
\end{equation}
Then, for any fixed \(n \geq 1\), we have
\begin{equation}\label{res_lo_bo_eta}
    {\bf 1}_\Omega \cdot \max_{\boldsymbol{\sigma}, \textbf{a}} {\cal L}_{t_2, \boldsymbol{\sigma}, \textbf{a}} \prec \left( W \ell_1 \eta_2 \right)^{-n+1} = \left( \frac{\ell_2}{\ell_1} \right)^{n-1} \cdot \left( W \ell_2 \eta_2 \right)^{-n+1}, \quad \eta_i = \eta_{t_i}, \quad \ell_i = \ell_{t_i}.
\end{equation}
\end{lemma}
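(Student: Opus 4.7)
My approach is to integrate the loop hierarchy \eqref{eq:mainStoflow} over $[t_1,t_2]$ and derive a Gronwall-type bound on the rescaled loops
\[
\Phi_n(u) := \max_{\boldsymbol{\sigma} \in \{+,-\}^n,\,\textbf{a}}\;|\mathcal{L}_{u,\boldsymbol{\sigma},\textbf{a}}|\cdot (W\ell_1\eta_u)^{n-1}.
\]
The aim is to show $\Phi_n(u)\prec 1$ on the whole interval $[t_1,t_2]$, starting from $\Phi_n(t_1)\prec 1$ (given by \eqref{55}). Note that the scale factor uses $\ell_1$ rather than $\ell_u$: this is exactly the source of the lossy factor $(\ell_2/\ell_1)^{n-1}$ in \eqref{res_lo_bo_eta}, and the reason the argument closes under only a pointwise hypothesis on $G_{t_2}$.

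\textbf{Step 1 (a priori uniform entry bound).} I would first upgrade the hypothesis $\|G_{t_2}\|_{\max}\le 2$ on $\Omega$ to a uniform bound $\|G_u\|_{\max}\le 3$ for all $u\in[t_1,t_2]$. For this, I combine (i) the hypothesis \eqref{55} at $u=t_1$, which via Lemma \ref{lem_GbEXP} applied to the $2$-loop yields $\|G_{t_1}-m\|_{\max}\prec(W\ell_1\eta_1)^{-1/2}=o(1)$; (ii) the event $\Omega$ at $u=t_2$; and (iii) the deterministic continuity-in-time estimate of type \eqref{Gopboundu} on a fine $N^{-C}$-net of $[t_1,t_2]$. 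On this intermediate event, all entries of $G_u$ are of order one uniformly in $u$.

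\textbf{Step 2 (hierarchy and drift matching).} With the uniform entry bound in hand, I apply Ito's formula to $(W\ell_1\eta_u)^{n-1}\mathcal{L}_{u,\boldsymbol{\sigma},\textbf{a}}$ using \eqref{eq:mainStoflow}. The deterministic drift factor $\partial_u\eta_u^{-(n-1)}=(n-1)\eta_u^{-(n-1)}/(1-u)$ balances against the quadratic term on the right of the hierarchy: after performing the sum $\sum_{a,b}(\cdots)S^{(B)}_{a,b}(\cdots)$ (which, since $S^{(B)}$ is row-stochastic and block-local, costs only an $O(1)$ factor), the quadratic term is of the expected size $(W\ell_1\eta_u)^{-(n-1)}\cdot O(\eta_u^{-1})\cdot\prod_{k,l}\Phi_{\cdot}(u)$, matching precisely the growth rate of the test function. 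This produces a differential inequality of the form $d\Phi_n(u)/du\prec \eta_u^{-1}\cdot P(\Phi_{\le n}(u))+\text{errors}$, with $P$ a fixed polynomial.

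\textbf{Step 3 (error terms).} The $\widetilde{G}$ correction $\mathcal{E}^{(\widetilde{G})}_u$ is controlled by Lemma \ref{lem_GbEXP} applied at time $u$ on the event of Step 1, using the hypothesis that \eqref{55} holds at $u=t_1$ only implicitly via the Step 1 bound $\|G_u-m\|_{\max}=o(1)$. The martingale $\mathcal{E}^{(M)}_u$ is controlled by the BDG inequality: its quadratic variation involves $(2n+2)$-loops, which are handled by running the same argument simultaneously for all $n\le n_0$ up to some fixed $n_0$ and closing the loop by using the deterministic bound $|\mathcal{L}_{u,\boldsymbol{\sigma},\textbf{a}}|\le \|G_u\|_{\max}^{n}W^{-n+1}\cdot L$ for very long loops.

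\textbf{Main obstacle and closure.} The principal difficulty is closing the hierarchy: the $n$-loop SDE references $(n+1)$- and $(2n+2)$-loops through the error terms. The resolution is to perform a simultaneous Gronwall argument on the whole family $\{\Phi_n(u)\}_{n\le n_0}$, where $n_0$ depends only on the parameter $n$ in the final statement, and to truncate the hierarchy at $2n_0+2$ using the trivial a priori bound from Step 1. Integrating the system from $t_1$ to $t_2$ and invoking $\eta_u\ge\eta_2$ then yields $\Phi_n(t_2)\prec 1$, i.e., the desired \eqref{res_lo_bo_eta}; rewriting $(W\ell_1\eta_2)^{-n+1}=(\ell_2/\ell_1)^{n-1}(W\ell_2\eta_2)^{-n+1}$ displays the loss factor inherited from using $\ell_1$ throughout the argument.
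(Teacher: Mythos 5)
Your approach is genuinely different from the paper's: the paper proves Lemma \ref{lem_ConArg} by a purely static resolvent comparison at the two fixed times $t_1$ and $t_2$, using the identity $G_{t_2}=\widetilde G_{t_1}+(z_{t_2}-\widetilde z_{t_1})G_{t_2}\widetilde G_{t_1}$ (where $\widetilde G_{t_1}$ is the resolvent of $H_{t_2}$ at a rescaled spectral parameter, equal in distribution to $(t_1/t_2)^{1/2}G_{t_1}$), a telescoping expansion of $G$-chains, the Schur-type Gram-matrix bound of Lemma \ref{lem_L2Loc}, and Ward's identity, all closed by induction on the loop length $m$. There is no stochastic calculus, no Gronwall, and no stopping time in that proof; the Gram traces that appear are precisely $\mathcal{L}_{t_1}$-loops controlled by the hypothesis \eqref{55}, and the event $\Omega$ enters only through the $G_{t_2}$-entries that survive the expansion.

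Your dynamical Gronwall route has concrete gaps. First, Step 1's claim that $\Omega$ at $t_2$ plus a net/continuity argument yields $\|G_u\|_{\max}=O(1)$ uniformly on $[t_1,t_2]$ is not justified: boundedness at the endpoints plus short-time continuity does not rule out excursions in the interior, and the forbidden-region bootstrap that would repair this (as in Step~1 of Theorem \ref{lem:main_ind}'s proof) itself invokes Lemma \ref{lem_ConArg}, which would be circular here. Second, the ``drift matching'' in Step 2 is imprecise: the spatial sums $\sum_{a,b}(\cdots)S^{(B)}_{ab}(\cdots)$ in the quadratic term pick up a volume factor $\ell_u$ from the decay scale of the time-$u$ loops, not $\ell_1$, so the source carries an extra factor $\ell_u/\ell_1\ge1$ relative to the damping from $\partial_u(W\ell_1\eta_u)^{\,n-1}$; this is exactly the instability the paper highlights with its toy quadratic ODE, and the whole $\mathcal{K}$/sum-zero machinery exists to tame it. Third, the proposed truncation at level $2n_0+2$ with the crude bound $|\mathcal{L}_m|\le\|G\|_{\max}^m\lesssim1$ fails: that bound is off from the target $(W\ell_1\eta_2)^{-m+1}$ by the factor $(W\ell_1\eta_2)^{m-1}$, and feeding it into the BDG/quadratic-variation estimate for the $n$-loop martingale term produces a contribution polynomially large in $W\ell_1\eta_2$, which destroys the claimed $\Phi_n(t_2)\prec1$. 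In short, the quadratic loop hierarchy cannot be closed by a naive Gronwall with a trivial truncation; the paper's static resolvent argument is what makes this lossy a priori estimate accessible without the full primitive-loop machinery.
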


Using \eqref{Lboundfor1/2} and \eqref{Lboundfors}, we know that the estimate \eqref{55} for \({\cal L}_s\) holds. Applying Lemma \ref{lem_ConArg}, we obtain
\begin{equation}\label{jsajufua}
    {\bf 1}\left(\|G_{u}\|_{\max} \leq 2\right) \cdot \max_{\boldsymbol{\sigma}, \textbf{a}} {\cal L}_{u, \boldsymbol{\sigma}, \textbf{a}} \prec \left( \frac{\ell_u}{\ell_s} \right)^{n-1} \cdot \left( W \ell_u \eta_u \right)^{-n+1}, \quad s \leq u \leq t.
\end{equation}
Combining the special case \(n = 2\) of \eqref{jsajufua} with Lemma \ref{lem_GbEXP}, we have
$$
{\bf 1}\left(\|G_u - m\|_{\max} \leq (W \ell_u \eta_u)^{-1/6}\right) \cdot \|G_u - m\|_{\max} \prec \left( \frac{\ell_u}{\ell_s} \right)^{1/2} \cdot \left( W \ell_u \eta_u \right)^{-1/2}, \quad s \leq u \leq t.
$$
Using the assumption \eqref{con_st_ind}, we find that for any fixed \(D > 0\),
\begin{equation}
    \mathbb{P}\left((W \ell_u \eta_u)^{-1/4} \leq \|G_u - m\|_{\max} \leq (W \ell_u \eta_u)^{-1/6}\right) \leq N^{-D}, \quad s \leq u \leq t.
\end{equation}
This shows that the interval \(\big((W \ell_u \eta_u)^{-1/4}, (W \ell_u \eta_u)^{-1/6}\big)\) is a forbidden region for \(\|G_u - m\|_{\max}\) for any \(u\) between \(s\) and \(t\). Since this event holds with very high probability, the standard continuity argument implies that it holds for all time between \(s\) and \(t\) simultaneously.

On the other hand, by assumption and \eqref{Gtmt12}, we have the initial bound at time \(s\):
$$
\|G_s - m\|_{\max} \prec (W \ell_s \eta_s)^{-1/2}.
$$
Using a standard continuity argument between \(s\) and \(t\), we conclude
$$
\|G_t - m\|_{\max} \prec (W \ell_t \eta_t)^{-1/4}.
$$
Combining this with \eqref{jsajufua}, we have proved \eqref{lRB1} in this case.

\paragraph{Case 3: \(t > 1/2 > s\).} From Case 1, we know that \eqref{Lboundfor1/2} and \eqref{Gtmt12} hold for \(t = 1/2\). Note that the proof of Case 2 uses only these two conditions. Therefore, the current case follows as a consequence of Case 2 with \(s = 1/2\).

\end{proof}

\subsection{Dynamics of  \texorpdfstring{$\cal L-\cal K$}{L-K}}

 Recall the loop hierarchy from Lemma \ref{lem:SE_basic}. Using the notations
$$
\mathcal{L}_{t,\;  \mathcal{G}^{(a), \,L}_{k, \,l}\left(\boldsymbol{\sigma},\, \textbf{a}\right)} := \left( \mathcal{G}^{(a), L}_{k, l} \circ \mathcal{L}_{t, \boldsymbol{\sigma}, \textbf{a}} \right), \quad  
\mathcal{L}_{t,\;  \mathcal{G}^{(b), \,R}_{k, \,l}\left(\boldsymbol{\sigma},\, \textbf{a}\right)} := \left( \mathcal{G}^{(b), R}_{k, l} \circ \mathcal{L}_{t, \boldsymbol{\sigma}, \textbf{a}} \right),
$$
the hierarchy takes the compact form:
\begin{align}
    d\mathcal{L}_{t, \boldsymbol{\sigma}, \textbf{a}} =& 
    \mathcal{E}^{(M)}_{t, \boldsymbol{\sigma}, \textbf{a}} +
    \mathcal{E}^{(\widetilde{G})}_{t, \boldsymbol{\sigma}, \textbf{a}} +
    W\cdot\sum_{1 \leq k < l \leq n} \sum_{a, b} \mathcal{L}_{t,\;  \mathcal{G}^{(a), \,L}_{k, \,l}\left(\boldsymbol{\sigma},\, \textbf{a}\right)} \cdot  S^{(B)}_{ab} \cdot \mathcal{L}_{t,\;  \mathcal{G}^{(b), \,R}_{k, \,l}\left(\boldsymbol{\sigma},\, \textbf{a}\right)}  \, dt.
\end{align}

The primitive hierarchy governing the evolution of \(\mathcal{K}\) is given by:
\begin{align}
    d\mathcal{K}_{t, \boldsymbol{\sigma}, \textbf{a}} = 
    W\cdot\mathcal{K}_{t,\;  \mathcal{G}^{(a), \,L}_{k, \,l}\left(\boldsymbol{\sigma},\, \textbf{a}\right)} \cdot S^{(B)}_{ab} \cdot \mathcal{K}_{t,\;  \mathcal{G}^{(b), \,R}_{k, \,l}\left(\boldsymbol{\sigma},\, \textbf{a}\right)} \, dt.
\end{align}
Combining the two equations, we obtain:
\begin{align}\label{eq_L-K-1}
    d(\mathcal{L} - \mathcal{K})_{t, \boldsymbol{\sigma}, \textbf{a}} 
    = &\; 
   W\cdot \sum_{1 \leq k < l \leq n} \sum_{a, b} 
   \Big( 
   (\mathcal{L} - \mathcal{K})_{t,\;  \mathcal{G}^{(a), \,L}_{k, \,l}\left(\boldsymbol{\sigma},\, \textbf{a}\right)} 
   \cdot S^{(B)}_{ab}
   \cdot 
   \mathcal{K}_{t,\;  \mathcal{G}^{(b), \,R}_{k, \,l}\left(\boldsymbol{\sigma},\, \textbf{a}\right)} 
     +  
     \left(\mathcal{K} \iff \mathcal{L-K} \right)
   \Big) \, dt \nonumber \\ 
    & + \mathcal{E}^{((\mathcal{L} - \mathcal{K}) \times (\mathcal{L} - \mathcal{K}))}_{t, \boldsymbol{\sigma}, \textbf{a}} 
    + \mathcal{E}^{(M)}_{t, \boldsymbol{\sigma}, \textbf{a}} 
    + \mathcal{E}^{(\widetilde{G})}_{t, \boldsymbol{\sigma}, \textbf{a}}, 
\end{align}
where
\begin{equation}\label{def_ELKLK}
    \mathcal{E}^{((\mathcal{L} - \mathcal{K}) \times (\mathcal{L} - \mathcal{K}))}_{t, \boldsymbol{\sigma}, \textbf{a}} :=
    W\cdot\sum_{1 \leq k < l \leq n} \sum_{a, b} (\mathcal{L} - \mathcal{K})_{t,\;  \mathcal{G}^{(a), \,L}_{k, \,l}\left(\boldsymbol{\sigma},\, \textbf{a}\right)} \cdot  S^{(B)}_{ab} \cdot (\mathcal{L} - \mathcal{K})_{t,\;  \mathcal{G}^{(b), \,R}_{k, \,l}\left(\boldsymbol{\sigma},\, \textbf{a}\right)} \, dt.
\end{equation}
Here \( \mathcal{K} \iff \mathcal{L-K} \) represents the terms obtained by swapping \(\mathcal{K}\) and \(\mathcal{L-K}\). The terms on the first line of the right-hand side correspond to \((\mathcal{L} - \mathcal{K})\) "loops" connected with a \(\mathcal{K}\) "loop" via \(S^{(B)}_{ab}\). These terms can be rearranged by the rank of $\mathcal{K}$, i.e., the length of the corresponding $G$-loop. It
 allows us to rewrite the first line of \eqref{eq_L-K-1} as:
\begin{align}\label{DefKsimLK}
\eqref{eq_L-K-1} = \sum_{l_{\cal K}=2}^n &\; W\cdot \sum_{1 \leq k < l \leq n} \sum_{a, b}  
   \Big( 
   (\mathcal{L} - \mathcal{K})_{t,\;  \mathcal{G}^{(a), \,L}_{k, \,l}\left(\boldsymbol{\sigma},\, \textbf{a}\right)} 
   \cdot S^{(B)}_{ab}
   \cdot 
   \mathcal{K}_{t,\;  \mathcal{G}^{(b), \,R}_{k, \,l}\left(\boldsymbol{\sigma},\, \textbf{a}\right)} \cdot \textbf{1}(\text{length of $\mathcal{K}$ loop equals } l_\mathcal{K})   \nonumber \\
   & + \left(\mathcal{K} \iff \mathcal{L-K} \right) \Big) 
    := \sum_{l_{\cal K}=2}^n\Big[\mathcal{K} \sim (\mathcal{L} - \mathcal{K})\Big]^{l_\mathcal{K}}_{t, \boldsymbol{\sigma}, \textbf{a}}.
\end{align}
Separating the special case $l_\mathcal{K} =2$, we have 
 the  $\cal L-K$ hierarchy 
\begin{align}\label{eq_L-Keee}
    d(\mathcal{L} - \mathcal{K})_{t, \boldsymbol{\sigma}, \textbf{a}} = &
    \Big[\mathcal{K} \sim (\mathcal{L} - \mathcal{K})\Big]^{l_\mathcal{K} = 2}_{t, \boldsymbol{\sigma}, \textbf{a}} + \sum_{l_\mathcal{K} > 2} \Big[\mathcal{K} \sim (\mathcal{L} - \mathcal{K})\Big]^{l_\mathcal{K}}_{t, \boldsymbol{\sigma}, \textbf{a}} + \mathcal{E}^{((\mathcal{L} - \mathcal{K}) \times (\mathcal{L} - \mathcal{K}))}_{t, \boldsymbol{\sigma}, \textbf{a}} +
    \mathcal{E}^{(M)}_{t, \boldsymbol{\sigma}, \textbf{a}} +
    \mathcal{E}^{(\widetilde{G})}_{t, \boldsymbol{\sigma}, \textbf{a}}.
\end{align} 
Clearly, we can view $\Big[\mathcal{K} \sim (\mathcal{L} - \mathcal{K})\Big]^{l_\mathcal{K} = 2}_{t, \boldsymbol{\sigma}, \textbf{a}}$ as a linear transform of  the  tensor $(\mathcal{L} - \mathcal{K})_{t, \boldsymbol{\sigma}, \textbf{a}}$.

\begin{definition}[Definition of ${\varTheta}_{\,t, \,\boldsymbol{\sigma}}$ and ${\cal U}_{\,s,\, t, \,\boldsymbol{\sigma}}$]\label{DefTHUST}
Define the linear operator ${\varTheta}_{t, \boldsymbol{\sigma}}$ on a tensor ${\cal A}: \mathbb Z^n_L\to \mathbb C$ by 
\begin{align}
    \left({{\varTheta}}_{t, \boldsymbol{\sigma}} \circ \mathcal{A}\right)_{\textbf{a}} & = \sum_{i=1}^n \sum_{b_i} \left(\frac{m_i m_{i+1}  }{1 - t m_i m_{i+1} S^{(B)}}\right)_{a_i b_i} \cdot \mathcal{A}_{\textbf{a}^{(i)}},\quad m_i=m(\sigma_i) \nonumber \\
    \quad \textbf{a}^{(i)}&  = (a_1, \ldots, a_{i-1}, b_i, a_{i+1}, \ldots, a_n).
\end{align}  
We also define the evolution kernel 
    \begin{align}\label{def_Ustz}
        \left(\mathcal{U}_{s, t, \boldsymbol{\sigma}} \circ \mathcal{A}\right)_{\textbf{a}} = \sum_{b_1, \ldots, b_n} \prod_{i=1}^n \left(\frac{1 - s \cdot m_i m_{i+1} S^{(B)}}{1 - t \cdot m_i m_{i+1} S^{(B)}}\right)_{a_i b_i} \cdot \mathcal{A}_{\textbf{b}}, \quad \textbf{b} = (b_1, \ldots, b_n), \quad m_i=m(\sigma_i)
    \end{align}
\end{definition}
For any $\xi \in \mathbb{C}$, we have 
 \begin{align}\label{def_Ustz_2}
\left(\frac{1 - s \xi S^{(B)}}{1 - t \xi S^{(B)}}\right) = I - (s - t) \xi \cdot \Theta^{(B)}_{t \xi} \cdot S^{(B)}.
\end{align}
Using these notations and the rank-2 $\mathcal{K}$ tensor from \eqref{Kn2sol}, we derive the following identity for the first term on the right-hand side of \eqref{eq_L-Keee}.

\begin{align}
    \left({\varTheta}_{t, \boldsymbol{\sigma}} \circ (\mathcal{L} - \mathcal{K})\right)_{\textbf{a}} = \Big[\mathcal{K} \sim (\mathcal{L} - \mathcal{K})\Big]^{l_\mathcal{K} = 2}_{t, \boldsymbol{\sigma}, \textbf{a}}.
\end{align}
 The following lemma is just a form of  Duhamel formula. 

\begin{lemma}[Integrated loop hierarchy] 
    \label{Sol_CalL}
Let $\mathcal{A}_t$ be a tensor satisfying the stochastic equation:
$$
    d \mathcal{A}_t = {\varTheta}_{t, \boldsymbol{\sigma}} \circ \mathcal{A}_t \; dt + \mathcal{D}_t \; dt + \mathcal{C}_t \cdot d{\cal B}_t.
$$
Then for $s<t$ the $\mathcal{A}_t$ satisfies
 $$
    \mathcal{A}_t = \mathcal{U}_{s, t, \boldsymbol{\sigma}} \circ \mathcal{A}_s +
    \int_{s}^t \mathcal{U}_{u, t, \boldsymbol{\sigma}} \circ \mathcal{D}_u \; du +
    \int_{s}^t \mathcal{U}_{u, t, \boldsymbol{\sigma}} \circ 
    \left(\mathcal{C}_u \cdot d{\cal B}_u\right). 
$$
Inserting this solution into \eqref{eq_L-Keee}, we have the following integrated loop hierarchy  
\begin{align}\label{int_K-LcalE}
    (\mathcal{L} - \mathcal{K})_{t, \boldsymbol{\sigma}, \textbf{a}} \;=\; \; &
    \left(\mathcal{U}_{s, t, \boldsymbol{\sigma}} \circ (\mathcal{L} - \mathcal{K})_{s, \boldsymbol{\sigma}}\right)_{\textbf{a}} \nonumber \\
    &+ \sum_{l_\mathcal{K} > 2} \int_{s}^t \left(\mathcal{U}_{u, t, \boldsymbol{\sigma}} \circ \Big[\mathcal{K} \sim (\mathcal{L} - \mathcal{K})\Big]^{l_\mathcal{K}}_{u, \boldsymbol{\sigma}}\right)_{\textbf{a}} du \nonumber \\
    &+ \int_{s}^t \left(\mathcal{U}_{u, t, \boldsymbol{\sigma}} \circ \mathcal{E}^{((\mathcal{L} - \mathcal{K}) \times (\mathcal{L} - \mathcal{K}))}_{u, \boldsymbol{\sigma}}\right)_{\textbf{a}} du  \nonumber \\
    &+ \int_{s}^t \left(\mathcal{U}_{u, t, \boldsymbol{\sigma}} \circ \mathcal{E}^{(\widetilde{G})}_{u, \boldsymbol{\sigma}}\right)_{\textbf{a}} du + \int_{s}^t \left(\mathcal{U}_{u, t, \boldsymbol{\sigma}} \circ \mathcal{E}^{(M)}_{u, \boldsymbol{\sigma}}\right)_{\textbf{a}}  
\end{align}
Furthermore, let $T$ be a stopping time with respect to the matrix Brownian motion $H_t$ and denote  $\tau:= T\wedge t$. 
Then we have the stopped integrated loop hierarchy  
\begin{align}\label{int_K-L_ST}
   (\mathcal{L} - \mathcal{K})_{\tau , \boldsymbol{\sigma}, \textbf{a}} \;=\; \; &
    \left(\mathcal{U}_{s, \tau, \boldsymbol{\sigma}} \circ (\mathcal{L} - \mathcal{K})_{s, \boldsymbol{\sigma}}\right)_{\textbf{a}}  \nonumber \\
    &+ \sum_{l_\mathcal{K} > 2} \int_{s}^\tau \left(\mathcal{U}_{u, \tau, \boldsymbol{\sigma}} \circ \Big[\mathcal{K} \sim (\mathcal{L} - \mathcal{K})\Big]^{l_\mathcal{K}}_{u, \boldsymbol{\sigma}}\right)_{\textbf{a}} du  \nonumber \\
    &+ \int_{s}^\tau \left(\mathcal{U}_{u, \tau, \boldsymbol{\sigma}} \circ \mathcal{E}^{((\mathcal{L} - \mathcal{K}) \times (\mathcal{L} - \mathcal{K}))}_{u, \boldsymbol{\sigma}}\right)_{\textbf{a}} du  \nonumber \\
    &+ \int_{s}^\tau \left(\mathcal{U}_{u, \tau, \boldsymbol{\sigma}} \circ \mathcal{E}^{(\widetilde{G})}_{u, \boldsymbol{\sigma}}\right)_{\textbf{a}} du + 
    {\cal U}_{t,\tau}\circ \int_{s}^\tau \left(\mathcal{U}_{u, t, \boldsymbol{\sigma}} \circ \mathcal{E}^{(M)}_{u, \boldsymbol{\sigma}}\right)_{\textbf{a}}
    .
\end{align}
  
 %$$
 %\left({\cal U}_{\tau, t}\right)^{-1}\circ\int_s^\tau\left(\mathcal{U}_{u, t, \boldsymbol{\sigma}} \circ \mathcal{E}^{(M)}_{u, \boldsymbol{\sigma}}\right)_{\textbf{a}} ,\quad \left( {\cal U}_{\tau, t}\right)^{-1}={\cal U}_{t,\tau}$$
 
\end{lemma}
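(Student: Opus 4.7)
The plan is to recognize the claim as a standard Duhamel (variation-of-constants) representation for an inhomogeneous linear SDE whose drift is generated by the time-dependent operator $\varTheta_{t,\boldsymbol{\sigma}}$. First I would verify that the family $\mathcal{U}_{s,t,\boldsymbol{\sigma}}$ defined in \eqref{def_Ustz} is precisely the two-parameter evolution semigroup generated by $\varTheta_{t,\boldsymbol{\sigma}}$. Because $\mathcal{U}_{s,t,\boldsymbol{\sigma}}$ factorizes as a tensor product over the $n$ indices, it suffices to check, factor by factor with $\xi=m_im_{i+1}$, that $F_{s,t}(\xi):=(1-s\xi S^{(B)})(1-t\xi S^{(B)})^{-1}$ satisfies $F_{t,t}=I$, the composition rule $F_{s,u}F_{u,t}=F_{s,t}$ (which follows from the commutativity of all $\Theta$-propagators noted in Lemma \ref{lem_propTH}), and the forward equation $\partial_t F_{s,t}(\xi)=\frac{\xi S^{(B)}}{1-t\xi S^{(B)}}F_{s,t}(\xi)$, computed directly from \eqref{def_Ustz_2}. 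These identities yield $\partial_t\mathcal{U}_{s,t,\boldsymbol{\sigma}}=\varTheta_{t,\boldsymbol{\sigma}}\circ\mathcal{U}_{s,t,\boldsymbol{\sigma}}$ together with the dual identity $\partial_u\mathcal{U}_{u,t,\boldsymbol{\sigma}}=-\mathcal{U}_{u,t,\boldsymbol{\sigma}}\circ\varTheta_{u,\boldsymbol{\sigma}}$.

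Next, I would apply It\^o's formula to the process $u\mapsto \mathcal{U}_{u,t,\boldsymbol{\sigma}}\circ\mathcal{A}_u$ with $t$ fixed. Using the SDE for $\mathcal{A}_u$ and the dual identity for $\mathcal{U}_{u,t,\boldsymbol{\sigma}}$, the two drift contributions involving $\varTheta_{u,\boldsymbol{\sigma}}$ cancel exactly, leaving
\begin{equation*}
d_u\!\left(\mathcal{U}_{u,t,\boldsymbol{\sigma}}\circ\mathcal{A}_u\right)
\;=\;\mathcal{U}_{u,t,\boldsymbol{\sigma}}\circ\mathcal{D}_u\,du
\;+\;\mathcal{U}_{u,t,\boldsymbol{\sigma}}\circ(\mathcal{C}_u\,d\mathcal{B}_u),
\end{equation*}
with no It\^o correction since $\mathcal{U}_{u,t,\boldsymbol{\sigma}}$ is deterministic in $u$. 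Integrating from $s$ to $t$ and using $\mathcal{U}_{t,t,\boldsymbol{\sigma}}=\mathrm{Id}$ yields the first Duhamel identity of the lemma. For the integrated loop hierarchy \eqref{int_K-LcalE}, one then identifies $\mathcal{A}_t=(\mathcal{L}-\mathcal{K})_{t,\boldsymbol{\sigma},\mathbf{a}}$, $\mathcal{D}_t=\sum_{l_\mathcal{K}>2}[\mathcal{K}\sim(\mathcal{L}-\mathcal{K})]^{l_\mathcal{K}}_{t,\boldsymbol{\sigma},\mathbf{a}}+\mathcal{E}^{((\mathcal{L}-\mathcal{K})\times(\mathcal{L}-\mathcal{K}))}_{t,\boldsymbol{\sigma},\mathbf{a}}+\mathcal{E}^{(\widetilde G)}_{t,\boldsymbol{\sigma},\mathbf{a}}$, and $\mathcal{C}_t\cdot d\mathcal{B}_t=\mathcal{E}^{(M)}_{t,\boldsymbol{\sigma},\mathbf{a}}$, and verifies that the $l_\mathcal{K}=2$ term in \eqref{eq_L-Keee} matches $\varTheta_{t,\boldsymbol{\sigma}}\circ(\mathcal{L}-\mathcal{K})$ under this identification.

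For the stopped version \eqref{int_K-L_ST}, the plan is to apply standard optional-stopping localization. Since the evolution kernel $\mathcal{U}_{u,\tau,\boldsymbol{\sigma}}$ depends on the stopping time $\tau=T\wedge t$ only through a deterministic function of $\tau$, and since $\varTheta_{u,\boldsymbol{\sigma}}$ has bounded norm for $u$ bounded away from $1$, one may either apply the unstopped identity to $\mathcal{A}_{u\wedge\tau}$ or, equivalently, verify \eqref{int_K-L_ST} pathwise by running the Duhamel formula on the (random) interval $[s,\tau]$ and invoking the optional stopping theorem to justify that $\int_s^\tau \mathcal{U}_{u,\tau,\boldsymbol{\sigma}}\circ(\mathcal{C}_u\,d\mathcal{B}_u)$ is a well-defined stopped stochastic integral.

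The main obstacle is conceptual rather than computational: one must confirm that the tensorial operator $\varTheta_{t,\boldsymbol{\sigma}}$ defined in Definition \ref{DefTHUST} is the correct linearization of the rank-$2$ part of the quadratic hierarchy. Concretely, one has to check that for $l=k+1$ the cut-and-glue operation with a rank-$2$ $\mathcal{K}$, evaluated via \eqref{Kn2sol} and \eqref{uaiw03}, reproduces exactly the kernel $\bigl(m_km_{k+1}S^{(B)}(1-t m_km_{k+1}S^{(B)})^{-1}\bigr)_{a_k,a}$ acting on $(\mathcal{L}-\mathcal{K})$ in the $k$-th index; once this is matched, the rest reduces to a semigroup manipulation and standard martingale bookkeeping.
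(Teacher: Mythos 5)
Your proposal is essentially correct and supplies exactly the argument the paper merely asserts (``just a form of Duhamel formula''). The route you take — identifying $\mathcal{U}_{s,t,\boldsymbol{\sigma}}$ as the two-parameter evolution family generated by $\varTheta_{t,\boldsymbol{\sigma}}$, applying It\^o's formula to $u\mapsto\mathcal{U}_{u,t,\boldsymbol{\sigma}}\circ\mathcal{A}_u$, and then matching drift and diffusion terms — is the standard one and the only sensible one.

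Two points deserve care. First, your forward-equation computation correctly produces the generator $\frac{\xi S^{(B)}}{1-t\xi S^{(B)}}$ on each factor, and your final paragraph correctly states that the rank-$2$ cut-and-glue term contributes the kernel $m_km_{k+1}S^{(B)}(1-tm_km_{k+1}S^{(B)})^{-1}$ acting on the $k$-th index. But Definition \ref{DefTHUST} as printed has the numerator $m_i m_{i+1}$ \emph{without} the $S^{(B)}$. This appears to be a typo in the paper: without the $S^{(B)}$ the identity $\partial_t\mathcal{U}_{s,t,\boldsymbol{\sigma}}=\varTheta_{t,\boldsymbol{\sigma}}\circ\mathcal{U}_{s,t,\boldsymbol{\sigma}}$ fails, and the $l_{\mathcal{K}}=2$ part of \eqref{eq_L-Keee}, computed from \eqref{DefKsimLK} and \eqref{Kn2sol}, does not match either. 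You have silently corrected it; you should say so explicitly so the reader does not think there is an inconsistency. (Your citation of \eqref{uaiw03} is out of place here — that is a step in the Ward-identity proof; only \eqref{Kn2sol} and the definitions of the cut-and-glue operators are needed to verify the match.) Second, for the stopped identity \eqref{int_K-L_ST}, the phrase ``apply the unstopped identity to $\mathcal{A}_{u\wedge\tau}$'' does not quite work, since the stopped process satisfies a modified SDE, and ``optional stopping'' is not the right invocation either because the integrand $\mathcal{U}_{u,\tau,\boldsymbol{\sigma}}$ depends on the terminal random time. The cleaner justification is the one you hint at: the Duhamel identity holds almost surely for every deterministic $t'\in[s,t]$ simultaneously (take continuous versions of the two-parameter process $t'\mapsto\int_s^{t'}\mathcal{U}_{u,t',\boldsymbol{\sigma}}\circ(\mathcal{C}_u\,d\mathcal{B}_u)$), and then one substitutes $t'=\tau$ pathwise. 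Stating it this way avoids any appearance of integrating a non-adapted integrand.
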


The equation \eqref{int_K-LcalE} will serve as our fundamental equation for estimating $\mathcal{L} - \mathcal{K}$. To analyze this equation, we first introduce the following notation, which is necessary to compute the quadratic variation of the martingale term.

\begin{definition}[Definition of $\mathcal{E}\otimes  \mathcal{E}$]\label{def:CALE} 
Denote 
  \begin{align}\label{defEOTE}
 \left( \mathcal{E}\otimes  \mathcal{E} \right)_{t, \,\boldsymbol{\sigma}, \,\textbf{a}, \,\textbf{a}'} \; := \; &
 \sum_{k=1}^n\left( \mathcal{E}\otimes  \mathcal{E} \right)^{(k)}_{t, \,\boldsymbol{\sigma}, \,\textbf{a}, \,\textbf{a}'}
 \nonumber \\
 \left( \mathcal{E}\otimes  \mathcal{E} \right)^{(k)}_{t, \,\boldsymbol{\sigma}, \,\textbf{a}, \,\textbf{a}'} :
\; = \; & W \sum_{b,b'} S^{(B)}_{b,b'}{\cal L}_{t, \boldsymbol{\sigma}^{(k ) },\textbf{a}^{(k )}},\quad \boldsymbol{\sigma}^{(k ) }\in \{+,-\}^{2n+2}, 
\end{align}
where the loop ${\cal L}_{t, \boldsymbol{\sigma}^{(k ) },\textbf{a}^{(k )}}$ is obtained by cutting the $k$-th edge of ${\cal L}_{t,\boldsymbol{\sigma},\textbf{a}}$ and then attach itself (with indices $\textbf{a}$) with its complex conjugate loop (with indices $\textbf{a}'$)  into a bigger loop, with the new indices $b$ and $b'$. Hence, there are $n$ indices between $b$ and $b'$ so that 
%the following integrated loop hierarchy $\textbf{a}^{(k )}(n)=b', \; \textbf{a}^{(k )}(2n )=b $, i.e., 
%where $\textbf{a}^{(k )}(m)$ is the $m$-th component of $\textbf{a}^{(k )}$. More precisely
\begin{align}\label{def_diffakn_k}
   & \textbf{a}^{(k )}=( a_k,a_{k+1},\cdots a_n, a_1,\cdots a_{k-1}, b', a'_{k-1}\cdots a_1', a_n'\cdots a'_{k},b)
    \nonumber \\
  &  \boldsymbol{\sigma}^{(k )}=( \, \sigma_k, \, \sigma_{k+1},\cdots \sigma_n,  \, \sigma_1,\cdots \sigma_{k}, \,  \overline\sigma_{k},  \cdots \overline\sigma_1 , \,  \overline\sigma_n \cdots \overline\sigma_{k })
\end{align}
The symbol $\otimes$ in the notation $\mathcal{E} \otimes \mathcal{E}$ was used  to emphasize the symmetric structure (as illustrated in Figure \eqref{fig:8shapedgraph}); it  does not denote a tensor product.  

Example: For a $3\text{-}G$ loop $\mathcal{L}_{\boldsymbol{\sigma}, \mathbf{a}}$, Figure \ref{fig:8shapedgraph} represents one loop that appears in $\mathcal{E} \otimes \mathcal{E}$. Although $S^{(B)}$ is not part of the loop, it does appear within $\mathcal{E} \otimes \mathcal{E}$.

\begin{figure}[ht]
\centering
\begin{minipage}{0.45\textwidth} % Adjust width as needed
    \centering
    \begin{tikzpicture}[scale=1.0]

    % Define nodes with visible dots for origin
    \node[draw, circle, inner sep=1pt, fill=black] (A1) at (-2, 2) {};
    \node at (-2.3, 2) {$a_1$}; % Offset label above

    \node[draw, circle, inner sep=1pt, fill=black] (A2) at (0, 3) {};
    \node at (0 , 3.3) {$a_2$}; % Offset label to the right

    \node[draw, circle, inner sep=1pt, fill=black] (A3) at (2, 2) {};
    \node at (2.3, 2 ) {$a_3$}; % Offset label above

    \node[draw, circle, inner sep=1pt, fill=black] (B) at (-0.5, 1) {};
    \node at (-0.9, 1) {$b$}; % Offset label to the left

    \node[draw, circle, inner sep=1pt, fill=black] (B1) at (0.5, 1) {};
    \node at (0.9, 1) {$b'$}; % Offset label to the right

    \node[draw, circle, inner sep=1pt, fill=black] (A1b) at (-2, 0) {};
    \node at (-2.3, -0) {$a'_1$}; % Offset label below

    \node[draw, circle, inner sep=1pt, fill=black] (A2b) at (0, -1) {};
    \node at (0, -1.3) {$a'_2$}; % Offset label to the right

    \node[draw, circle, inner sep=1pt, fill=black] (A3b) at (2, 0) {};
    \node at (2.3, -0) {$a'_3$}; % Offset label below

    % Draw connections (upper diamond)
    \draw (A1) -- (A2) node[midway, above left] {\small $G_2$}; % Reduced font size
    \draw (A2) -- (A3) node[midway, above right] {\small $G_3$}; % Reduced font size
    \draw (A1) -- (B) node[midway, below left, xshift=-2pt, yshift= 5pt] {\small $G_1$}; % Adjusted position
    \draw (A3) -- (B1) node[midway, below right, xshift=2pt, yshift= 5pt] {\small $G_1$}; % Adjusted position

    % Draw connections (lower diamond)
    \draw (B) -- (A1b) node[midway, above left,  xshift=-2pt, yshift=-5pt] {\small $\overline{G_1}$}; % Reduced font size
    \draw (A1b) -- (A2b) node[midway, below left, yshift=-2pt] {\small $\overline{G_2}$}; % Adjusted position
    \draw (A2b) -- (A3b) node[midway, below right, yshift=-2pt] {\small $\overline{G_3}$}; % Adjusted position
    \draw (B1) -- (A3b) node[midway, above right, xshift=2pt, yshift=-5pt] {\small $\overline{G_1}$}; % Adjusted position

    % Connect b and b' with a wavy line and add "S" label
    \draw[decorate, decoration={snake, amplitude=0.4mm, segment length=2mm}, thick] (B) -- (B1)
        node[midway, above] {$S^{(B)}$};

    \end{tikzpicture}
   
\end{minipage}%
\hfill
\begin{minipage}{0.45\textwidth} % Adjust width as needed
    \raggedright
     % Adjust vertical alignment as needed
    $$k=1 $$\\ 
    $$\textbf{a}^{(1 )}=( a_1, a_2, a_3, b', a_3', a_2', a_1',b);$$
    \\  
     $$\boldsymbol{\sigma}^{(1 )}=(\;\sigma_1, \sigma_2, \sigma_3,\sigma_1, \overline{\sigma_1},\overline{\sigma_3}, \overline{\sigma_2},\overline{\sigma_1}, ).$$
\end{minipage}
 \caption{Example of the $8-G$ loop in $\mathcal{E}\otimes  \mathcal{E}$: for $\sigma\in \{+,-\}^3$}
    \label{fig:8shapedgraph}
\end{figure}

\end{definition}

\begin{lemma}[The martingale term]\label{lem:DIfREP}
For any stopping time $T$ with respect to  $H_t$ and  $\tau:= t\wedge T$, we have  
 \begin{equation}\label{alu9_STime}
   \mathbb{E} \left [    \int_{s}^\tau\left(\mathcal{U}_{u, t, \boldsymbol{\sigma}} \circ \mathcal{E}^{(M)}_{u, \boldsymbol{\sigma}}\right)_{\textbf{a}} \right ]^{2p} 
   \le C_{n,p} \; 
   \mathbb{E} \left(\int_{s}^\tau 
   \left(\left(
   \mathcal{U}_{u, t,  \boldsymbol{\sigma}}
   \otimes 
   \mathcal{U}_{u, t,  \overline{\boldsymbol{\sigma}}}
   \right) \;\circ  \;
   \left( \mathcal{E} \otimes  \mathcal{E} \right)
   _{u, \,\boldsymbol{\sigma}  }
   \right)_{{\textbf a}, {\textbf a}}du 
   \right)^{p}
  \end{equation} 
  where  $\overline{\boldsymbol{\sigma}}$ is the conjugate sign vector of ${\boldsymbol{\sigma}}$. More precisely, as in \eqref{def_Ustz}, 
  $$
  \left[\left(
   \mathcal{U}_{u, t,  \boldsymbol{\sigma}}
   \otimes 
   \mathcal{U}_{u, t,  \overline{\boldsymbol{\sigma}}}
   \right)\circ \cal A\right]_{\textbf{a},\textbf{a}'}=
   \sum_{\textbf{b},\;\textbf{b}'} \; \prod_{i=1}^n \left(\frac{1 - u \cdot m_i m_{i+1} S^{(B)}}{1 - t \cdot m_i m_{i+1} S^{(B)}}\right)_{a_i b_i}
   \;\cdot \;\prod_{i=1}^n \left(\frac{1 - u \cdot \overline{m_i m_{i+1}} S^{(B)}}{1 - t \cdot \overline{m_i m_{i+1}} S^{(B)}}\right)_{a'_i b'_i} \cdot \mathcal{A}_{\textbf{b},\textbf{b}'}.
  $$
\end{lemma}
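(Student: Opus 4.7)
\bigskip

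\noindent\textbf{Proof proposal for Lemma \ref{lem:DIfREP}.}

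The plan is to identify the quantity on the left as a stopped scalar martingale, apply the Burkholder--Davis--Gundy (BDG) inequality, and then recognise the quadratic variation as precisely the right-hand side. The first step is to exploit the deterministic semigroup property
\[
\mathcal{U}_{u,\tau,\boldsymbol{\sigma}}
\;=\;
\mathcal{U}_{s,\tau,\boldsymbol{\sigma}}\,\mathcal{U}_{s,u,\boldsymbol{\sigma}}^{-1},
\]
which follows directly from Definition~\ref{DefTHUST}, since $\mathcal{U}_{s,t,\boldsymbol{\sigma}}$ is the solution operator of the linear equation $\partial_t A=\varTheta_{t,\boldsymbol{\sigma}}\circ A$. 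Using this identity and the fact that $\mathcal{U}_{s,\tau,\boldsymbol{\sigma}}$ can be pulled outside the $u$-integral, one rewrites
\[
\Bigl(\int_{s}^{\tau}\mathcal{U}_{u,\tau,\boldsymbol{\sigma}}\circ\mathcal{E}^{(M)}_{u,\boldsymbol{\sigma}}\Bigr)_{\!\textbf a}
\;=\;
\Bigl(\mathcal{U}_{s,\tau,\boldsymbol{\sigma}}\circ N_\tau\Bigr)_{\!\textbf a},
\qquad
N_u\;:=\;\int_{s}^{u}\mathcal{U}_{s,u',\boldsymbol{\sigma}}^{-1}\circ\mathcal{E}^{(M)}_{u',\boldsymbol{\sigma}}.
\]
Because $\mathcal{U}_{s,u,\boldsymbol{\sigma}}^{-1}$ is deterministic, $N_u$ is a (tensor-valued) martingale with respect to the filtration of $H_u$, and therefore so is the scalar process
\[
M_u\;:=\;\bigl(\mathcal{U}_{s,\tau,\boldsymbol{\sigma}}\circ N_u\bigr)_{\!\textbf a}
\;=\;\int_{s}^{u}\bigl(\mathcal{U}_{u',\tau,\boldsymbol{\sigma}}\circ\mathcal{E}^{(M)}_{u',\boldsymbol{\sigma}}\bigr)_{\!\textbf a},
\]
stopped at the bounded stopping time $\tau$.

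The second step is to apply BDG to $M_u$, giving $\mathbb{E}|M_\tau|^{2p}\le C_p\,\mathbb{E}\langle M\rangle_\tau^{p}$. Using the expression for $\mathcal{E}^{(M)}$ in \eqref{def_Edif} and $d\mathcal{B}_\alpha\, \overline{d\mathcal{B}_\beta}=\delta_{\alpha\beta}\,dt$ for complex Brownian motion, the quadratic variation is
\[
\langle M\rangle_\tau
\;=\;\int_{s}^{\tau}\sum_{\alpha=(i,j)} S_\alpha\,
\bigl|\bigl(\mathcal{U}_{u,\tau,\boldsymbol{\sigma}}\circ\partial_\alpha\mathcal{L}_{u,\boldsymbol{\sigma},\cdot}\bigr)_{\!\textbf a}\bigr|^{2}\,du.
\]
The main task is then to identify the integrand with $\bigl((\mathcal{U}_{u,\tau,\boldsymbol{\sigma}}\otimes\mathcal{U}_{u,\tau,\overline{\boldsymbol{\sigma}}})\circ(\mathcal{E}\otimes\mathcal{E})_{u,\boldsymbol{\sigma}}\bigr)_{\textbf a,\textbf a}$. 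Write $|X|^{2}=X\cdot\overline X$ and use that complex conjugation of a $G$-loop exchanges $\boldsymbol{\sigma}\leftrightarrow\overline{\boldsymbol{\sigma}}$ and hence swaps $\mathcal{U}_{u,\tau,\boldsymbol{\sigma}}$ with $\mathcal{U}_{u,\tau,\overline{\boldsymbol{\sigma}}}$. Bringing the two deterministic operators outside the sum over $\alpha$, it remains to verify the pointwise identity
\[
\sum_{\alpha=(i,j)}S_\alpha\,
\bigl(\partial_\alpha\mathcal{L}_{u,\boldsymbol{\sigma},\textbf a}\bigr)\,
\bigl(\partial_\alpha\mathcal{L}_{u,\overline{\boldsymbol{\sigma}},\textbf a'}\bigr)
\;=\;(\mathcal{E}\otimes\mathcal{E})_{u,\boldsymbol{\sigma},\textbf a,\textbf a'}.
\]
This is the main step. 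It is a direct combinatorial check: differentiating the loop $\mathcal{L}_{\boldsymbol{\sigma},\textbf a}=\langle\prod_{k}G(\sigma_k)E_{a_k}\rangle$ in $H_{ij}$ replaces one of the $G(\sigma_k)$'s by $-G(\sigma_k)e_{ij}G(\sigma_k)$, which amounts to cutting the $k$-th $G$-edge and terminating the two halves at the indices $i$ and $j$. Summing over $(i,j)$ against $S_{ij}$ and pairing with the corresponding cut of the conjugate loop glues the two cut points through the $S^{(B)}$ factor (after the internal averaging that is encoded in the block structure of $S=S^{(B)}\otimes S_W$). After regrouping the two half-chains in the orientation specified by Definition~\ref{def:CALE}, one recovers exactly the $(2n+2)$-loop $\mathcal{L}_{u,\boldsymbol{\sigma}^{(k)},\textbf a^{(k)}}$ of \eqref{def_diffakn_k}, summed over $k=1,\dots,n$; this is the ``figure-eight'' $\mathcal{E}\otimes\mathcal{E}$.

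The only technical subtlety is the Hermitian constraint $H_{ji}=\overline{H_{ij}}$, which forces us to parameterise $\alpha=(i,j)$ with $i\le j$ and to track the two contributions ($\partial_{H_{ij}}$ and $\partial_{H_{ji}}=\partial_{\overline{H_{ij}}}$) separately. Both produce the same cut-and-glue pattern up to orientation, and the factor $W$ in \eqref{defEOTE} appears because $E_{a}$ carries a $W^{-1}$ normalization while the cut index $i$ ranges over a block of size $W$. Combining the three steps yields the claimed BDG bound; the constant $C_{n,p}$ absorbs the BDG constant together with the combinatorial factor arising from the sum over cuts $k$. Once this identity is in place, the stopped version \eqref{alu9_STime} follows from the optional stopping theorem applied to $M_u$ and the fact that $\tau$ is bounded by $t$.
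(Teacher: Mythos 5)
Your semigroup factorization $\mathcal{U}_{u,\tau,\boldsymbol{\sigma}}=\mathcal{U}_{s,\tau,\boldsymbol{\sigma}}\circ\mathcal{U}_{s,u,\boldsymbol{\sigma}}^{-1}$ is a reasonable way to expose the martingale structure, although the paper does this more implicitly (for fixed deterministic $t$ the process $t'\mapsto\int_s^{t'}\bigl(\mathcal{U}_{u,t,\boldsymbol{\sigma}}\circ\mathcal{E}^{(M)}_{u,\boldsymbol{\sigma}}\bigr)_{\textbf a}$ is already a martingale because $\mathcal{U}_{u,t,\boldsymbol{\sigma}}$ is deterministic, and the paper simply handles $\tau=t$ and notes the general case is identical). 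The BDG step and the cut-and-glue combinatorics are essentially the paper's route.

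However, the ``pointwise identity'' you state as the main task,
\[
\sum_{\alpha}S_\alpha\,\bigl(\partial_\alpha\mathcal{L}_{u,\boldsymbol{\sigma},\textbf a}\bigr)\,\overline{\bigl(\partial_\alpha\mathcal{L}_{u,\boldsymbol{\sigma},\textbf a'}\bigr)}
\;=\;(\mathcal{E}\otimes\mathcal{E})_{u,\boldsymbol{\sigma},\textbf a,\textbf a'},
\]
is not an identity, and this is a genuine gap. By the Leibniz rule, $\partial_\alpha\mathcal{L}_{\boldsymbol{\sigma},\textbf a}=\sum_{k=1}^n\partial_\alpha^{(k)}\mathcal{L}_{\boldsymbol{\sigma},\textbf a}$, where $\partial_\alpha^{(k)}$ differentiates only the $k$-th resolvent. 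Expanding the product, the left side produces all pairs $(k,k')$. The diagonal pairs $k=k'$ glue the $k$-th cut of $\mathcal{L}_{\textbf a}$ to the $k$-th cut of the conjugate loop, and those indeed reproduce $(\mathcal{E}\otimes\mathcal{E})^{(k)}$. But the off-diagonal pairs $k\ne k'$ glue cuts on \emph{different} edges and yield $(2n+2)$-loops with a different topology. Definition~\ref{def:CALE} defines $(\mathcal{E}\otimes\mathcal{E})$ as $\sum_k(\mathcal{E}\otimes\mathcal{E})^{(k)}$, i.e.\ only the diagonal contributions, so the claimed identity fails.

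The correct argument, and the one the paper gives, is a $k$-by-$k$ identity
\[
\sum_{\alpha}S_\alpha\,\bigl(\partial_\alpha^{(k)}\mathcal{L}_{u,\boldsymbol{\sigma},\textbf a}\bigr)\,\overline{\bigl(\partial_\alpha^{(k)}\mathcal{L}_{u,\boldsymbol{\sigma},\textbf a'}\bigr)}
\;=\;(\mathcal{E}\otimes\mathcal{E})^{(k)}_{u,\boldsymbol{\sigma},\textbf a,\textbf a'},
\]
followed by the Cauchy--Schwarz bound $\sum_\alpha\bigl|\sum_k X_k(\alpha)\bigr|^2\le n\sum_\alpha\sum_k|X_k(\alpha)|^2$ applied inside the quadratic variation. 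This Cauchy--Schwarz step is exactly where the $n$-dependence of $C_{n,p}$ enters; notice that an exact identity would give a constant $C_p$ independent of $n$. Your closing remark that $C_{n,p}$ absorbs ``the combinatorial factor arising from the sum over cuts $k$'' is at odds with presenting the step as an identity; making that step explicit as a Cauchy--Schwarz inequality, rather than an identity, is what closes the gap.
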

\begin{proof}[Proof of Lemma \ref{lem:DIfREP}]

We will prove the case $\tau =t$; the general case is identical. 
By Definition \ref{def_Edif}, $
 \mathcal{E}^{(M)}_{t, \boldsymbol{\sigma}, \textbf{a}}=\sum_{\alpha=(i,j)} \mathcal{E}^{(M)}_{t, \boldsymbol{\sigma}, \textbf{a}}(\alpha)\cdot d{\cal B}_\alpha(t) $ 
 and 
 $$
 \left(\mathcal{U}_{u, t, \boldsymbol{\sigma}}\circ \mathcal{E}^{(M)}_{u, \boldsymbol{\sigma}}\right)_{\textbf{a}}=
 \sum_{\alpha=(i,j)}
 \left(
 \mathcal{U}_{u, t, \boldsymbol{\sigma}} \circ\mathcal{E}^{(M)}_{u, \boldsymbol{\sigma}}
 (\alpha)
 \right)_{\textbf{a}}\cdot d{\cal B}_\alpha(t)
 $$
Here with $\quad \alpha=(i,j)$ we have 
$$
\mathcal{E}^{(M)}_{t, \boldsymbol{\sigma},\textbf{a}}
 (\alpha)
 =(S_{ij})^{1/2}\cdot \partial_{(H_t)_{xy}} {\cal L}_{t,\boldsymbol{\sigma},\textbf{a}}.
$$
Using  the chain rule and the structure of $\cal L$, we can write 
 $$
\mathcal{E}^{(M)}_{t, \boldsymbol{\sigma},\textbf{a}}
 (\alpha)=\sum_{k=1}^{n}\;\mathcal{E}^{(M)}_{t, \boldsymbol{\sigma},\textbf{a}}
 (\alpha, k), \quad \quad \mathcal{E}^{(M)}_{t, \boldsymbol{\sigma},\textbf{a}}
 (\alpha, k):=(S_{ij})^{1/2}\cdot  {\cal L}_{t,\boldsymbol{\sigma},\textbf{a}}\Big|\left(G_k\to \partial_{(H_t)_{xy}}G_k\right)$$
 where $ (\alpha, k)$ denotes  the part that  the derivative acts on the $k$-th $G$ edge in the $\cal L$.  By definition  \eqref{defEOTE},  we have 
 $$
 \sum_\alpha   \mathcal{E}^{(M)}_{t, \boldsymbol{\sigma},\textbf{a}}
 (\alpha, k)\cdot \overline {\mathcal{E}^{(M)}_{t, \boldsymbol{\sigma},\textbf{a}'}(\alpha, k)}=
 \left(\mathcal{E}\otimes  \mathcal{E}\right)^{(k)}_{t, \,\boldsymbol{\sigma}, \,\textbf{a},\,\textbf{a}'}
 $$
 Since  $\cal U$ is a deterministic linear operator,   the  quadratic  variation of the martingale  term in \eqref{int_K-LcalE}
 can be bounded by 
 \begin{align} \label{UEUEdif}
  \left[\int \left(\mathcal{U}_{u, t, \boldsymbol{\sigma}} \circ \mathcal{E}^{(M)}_{u, \boldsymbol{\sigma}}\right)_{\textbf{a}} \right ]_t
\;=\; & \int_s^t
\sum_{\alpha}\left|  \mathcal{U}_{u, t, \boldsymbol{\sigma}} \circ\mathcal{E}^{(M)}_{u, \boldsymbol{\sigma}}(\alpha )
 \right|^2 du 
 = \int_s^t
\sum_{\alpha}\left|\sum_{k=1}^n \mathcal{U}_{u, t, \boldsymbol{\sigma}} \circ\mathcal{E}^{(M)}_{u, \boldsymbol{\sigma}}(\alpha, k)
 \right|^2 du
 \nonumber \\
\; \;\le C_n\; &\int_s^t
\sum_{\alpha}\sum_{k=1}^n\left| \mathcal{U}_{u, t, \boldsymbol{\sigma}} \circ\mathcal{E}^{(M)}_{u, \boldsymbol{\sigma}}(\alpha, k)
 \right|^2 du
\nonumber \\
\;=\;C_n \; &\int_{s}^t
\left(\left(
   \mathcal{U}_{u, t,  \boldsymbol{\sigma}}
   \otimes 
   \mathcal{U}_{u, t,  \overline{\boldsymbol{\sigma}}}
   \right) \;\circ  \;
   \left( \mathcal{E} \otimes  \mathcal{E} \right)
   _{u, \,\boldsymbol{\sigma}  }
   \right)_{{\textbf a}, {\textbf a}} du.
\end{align}
Here we have used the Schwarz inequality in expanding the square. 
Our desired result,  \eqref{alu9_STime}, follows from the BDG inequality.

\end{proof}

\subsection{Proof of Theorem \ref{lem:main_ind}, step 2}
In this section,  we focus on the $(+,-)$ $2$-$G$-loop, i.e. $\boldsymbol{\sigma}=(+,-)$. The subscript $\boldsymbol{\sigma}$ will be dropped  in this subsection.  We will prove \eqref{Eq:Gdecay_w} first, and  \eqref{Gt_bound_flow} will be proved at the end of this  section. 
Define the tail functions ${\cal T}^{(\cal L-K)}_u (\ell)$ and ${\cal T}_{u,D} (\ell)$ 
\begin{equation}\label{def_WTu}
      {\cal T}^{(\cal L-K)}_u (\ell):= \max_{a,\,b\; : \; |a-b|\ge \ell} \left| ({\cal L-K})_{u,  \boldsymbol{\sigma},  ( a, b)}\right|,\quad  \boldsymbol{\sigma}=(+,-),
\end{equation}
\begin{equation}\label{def_WTuD}
{\cal T} _{u, D} (\ell) :=  (W\ell_u\eta_u)^{-2}
\exp \left(- \left( \ell /\ell_u \right)_+^{1/2} \right)+W^{-D}.
\end{equation} 
Both  ${\cal T}^{(\cal L-K)}_u$ and ${\cal T} _{u, D}$ are non-decreasing funtions:
 $$
 0\le \ell_1\le \ell_2,\quad \quad 
 {\cal T}^{(\cal L-K)}_u (\ell_1)\ge {\cal T}^{(\cal L-K)}_u (\ell_2),\quad {\cal T} _{u,D} (\ell_1)\ge{\cal T} _{u,D}(\ell_2)
 $$
 Denote  the ratio between them by  % (the plus 1 is for the simplicity of future argument).
\begin{equation}\label{def_Ju}
     {\cal J} _{u,D} (\ell):=   \left({\cal T}^{(\cal L-K)}_u (\ell) \Big/{\cal T}_{u,D} (\ell)\right)+1
\end{equation} 
We aim to bound $ {\cal J} _{u,D} (\ell)$ for any $u\in [s,t]$ and large $D>0$, i.e., 
 \begin{align}\label{shoellJJ}
    {\cal J}^* _{u,D} :=\max_{\ell}   {\cal J} _{u,D} (\ell)\le (\eta_s/\eta_u)^4 
  \end{align}
Define a  scale parameter 
$$
  \ell^{*}_t := (\log W)^{3/2}\cdot \ell_t .
$$
In this scale $\ell_t^*$,  $\Theta_t$ is exponentially small, whereas ${\cal T}_t$ is not. 
More precisely, we have the following lemma.  
\begin{lemma}\label{lem_dec_calE_0}
For any fixed large $D$ and small $\delta>0$,  
 \begin{align}
 \label{Kell*}
   |b -a|\ge \delta \cdot \ell_t^* &\implies 
   \left({\Theta}_t\right)_{ab } \le W^{-D} ,\quad    \left({\Theta}_s^{-1}{\Theta}_t\right)_{ab } \le W^{-D}
   \\
  |b -a|\ge \delta \cdot \ell_t^* 
 & \implies  {\cal L}_{t, (-,+),(a, b)} \prec {\cal J}^*_{t,D}\cdot{\cal T}_{t,D}(|a-b|) . \label{auiwii}
\end{align}
For any constant $C>0$, \begin{equation}\label{Tell*}
       { {\cal T}_{u,D} \left(\ell-C\cdot \ell_u^*\right)}\;\prec \;  {\cal T}_{u,D} \left(\ell \right).
\end{equation} 

\end{lemma}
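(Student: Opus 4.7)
The plan is to prove the three statements separately, each by a direct computation from the definitions; there is no induction and no genuine dynamical input, only exponential decay of the propagator $\Theta^{(B)}$, the explicit formula for the rank-2 primitive loop, and sub-additivity of the square root.

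For \eqref{Kell*}, I will invoke the exponential decay bound \eqref{prop:ThfadC} from Lemma \ref{lem_propTH} with $\xi=t$ (so $|m|=1$ gives $\hat\ell(\xi)=\ell_t$) to obtain $(\Theta_t)_{ab}\le C(|1-t|\ell_t)^{-1}\exp(-c|a-b|/\ell_t)$. For $|a-b|\ge\delta\ell_t^{*}=\delta(\log W)^{3/2}\ell_t$, the exponential factor becomes $\exp(-c\delta(\log W)^{3/2})$, which beats every polynomial $W^{-D}$ and absorbs the polynomial prefactor. For $\Theta_s^{-1}\Theta_t$, I will use the identity $\Theta_s^{-1}\Theta_t = I+(t-s)\,\Theta_t S^{(B)}$, which follows from $\Theta_s^{-1}=I-sS^{(B)}$ together with $\Theta_s^{-1}\Theta_t=(1-sS^{(B)})(1-tS^{(B)})^{-1}=I+(t-s)S^{(B)}(1-tS^{(B)})^{-1}$; since $S^{(B)}_{ab}$ is supported on $|a-b|\le 1$ while $|a-b|\ge\delta\ell_t^{*}\gg 1$, the identity piece vanishes and the remaining term inherits the exponential decay of $\Theta_t$ (convolved against a kernel of range one, which shifts the scale by only one block).

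For \eqref{auiwii}, the plan is to decompose $\mathcal L_{t,(-,+),(a,b)}=(\mathcal L-\mathcal K)_{t,(-,+),(a,b)}+\mathcal K_{t,(-,+),(a,b)}$. By cyclicity of the trace in \eqref{Eq:defGLoop} we have $\mathcal L_{t,(-,+),(a,b)}=\mathcal L_{t,(+,-),(b,a)}$, and the explicit formula \eqref{Kn2sol2} gives $\mathcal K_{t,(-,+),(a,b)}=W^{-1}|m|^2(\Theta_t)_{ba}$. The $\mathcal K$ contribution is therefore controlled by \eqref{Kell*} just proved and is swallowed by the $W^{-D}$ summand of $\mathcal T_{t,D}(|a-b|)$. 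The $\mathcal L-\mathcal K$ contribution is, by definition \eqref{def_WTu}, bounded by $\mathcal T^{(\mathcal L-\mathcal K)}_t(|a-b|)$, and the definitions \eqref{def_WTuD} and \eqref{def_Ju} rewrite this as $\mathcal J_{t,D}(|a-b|)\cdot\mathcal T_{t,D}(|a-b|)\le \mathcal J^{*}_{t,D}\cdot\mathcal T_{t,D}(|a-b|)$.

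For \eqref{Tell*}, the $W^{-D}$ summand is unaffected by shifting the argument, so the whole matter reduces to the exponential piece $(W\ell_u\eta_u)^{-2}\exp(-(\ell/\ell_u)_+^{1/2})$. Using the elementary inequality $A^{1/2}-(A-B)_+^{1/2}\le B^{1/2}$ with $A=\ell/\ell_u$ and $B=C\ell_u^{*}/\ell_u=C(\log W)^{3/2}$, I obtain $\exp(-((\ell-C\ell_u^{*})/\ell_u)_+^{1/2})\le \exp(\sqrt C\,(\log W)^{3/4})\cdot\exp(-(\ell/\ell_u)_+^{1/2})$, and the prefactor $\exp(\sqrt C\,(\log W)^{3/4})$ is sub-polynomial in $W$, so bounded by $W^{\tau}$ for any fixed $\tau>0$, which is exactly what $\prec$ requires. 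The only ``subtle'' point in the whole lemma is the particular choice $\ell_t^{*}=(\log W)^{3/2}\ell_t$: it is calibrated so that $\ell_t^{*}/\ell_t$ is large enough to turn the exponential decay of $\Theta$ into super-polynomial smallness, while $(\ell_t^{*}/\ell_t)^{1/2}=(\log W)^{3/4}$ remains small enough that a shift by $\ell_t^{*}$ costs only a sub-polynomial factor in the slower tail function $\mathcal T_{u,D}$.
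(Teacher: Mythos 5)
Your proof is correct and, for \eqref{auiwii}, takes essentially the same route as the paper: decompose $\mathcal{L}=(\mathcal{L}-\mathcal{K})+\mathcal{K}$, bound the $\mathcal{K}$ piece by the $\Theta$-decay \eqref{prop:ThfadC} via \eqref{Kn2sol}, and absorb the $\mathcal{L}-\mathcal{K}$ piece using the definitions \eqref{def_WTu}, \eqref{def_Ju}, \eqref{shoellJJ} of $\mathcal{J}^*$. The paper explicitly proves only \eqref{auiwii} and leaves \eqref{Kell*} and \eqref{Tell*} unproved as routine; your arguments for those two — the identity $\Theta_s^{-1}\Theta_t = I + (t-s)\Theta_t S^{(B)}$ combined with the range-one support of $S^{(B)}$, and the elementary inequality $A_+^{1/2}-(A-B)_+^{1/2}\le B^{1/2}$ together with the observation that $e^{O((\log W)^{3/4})}$ is sub-polynomial — are exactly the intended elementary computations.
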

\begin{proof} We will only prove \eqref{auiwii}. By definition, 

\[
{\cal L}_{t, (-,+),(a, b)} = \left({\cal L} - \cal K\right)_{t, (-,+),(a, b)} + {\cal K}_{t, (-,+),(a, b)}.
\]
Using the $\Theta$ representation of the 2-\(\cal K\) in \eqref{Kn2sol} and the decay property of \(\Theta\) from \eqref{prop:ThfadC}, we have  for any \(D > 0\) that 
\[
{\cal K}_{t, (-,+),(a, b)} \prec N^{-D} \leq {\cal T}_{t, D}(|a-b|).
\]
Together with the definition \eqref{shoellJJ}  of \(\cal J^*\), \eqref{def_Ju}, and \eqref{def_WTu}, we have proved  \eqref{auiwii}.
We remark that \eqref{auiwii} is significant in that ${\cal T}_{t,D} $ 
is of $\left(W \eta_u \ell_u\right)^{-1}$  smaller than the typical size of $2-{\cal L}$ loop. The reason we gain an extra small factor is due to the assumption  $|b -a|\ge \delta \cdot \ell_t^*$.
\end{proof} 

Our proof of \eqref{Eq:Gdecay_w}   relies on the loop hierarchy \eqref{int_K-L_ST} in the special case $n=2$. 
We begin the analysis of the hierarchy by  bounding  terms in \eqref{int_K-L_ST}.  

\begin{lemma}\label{lem_dec_calE}  Suppose  the assumptions of Theorem  \ref{lem:main_ind} and the conclusions  of Step 1, i.e., \eqref{lRB1} and \eqref{Gtmwc}, 
hold. Assume that 
\begin{align} 
s\le u\le t, \quad  D\ge 10, \; \textbf{a}=(a_1,a_2),\; \textbf{a}'=(a'_1,a'_2),\; \boldsymbol{\sigma}=(+,-), \quad 
\max_i |a_i-a_i'|\le \ell_t ^*. \label{57}
\end{align}
 We  have 
\begin{align}\label{res_deccalE_lk}
       {\cal E}^{( (L-K)\times(L-K) )}
       _{u, \boldsymbol{\sigma},\textbf{a}} \Big/\; 
  {\cal T}_{t,D}(|a_1-a_2|) \quad 
 \;  &\; \prec \; 
   \left(\eta_u\right)^{-1} 
   \cdot  
   \left(W\eta_u\ell_u\right)^{-1}
    \cdot 
 \left({\cal J}^{*}_{u,D} \right)^{2}
 \\
      \mathcal{E}^{( \widetilde{G} )}_{u, \boldsymbol{\sigma},\textbf{a}} 
    \;\Big/\; 
 {\cal T}_{t,D}(|a_1-a_2|)
 \;  &\; \prec \; 
 \left(\eta_u\right)^{-1} 
  \cdot \left(\ell_u/\ell_s\right)^2\cdot{\bf 1}(|a_1-a_2|\le \ell_u^*)
    \nonumber  \\
       \;  &\; + \;  \left(\eta_u\right)^{-1} 
  \cdot
       \left(W \eta_u \ell_u\right)^{-1/3}  \cdot 
       \left({\cal J}^{*}_{u,D} \right)^{3}
 \label{res_deccalE_wG}\\
\label{res_deccalE_dif}
    \left(
   \mathcal{E}\otimes  \mathcal{E} \right)
   _{u, \,\boldsymbol{\sigma},  \,{\textbf a} ,\, {\textbf a}'
     }
     \;\Big/\; 
 \left( {\cal T}_{t,D}(|a_1-a_2|)\right)^2
 \;  &\; \prec \; 
  \left(\eta_u\right)^{-1} 
  \cdot \left(\ell_u/\ell_s\right)^5\cdot{\bf 1}(|a_1-a_2|\le 4\ell_t^*)
     \nonumber \\
       \;  &\; + \;  \left(\eta_t\right)^{-1} 
  \cdot
       \left(W \eta_u \ell_u\right)^{-1/2}  \cdot 
    \left({\cal J}^{*}_{u,D} \right)^{3}
\end{align} 
  \end{lemma}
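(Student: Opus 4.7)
The plan is to split each of the three error terms using $\mathcal{L}=\mathcal{K}+(\mathcal{L}-\mathcal{K})$ and then apply three inputs: (i) the tail bound $|(\mathcal{L}-\mathcal{K})_{u,(+,-),(a,b)}|\prec \mathcal{J}^{*}_{u,D}\,\mathcal{T}_{u,D}(|a-b|)$ that is built into the definitions \eqref{def_WTu}--\eqref{shoellJJ}; (ii) the explicit tree representation of $\mathcal{K}$ (Lemma~\ref{Lemma_TRofK}) together with the propagator decay \eqref{prop:ThfadC}, which in the long-range regime $|a_1-a_2|\ge \delta\ell_t^{*}$ is comparable to $W^{-D}$ by Lemma~\ref{lem_dec_calE_0}; (iii) the loop-to-chain conversions of Lemma~\ref{lem_GbEXP_n2} combined with the a-priori $m$-loop bounds of Step~1 \eqref{lRB1}. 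The general device to extract the target decay factor $\mathcal{T}_{t,D}(|a_1-a_2|)$ from any internal convolution $\sum_a F(|a_1-a|)G(|a-a_2|)$ is the triangle split $|a-a_1|\le |a_1-a_2|/2$ vs.\ complement, on each half bounding one factor by its exponential tail at scale $\ell_u$; the passage from $\mathcal{T}_{u,D}$ to $\mathcal{T}_{t,D}$ is absorbed into \eqref{Tell*} since $\ell_u\ge \ell_t$ and $\eta_u\ge \eta_t$.

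For $\mathcal{E}^{((\mathcal{L}-\mathcal{K})\times(\mathcal{L}-\mathcal{K}))}$ with $n=2$ and $\boldsymbol{\sigma}=(+,-)$, the definition \eqref{def_ELKLK} collapses to the single index pair $(k,l)=(1,2)$ and the expression reduces to a $W$-times convolution of two $(\mathcal{L}-\mathcal{K})$ $2$-loops joined by $S^{(B)}$. Bounding each factor by $\mathcal{J}^{*}_{u,D}\mathcal{T}_{u,D}$ of its own distance, using $\|S^{(B)}\|_{1\to\infty}=O(1)$ to absorb the $b$-sum, and applying the triangle split above yields a gain of $\eta_u^{-1}$ from the one-dimensional block convolution of exponentials of scale $\ell_u$, times $(W\ell_u\eta_u)^{-2}$ from the $\mathcal{T}_{t,D}$ prefactors. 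The prefactor $W$ combines with one of the $(W\ell_u\eta_u)^{-1}$ factors from $\mathcal{T}$ and one of the $\mathcal{T}_{t,D}(|a_1-a_2|)$ factors cancels against the divisor, giving the stated $\eta_u^{-1}(W\ell_u\eta_u)^{-1}(\mathcal{J}^{*})^{2}$.

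For $\mathcal{E}^{(\widetilde{G})}$ the strategy is to apply the averaged local law \eqref{GavLGEX} to $\langle \widetilde{G}_u(\sigma_k)E_a\rangle$, which gives $\prec \max_{a',b'}\mathcal{L}_{u,(+,-),(a',b')}\prec (W\ell_u\eta_u)^{-1}$, and then to bound the accompanying $3$-loop $\mathcal{G}^{(b)}_k\circ \mathcal{L}_{u,\boldsymbol{\sigma},\mathbf{a}}$ by its $\mathcal{K}$-piece and $(\mathcal{L}-\mathcal{K})$-piece. The $\mathcal{K}$-part is short-ranged by \eqref{prop:ThfadC} and contributes only on $|a_1-a_2|\le \ell_u^{*}$, producing the first, $(\ell_u/\ell_s)^{2}{\bf 1}(|a_1-a_2|\le \ell_u^{*})$ term after accounting for the Step~1 loss in \eqref{lRB1}; in the long-range regime this part is $W^{-D}$-small by Lemma~\ref{lem_dec_calE_0}. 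The $(\mathcal{L}-\mathcal{K})$-part of the $3$-loop is controlled by summing two $(\mathcal{L}-\mathcal{K})$ $2$-loops via Schwarz, or equivalently by promoting the a-priori $3$-loop bound of Step~1 to a tail bound via the inductive statement \eqref{Eq:Gdecay_w} one is currently proving; the $\mathcal{J}^{*}$ accounting is cubic here because three $2$-loop tails can appear (one from $\langle \widetilde{G}E_a\rangle$, two from the cut $3$-loop), producing the $(W\ell_u\eta_u)^{-1/3}(\mathcal{J}^{*})^{3}$ term after a Young-type interpolation to distribute the $\ell_u/\ell_s$ growth across the three tail factors.

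For $\mathcal{E}\otimes\mathcal{E}$ the key object is the $6$-loop of Figure~\ref{fig:8shapedgraph} with two cut points $b,b'$ glued by $S^{(B)}$, further convolved with the outer propagators $\mathcal{U}_{u,\tau,\boldsymbol{\sigma}}\otimes \mathcal{U}_{u,\tau,\overline{\boldsymbol{\sigma}}}$ at the four external indices $\mathbf{a},\mathbf{a}$. The plan is to bound the $6$-loop by Step~1, namely $\prec(\ell_u/\ell_s)^{5}(W\ell_u\eta_u)^{-5}$, and to use \eqref{def_Ustz_2} to write $\mathcal{U}=I-(t-s)m_im_{i+1}\Theta_{t}S^{(B)}$ so that the outer propagation only produces a harmless $O(1)$ $L^\infty\to L^\infty$ convolution at block level; summing the $b$-variable using $\|S^{(B)}\|_{1\to\infty}=O(1)$ and the $b'$-variable against the remaining propagator gives the overall $\eta_u^{-1}$ that controls the long-range behavior. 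This yields the first term $\eta_u^{-1}(\ell_u/\ell_s)^{5}\mathbf{1}(|a_1-a_2|\le 4\ell_t^{*})$. The main obstacle is the long-range estimate producing $(W\ell_u\eta_u)^{-1/2}(\mathcal{J}^{*})^{3}$: here the $6$-loop must be split across the ``waist'' of the figure-eight into two $3$-chains, each of which is estimated by Lemma~\ref{lem_GbEXP_n2} using the inductive $\mathcal{L}$-bounds at the current time, and the three $\mathcal{J}^{*}$ factors then arise from converting three of the constituent $2$-blocks of the chains into $(\mathcal{L}-\mathcal{K})$ tails via \eqref{Eq:Gdecay_w}; the residual square root $(W\ell_u\eta_u)^{-1/2}$ comes from the one off-diagonal chain bound in \eqref{res_XICoff}. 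This last step is the principal technical difficulty since the $\mathcal{T}_{t,D}(|a_1-a_2|)^{2}$ divisor requires tracking the decay through both the outer $\mathcal{U}$ and the inner figure-eight glueing simultaneously.
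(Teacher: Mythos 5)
Your plan for \eqref{res_deccalE_lk} matches the paper: each factor in the $W\sum_{b,b'}(\mathcal{L}-\mathcal{K})S^{(B)}(\mathcal{L}-\mathcal{K})$ convolution is bounded by $\mathcal{J}^*_{u,D}\mathcal{T}_{u,D}$ of its own distance, the $S^{(B)}$ localizes $|b-b'|\le 1$, and a one-dimensional $\exp(-\sqrt{\cdot})$ convolution identity gives the $\eta_u^{-1}(W\ell_u\eta_u)^{-1}$ gain. This part is fine.

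For \eqref{res_deccalE_wG} and \eqref{res_deccalE_dif} the plan has real gaps. First, a structural point: the quantity Lemma~\ref{lem_dec_calE} estimates is the raw $\mathcal{E}\otimes\mathcal{E}$ at fixed $(\mathbf{a},\mathbf{a}')$, \emph{not} the $\mathcal{U}\otimes\mathcal{U}$--convolved object. The outer propagation $\mathcal{U}_{u,\tau,\boldsymbol{\sigma}}\otimes\mathcal{U}_{u,\tau,\overline{\boldsymbol{\sigma}}}$ is applied separately (in the derivation of \eqref{res_deccalE_3}), where its $\max\to\max$ norm is controlled by Lemma~\ref{lem:sum_Ndecay} after restricting $\mathbf{b}$ to a window of size $\ell_t^*$. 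Folding it into this lemma muddles what must be proved and obscures where the condition $\max_i|a_i-a_i'|\le\ell_t^*$ is used.

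Second, and more substantively, your route to the long--range tail bound does not go through. For \eqref{res_deccalE_wG} you propose splitting the $3$-loop into $\mathcal{K}+(\mathcal{L}-\mathcal{K})$ and then ``controlling the $(\mathcal{L}-\mathcal{K})$ part by Schwarz into two $(\mathcal{L}-\mathcal{K})$ $2$-loops.'' There is no such Schwarz factorisation: a Schwarz split of a $3$-loop produces a $2$-loop times a $4$-loop, and subtracting $\mathcal{K}$ does not respect this factorisation. Moreover, at this stage of Step~2 there is no a-priori tail bound for a $3$-loop $(\mathcal{L}-\mathcal{K})$; only the $2$-loop tail up to $\mathcal{J}^*_{u,D}$ is available (and that is a definitional tautology, not via \eqref{Eq:Gdecay_w}, which is what is \emph{being} proved — invoking it here would be circular; the correct input is \eqref{auiwii}). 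The paper circumvents this by bounding individual $G$-entries: it does \emph{not} decompose the $3$-loop, but writes it as a product of three $G$-entries, applies Schwarz at the level of $G$-products ($G_{x_1x_2}G_{x_2y}G_{yx_1}\prec|G_{x_1x_2}|^2|G_{yx_1}|+|G_{x_2y}|^2|G_{yx_1}|$), controls each $|G_{xy}|^2$ via \eqref{GijGEX} as a $2$-loop, and only at that level invokes the tail $\mathcal{J}^*\mathcal{T}$ via \eqref{auiwii}. This entry-level Schwarz is the missing idea. The same issue recurs in \eqref{res_deccalE_dif}: the paper does not split the figure-eight into two $3$-chains; it isolates a single $2$-chain $(G^\dagger E_bG)_{x_1x_1'}$ (estimated by Lemma~\ref{lem_GbEXP_n2}) and bounds the remaining four $G$-entries individually via \eqref{GijGEX} plus \eqref{auiwii}, which is what delivers the $(\mathcal{J}^*)^2\mathcal{T}\cdot\mathcal{T}$ decay in \eqref{GGTLJ4G}. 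Your chain-splitting would still leave you without a mechanism to produce the $\mathcal{T}_{t,D}(|a_1-a_2|)^2$ divisor, since the chain estimates of Lemma~\ref{lem_GbEXP_n2} are uniform in position and carry no spatial decay. A small additional slip: for $u\le t$ we have $\ell_u\le\ell_t$, not $\ell_u\ge\ell_t$; the passage $\mathcal{T}_{u,D}\le\mathcal{T}_{t,D}$ is direct monotonicity, and \eqref{Tell*} is used only to absorb shifts of the argument by $O(\ell_u^*)$.
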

By assumption  \eqref{con_st_ind},  $s$ and $t$ are near each other.  So the exact exponents  on the right side  of \eqref{res_deccalE_lk}-\eqref{res_deccalE_dif} are not important for our purpose.  We only  need the errors are of the form 
$$
  (\eta_s/\eta_t)^\alpha\cdot \left(W \eta_u \ell_u\right)^{-\beta}\cdot \left({\cal J}^{*}_{u,D} \right) ^{\gamma}
  $$
for some positive constants $\alpha$, $\beta$ and $\gamma$. 

We now provide a power counting to guess the sizes of terms in the previous lemma. Since 
$ {\cal E}^{( (L-K)\times(L-K) )}$ is a higher order term, we will ignore it in the following heuristic. 
Denote by $A_u \sim  W\ell_u\eta_u$. By definition, $\mathcal{E}^{( \widetilde{G} )}_u$ is a product of an $1$-$\cal L -\cal K$ loop and 
a $3$-$\cal L$ loop. We know that  $1$-$\cal L -\cal K$ loop is of order $A_u^{-1}$ and $3$-$\cal L$ loop is of order $A_u^{-2}$. In addition, 
the  summation index in $\mathcal{E}^{( \widetilde{G} )}$ yields a factor $\ell_u$ if we assume the correct decay property. Since there is an additional $W$ factor in $\mathcal{E}^{( \widetilde{G} )}_u$, 
\begin{align}
\mathcal{E}^{( \widetilde{G} )}_u \sim A_u^{-3} W \ell_u = \eta_u^{-1} A_u^{-2}.
\end{align}
The factor $A_u^{-2}$ is exactly the prefactor  in the definition of $ {\cal T}_u$ in \eqref{res_deccalE_wG}.  For $ \mathcal{E}\otimes  \mathcal{E}_u $, it is a $6$-$\cal L$ loop of order $A_u^{-5}$. Hence 
\begin{align}
(\mathcal{E}\otimes  \mathcal{E})_u \sim A_u^{-5} W \ell_u = \eta_u^{-1} A_u^{-4}, 
\end{align}
which explains the order in \eqref{res_deccalE_dif}.  Notice that in both \eqref{res_deccalE_wG} and  \eqref{res_deccalE_dif}, 
we used $ {\cal T}_t$ on the left sides of the equations while both $\mathcal{E}^{( \widetilde{G} )}_u$ and $(\mathcal{E}\otimes  \mathcal{E})_u$ are at the time $u$. 

So far we only used the loop bounds which are consequences of Step 1. 
It remains to understand the last terms in \eqref{res_deccalE_wG} and  \eqref{res_deccalE_dif}. Assuming  Lemma \ref{lem_dec_calE}, we now  prove  \eqref{Eq:Gdecay_w}. We will use extensively the kernel estimates concerning  the operator $\cal U$ in Section \ref{ks}.

\begin{proof}[Proof of \eqref{Eq:Gdecay_w}]
By  assumption \eqref{Eq:Gdecay+IND} on $({ \cal L-\cal K})_s$, the operator norm bound on $\cal U$ in Lemma \ref{lem:sum_Ndecay},  and the  tail estimate  \eqref{neiwuj}, 
%of Lemma \ref{TailtoTail}, 
we can  bound the first term on the right side  of \eqref{int_K-L_ST} by 
\begin{equation} \label{res_deccalE_0}
\left(\mathcal{U}_{s, t, \boldsymbol{\sigma}} \circ (\mathcal{L} - \mathcal{K})_{s, \boldsymbol{\sigma}}\right)_{\textbf{a}}\;\Big/\; 
  {\cal T}_{t, D}(|a_1-a_2|) \quad 
  \prec \;   (\ell_t/\ell_s)^2\cdot {\bf 1}(|a_1-a_2|\le \ell_t^*) + 1,
\end{equation}
where the last term of order one comes from applying \eqref{neiwuj}. Notice that the expansion factor $\left(\eta_u / \eta_t\right)^2$  from applying Lemma \ref{lem:sum_Ndecay} has become 
$ (\ell_t/\ell_s)^2$ due to the prefactors in $ {\cal T}_{s, D}$ and  ${\cal T}_{t, D}$.
For  any $\textbf{a}$ fixed and any function $f$,  we decompose $f  = f_1  + f_2   $ where 
$f_1 (\textbf{b}) = f (\textbf{b}) {\bf 1} ( \| \textbf{b}-\textbf{a}\|\le \ell_t^*) $. 
From the decay of ${\cal U}_{u,t}$,  $\Big ( {\cal U}_{u,t, \boldsymbol{\sigma}} \circ f_2 \Big)_\textbf{a}$ is exponentially small.  
Hence we only have to bound $  {\cal U}_{u,t, \boldsymbol{\sigma}} \circ f_1$, for which we apply  Lemma \ref{lem:sum_Ndecay}. Therefore,  
we can bound the second  term of \eqref{int_K-L_ST} by 
$$
 \Big({\cal U}_{u,t, \boldsymbol{\sigma}}\circ  {\cal E}^{( (L-K)\times(L-K) )}_{u, \boldsymbol{\sigma}}\Big)_\textbf{a}
 \prec (\eta_u/\eta_t)^2 \max_{\| \textbf{b}-\textbf{a}\|\le \ell_t^*}{\cal E}^{( (L-K)\times(L-K) )}_{u, \boldsymbol{\sigma}, \textbf{b}}+W^{-D}, 
$$
$$
 \|\textbf{b}-\textbf{a}\|=\max_i |b_i-a_i|\le \ell_t^*.
 $$
Under the last  condition, \eqref{Tell*} implies that 
$$
 {\cal T}_{t, D}(|b_1-b_2|)\prec  {\cal T}_{t, D}(|a_1-a_2|).
$$
Using  ${\cal E}^{( (L-K)\times(L-K) )}$ estimate  \eqref{res_deccalE_lk} and $(s,t)$-condition  \eqref{con_st_ind}, we have 
\begin{align} 
\label{res_deccalE_1}
     \Big({\cal U}_{u,t, \boldsymbol{\sigma}}\circ  {\cal E}^{( (L-K)\times(L-K) )}_{u, \boldsymbol{\sigma}}\Big)_\textbf{a}\;\Big/\; 
 {\cal T}_{t, D}(|a_1-a_2|) \quad 
  & \prec \; 
  \frac{1}{\eta_u}\left(\eta_u / \eta_t\right)^2 
   \cdot  
   \left(W\eta_u\ell_u\right)^{-1 }
    \cdot 
\left({\cal J}^*_{u,D}\right)^2
\end{align} 
Similarly,   the estimate \eqref{res_deccalE_wG} on ${\cal E}^{(\widetilde{G} )}$ implies   
\begin{align}  \label{res_deccalE_2}
     \Big(\mathcal{U}_{u,t, \boldsymbol{\sigma}} \circ  \mathcal{E}^{( \widetilde{G} )}_{u, \boldsymbol{\sigma}}\Big)_{\mathbf{a}} 
    \;\Big/\; 
{\cal T}_{t, D}(|a_1-a_2|)
 &\prec \;  {\bf1}(|a_1-a_2|\le 3\ell_t^*)\cdot  \frac{1} {\eta_u} \cdot \left(\eta_u / \eta_t\right)^2\cdot \left(\ell_u / \ell_s\right)^2 
 \nonumber \\
  &+\;    \frac{1}{\eta_u} 
  \cdot\left(\eta_u / \eta_t\right)^2 \cdot
       \left(W \eta_u \ell_u\right)^{-1/3}  \cdot \left({\cal J}^{*}_{u,D} \right)^{3 }, 
       \end{align} 
and  the  estimate  \eqref{res_deccalE_dif} on $\cal E\otimes\cal E$  implies  
\begin{align} \label{res_deccalE_3}
  \left(\left(
   \mathcal{U}_{u, t,  \boldsymbol{\sigma}}
   \otimes 
   \mathcal{U}_{u, t,  \overline{\boldsymbol{\sigma}}}
   \right) \;\circ  \;
   \left( \mathcal{E} \otimes  \mathcal{E} \right)
   _{u, \,\boldsymbol{\sigma}  }
   \right)_{{\textbf a}, {\textbf a}}
     \;\Big/\; 
 \left({\cal T}_{t, D}(|a_1-a_2|)\right)^2
 &\prec \;  {\bf1}(|a_1-a_2|\le 6\ell_t^*)\cdot  \frac{1} {\eta_u} \cdot \left(\eta_u / \eta_t\right)^4\cdot \left(\ell_u / \ell_s\right)^5 
 \nonumber \\
 &+ \;  \frac{1}{\eta_u} 
  \cdot
       \left(W \eta_u \ell_u\right)^{-1/3}  \cdot \left({\cal J}^{*}_{u,D} \right)^{3 }  
\end{align}
In the last inequality, we have absorbed the  expansion factor $\left(\eta_u / \eta_t\right)^4$ by the change of the exponent in $   \left(W \eta_u \ell_u\right)$ from $-1/2$ to $-1/3$.

We now insert  these bounds   into the $2$-$G$-loop equation \eqref{int_K-L_ST} and 
bound  the martingale term by \eqref{alu9_STime}.
Denote by $T$ the stopping time 
\begin{align}
T:=\min \{u: {\cal J}^*_{u,D} \ge \left(\eta_s / \eta_t\right)^4\}
\end{align}
and set $\tau = T \wedge t$. 
Clearly, the stopped versions of Lemma \ref{lem_dec_calE} and the previous bounds in this proof are valid by similar arguments. 
The quadratic variation of the stopped  martingale term is then bounded by 
\begin{align}\label{51}
& \int_s^\tau  \left( \left(\mathcal{U}_{u,t,  \boldsymbol{\sigma}}
   \otimes 
   \mathcal{U}_{u, t,  \overline{\boldsymbol{\sigma}}}
   \right) \;\circ  \;
   \left( \mathcal{E} \otimes  \mathcal{E}  \right)
   _{u, \,\boldsymbol{\sigma},\, \overline{\boldsymbol{\sigma}} }
   \right)_{{\textbf a}, {\textbf a}}  d u
 \Big/{\cal T}_{t, D}(|a_1-a_2|)^2  \nonumber \\
 \;\prec \; &   \int_s^\tau   d u \Big \{ 
 {\bf1}(|a_1-a_2|
\le 6\ell_t^*)\cdot  \frac{1} {\eta_u} \cdot \left(\eta_u / \eta_t\right)^4\cdot \left(\ell_u / \ell_s\right)^5 
+ \;  \frac{1}{\eta_u} 
  \cdot
       \left(W \eta_u \ell_u\right)^{-1/3}  \cdot \left({\cal J}^{*}_{u,D} \right)^{3 }  \Big \}   \nonumber \\
\;\le \; &    \big [ (\eta_s/\eta_t)^{4}  \cdot {\bf 1}(|a_1-a_2|\le 6\ell_t^*)+1 \big ].
 \end{align}
Applying  \eqref{alu9_STime}, we have 
 $$
 \int_s^\tau\left(\mathcal{U}_{u, t, \sigma} \circ \mathcal{E}_{u, \sigma}^{(M)}\right)_{\textbf{a}}\;\Big/{\cal T}_{t, D}(|a_1-a_2|)
 \prec \big [ (\eta_s/\eta_t)^{2}  \cdot {\bf 1}(|a_1-a_2|\le 6\ell_t^*)+1 \big ].
 $$
 This statement holds for any $t\ge s$. Furthermore, due to the monotonicity of $\ell$ and $\eta$, for any $s\le t'\le t$ we have
\begin{align}\label{bsluwous1}
  \int_s^{T\wedge t'}\left(\mathcal{U}_{u,t', \sigma} \circ \mathcal{E}_{u, \sigma}^{(M)}\right)_{\textbf{a}}\;\Big/{\cal T}_{t', D}(|a_1-a_2|)
   & \; \prec \; \big [ (\eta_s/\eta_{t'})^{2}  \cdot {\bf 1}(|a_1-a_2|\le 6\ell_{t'}^*)+1 \big ]
 \\\nonumber
  & \; \prec \;  \big [ (\eta_s/\eta_{t })^{2}  \cdot {\bf 1}(|a_1-a_2|\le 6\ell_{t }^*)+1 \big ]
 \end{align}
 We extend this bound to a $N^{-C}$ net between $s$ and $t$, i.e., $t^{(k)}=s+(t-s)\cdot k\cdot N^{-C}$   for a fixed large $C$.  
 \begin{align}\label{bsluwous2}
  & \max_k\int_s^{T\wedge t^{(k)}} \left(\mathcal{U}_{u,\, t^{(k)},\, \sigma} \circ \mathcal{E}_{u, \sigma}^{(M)}\right)_{\textbf{a}}\;\Big/{\cal T}_{t^{(k)}, \,D}(|a_1-a_2|) \prec  \big [ (\eta_s/\eta_{t })^{2}  \cdot {\bf 1}(|a_1-a_2|\le 6\ell_{t }^*)+1 \big ]
 \end{align}
For any $k$, we can rewrite  the last term in   \eqref{int_K-L_ST} as follows: 
%equals to the following term.  
$$
 %t^{(k)}\ge \tau=T\wedge t\; \implies \;  
 {\cal U}_{t,\tau}\circ \int_{s}^\tau \left(\mathcal{U}_{u, t, \boldsymbol{\sigma}} \circ \mathcal{E}^{(M)}_{u, \boldsymbol{\sigma}}\right)_{\textbf{a}}\;=\;{\cal U}_{t^{(k)},\tau }\int_s^{\tau } \left(\mathcal{U}_{u,\, t^{(k)},\, \sigma} \circ \mathcal{E}_{u, \sigma}^{(M)}\right)_{\textbf{a}} 
$$
Choose  a $k$ such that  $t^{(k-1)}\le \tau < t^{(k)}$. Then $T\wedge t^{(k)}= \tau$. 
Since $C$ can be arbitrary fixed large,  $ {\cal U}_{t^{(k)},\tau }$ is  nearly an identity operator. Then
$$
{\cal U}_{t^{(k)},\tau }\int_s^{\tau } \left(\mathcal{U}_{u,\, t^{(k)},\, \sigma} \circ \mathcal{E}_{u, \sigma}^{(M)}\right)_{\textbf{a}} =\int_s^{\tau } \left(\mathcal{U}_{u,\, t^{(k)},\, \sigma} \circ \mathcal{E}_{u, \sigma}^{(M)}\right)_{\textbf{a}} +O(N^{-C/2})
$$
and
$$
{\cal T}_{t^{(k)}, \,D}(|a_1-a_2|)\Big/{\cal T}_{\tau, \,D}(|a_1-a_2|)=1+O(N^{-C/2})
$$
Combining  these two bounds, we have 
$$
 {\cal U}_{t,\tau}\circ \int_{s}^\tau \left(\mathcal{U}_{u, t, \boldsymbol{\sigma}} \circ \mathcal{E}^{(M)}_{u, \boldsymbol{\sigma}}\right)_{\textbf{a}} \; \Big/\; {\cal T}_{\tau, \,D}(|a_1-a_2|) \prec  \big [ (\eta_s/\eta_{t })^{2}  \cdot {\bf 1}(|a_1-a_2|\le 6\ell_{t }^*)+1 \big ]
 $$
 Combining this  bound with  \eqref{res_deccalE_0}, \eqref{res_deccalE_1},   \eqref{res_deccalE_2}  and  \eqref{int_K-L_ST}, we have 
$$
\left({\cal L}-{\cal K}\right)_{\tau, \textbf{a}}\Big/{\cal T}_{t, D}(|a_1-a_2|) \prec   \big [ (\eta_s/\eta_t)^{2}  \cdot {\bf 1}(|a_1-a_2|\le 6\ell_t^*)+1 \big ], 
$$
where we have used the initial condition ${\cal J}^*_{s,D} \prec 1$ from \eqref{Eq:L-KGt+IND} and \eqref{Eq:Gdecay+IND}.

This implies that 
\begin{align}\label{52}
 {\cal J}^*_{\tau,D}\prec     (\eta_s/\eta_t)^{2}  
\end{align}
Hence  ${\mathbb P} (T\le t ) $ is negligible and we have completed  the proof of \eqref{Eq:Gdecay_w}.
 Notice that we have also proved 
\begin{align}\label{53}
\left({\cal L}-{\cal K}\right)_{t, \textbf{a}}\Big/{\cal T}_{t, D}(|a_1-a_2|) \prec   \big [ (\eta_s/\eta_t)^{2}  \cdot {\bf 1}(|a_1-a_2|\le 6\ell_t^*)+1 \big ].
\end{align}

\end{proof}

 \begin{proof}[Proof of Lemma \ref{lem_dec_calE}]
\emph{Proof of \eqref{res_deccalE_lk}.}  
We first  note the monotonicity properties 
$$
u\le t\implies \ell_u\le \ell_t,   \quad {\cal T}_{u,D}\le {\cal T}_{t,D} .$$
By definition, 
\begin{align} {\cal E}^{( ({\cal L-K})\times({\cal L-K}) )}_{u,  \boldsymbol{\sigma},  \textbf{a} }
= &W\sum_{b_1,b_2}({\cal L-K})_{u,
\boldsymbol{\sigma}, 
( a_1, b_1)
}
S^{(B)}_{b_1,b_2}
({\cal L-K})_{u,  \boldsymbol{\sigma},  ( b_2, a_2)}
\end{align}
By definition of $S^{(B)}$,  we have $|b_1-b_2|\le 1$.  
Using 
$$
\int_{0 }^a\exp\left(-\sqrt{(a-x)}-\sqrt x+ \sqrt a\right)dx\le C\approx 6.12
,\quad \text{ and } \quad 
\int_{0 }^\infty\exp\left({-\sqrt x}\right)dx=2
$$
we have 
\begin{align}\nonumber
{\cal E}^{( ({\cal L-K})\times({\cal L-K}) )}_{u,  \boldsymbol{\sigma},  \textbf{a} }
\; \prec \;& W ({\cal J}_{u,D}^*) ^2\cdot 
\sum_{x }  {\cal T}_{u, \,D}(|a_1-x|) \cdot {\cal T}_{u, \,D}(|a_2-x|)   
\\\label{mmxiaoxi}
\; \prec \;&
({\cal J}_{u,D}^*) ^2 \cdot 
 \eta_u^{-1}\cdot (W\ell_u\eta_u)^{-1}\cdot  {\cal T}_{u,D }(|a_1-a_2|) .
\end{align} 
We  have thus proved   \eqref{res_deccalE_lk}.

\medskip
\noindent 
\emph{Proof of   \eqref{res_deccalE_wG}}.    By definition, we  write 
\begin{align}
 \max_{\textbf{a}}{\cal E}^{(\widetilde G)}_{u,  \boldsymbol{\sigma},  \textbf{a} }
 \prec W \sum_{b_1,b_2} \langle \widetilde G_u E_{b_1}\rangle\cdot S^{(B)}_{b_1,b_2}\cdot  {\cal L}_{u, (-,+,+),(a_1, b_2, a_2)}+c.c.,\quad \widetilde{G}=G-m
 \end{align}
By \eqref{GavLGEX} and \eqref{Gtmwc}, we have 
$$
\langle \widetilde G_u E_{b_1}\rangle\prec \Xi^{(\cal L)}_{u,2}\cdot (W\ell_u\eta_u)^{-1}\prec  (W\ell_u\eta_u)^{-1}+ {\cal J}^*_{u,D}\cdot (W\ell_u\eta_u)^{-2}.
$$
Therefore 
 \begin{align}\label{jiizziy}
\max_{\textbf{a}}{\cal E}^{(\widetilde G)}_{u,  \boldsymbol{\sigma},  \textbf{a} } \prec \frac{1}{ \ell_u\eta_u} \cdot \sum_{b }   \left|{\cal L}_{u, (-,+,+),(a_1, b , a_2)}\right|(1+{\cal J}^*_{u,D}(W\ell_u\eta_u)^{-1})
\end{align}
Consider first the case  $|a_1-a_2|\le \ell_u^*$.  Denote 
$$
\ell_u^{**}:= \ell_u\cdot (\log W)^3=\ell^*_u\cdot (\log W)^{3/2}
$$
Clearly,  ${\cal T}_{u,D}(\ell_u^{**})\prec W^{-D}$. 
For $|b-a_1|\le  \ell_u^{**}$, 
the loop bound \eqref{lRB1}  proved in the Step 1 implies that 
\begin{align}\label{alkkj}
 \sum_{b }{\bf 1}\big(|b-a_1|\le  \ell_u^{** }\big)\cdot  \left|{\cal L}_{u, (-,+,+),(a_1, b , a_2)}\right|\prec (\ell_u/\ell_s)^2\cdot (W\ell_u\eta_u)^{-2} \cdot \ell_u   
\end{align}
where the summation over $b$ provides a factor $ \ell_u^{** } \prec\ell_u$. 
If $|b-a_1|\ge \ell_u^{**}$, we can easily bound 
 $$
 \left|{\cal L}_{u, (-,+,+),(a_1, b , a_2)}\right|\prec \max_{x_1\in {\cal I}_{a_1},y\in {\cal I}_{b}}|G_{x_1,y}|.
$$
Using  \eqref{GijGEX}, we have that 
$$
\max_{x_1\in {\cal I}_{a_1},y\in {\cal I}_{b}}|G_{x_1,y}|\prec \sum_{a_1', b'}\left({\cal L}_{u,(+,-),{(a_1',b')}}\right)^{1/2}{\bf1}\left(|a'_1-a_1|\le 1, |b-b'|\le 1\right).
$$
Since $|b-a_1|\ge \ell_u^{** }$, by  \eqref{auiwii} and ${\cal T}_{u,D}(|a_1-b|)\prec W^{-D}$, we obtain that  
$$
{\cal L}_{u,(+,-),{(a_1',b')}} \prec {\cal J}^*_{u,D}\cdot W^{-D}.
$$
Therefore, the contribution from $|b-a_1|\ge  \ell_u^{** }$ part is negligible in the sense 
\begin{align}\label{alkkj2}
 \sum_{b }{\bf 1}\big(|b-a_1|\ge  \ell_u^{** }\big)\cdot  \left|{\cal L}_{u, (-,+,+),(a_1, b , a_2)}\right|\prec  {\cal J}^*_{u,D}\cdot W^{-10}. 
\end{align}
It  is easy to check that the contribution of the last term does not affect the argument given below.  Furthermore,  the contributions from both  \eqref{alkkj} and  \eqref{jiizziy} are bounded by 
\begin{align}\label{82jjdopasj}
 |a_1-a_2|\le \ell_u^*\implies  \mathcal{E}_{u, \boldsymbol{\sigma}, \mathbf{a}}^{(\widetilde{G})} \Big/{\cal T}_{u,D}(|a_1-a_2|)\prec 
 (\eta_u)^{-1} \cdot \left(\ell_u / \ell_s\right)^2\cdot   \left(1+  (\mathcal{J}_{u, D}^*)^2 \left(W \ell_u \eta_u\right)^{-1 }\right).
 \end{align}
This implies that   \eqref{res_deccalE_wG} holds for $|a_1-a_2|\le \ell_u^*$.

For   $|a_1-a_2|\ge \ell_u^*$,  we split it into  two cases  
 $$
 (1): \min_i|a_i-b|\le \ell_u^*/2,\quad \quad 
 (2): \min_i|a_i-b|\ge \ell_u^*/2.
 $$
In the first case,  we assume without loss of generality that $|a_1-b|\le \ell_u^*/2$. Applying  the Schwarz inequality to  the two $G$ edges  in ${\cal L}_{u, (-,+,+),(a_1, b , a_2)}$ connecting the block $a_2$,  we have 
$$
G_{x_1x_2}G_{x_2y}G_{yx_1}\prec |G_{x_1x_2}|^2|G_{yx_1}|+
|G_{ x_2 y}|^2|G_{yx_1}|.
$$
Therefore, we have 
\begin{align}\label{kskjw}
  {\cal L}_{u, (-,+,+),(a_1, b , a_2)}
  \;\prec\; &\;  W^{-1} \Big ( {\cal L}_{u, (-,+),(a_1, a_2)} \ \max_{x_1\in {\cal I}_{a_1}}\sum_{y\in {\cal I}_{b}}|G_{x_1y}|+ {\cal L}_{u, (-,+),(a_2, b)}\ \max_{y\in {\cal I}_{b}}\sum_{x_1\in {\cal I}_{a_1}}|G_{x_1y}| \Big )  .
\end{align}
Using \eqref{GijGEX},  the $\cal K$ bound and the trivial fact $\sqrt { 1+c } \le 1+c $,  we can bound $|G_{x_1x_2}|$ by 
$$
{\bf 1}(x_1\ne x_2)|G_{x_1x_2}|\prec (W\ell_u\eta_u)^{-1/2}(1+{\cal J}^*_{u,D}(W\ell_u\eta_u)^{-1}).
$$
It implies that for any $a $ and $b$, 
\begin{align}\label{jaysw2002}
  W^{-1}  \max_{y\in{\cal I}_b}\sum_{x\in{\cal I}_a}|G_{xy}|\prec (W\ell_u\eta_u)^{-1/2}(1+{\cal J}^*_{u,D}(W\ell_u\eta_u)^{-1}).
\end{align}
Inserting  this bound  into \eqref{kskjw}, we have 
\begin{align}
  {\cal L}_{u, (-,+,+),(a_1, b , a_2)}
     \;\prec\; &\;\left( {\cal L}_{u, (-,+),(a_1, a_2)}   + {\cal L}_{u, (-,+),(a_2, b)}  \right)  \cdot (W\ell_u\eta_u)^{-1/2}(1+{\cal J}^*_{u,D}(W\ell_u\eta_u)^{-1})
\end{align}
 Since $|a_2-b|\ge \ell_u^*/2$ and $|a_1-a_2|\ge  \ell_u^*$,  we can estimate  $ {\cal L}_{u, (-,+),(a_1, a_2)}   + {\cal L}_{u, (-,+),(a_2, b)}$ 
with \eqref{auiwii} to have  
$$
 {\cal L}_{u, (-,+,+),(a_1, b , a_2)} \;\prec\; {\cal J}^*_{u,D}\cdot \left(  {\cal T}_{u,D}(|a_1-a_2|)+{\cal T}_{u,D}(|b-a_2|) \right)  \cdot (W\ell_u\eta_u)^{-1/2}(1+{\cal J}^*_{u,D}(W\ell_u\eta_u)^{-1}).
 $$
Furthermore since $|b-a_2|\ge |a_1-a_2|-|a_1-b|\ge |a_1-a_2|-\ell_u^*/2$, then
$${\cal T}_{u,D}(|b-a_2|) \prec  {\cal T}_{u,D}(|a_1-a_2|).
$$
Combining these bounds,  we obtain,   for $|a_1-a_2|\ge \ell_u^*$, that  
\begin{align}
    \label{lwjufw}
    \sum_{b } ^{\text{Case } 1} \left|{\cal L}_{u, (-,+,+),(a_1, b , a_2)}\right|\prec \left({\cal J}^*_{u,D}\right)^2\cdot{\cal T}_{u,D}(|a_1-a_2|) \cdot \ell_u(W\ell_u\eta_u)^{-1/2} .
\end{align}
For case (2),  we bound  ${\cal L}_{u, (-,+,+),(a_1, b , a_2)}$ as follows
\begin{align}\label{LGGGXXX}
{\cal L}_{u, (-,+,+),(a_1, b , a_2)}
\prec \max_{x_1,x_2, y} \left|G_{x_1y}\right| \left|G_{yx_2}\right|\left|G_{x_1x_2}\right|{\bf 1}(x_1\in {\cal I}_{a_1}){\bf 1}(x_2\in {\cal I}_{a_2}){\bf 1}(y\in {\cal I}_{b}).
\end{align}
 In this case, since $a_1$, $a_2$ and $b$ are all different,  $x_1$, $x_2$ and $y$ must be different.

 Next we use   \eqref{GijGEX} to bound a single $G$ with the $2$-$G$-loops.   Because $a_1$, $a_2$ and $b$ are away from each other by $\ell_u^*/2$, we have, by  \eqref{auiwii}, 
 \begin{align}\label{GGTLJ}
 \left|G_{yx_2}\right|\cdot \left|G_{x_1x_2}\right| \cdot
\left|G_{x_1y}\right|
\;\prec \; & {\cal J}_*^{3/2} \Big( {\cal T}_{u,D}(|a_1-a_2|) {\cal T}_{u,D}(|a_1-b|){\cal T}_{u,D}(|a_2-b|)\Big)^{1/2}
 \end{align}
Then we can bound 
\begin{align}
    \sum_{b}^{ \text {Case } 2 }{\cal L}_{u, (-,+,+),(a_1, b , a_2)} 
 \;\prec \;  &\;{\cal J}_*^{3/2}\Big( {\cal T}_{u,D}(|a_1-a_2|)\Big)^{1/2} \sum_b \Big({\cal T}_{u,D}(|a_1-b|){\cal T}_{u,D}(|a_2-b|)\Big)^{1/2}
 \nonumber \\
  \;\prec \; &\;{\cal J}_*^{3/2}\Big( {\cal T}_{u,D}(|a_1-a_2|)\Big) \cdot  \ell_u\cdot  (W\ell_u\eta_u)^{-1}
\end{align}
where we have used
$$
\int_{0 }^a\exp\left(-\sqrt{(a-x)}/2-\sqrt x/2+ \sqrt a/2\right)dx\le C .
$$
Combining  this estimate with \eqref{lwjufw}, we obtain that if $|a_1-a_2|\ge \ell_u^*$ then
\begin{align}
    \sum_{b} {\cal L}_{u, (-,+,+),(a_1, b , a_2)} 
  \;\prec \; &\;{\cal J}_*^{2}\Big( {\cal T}_{u,D}(|a_1-a_2|)\Big) \cdot  \ell_u\cdot  (W\ell_u\eta_u)^{-1/2}
\end{align}
Inserting  it into \eqref{jiizziy} and using  \eqref{82jjdopasj}, we have completed the proof of  \eqref{res_deccalE_wG}.

 \medskip
\emph{Proof of \eqref{res_deccalE_dif}. } By definition \eqref{defEOTE},  $(\mathcal{E} \otimes \mathcal{E})$ is the sum of  $(\mathcal{E} \otimes \mathcal{E})^{(k)}$, and the latter ones can be written in terms of the following loops $
{\cal L}^{(k )}:= {\cal L}_{u, \boldsymbol{\sigma}^{(k ) },\textbf{a}^{(k )}} $. Here
$ \boldsymbol{\sigma}^{(1 )}=( +,-,+,-,+,-)$, $\textbf{a}^{(1 )}=( a_1,a_2, b', a_2',a_1',b)$
and $\boldsymbol{\sigma}^{( 2)}= $ $(-,+,-,+,-,+)$, $\textbf{a}^{( 2)}=( a_2,a_1, b', a_1',a_2',b)$. Hence  by symmetry, we only need to prove \eqref{res_deccalE_dif} for $(\mathcal{E} \otimes \mathcal{E})^{(1)}$, i.e.,  $k=1$ case. Notice that $|b-b'| = 1$ in this case and 
we can treat $b=b'$ for all practical purpose in the following proof. 
 
\begin{figure}[ht]
\centering
\begin{tikzpicture}[scale=1.2, every node/.style={font=\small}]

% Left graph
\node (a1) at (0, 2) {$\bullet$};
\node[above left] at (a1) {$a_1$};

\node (a2) at (4, 2) {$\bullet$};
\node[above right] at (a2) {$a_2$};

\node (b) at (1.5, 1) {$\bullet$};
\node[below] at (b) {$b$};

\node (b') at (2.5, 1) {$\bullet$};
\node[below] at (b') {$b'$};

\node (a1') at (0, 0) {$\bullet$};
\node[below left] at (a1') {$a_1'$};

\node (a2') at (4, 0) {$\bullet$};
\node[below right] at (a2') {$a_2'$};

\draw[thick] (a1) -- (b) node[midway, left] {$+$};
 \draw[thick] (b') -- (a2) node[midway, right] {$+$};
\draw[thick] (b') -- (a2') node[midway, right] {$-$};
 \draw[thick] (b) -- (a1') node[midway, left] {$-$};
\draw[thick] (a1) -- (a2) node[midway, above] {$-$};
\draw[thick] (a1') -- (a2') node[midway, below] {$+$};
\draw[decorate, decoration={snake, amplitude=0.4mm, segment length=2mm}, thick] (b) -- (b')
        node[midway, above] {$S^{(B)}$};

% Right graph
\node (a1r) at (6, 2) {$\bullet$};
\node[above left] at (a1r) {$a_2$};

\node (a2r) at (10, 2) {$\bullet$};
\node[above right] at (a2r) {$a_1$};

\node (br) at (7.5, 1) {$\bullet$};
\node[below] at (br) {$b$};

\node (b'r) at (8.5, 1) {$\bullet$};
\node[below] at (b'r) {$b'$};

\node (a1r') at (6, 0) {$\bullet$};
\node[below left] at (a1r') {$a_2'$};

\node (a2r') at (10, 0) {$\bullet$};
\node[below right] at (a2r') {$a_1'$};

\draw[thick] (a1r) -- (br) node[midway, left] {$-$};
 \draw[thick] (b'r) -- (a2r) node[midway, right] {$-$};
\draw[thick] (b'r) -- (a2r') node[midway, right] {$+$};
 \draw[thick] (br) -- (a1r') node[midway, left] {$+$};
\draw[thick] (a1r) -- (a2r) node[midway, above] {$+$};
\draw[thick] (a1r') -- (a2r') node[midway, below] {$-$};
\draw[decorate, decoration={snake, amplitude=0.4mm, segment length=2mm}, thick] (br) -- (b'r)
        node[midway, above] {$S^{(B)}$};
        
\end{tikzpicture}
\caption{\hspace{.1in} Left one: $k=1$ \hspace{1.4in}Right one: $k=2$}
\label{fig:graph-6Gfor2G}
\end{figure}

\noindent 
{\bf Case 1: $|a_1-a_2|\le 4\ell_t^*$ } 
%First   prove \eqref{res_deccalE_dif}  for $(\mathcal{E} \otimes \mathcal{E})^{(1)}$ in the case  $|a_1-a_2|\le 4\ell_t^*$.  
We split the sum $\sum_{b,b'}$ into two parts 
$$
|b-a_1|\le \ell_u^{** }:=(\log W)^3 \ell_u,\quad |b-a_1|\ge \ell_u^{** } 
$$
Using ${\cal T}_{u,D}(\ell_u^{** }) $ is very small,  one can easily bound 
$$
\sum_{b,b'} {\bf 1}\left(|b-a_1|\ge \ell_u^{** }\right){\cal L}^{(1)}\prec {\cal J}^*_{u,D}\cdot W^{-3},
$$
by arguments similar to those used in \eqref{alkkj2}.
For $|b-a_1|\le  \ell_u^{** } $, we use 
the loop bound \eqref{lRB1} proved in Step 1 to have 
\begin{align}\label{alkkj3}
\sum_{b,b'}{\bf 1}\big(|b-a_1|\le  \ell_u^{** }\big) {\cal L}^{(1)}  \prec (\ell_u/\ell_s)^5\cdot (W\ell_u\eta_u)^{-5} \cdot \ell_u   .
\end{align}
Combining  these two bounds, we obtain that \eqref{res_deccalE_dif} for $|a_1-a_2|\le 4\ell_t^*$. Notice that the application of the loop bound of length $6$ yields a very strong bound $(W\ell_u\eta_u)^{-5}$ which is not easy to see without the loop estimate. 

 \medskip

 \noindent 
{\bf Case 2: $|a_1-a_2|\ge 4\ell_t^*$ .} 
% Recall  $|a'_i-a_i|\le \ell_t^*, \quad a_1-a_2\ge 4\ell_t^*$.
Recall  the assumption  $|a'_i-a_i|\le \ell_t^*$ \eqref{57} and the fact  that we can treat $b=b'$  in the following proof.  
We  split the sum over $b$ (and $b'$) into two parts 
$$
(1): |b -a_1|\le |b-a_2|,\quad (2): |b-a_1|\ge |b-a_2|.
$$
By symmetry, we only consider  the first case.   Similar to \eqref{LGGGXXX} (see also figure \ref{fig:graph-6Gfor2G}), we  bound ${\cal L}^{(1)}$ by the product of four $G$'s and  $G^\dagger E_b G$ as follows: 
\begin{align} 
{\cal L}^{(1)}\le W^{-2}   \max_{ x_2\in {\cal I}_{a _2}}\max_{ y'\in {\cal I}_{b'}}\max_{x_2'\in {\cal I}_{a'_2}} \sum_{x_1\in {\cal I}_{a_1}}\sum_{x'_1\in {\cal I}_{a'_1}}
\left|G_{x_1x_2}\right|
\left|G_{x_2y'}\right|
\left|G_{y'x_2'}\right|
\left|G_{ x_2'x_1'}\right|
\left|\left(G^\dagger E_b G\right)_{ x_1x_1'}\right|.
\end{align}
Using Schwarz's inequality (on $\sum_{x_1,x'_1}$) and the definition $\cal L$, we have
\begin{align}\label{L6G6X}
{\cal L}^{(1)}\le \max^*_{ x_1, x_2, y', x_2, x_2'}  
\left|G_{x_1x_2}\right|
\left|G_{x_2y'}\right|
\left|G_{y'x_2'}\right|
\left|G_{ x_2'x_1'}\right|
\cdot \left(\max_{ \textbf{a}}\max_{\boldsymbol{\sigma}\in \{+,-\}^4 }{\cal L} _{\boldsymbol{\sigma}, a}\right)^{1/2}, 
\end{align}
where the $\max $ is over  the condition 
$$
 {\bf 1}(x_1\in {\cal I}_{a_1}){\bf 1}(x_2\in {\cal I}_{a_2}){\bf 1}(y'\in {\cal I}_{b'}){\bf 1}(x'_1\in {\cal I}_{a'_1}){\bf 1}(x'_2\in {\cal I}_{a'_2}).
$$
We claim that 
\begin{align} \label{GGTLJ4G}
|G_{y'x_2}|\cdot |G_{y'x_2'}| 
\cdot 
|G_{x_1x_2}|\cdot |G_{x_1'x_2'}|
\prec 
\left({\cal J}^*_{u,D}\right)^2\cdot 
{\cal T} _{t,D}(|a_1-a_2|)\cdot {\cal T} _{t,D}( |b-a_2|)  .  
\end{align}
To prove this bound, we split it into two cases:  
\begin{align}
(1a): \;&\; |a_1-b|\le \ell_u^*,\quad \text{or}\quad |a_1'-b|\le \ell_u^*\nonumber \\
(1b):   \;&\;  |a_1-b|\ge \ell_u^*,\quad and\quad |a_1'-b|\ge \ell_u^*.
\end{align} 
Since $|a_1-a_2|\ge 4\ell_t^*$, we have,  similar to \eqref{GGTLJ}, that 
\begin{align} 
 |G_{x_1x_2}|^2\le  & {\cal J}^*_{u,D}\cdot {\cal T}_{u,D}(|a_1-a_2|), \\
 |G_{x_1'x_2'}|^2\le & {\cal J}^*_{u,D}\cdot {\cal T}_{u,D}\left( |a_1'-a_2'|\right)  \prec {\cal J}^*_{u,D}\cdot {\cal T}_{t,D}\left( |a_1-a_2|\right), 
\end{align} 
where  we have used ${\cal T}_u\le {\cal T}_t$, the assumption $|a_i'-a_i|\le \ell_t^*$ and \eqref{Tell*} in the   second inequality.  Thus we have 
$$
|G_{x_1x_2}|\cdot |G_{x_1'x_2'}|\prec   {\cal J}^*_{u,D}\cdot {\cal T}_{t,D}\left( |a_1-a_2|\right).
$$ 
For edges connecting with $b'$, we have 
$$
|G_{y'x_2}|\cdot |G_{y'x_2'}| \prec 
{\cal J}^*_{u,D}\cdot  {\cal T}^{1/2}_{u,D}\left( |b-a_2| \right)\cdot   {\cal T}^{1/2}_{u,D}\left(  |b-a_2'| \right),
$$
where we have used that  both  $|b-a_2|$ and $|b-a_2'|$ are larger than $\ell_t^*\ge \ell_u^*$.
Using  ${\cal T}_u\le {\cal T}_t$ and $|a_2-a_2'|\le \ell_t^*$, we have 
$$
|G_{y'x_2}|\cdot |G_{y'x_2'}|\le {\cal J}^*_{u,D}\cdot  {\cal T} _{t,D}\left(  |b-a_2| \right).
$$
Combining  these  bounds, we have proved \eqref{GGTLJ4G}. 

Next, applying  \eqref{lRB1} on $\cal L$ term in \eqref{L6G6X}, together with \eqref{GGTLJ4G} we obtain that 
$$
{\cal L}^{(1)}\prec (\ell_u/\ell_s)^{3/2}\left(W\ell_u\eta_u\right)^{-3/2}\cdot \left({\cal J}^*_{u,D}\right)^2\cdot 
{\cal T} _{t,D}(|a_1-a_2|)\cdot {\cal T} _{t,D}( |b-a_2|)  . 
$$
Consider the case (1a) so that   $|a_2-b|\ge |a_1-a_2|-\ell_t^*$. Therefore, ${\cal T} _{t,D}\left(  |b-a_2| \right) \prec 
{\cal T} _{t,D}\left(  |a_1-a_2| \right)$. 
Combining  these bounds  with \eqref{GGTLJ4G}, we have 
\begin{align}
  \sum_{b,b'}^{(1a)} {\cal L}^{(1)}
 \prec &(\ell_u/\ell_s)^{3/2} \cdot \ell_u \cdot\left(W \ell_u \eta_u\right)^{-3/2}
\cdot\left({\cal J}^*_{u,D}\right)^2  \cdot \left({\cal T}_{t,D}(|a_1-a_2|\right) ^2.
\end{align}
For (1b), we bound  $(G^\dagger E_bG)_{x_1x_1'}$ by 
$$
(G^\dagger E_bG)_{x_1x_1'}\le \max_{y\in {\cal I}_b}|G_{x_1y}||G_{x'_1y}|.
$$
Since  $ |a_1-b|,\; |a_1'-b| \ge \ell_u^*$,    we have, similar to \eqref{GGTLJ}, 
$$
(G^\dagger E_bG)_{x_1x_1'}
\prec {\cal J}^*_{u,D} \cdot  {\cal T}_{t,D}(|b-a_1|) .
$$
Together  with \eqref{GGTLJ4G}, we have  
\begin{align}
  \sum_{b,b'}^{(1b)} {\cal L}^{(1)}
 \prec\; & \left( {\cal J}^*_{u,D}\right)^3 \cdot  {\cal T}_{t,D}(|a_1-a_2|)\cdot \sum_b {\cal T}_{t,D}(|b-a_1|) {\cal T}_{t,D}(|b-a_2|)
 \nonumber \\
\prec \; &  \left( {\cal J}^*_{u,D}\right)^3 \cdot \ell_t \cdot (W\ell_t\eta_t)^{-2} 
\cdot \left({\cal T}_{t,D}(|a_1-a_2|)\right)^{2}.
 \end{align}
Here the last line was bounded as in the proof of \eqref{mmxiaoxi}.  Putting  the bounds for (1a) and (1b) together, we obtain \eqref{res_deccalE_dif} and complete the proof of Lemma  \ref{lem_dec_calE}. 
\end{proof}

\noindent 
\begin{proof}[Proof of \eqref{Gt_bound_flow}] Combining  the $\cal L-\cal K$ estimate  \eqref{Eq:Gdecay_w} and the $\cal K$ bound in Lemma \ref{ML:Kbound}, we obtain that
\begin{equation}
\label{lk2safyas}
 \max_{a, b}{\cal L }_{u, (+,-), (a,b)}\prec \left(\eta_s/\eta_u\right)^4\cdot (W\ell_u\eta_u)^{-2}+(W\ell_u\eta_u)^{-1}\prec (W\ell_u\eta_u)^{-1}  ,\quad u\in [s,t]    
\end{equation}
where we have used the inductive assumption  \eqref{con_st_ind}. 
Then we can apply  the resolvent entry estimate in Lemma \ref{lem_GbEXP}.  The weak local law in \eqref{Gtmwc} implies the assumptions of  Lemma \ref{lem_GbEXP}. From \eqref{GiiGEX}, \eqref{GijGEX} and  \eqref{lk2safyas}, we have 
$$
\|G_t-m\|^2_{\max}\prec  \max_{a, b}{\cal L }_{u, (+,-), (a,b)}\prec (W\ell_u\eta_u)^{-1} .
$$
This completes the proof of \eqref{Gt_bound_flow}.   
\end{proof}

\subsection{Fast decay property}
In the second step of the proof for Theorem \ref{lem:main_ind}, we established the decay property of the $2\text{-}G$ loop in \eqref{Eq:Gdecay_w}. In this subsection, we extend this decay property to general loops $\mathcal{L}$ and $\mathcal{L} - \mathcal{K}$. This decay property proves to be highly useful for bounding the $\mathcal{E}$ terms in equation \eqref{int_K-LcalE}. We begin by defining the $\ell_u$-decay property.

\begin{definition}[The decay property]\label{Def_decay}
   Let  $\cal A$ be a tensor $\mathbb Z_L^n\to \mathbb R$.We say $\cal A$ has $(u, \tau, D)$ decay at the  time $u$ if  for some fixed small $\tau>0$ and large $D>0$, we have  
\begin{equation}\label{deccA}
    \max_i\|a_i-a_j\|\ge \ell_u W^{\tau} \implies  {\cal A}_{\textbf{a}}=O(W^{-D}),\quad \textbf{a}=(a_1,a_2\cdots, a_n) .
\end{equation}
\end{definition}
\begin{lemma}[The decay property of ${\cal L}_u$]\label{lem_decayLoop}  
Assume that \eqref{Gt_bound_flow} and \eqref{Eq:Gdecay_w} hold. Then  for 
any $n\ge 2$, any small $\tau$, large $D$ and $D'>0$,  the  ${\cal L}_u$ has $(u, \tau, D)$ decay  with probability $1-O(W^{-D'})$. More precisely, 
\begin{align}\label{res_decayLK}
\mathbb P\left( \max_{\boldsymbol{\sigma}}  \Big(\left|{\cal L}_{u,\boldsymbol{\sigma},\textbf{a}}\right|+\left|{(\cal L-K})_{u,\boldsymbol{\sigma},\textbf{a}}\right|\Big)\cdot{\bf 1}\left(\max_{  i, j } \|a_i-a_j\|\ge \ell_u W^{\tau}\right) \ge W^{-D}\right)\le W^{-D'} .
\end{align}
 \end{lemma}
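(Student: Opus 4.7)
The plan is to reduce everything to two ingredients that are already available at this point in the argument: the $2$-loop decay \eqref{Eq:Gdecay_w} established in Step 2, and the exponential decay of the block propagator $\Theta^{(B)}$ from Lemma \ref{lem_propTH}. The deterministic piece $\mathcal{K}$ will be handled directly from its tree representation, while the full loop $\mathcal{L}$ will be controlled by cutting out a single long resolvent edge and invoking the a priori chain estimates on what remains.

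First I would handle $\mathcal{K}_{u,\boldsymbol{\sigma},\textbf{a}}$ via Lemma \ref{Lemma_TRofK}. Every edge of every tree graph in the representation is of the form $\Theta^{(B)}_{u\,m_i m_j}$ (boundary edge) or $\Theta^{(B)}_{u\,m_i m_j}-\mathbf{1}$ (internal edge), each of which decays exponentially on a scale at most $\ell_u$ by \eqref{prop:ThfadC}. Since the underlying tree is connected, the unique path between the external vertices $a_i$ and $a_j$ has total length at least $\|a_i-a_j\|$, so the product of edge weights along that path is bounded by $\exp(-c\|a_i-a_j\|/\ell_u)\prec W^{-D}$ whenever $\|a_i-a_j\|\ge \ell_u W^{\tau}$. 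The remaining sums over the $O_n(1)$ tree shapes and the internal vertices cost at most $L^{C_n}$, which is harmless.

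Next, for $\mathcal{L}$ I would first upgrade \eqref{Eq:Gdecay_w} to pointwise decay of individual resolvent entries. Substituting the Step-$2$ bound together with $\mathcal{K}_{u,(+,-),(a,b)}=W^{-1}|m|^2[\Theta^{(B)}_{u|m|^2}]_{ab}$ and its exponential decay into \eqref{GijGEX} yields, for $|[i]-[j]|\ge 2$,
\begin{equation*}
  |G_u(\sigma)_{ij}|^{2}\;\prec\; (W\ell_u\eta_u)^{-1}\exp\!\Big(-c\sqrt{|[i]-[j]|/\ell_u}\Big) + W^{-D},
\end{equation*}
so that $|G_u(\sigma)_{ij}|\prec W^{-D''}$ for any prescribed $D''$ as soon as $|[i]-[j]|\ge \ell_u W^{\tau}/(2n)$. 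Given $\textbf{a}$ with $\max_{i,j}\|a_i-a_j\|\ge \ell_u W^{\tau}$, the cyclic triangle inequality produces consecutive indices $k,k+1$ (mod $n$) with $\|a_k-a_{k+1}\|\ge \ell_u W^{\tau}/(2n)$. Cutting the loop at the $(k+1)$-th resolvent edge gives
\begin{equation*}
  \mathcal{L}_{u,\boldsymbol{\sigma},\textbf{a}} \;=\; W^{-2}\sum_{x\in\mathcal{I}_{a_k},\,y\in\mathcal{I}_{a_{k+1}}} G_u(\sigma_{k+1})_{xy}\cdot \mathcal{C}_{yx},
\end{equation*}
where $\mathcal{C}$ is the remaining $(n-1)$-chain. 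The chain entries $|\mathcal{C}_{yx}|$ are only polynomial in $N$ by the a priori loop bound \eqref{lRB1} combined with the chain--loop inequality of Lemma \ref{lem_GbEXP_n2} applied with $\alpha=\ell_u/\ell_s$ and $\beta=1$; the admissibility condition $\alpha^2\le(W\ell_u\eta_u)^{1/4}$ is guaranteed by the inductive hypothesis \eqref{con_st_ind}. Choosing $D''$ large enough absorbs the $W^{2}$ from the index sum and the polynomial chain loss, yielding $|\mathcal{L}_{u,\boldsymbol{\sigma},\textbf{a}}|\prec W^{-D}$; the same bound on $\mathcal{L}-\mathcal{K}$ follows by the triangle inequality from this and the $\mathcal{K}$ bound above, and a union bound over $\boldsymbol{\sigma}\in\{+,-\}^n$ and the polynomially many $\textbf{a}\in\mathbb{Z}_L^n$ upgrades the pointwise estimate to \eqref{res_decayLK}.

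The main technical point, and really the only thing that needs checking, is bookkeeping: one must verify that the polynomial losses from the chain bound, the $O(W^{2})$ index summation, and the factor $(\eta_s/\eta_u)^{4}=O(1)$ implicit in \eqref{Eq:Gdecay_w} are all crushed by the stretched-exponential factor $\exp(-cW^{\tau/2})$ produced by the one long resolvent entry. This is automatic for any fixed $\tau>0$, so no new idea beyond the $2$-loop decay of Step 2 is needed.
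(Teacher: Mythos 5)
Your proof is correct and follows essentially the same route as the paper: exponential decay of $\mathcal{K}$ from the tree representation, pointwise decay of individual $G$ entries by feeding \eqref{Eq:Gdecay_w} into \eqref{GijGEX}, then pigeonhole on the cyclic indices to find a long $G$ edge whose stretched-exponential smallness kills the whole loop, with a union bound over $\boldsymbol{\sigma}$ and $\mathbf{a}$. The one superfluous step is invoking Lemma \ref{lem_GbEXP_n2} to control the residual $(n-1)$-chain: the trivial bound $\|G_u\|_{\mathrm{op}}=\eta_u^{-1}\le N$ already gives a polynomial bound on the chain entries, which is all that the stretched-exponential factor $\exp(-cW^{\tau/2})$ from the long edge needs to crush, so no chain--loop machinery is required there.
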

 \begin{proof}[Proof of Lemma \ref{lem_decayLoop}] Combining  the decay properties  of ${\cal K}_{u,(+,-)}$ and $\cal L-\cal K$  \eqref{Eq:Gdecay_w}, we obtain that  
 ${\cal L}_{u,(+,-)}$ has $(u, \tau, D)$ decay property  for any fixed $(\tau, D)$  with high probability.
 Applying the $G_{ij}$ estimate in \eqref{GijGEX} and  the new ${\cal L}_u$ decay property, we have 
 $$
   \forall \tau, D, D', \quad \mathbb P\left( \max_{i\in {\cal I}_{a_1}}\max_{j\in {\cal I}_{a_2}}|G_{ij}|
 \cdot{\bf 1}\left( |a_1-a_2\|\ge \ell_u W^{\tau}\right) \ge W^{-D}\right)\le W^{-D'} .
 $$
By definition  ${\cal L}=\langle \prod\limits_{i=1}^n G_i E_{a_i}\rangle $  has the $(u, \tau, D)$ decay property.  On the other hand, by the decay of $\Theta^{(B)}_t$ and  the tree representation of $\cal K$ in Lemma \ref{eq_trcalK},  $\cal K$ also   has $(u, \tau, D)$ decay property. Therefore $({\cal L-\cal K})_u$ has $(u, \tau, D)$ decay property. 
 
 \end{proof}

The decay property enables us to use  Lemma \ref{lem:sum_decay} for $\|{\cal U}_{u,t}\|_{\max\to \max}$ on fast decay tensors. This provides a major  improvement over using Lemma \ref{lem:sum_Ndecay} to bound $\|{\cal U}_{u,t}\|_{\max\to \max}$.  We have the following lemma estimating   $\cal E$ terms.

 \begin{lemma}[Bounds on $\cal E$ terms] \label{lem_BcalE} 
Assume  that \eqref{Gt_bound_flow} and \eqref{Eq:Gdecay_w} hold.  
Define $\Xi^{({\cal L})}$ and  $\Xi^{({\cal L}-{\cal K})}$ as follows
\begin{align}\label{def:XIL-K}
  \Xi^{({\cal L} )}_{t, m} &:= \max_{\boldsymbol{\sigma}, \textbf{a}} \left|{\cal L} _{t, \boldsymbol{\sigma}, \textbf{a}} \right|\cdot \left(W\ell_t \eta_t\right)^{m-1} \cdot {\bf 1}(\boldsymbol{\sigma} \in \{+, -\}^m),  
\\\nonumber
     \Xi^{({\cal L}-{\cal K})}_{t, m} &:= \max_{\boldsymbol{\sigma}, \textbf{a}} \left|({\cal L}-{\cal K})_{t, \boldsymbol{\sigma}, \textbf{a}} \right|\cdot \left(W\ell_t \eta_t\right)^{m} \cdot {\bf 1}(\boldsymbol{\sigma} \in \{+, -\}^m).
\end{align}
Then 
\begin{align}\label{CalEbwXi}
\big[\mathcal{K} \sim(\mathcal{L}-\mathcal{K})\big]_{u, \boldsymbol{\sigma}}^{l_{\mathcal{K}}}
   \; \prec \; &
   \left( \max_{k<n }\Xi^{(\cal L-\cal K)}_{u,k}\right)\cdot (W\ell_u\eta_u)^{-n }\cdot \frac{1}{\eta_u}  , 
   \nonumber \\
    \mathcal{E}_{u, \boldsymbol{\sigma}}^{((\mathcal{L} - \mathcal{K}) \times (\mathcal{L} - \mathcal{K}))}
    \; \prec \; &
   \left( \max_{k:\;2\le k\le n }\Xi^{(\cal L-\cal K)}_{u,k}\cdot \Xi^{(\cal L-\cal K)}_{u,n-k+2}\cdot(W\ell_u\eta_u)^{-1}\right)\cdot (W\ell_u\eta_u)^{-n }\cdot \frac{1}{\eta_u}  , 
  \nonumber \\  \mathcal{E}_{u, \boldsymbol{\sigma}}^{(\widetilde{G})}
 \; \prec \; &
     \Xi^{(\cal L)}_{u,n+1} \cdot (W\ell_u\eta_u)^{-n }\cdot \frac{1}{\eta_u}   ,  
  \nonumber \\   
(\mathcal{E} \otimes \mathcal{E}) _{u, \boldsymbol{\sigma}}
\; \prec \; &  \; \Xi^{(\cal L)}_{u, {2n+2}}\cdot   (W\ell_u\eta_u)^{-2n } \cdot \frac1{\eta_u}. 
\end{align}
Furthermore, 
all these  $\cal E$ terms have the $(u, \tau, D)$ decay property for  $u\in [s,t]$.  
 \end{lemma}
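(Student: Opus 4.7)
The plan is to prove all four bounds from a single template: expand each $\mathcal{E}$ term (or the integrand $[\mathcal{K}\sim(\mathcal{L}-\mathcal{K})]^{l_\mathcal{K}}$) as a $W$-weighted sum of products of loops linked through $S^{(B)}_{ab}$, apply the $\Xi$ bounds from \eqref{def:XiL}--\eqref{def:XIL-K} together with the $\mathcal{K}$-bound of Lemma \ref{ML:Kbound}, and control each internal block-index sum using the $(u,\tau,D)$ decay property of the constituent loops established in Lemma \ref{lem_decayLoop}. Because $S^{(B)}_{ab}$ forces $|a-b|\le 1$ at the block level while the decay restricts the effective range of the remaining index to $\sim\ell_u$, each internal summation produces $W\ell_u = (W\ell_u\eta_u)/\eta_u$; this is the uniform mechanism giving the factor $1/\eta_u$ in every bound.

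For $[\mathcal{K}\sim(\mathcal{L}-\mathcal{K})]^{l_\mathcal{K}}$, Definition \ref{Def:oper_loop} shows that cutting an $n$-loop at positions $k<l$ produces loops of lengths $l_\mathcal{K}=l-k+1$ and $n-l_\mathcal{K}+2$; Lemma \ref{ML:Kbound} controls the former by $(W\ell_u\eta_u)^{-l_\mathcal{K}+1}$ and the definition of $\Xi^{(\mathcal{L}-\mathcal{K})}$ controls the latter by $\Xi^{(\mathcal{L}-\mathcal{K})}_{u,n-l_\mathcal{K}+2}\cdot(W\ell_u\eta_u)^{-(n-l_\mathcal{K}+2)}$, and combining with the $W\ell_u$ summation gain yields the claim. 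The argument for $\mathcal{E}^{((\mathcal{L}-\mathcal{K})\times(\mathcal{L}-\mathcal{K}))}$ is identical, except that the $\mathcal{K}$ factor is replaced by a second $\mathcal{L}-\mathcal{K}$; since at equal length $\Xi^{(\mathcal{L}-\mathcal{K})}$ carries an extra $(W\ell_u\eta_u)^{-1}$ compared with $\mathcal{K}$, this accounts for the additional $(W\ell_u\eta_u)^{-1}$ in that bound. For $\mathcal{E}^{(\widetilde{G})}$ from \eqref{def_EwtG}, I will separate the 1-loop factor $\langle\widetilde{G}_u E_a\rangle$ from the $(n+1)$-loop $\mathcal{G}^{(b)}_k\circ\mathcal{L}$; the crucial point is to use the \emph{averaged} law \eqref{GavLGEX} together with \eqref{lk2safyas} to obtain $\langle\widetilde{G}_u E_a\rangle\prec(W\ell_u\eta_u)^{-1}$ rather than the pointwise $W^{-1/2}$, thereby producing the sharp $\Xi^{(\mathcal{L})}_{u,n+1}$ normalization. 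Finally, $\mathcal{E}\otimes\mathcal{E}$ is by Definition \ref{def:CALE} an explicit $W$-weighted sum of $(2n+2)$-loops linked by $S^{(B)}_{b,b'}$, and the same scheme produces $\Xi^{(\mathcal{L})}_{u,2n+2}(W\ell_u\eta_u)^{-(2n+1)}\cdot W\ell_u$, matching the claim.

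The decay property of each $\mathcal{E}$ term is inherited directly from Lemma \ref{lem_decayLoop}. Since $S^{(B)}_{ab}$ couples only neighbouring block indices, no internal contraction can manufacture long-distance correlations between the external indices $\textbf{a}$: if two entries $a_i,a_j$ are separated by more than $\ell_u W^{\tau}$, then in every term of the cut-and-glue expansion at least one constituent loop (or $\mathcal{L}-\mathcal{K}$ factor) must have two of its indices separated by that same distance, and is therefore $O(W^{-D})$ off an event of probability $\le W^{-D'}$ by Lemma \ref{lem_decayLoop}. A union bound over the finitely many cut positions $1\le k<l\le n$ completes the argument.

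The main obstacle is not a single hard estimate but the sharp $\Xi$ bookkeeping. The only genuinely delicate step is the treatment of $\mathcal{E}^{(\widetilde{G})}$: using a naive pointwise bound on $\widetilde{G}_{xx}$ there would lose a factor $(W\ell_u\eta_u)^{-1/2}$ coming from the averaged law \eqref{GavLGEX}, and that loss would propagate and destroy the sharp $(W\ell_u\eta_u)^{-n}$ scaling of $\mathcal{L}-\mathcal{K}$ expected downstream from the integrated hierarchy \eqref{int_K-LcalE}. A secondary but routine point is that the rapid decay of Lemma \ref{lem_decayLoop} kicks in only at scales $\gg\ell_u W^{\tau}$, so the $\ell_u$ summation gain must be extracted by combining the pointwise $\Xi$ bound on the short-distance region $|a-b|\lesssim \ell_u W^{\tau}$ with the rapid decay outside it.
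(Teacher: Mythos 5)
Your proposal follows essentially the same route as the paper's proof: expand each term as a $W$-weighted, $S^{(B)}$-linked product, bound $\mathcal{K}$ factors by Lemma \ref{ML:Kbound} and $\mathcal{L}-\mathcal{K}$ factors by $\Xi^{(\mathcal{L}-\mathcal{K})}$, use the averaged law \eqref{GavLGEX} (equivalently $\Xi^{(\mathcal{L})}_{u,2}\prec 1$) to handle the length-one factor in $\mathcal{E}^{(\widetilde{G})}$, and extract a single $W\ell_u=(W\ell_u\eta_u)/\eta_u$ from the internal contraction by combining the $S^{(B)}$ nearest-neighbor constraint with the $\ell_u$-range decay of Lemma \ref{lem_decayLoop}. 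You correctly isolate the one genuinely sharp point — the averaged rather than pointwise bound on $\langle\widetilde{G}_u E_a\rangle$ — and your decay argument for the $\mathcal{E}$ terms is the same as the paper's (there implicit in ``follows directly from the definitions and Lemma \ref{lem_decayLoop}'').
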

 
 \begin{proof}[Proof of Lemma \ref{lem_BcalE}] The proof  follows directly  the definitions of these  terms  %and the  $\cal E$ terms, 
 and the decay properties in Lemma \ref{lem_decayLoop}.  
     \begin{itemize}
    \item By definition \eqref{DefKsimLK},  $\mathcal{K} \sim(\mathcal{L}-\mathcal{K})$ can be written as the product of $\cal K$  and $\cal L-\cal K$ loops.   Bounding  $\cal K$  with \eqref{eq:bcal_k}, we have  
    \begin{align}
    \big[\mathcal{K} \sim(\mathcal{L}-\mathcal{K})\big]_{u, \boldsymbol{\sigma}}^{l_{\mathcal{K}}}
 \; \prec \; &
 W\cdot (W\ell_u\eta_u)^{-l_{\mathcal{K}}+1}\cdot \ell_u\cdot \Xi^{(\cal L-\cal K)}_{u,(n-l_{\mathcal{K}}+2)}\cdot (W\ell_u\eta_u)^{-(n-l_{\mathcal{K}}+2)}
    \end{align}
    where we used $l_{\mathcal{K}}\ge 3$ and the $\ell_u$ factor comes from the sum of the index $a$, which was restricted by the decay property of $\mathcal K$. 
    As a side remark, the other index  $b$ was restricted by    $S^{(B)}_{ab}$. 
    
    \item By  definition \eqref{def_ELKLK},  $ \mathcal{E}^{((\mathcal{L} - \mathcal{K}) \times (\mathcal{L} - \mathcal{K}))}$ can be written as the product of two loops, whose total length is $n+2$.
    \begin{align}
   \mathcal{E}^{((\mathcal{L} - \mathcal{K}) \times (\mathcal{L} - \mathcal{K}))}
 \; \prec \; &
 W\cdot \sum_{k=2}^{n} \Xi^{(\cal L-\cal K)}_{u, \,k}
 \cdot (W\ell_u\eta_u)^{-k}
 \cdot \ell_u\cdot \Xi^{(\cal L-\cal K)}_{u,(n-k+2)}
 \cdot (W\ell_u\eta_u)^{-(n-k+2)}
      \end{align}
    
    \item By definition \eqref{def_EwtG}, 
    $\mathcal{E}^{(\widetilde{G})}$  can be written as the product of  a  $(\cal L-\cal K)$ loop of length one and an  $(n+1)-G$  loop. With  \eqref{GavLGEX}, we have 
    \begin{align}
  \mathcal{E}^{(\widetilde{G})}
 \; \prec \; &
 W\cdot   \Xi^{\cal L }_{u, \,2}
 \cdot (W\ell_u\eta_u)^{-1}
 \cdot \ell_u\cdot \Xi^{\cal L }_{u,(n+1)}
 \cdot (W\ell_u\eta_u)^{-n} 
    \end{align} 
    Furthermore, with \eqref{Eq:Gdecay_w}, we can bound $\Xi^{\cal L }_{u, \,2}\prec 1$. 
 \item By definition  \eqref{def_diffakn_k}, 
    $(\mathcal{E} \otimes \mathcal{E})$ can be written in terms of  $(2n+2)$-$G$-loops.  Thus 
    \begin{align}
(\mathcal{E} \otimes \mathcal{E}) \prec  \Xi^{(\cal L)}_{u, {2n+2}}\cdot  W\cdot\ell_u\cdot (W\ell_u\eta_u)^{-2n-1 } .
    \end{align} 
\end{itemize}
Here all  $\ell_u$ factors  come from summing an index restricted to a range $\ell_u$. 
 \end{proof}
  
In the remaining of this subsection, we will use %hese estimates in 
Lemma \ref{lem_BcalE} to  improve estimates on $\cal L-\cal K$.  

\begin{lemma}\label{lem:STOeq_NQ} 
Assume that $\boldsymbol{\sigma}$ is not an alternating sign vector, i.e., 
\begin{equation}\label{NALsigm}
    \exists k,\; s.t.\; \sigma_k=\sigma_{k+1}.
\end{equation} 
Suppose that the   assumptions of  Theorem \ref{lem:main_ind},  the local law \eqref{Gt_bound_flow} and the decay property for $\cal L-\cal K$ \eqref{Eq:Gdecay_w} hold. If 
 $$
 \max_{u\in [s,t]}\, \Xi^{(\cal L)}_{u, {2n+2}}\prec {  \Lambda}
 $$
 for some deterministic quantity $ \Lambda \ge 1$, 
then 
\begin{align}\label{am;asoiuw}
 \Xi^{({\cal L-\cal K})}_{u,n}
 \;\prec\;  &\; { \Lambda}^{1/2}+\max_{u\in [s,t]}
  \left(\; \max_{k<n }\Xi^{({\cal L-\cal K})}_{u,k}+\max_{k:\;2\le k\le n }\Xi^{({\cal L-\cal K})}_{u,k}\cdot \Xi^{({\cal L-\cal K})}_{u,n-k+2}\cdot(W\ell_u\eta_u)^{-1}+\Xi^{({\cal L})}_{u,n+1}\right).
\end{align}

\end{lemma}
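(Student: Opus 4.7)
The plan is to start from the stopped integrated loop hierarchy \eqref{int_K-L_ST} for $(\mathcal{L}-\mathcal{K})_{u,\boldsymbol{\sigma},\textbf{a}}$ at each fixed $u \in [s,t]$, applied with the given non-alternating $\boldsymbol{\sigma}$. The key structural observation is that whenever $\sigma_k = \sigma_{k+1}$, the corresponding one-dimensional factor inside $\mathcal{U}_{v,u,\boldsymbol{\sigma}}$ is built from $\Theta^{(B)}_{v\,m^2}$ (or its conjugate), which in the bulk regime $|E|\le 2-\kappa$ satisfies $|1-v\,m^2|\ge c_\kappa>0$ uniformly for $v\in[0,1]$; hence this factor is a bounded, short-range operator and does \emph{not} generate the singular $(\eta_s/\eta_u)^{\alpha}$ expansion that appeared in the alternating case treated in Step 2. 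This is exactly why the bound \eqref{am;asoiuw} contains no such growing prefactor and why the non-alternating hypothesis \eqref{NALsigm} is essential.

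\textbf{Term-by-term estimates.} I would bound each of the five contributions on the right-hand side of \eqref{int_K-L_ST} separately. Combining Lemma \ref{lem_BcalE} with the fast-decay property from Lemma \ref{lem_decayLoop}, each of the pointwise bounds on $[\mathcal{K}\sim(\mathcal{L}-\mathcal{K})]^{l_\mathcal{K}>2}$, on $\mathcal{E}^{((\mathcal{L}-\mathcal{K})\times(\mathcal{L}-\mathcal{K}))}$, and on $\mathcal{E}^{(\widetilde G)}$ takes the form $X_v\cdot(W\ell_v\eta_v)^{-n}/\eta_v$, where $X_v$ equals $\max_{k<n}\Xi^{(\cal L-K)}_{v,k}$, $\max_{2\le k\le n}\Xi^{(\cal L-K)}_{v,k}\Xi^{(\cal L-K)}_{v,n-k+2}(W\ell_v\eta_v)^{-1}$, or $\Xi^{(\cal L)}_{v,n+1}$, respectively. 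After applying $\mathcal{U}_{v,u,\boldsymbol{\sigma}}$ (bounded in max-to-max norm uniformly because of the non-alternating factor) to each source supported on the $\ell_v W^\tau$-decay region and integrating $dv$ over $[s,u]$, the $1/\eta_v$ factor is absorbed into $\log(\eta_s/\eta_u)\prec 1$, since \eqref{con_st_ind} forces $\eta_s/\eta_u$ to be bounded. The initial term $\mathcal{U}_{s,u,\boldsymbol{\sigma}}\circ(\mathcal{L}-\mathcal{K})_{s,\boldsymbol{\sigma}}$ is controlled by the inductive assumption $\Xi^{(\cal L-K)}_{s,n}\prec 1$, producing only a harmless $O(1)$ contribution to the right-hand side of \eqref{am;asoiuw}.

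\textbf{Martingale term.} For this I would invoke Lemma \ref{lem:DIfREP} together with the $(\mathcal{E}\otimes\mathcal{E})_v\prec \Xi^{(\cal L)}_{v,2n+2}(W\ell_v\eta_v)^{-2n}/\eta_v\prec \Lambda\cdot(W\ell_v\eta_v)^{-2n}/\eta_v$ bound supplied by Lemma \ref{lem_BcalE} and the hypothesis $\Xi^{(\cal L)}_{v,2n+2}\prec\Lambda$. After applying the double propagator $\mathcal{U}\otimes\mathcal{U}$ (again uniformly bounded in the non-alternating case), integrating over $v\in[s,u]$ absorbs the $1/\eta_v$ into a logarithm; taking $p$-th moments and extracting the square root via BDG then yields a contribution of order $\Lambda^{1/2}(W\ell_u\eta_u)^{-n}$ to $|(\mathcal{L}-\mathcal{K})_{u,\boldsymbol{\sigma},\textbf{a}}|$, i.e., exactly the $\Lambda^{1/2}$ term in \eqref{am;asoiuw}. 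A standard stopping-time argument, as in Step 2, converts the high-moment estimates into the required stochastic-domination statement.

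\textbf{Main obstacle.} The hardest technical point will be the uniform-in-$(v,u)$ operator bound on $\mathcal{U}_{v,u,\boldsymbol{\sigma}}$ for non-alternating $\boldsymbol{\sigma}$ acting on tensors with the $(v,\tau,D)$-decay property. The proof requires factoring $\mathcal{U}$ into its $n$ one-dimensional propagator factors and exploiting that at least one factor $\Theta^{(B)}_{v\,m^2}$ (or its conjugate) carries no $\eta^{-1}$-singularity, while each remaining long factor $\Theta^{(B)}_{v\,|m|^2}$ is controlled through the kernel estimates already used in Step 2 together with the fast decay of the source guaranteed by Lemmas \ref{lem_decayLoop} and \ref{lem_BcalE}. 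This produces $\mathcal{U}$-bounds independent of $\eta$, in sharp contrast to the alternating case where the analogous blow-up had to be absorbed via the sum-zero cancellation of the self-energy.
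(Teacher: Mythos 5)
Your proposal follows the paper's proof of this lemma closely: you start from the integrated hierarchy \eqref{int_K-L_ST}, control the source terms via Lemma \ref{lem_BcalE} together with the fast-decay input from Lemma \ref{lem_decayLoop}, exploit the improved propagator estimate for non-alternating $\boldsymbol{\sigma}$ on fast-decay tensors (Case 1 of \eqref{sum_res_2}), and bound the martingale via Lemma \ref{lem:DIfREP}, BDG, and the hypothesis on $\Xi^{(\mathcal{L})}_{u,2n+2}$, which is exactly what the paper does. The only imprecision is your description of the Case-1 propagator estimate as ``$\eta$-independent'': the actual bound \eqref{sum_res_2} is $(\ell_v\eta_v/\ell_u\eta_u)^n\ge 1$, not $O(1)$; however, this factor is precisely what cancels against the ratio of the source prefactors $(W\ell_v\eta_v)^{-n}$ and $(W\ell_u\eta_u)^{-n}$, so the argument goes through unchanged.
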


\begin{proof}[Proof of Lemma \ref{lem:STOeq_NQ}]
 By  case 1 of \eqref{sum_res_2}, %for fast decay tensors satisfying \eqref{NALsigm},  
$$
\big\|{\cal U}_{u,t,\boldsymbol{\sigma}}\big\|_{\max\to \max}\Big|({\rm fast\; decay \;tensor satisfying \eqref{NALsigm}})\le C_n W^{C_n \tau} \cdot\left(\frac{\ell_u \eta_u}{\ell_t \eta_t}\right)^n.
$$
 By  assumptions of  
 Theorem \ref{lem:main_ind} on  $({\cal L-K})_{s}$, the $\cal E$ estimates in Lemma \ref{lem_BcalE}, and   the previous bound on  $\max\to \max$ norm of ${\cal U}_{u,t}$, 
 we have 
\begin{align}\label{sahwNQ}
  (W\ell_t\eta_t)^n\cdot      (\mathcal{L} - \mathcal{K})_{t, \boldsymbol{\sigma}, \textbf{a}} \;\prec\; \; &
  1+   \int_{s}^t  \left( \max_{k<n }\Xi^{({\cal L-\cal K})}_{u,k}\right) \cdot \frac{1}{\eta_u}   du 
    \nonumber \\
    &+ \int_{s}^t \left( \max_{k:\;2\le k\le n }\Xi^{({\cal L-\cal K})}_{u,k}\cdot \Xi^{({\cal L-\cal K})}_{u,n-k+2}\cdot(W\ell_u\eta_u)^{-1}\right)\cdot \frac{1}{\eta_u} du 
    \nonumber \\
    &+ \int_{s}^t \Xi^{(\cal L)}_{u,n+1}\cdot \frac1{\eta_u} du 
   +
   (W\ell_t\eta_t)^n\cdot\int_{s}^t \left(\mathcal{U}_{u, t, \boldsymbol{\sigma}} \circ    \mathcal{E}^{(M)}_{u, \boldsymbol{\sigma}}\right)_{\textbf{a}} du. 
\end{align}
By Lemma \ref{lem:DIfREP},  we know that 
\begin{equation} \label{sahwNQ2}
   \mathbb{E} \left [    \int_{s}^t \left(\mathcal{U}_{u, t, \boldsymbol{\sigma}} \circ \mathcal{E}^{(M)}_{u, \boldsymbol{\sigma}}\right)_{\textbf{a}} \right ]^{2p} 
   \le C_{n,p} \; 
   \mathbb{E}    \left(\int_{s}^t 
   \left(\left(
   \mathcal{U}_{u, t,  \boldsymbol{\sigma}}
   \otimes 
   \mathcal{U}_{u, t,  \overline{\boldsymbol{\sigma}}}
   \right) \;\circ  \;
   \left( \mathcal{E} \otimes  \mathcal{E} \right)
   _{u, \,\boldsymbol{\sigma}  }
   \right)_{{\textbf a}, {\textbf a}}du 
   \right)^{p}.
\end{equation} 
Using  \eqref{CalEbwXi} to bound $\mathcal{E} \otimes  \mathcal{E}$, we have 
\begin{align}\label{sahwNQ3}
   (W\ell_t\eta_t)^{2n}\cdot  \int_{s}^t  \left(\mathcal{U}_{u, t, (\boldsymbol{\sigma}, \overline{\boldsymbol{\sigma}})} \;\circ  \;
   \left( \mathcal{E} \otimes  \mathcal{E} \right)
   _{u, \,\boldsymbol{\sigma}  }
   \right)_{{\textbf a}, {\textbf a}}du
   \prec \int_{s}^t \Xi^{(\cal L)}_{u, {2n+2}}\cdot \frac{1}{\eta_u}du . 
\end{align}
This completes the proof of Lemma \ref{lem:STOeq_NQ}. 
 
\end{proof}

\subsection{Sum zero operator \texorpdfstring{${\cal Q}_t$}{Qt}}
In this subsection, we introduce another key tool, ${\cal Q}_t$,  for estimating the loop hierarchy. 
\begin{definition}[Definition of ${\cal Q}_t$]\label{Def:QtPt} Let ${\cal A}_\textbf{a}$ be a tensor with $\textbf{a}\in\mathbb Z_L^n$, $n\ge 2$.   Define ${\cal P}$  by
$$
\left( {\cal P} \circ {\cal A}\right)_{a_1}= \sum_{a_2,\, a_3\cdots a_n}  {\cal A}_{\textbf{a}} ,\quad \textbf{a}=(a_1,\cdots,a_n), 
$$
where the index $a_1$ was fixed. A tensor $A$ has a sum zero property if and only if $ {\cal P} \circ {\cal A}= 0$.   Define 
$$
 \left({\cal Q}_t\circ {\cal A}\right)_{\textbf{a}}={\cal A}_{\textbf{a}}-\left({\cal P} \circ {\cal A}\right)_{a_1}
 \cdot {  \vartheta}_{t,\,\textbf{a}},\quad 
 {  \vartheta}_{t, \textbf{a}}:=\big(1-t\big)^{n-1}\prod_{i=2}^n \left(\Theta^{(B)}_t\right)_{a_1a_i}.
$$
Since $\sum_b\left(\Theta^{(B)}_t\right)_{ab}=(1-t)^{-1}$, it is easy to check that $ {\cal P} \circ  \vartheta_{t,\textbf{a}}=1$ 
and  ${\cal P} \circ {\cal Q}_t =0$. 
\end{definition}
The reason for using $\vartheta_{t,\textbf{a}}$ in the definition of ${\cal Q}_t$ is to preserve both  the $\max$ norm and the rapid decay properties of ${\cal A}$, provided that ${\cal A}$ possesses such properties.

\begin{lemma}[Properties ${\cal Q}_t$]\label{lem_+Q} 
Let  $\cal A$ be a tensor $\mathbb Z_L^n\to \mathbb R$. Assume that  $\cal A$ has ($t, \tau, D$) decay property. % at time $t$ as in Def. \ref{Def_decay}. 
Then  
\begin{align}\label{normQA}
        \max_{\textbf{a}} \left({\cal Q}_t\circ \cal A\right)_{\textbf{a}} \le W^{C_n \tau} \cdot \max_{\textbf{a}} {\cal A}_{\textbf{a}}+W^{-D+C_n}. 
\end{align}
Furthermore,   
$\left({\cal Q}_t\circ \cal A\right)_{\textbf{a}}$ has the ($ t, \tau, D$) decay property. 
\end{lemma}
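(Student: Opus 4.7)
The plan is to bound the two ingredients of $\mathcal{Q}_t\circ \mathcal{A}$ separately and then combine them. By definition
\[
(\mathcal{Q}_t\circ \mathcal{A})_{\textbf{a}} = \mathcal{A}_{\textbf{a}} - (\mathcal{P}\circ \mathcal{A})_{a_1}\cdot \vartheta_{t,\textbf{a}},
\]
so it suffices to control $(\mathcal{P}\circ \mathcal{A})_{a_1}$ and $\vartheta_{t,\textbf{a}}$.

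First I would bound $(\mathcal{P}\circ \mathcal{A})_{a_1} = \sum_{a_2,\ldots,a_n} \mathcal{A}_{\textbf{a}}$. By the assumed $(t,\tau,D)$ decay property, the summand is $O(W^{-D})$ unless $\max_{ij}\|a_i-a_j\|\le \ell_t W^\tau$; this restricts the effective domain of summation to a set of size at most $C_n(\ell_t W^\tau)^{n-1}$ (with a negligible tail contribution of size $L^{n-1}W^{-D}$). This yields
\[
|(\mathcal{P}\circ \mathcal{A})_{a_1}| \;\le\; C_n(\ell_t W^\tau)^{n-1}\max_{\textbf{b}}|\mathcal{A}_{\textbf{b}}| + L^{n-1}W^{-D}.
\]
Second, using the $\max$-norm bound $\|\Theta^{(B)}_t\|_{\max}\le C/((1-t)\ell_t)$ from Lemma \ref{lem_propTH}, I would estimate
\[
|\vartheta_{t,\textbf{a}}| \;\le\; (1-t)^{n-1}\prod_{i=2}^n |(\Theta^{(B)}_t)_{a_1 a_i}| \;\le\; C^{n-1}\ell_t^{-(n-1)},
\]
so the product in the second term of $(\mathcal{Q}_t\circ \mathcal{A})_{\textbf{a}}$ is bounded by $C_n W^{(n-1)\tau}\max|\mathcal{A}| + W^{-D+C_n}$, where the $\ell_t^{n-1}$ factors cancel against the $(\ell_t W^\tau)^{n-1}$ from the first step. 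Adding the trivial bound $|\mathcal{A}_{\textbf{a}}|\le \max|\mathcal{A}|$ gives the asserted inequality \eqref{normQA}.

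For the decay property, suppose $\max_{ij}\|a_i-a_j\|\ge \ell_t W^\tau$. The first term $\mathcal{A}_{\textbf{a}}$ is already $O(W^{-D})$ by hypothesis. For the second term, the triangle inequality forces the existence of some $k\in\{2,\ldots,n\}$ with $\|a_1-a_k\|\ge \tfrac12 \ell_t W^\tau$; the corresponding factor $(\Theta^{(B)}_t)_{a_1 a_k}$ in $\vartheta_{t,\textbf{a}}$ then contributes the exponential $e^{-c W^\tau/2}$ via \eqref{prop:ThfadC}, which is super-polynomially small and dominates the polynomial bound on $(\mathcal{P}\circ \mathcal{A})_{a_1}$ derived above. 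Hence $(\mathcal{Q}_t\circ \mathcal{A})_{\textbf{a}}=O(W^{-D})$ as required, and $\mathcal{Q}_t\circ \mathcal{A}$ again enjoys the $(t,\tau,D)$ decay property.

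There is no real obstacle here; the only bookkeeping subtlety is ensuring that the product $(\ell_t W^\tau)^{n-1}\cdot \ell_t^{-(n-1)}$ cancels the $\ell_t$ dependence cleanly, so that only the polynomial factor $W^{C_n\tau}$ survives. The construction of $\vartheta_{t,\textbf{a}}$ is tailor-made for precisely this cancellation, which is why it is chosen as a product of $\Theta^{(B)}_t$ factors rooted at $a_1$ with the weight $(1-t)^{n-1}$ normalizing $\mathcal{P}\circ \vartheta_{t,\textbf{a}}=1$.
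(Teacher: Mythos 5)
Your proof is correct and gives the natural argument. The paper states Lemma \ref{lem_+Q} without an explicit proof, evidently regarding it as a routine consequence of the definitions; your argument fills in exactly the expected reasoning: the $(t,\tau,D)$ decay of $\mathcal{A}$ localizes the sum defining $(\mathcal{P}\circ\mathcal{A})_{a_1}$ to $O((\ell_t W^\tau)^{n-1})$ terms, the crude $\max$-norm bound $(\Theta^{(B)}_t)_{ab}\le C/((1-t)\ell_t)$ from \eqref{prop:ThfadC} gives $|\vartheta_{t,\textbf{a}}|\le C^{n-1}\ell_t^{-(n-1)}$, the $\ell_t$ powers cancel, and the exponential decay of a single $\Theta^{(B)}_t$ factor rooted at $a_1$ (via the pigeonhole/triangle-inequality step) makes the subtracted term super-polynomially small whenever $\max_{ij}\|a_i-a_j\|\ge\ell_t W^\tau$. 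The only mild caveat worth noting is that the decay conclusion implicitly requires $\max_{\textbf a}|\mathcal{A}_{\textbf a}|$ to be at most polynomial in $W$ (so the exponential factor can absorb it); this holds in every application in the paper and is consistent with the way the $O(W^{-D})$ notation is used there, so it is not a real gap.
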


We  now use  ${\cal Q}_t$ to improve our estimates on  ${\cal L} - {\cal K}$ integral representation \eqref{int_K-LcalE}.
Denote by $\dot \vartheta_{t,  \textbf{a}}=\frac{d}{dt}\vartheta_{t,  \textbf{a}}$. 
 From the hierarchy of \(\mathcal{L} - \mathcal{K}\) \eqref{eq_L-Keee}, we have 
\begin{align} \label{zjuii1}
    d{\cal Q}_t \circ (\mathcal{L} - \mathcal{K})_{t, \boldsymbol{\sigma}, \textbf{a}} 
    = & {\cal Q}_t \circ \Big[\mathcal{K} \sim (\mathcal{L} - \mathcal{K})\Big]^{l_\mathcal{K} = 2}_{t, \boldsymbol{\sigma}, \textbf{a}} 
    + \sum_{l_\mathcal{K} > 2} {\cal Q}_t \circ \Big[\mathcal{K} \sim (\mathcal{L} - \mathcal{K})\Big]^{l_\mathcal{K}}_{t, \boldsymbol{\sigma}, \textbf{a}} 
    \\\nonumber
    & + {\cal Q}_t \circ \mathcal{E}^{((\mathcal{L} - \mathcal{K}) \times (\mathcal{L} - \mathcal{K}))}_{t, \boldsymbol{\sigma}, \textbf{a}} 
    + {\cal Q}_t \circ \mathcal{E}^{(M)}_{t, \boldsymbol{\sigma}, \textbf{a}} 
    + {\cal Q}_t \circ \mathcal{E}^{(\widetilde{G})}_{t, \boldsymbol{\sigma}, \textbf{a}} 
    + \left[{\cal P} \circ \left(\mathcal{L} - \mathcal{K}\right)_{t, \boldsymbol{\sigma}} \cdot \dot \vartheta_t \right]_{\textbf{a}}.
\end{align} 
To  match the form appearing in Lemma \ref{Sol_CalL}, we write the first term on the right-hand side as 
\begin{align}\label{zjuii2}
{\cal Q}_t \circ \Big[\mathcal{K} \sim (\mathcal{L} - \mathcal{K})\Big]^{l_\mathcal{K} = 2}_{t, \boldsymbol{\sigma}, \textbf{a}}
& = {\cal Q}_t \circ \varTheta_{t, \boldsymbol{\sigma}} \circ (\mathcal{L} - \mathcal{K})_{t, \boldsymbol{\sigma}, \textbf{a}}
\\\nonumber
& = \varTheta_{t, \boldsymbol{\sigma}} \circ {\cal Q}_t \circ (\mathcal{L} - \mathcal{K})_{t, \boldsymbol{\sigma}, \textbf{a}}
+ \big[{\cal Q}_t , \varTheta_{t, \boldsymbol{\sigma}} \big] \circ (\mathcal{L} - \mathcal{K})_{t, \boldsymbol{\sigma}, \textbf{a}},
\end{align}
where  $\big[{\cal Q}_t , \varTheta_{t, \boldsymbol{\sigma}} \big] $ is the commutator
of ${\cal Q}_t $ and $\varTheta_{t, \boldsymbol{\sigma}} \big]$.
Since \({\cal P} \circ {\cal Q}_t = 0\), the left-hand side of \eqref{zjuii2} satisfies the sum-zero property. Furthermore, with the definition of \(\varTheta_{t, \boldsymbol{\sigma}} \) in Definition \ref{DefTHUST}, one can verify that if \({\cal A}\) has the sum-zero property,  so does \(\varTheta_{t, \boldsymbol{\sigma}}  \circ {\cal A}\), i.e., 
\[
{\cal P} \circ {\cal A} = 0 \implies {\cal P} \circ \varTheta_{t, \boldsymbol{\sigma}}  \circ {\cal A} = 0.
\]
It implies the first term in the right hand side of \eqref{zjuii2} has the sum-zero property. Since the left hand side of \eqref{zjuii2} also has this property, we obtain that the last term in \eqref{zjuii2} has sum zero property too, i.e., 
\begin{align}\label{pqthlk}
   {\cal P}\circ \big[{\cal Q}_t , \varTheta_{t,\boldsymbol{\sigma}} \big]\circ (\mathcal{L} - \mathcal{K}) _{t, \boldsymbol{\sigma}, \textbf{a}}=0. 
\end{align}
Then combining \eqref{zjuii1} and \eqref{zjuii2}, and applying Lemma \ref{Sol_CalL}, we obtain the following equation, paralleling to \eqref{int_K-LcalE}, 
 \begin{align}\label{int_K-L+Q}
  {\cal Q}_t\circ   (\mathcal{L} - \mathcal{K})_{t, \boldsymbol{\sigma}, \textbf{a}} \;=\; \; &
    \left(\mathcal{U}_{s, t, \boldsymbol{\sigma}} \circ  {\cal Q}_s\circ (\mathcal{L} - \mathcal{K})_{s, \boldsymbol{\sigma}}\right)_{\textbf{a}} \nonumber \\
    &+ \sum_{l_\mathcal{K} > 2} \int_{s}^t \left(\mathcal{U}_{u, t, \boldsymbol{\sigma}} \circ  {\cal Q}_u\circ \Big[\mathcal{K} \sim (\mathcal{L} - \mathcal{K})\Big]^{l_\mathcal{K}}_{u, \boldsymbol{\sigma}}\right)_{\textbf{a}} du \nonumber \\
    &+ \int_{s}^t \left(\mathcal{U}_{u, t, \boldsymbol{\sigma}} \circ  {\cal Q}_u\circ \mathcal{E}^{((\mathcal{L} - \mathcal{K}) \times (\mathcal{L} - \mathcal{K}))}_{u, \boldsymbol{\sigma}}\right)_{\textbf{a}} du \nonumber \\
    &+ \int_{s}^t \left(\mathcal{U}_{u, t, \boldsymbol{\sigma}} \circ  {\cal Q}_u\circ \mathcal{E}^{(\widetilde{G})}_{u, \boldsymbol{\sigma}}\right)_{\textbf{a}} du + \int_{s}^t \left(\mathcal{U}_{u, t, \boldsymbol{\sigma}} \circ  {\cal Q}_u\circ \mathcal{E}^{(M)}_{u, \boldsymbol{\sigma}}\right)_{\textbf{a}} du
    \nonumber \\
     &+ \int_{s}^t 
     \left(\mathcal{U}_{u, t, \boldsymbol{\sigma}} 
     \circ \left( \big[{\cal Q}_u , \varTheta_{u,\boldsymbol{\sigma}} \big]\circ (\mathcal{L} - \mathcal{K}) _{u, \boldsymbol{\sigma} }\right) \right)_{\textbf{a}} du
        \\\nonumber
    &+ \int_{s}^t 
     \left(\mathcal{U}_{u, t, \boldsymbol{\sigma}} 
     \circ \left[ {\cal P} \circ
      \left(\mathcal{L} - \mathcal{K}\right)_{u, \boldsymbol{\sigma}}
        \cdot \dot \vartheta_{u } \right]\right)_{\textbf{a}} du.
\end{align}
In the next lemma, we will utilize the fact that all terms on the right-hand side of \eqref{int_K-L+Q} exhibit both fast decay and sum-zero properties.

\begin{lemma}\label{lem:STOeq_Qt} 
Suppose that  the assumptions of 
 Theorem \ref{lem:main_ind},  the local law \eqref{Gt_bound_flow} and the decay property of $\cal L-\cal K$ \eqref{Eq:Gdecay_w} hold. 
If 
 $$
 \max_{u\in [s,t]}\, \Xi^{(\cal L)}_{u, {2n+2}}\prec {  \Lambda}
 $$
 for some deterministic quantity $ \Lambda \ge 1$, 
then 
\begin{align}\label{am;asoi222}
 \Xi^{({\cal L-\cal K})}_{u,n}
 \;\prec\;  &\; { \Lambda}^{1/2}+\max_{u\in [s,t]}
  \left(\; \max_{k<n }\Xi^{({\cal L-\cal K})}_{u,k}+\max_{k:\;2\le k\le n }\Xi^{({\cal L-\cal K})}_{u,k}\cdot \Xi^{({\cal L-\cal K})}_{u,n-k+2}\cdot(W\ell_u\eta_u)^{-1}+\Xi^{({\cal L})}_{u,n+1}\right).
\end{align}
Notice that  there is no prefactor depending on  $(\eta_s/\eta_t)$ on the right hand side of the last inequality.
\end{lemma}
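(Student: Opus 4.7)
The plan is to mimic the proof of Lemma \ref{lem:STOeq_NQ}, but starting from the sum-zero version \eqref{int_K-L+Q} of the integrated hierarchy rather than \eqref{int_K-LcalE}. The point of applying $\mathcal{Q}_u$ before acting by $\mathcal{U}_{u,t,\boldsymbol{\sigma}}$ is to render the input sum-zero, at which stage the long-range propagator $\Theta^{(B)}_{t|m|^2}$ inside $\mathcal{U}_{u,t,\boldsymbol{\sigma}}$ is integrated against a mean-zero tensor. This is precisely the mechanism that in the alternating case replaces the missing non-alternating bound used via Case~1 of \eqref{sum_res_2}, and should give
\[
\bigl\|\mathcal{U}_{u,t,\boldsymbol{\sigma}} \circ \mathcal{Q}_u \circ \mathcal{A}\bigr\|_{\max} \;\prec\; W^{C_n\tau}\bigl(\tfrac{\ell_u\eta_u}{\ell_t\eta_t}\bigr)^n\,\|\mathcal{A}\|_{\max}
\]
for any fast-decaying $\mathcal{A}$, without any factor $(\eta_s/\eta_t)^{\alpha}$. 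Combining this with Lemma \ref{lem_+Q} (which says $\mathcal{Q}_u$ preserves both $\max$ norm and the $(u,\tau,D)$ decay property up to a harmless $W^{C_n\tau}$ loss) and the $\mathcal{E}$-bounds from Lemma \ref{lem_BcalE} reproduces, line by line, the estimates \eqref{sahwNQ} and \eqref{sahwNQ3} of Lemma \ref{lem:STOeq_NQ} applied to $\mathcal{Q}_t\circ(\mathcal{L}-\mathcal{K})$ instead of $(\mathcal{L}-\mathcal{K})$.

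Thus the first five lines of \eqref{int_K-L+Q} contribute, after BDG for the martingale term, exactly the right-hand side of \eqref{am;asoi222}. The genuinely new piece is the last line
\[
\int_{s}^{t}\!\Bigl(\mathcal{U}_{u,t,\boldsymbol{\sigma}} \circ\bigl[\mathcal{P}\circ(\mathcal{L}-\mathcal{K})_{u,\boldsymbol{\sigma}}\cdot\dot\vartheta_{u}\bigr]\Bigr)_{\textbf{a}}\,du,
\]
which is not present in \eqref{int_K-LcalE}. The plan for this term is to use Ward's identity (Lemma \ref{lem_WI_K}): since both $\mathcal{L}$ and $\mathcal{K}$ satisfy the same Ward identity \eqref{WI_calL}--\eqref{WI_calK}, iterating it converts $\mathcal{P}\circ(\mathcal{L}-\mathcal{K})_{u,\boldsymbol{\sigma}}$ into a combination of $\mathcal{L}-\mathcal{K}$ loops of strictly shorter length, each multiplied by a factor $(W\eta_u)^{-(n-1)}$ from repeated application of \eqref{WI_calL}. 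Combined with the pointwise bound on $\dot\vartheta_{u}$ (which by Definition \ref{Def:QtPt} factorizes as $(1-u)^{n-2}$ times derivatives of $\Theta^{(B)}_u$, hence is controlled by the derivative bounds of Lemma \ref{lem_propTH}) and the straightforward $\|\mathcal{U}_{u,t,\boldsymbol{\sigma}}\circ\vartheta_u\|_{\max}$ estimate (since $\vartheta_u$ is a product of propagators, its image under $\mathcal{U}_{u,t,\boldsymbol{\sigma}}$ collapses to an expression of the same shape at time $t$), this term should be bounded by $\max_{k<n}\Xi^{(\mathcal{L}-\mathcal{K})}_{u,k}$, which is already present on the right-hand side of \eqref{am;asoi222}.

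Finally, to pass from the bound on $\mathcal{Q}_t\circ(\mathcal{L}-\mathcal{K})_{t,\boldsymbol{\sigma},\textbf{a}}$ to one on $(\mathcal{L}-\mathcal{K})_{t,\boldsymbol{\sigma},\textbf{a}}$ itself, I would use the decomposition
\[
(\mathcal{L}-\mathcal{K})_{t,\boldsymbol{\sigma},\textbf{a}} \;=\; \mathcal{Q}_t\circ(\mathcal{L}-\mathcal{K})_{t,\boldsymbol{\sigma},\textbf{a}} \;+\; \bigl(\mathcal{P}\circ(\mathcal{L}-\mathcal{K})_{t,\boldsymbol{\sigma}}\bigr)_{a_1}\cdot\vartheta_{t,\textbf{a}},
\]
with $\mathcal{P}\circ(\mathcal{L}-\mathcal{K})$ again controlled by Ward's identity in terms of shorter $\mathcal{L}-\mathcal{K}$ loops, and $\vartheta_{t,\textbf{a}}$ bounded by $\prod_{i\ge 2}(\Theta^{(B)}_t)_{a_1,a_i}$, which has $\max$ norm absorbed into the $\Xi^{(\mathcal{L}-\mathcal{K})}_{u,k}$ for $k<n$. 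Putting the three estimates together yields \eqref{am;asoi222}.

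The main obstacle, and the step that warrants the most care, is the sum-zero kernel estimate claimed in the first paragraph: that $\mathcal{U}_{u,t,\boldsymbol{\sigma}}$ acting on a sum-zero, fast-decaying tensor gains exactly the same $(\ell_u\eta_u/\ell_t\eta_t)^n$ factor that the non-alternating version provided via Case~1 of \eqref{sum_res_2}, without the $(\eta_s/\eta_t)$ expansion factor that appeared in Step~2. This is where the role of the sum-zero property established in Lemma \ref{lem:SZ} is indispensable: it is exactly the cancellation \eqref{res_SZ_SinMole} that converts a naive factor $\eta_u^{-n}$ from the alternating propagators into a bounded quantity once the input has mean zero. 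The remaining bookkeeping (Ward identity for $\mathcal{P}\circ(\mathcal{L}-\mathcal{K})$, BDG for the martingale, $\mathcal{E}$-bounds via Lemma \ref{lem_BcalE}) is then parallel to the proof of Lemma \ref{lem:STOeq_NQ}.
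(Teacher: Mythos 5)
Your overall plan matches the paper's: use $\mathcal{Q}_u$ to force sum-zero before applying $\mathcal{U}_{u,t,\boldsymbol{\sigma}}$, then reproduce the estimates of Lemma~\ref{lem:STOeq_NQ} without the $(\eta_s/\eta_t)$ expansion factor, then handle the new $\dot\vartheta$ term and undo the $\mathcal{Q}_t$-projection at the end. The final decomposition $(\mathcal{L}-\mathcal{K})_t = \mathcal{Q}_t\circ(\mathcal{L}-\mathcal{K})_t + (\mathcal{P}\circ(\mathcal{L}-\mathcal{K})_t)\cdot\vartheta_t$ and its control via Ward's identity are exactly what the paper does in \eqref{kolkisaf}. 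However, there are two genuine gaps.

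First, you misattribute the mechanism behind the sum-zero kernel estimate. You cite Lemma~\ref{lem:SZ} (the cancellation \eqref{res_SZ_SinMole}) as ``indispensable'' for the bound on $\mathcal{U}_{u,t,\boldsymbol{\sigma}}\circ\mathcal{Q}_u$, but that lemma is a property of the $\mathcal{K}$-loop self-energy $\Sigma^{(\emptyset)}$ and is used to prove the size bound on $\mathcal{K}$, not the evolution-kernel estimate. The operator-norm gain you need comes from Case~2 of \eqref{sum_res_2} in Lemma~\ref{lem:sum_decay}, whose proof is a summation-by-parts argument: writing $\Xi_i = \Xi_i^0 + \Xi_i^*$, the $\prod_i\Xi_i^0$ contribution drops out when the input is sum-zero. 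Nothing about $\Sigma^{(\emptyset)}$ enters.

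Second, and more seriously, your treatment of the extra term $\int_s^t\bigl(\mathcal{U}_{u,t,\boldsymbol{\sigma}}\circ[\mathcal{P}\circ(\mathcal{L}-\mathcal{K})_{u,\boldsymbol{\sigma}}\cdot\dot\vartheta_u]\bigr)_{\textbf{a}}\,du$ is not closed. You propose to bound $\mathcal{P}\circ(\mathcal{L}-\mathcal{K})$ via Ward's identity and $\dot\vartheta_u$ pointwise, and then appeal to an algebraic ``collapse'' of $\mathcal{U}_{u,t,\boldsymbol{\sigma}}\circ\vartheta_u$; but the object inside the integral involves $\dot\vartheta_u$, not $\vartheta_u$, and applying $\mathcal{U}_{u,t,\boldsymbol{\sigma}}$ to it in the max-to-max sense naively yields the $(\eta_u/\eta_t)^n$ blow-up you are trying to avoid. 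The paper's crucial observation is that $\mathcal{P}\circ\dot\vartheta_u=0$, so the full integrand $\mathcal{P}\circ(\mathcal{L}-\mathcal{K})_{u}\cdot\dot\vartheta_u$ is itself a sum-zero fast-decaying tensor, and Lemma~\ref{lem:sum_decay} Case~2 applies to it directly, giving the uniform factor $(\ell_u\eta_u/\ell_t\eta_t)^n$. Without that observation this term is not controlled. Also, the paper applies Ward once (gaining $(W\eta_u)^{-1}$) and pays for the remaining $n-2$ summations with the decay scale $\ell_u$, yielding $(W\eta_u)^{-n}\ell_u^{-1}\Xi^{(\mathcal{L}-\mathcal{K})}_{u,n-1}$; your claimed $(W\eta_u)^{-(n-1)}$ from repeated Ward applications does not account for the change in sign-vector structure after each application and does not match the paper's bookkeeping.
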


\begin{proof}[Proof of Lemma \ref{lem:STOeq_Qt}]
In the previous subsection, we have proved \eqref{am;asoi222} for the non-alternating case, i.e., \eqref{NALsigm}. Hence we only need to focus on the alternating case 
$
\forall \; k \quad \sigma_k=\overline \sigma_{k+1} $.
By Lemma \ref{lem_+Q},  % the following terms in   the r.h.s. of \eqref{int_K-L+Q}
$$
{\cal Q}_s\;\circ ({\cal  L-K})_s,\quad {\cal Q}\;\circ ({\cal K\sim L-K}),\quad {\cal Q}\;\circ{\cal E}^{\cal (L-K)\times (L-K)}, \quad {\cal Q}\;\circ {\cal E}^{(\widetilde G)} 
$$  
have
the sum zero and  fast decay properties. Furthermore, their $\max$ norms can be bounded with \eqref{normQA}, which amounts to  
$
\|{\cal Q}\|_{\max\to \max}\prec 1$.
By %emma \ref{lem:sum_decay}, 
\eqref{sum_res_2},    we have the following bound for  sum zero fast decay tensors: 
\begin{align}\label{jsaudyfa}
    \big\|{\cal U}_{u,t,\boldsymbol{\sigma}}\big\|_{\max\to \max}\Big |({\rm fast\; decay \;and \; sum \; zero\;tensor})\le C_n W^{C_n \tau} \cdot\left(\frac{\ell_u \eta_u}{\ell_t \eta_t}\right)^n.
\end{align}
Following the same argument in the proof of Lemma \ref{lem:STOeq_NQ}, we have 
  \begin{align}\label{sahwmis}
  (W\ell_t\eta_t)^n\cdot  {\cal Q}_t\circ   (\mathcal{L} - \mathcal{K})_{t, \boldsymbol{\sigma}, \textbf{a}} \;\prec\; \; &
  \; 1+\max_{u\in [s,t]}
  \left(\; \max_{k<n }\Xi^{({\cal L-\cal K})}_{u,k}+\max_{2\le k\le n }\Xi^{({\cal L-\cal K})}_{u,k}\cdot \Xi^{({\cal L-\cal K})}_{u,n-k+2}\cdot(W\ell_u\eta_u)^{-1}+\Xi^{({\cal L})}_{u,n+1}\right)
  \nonumber \\
 \; + \; & \;(W\ell_t\eta_t)^n\cdot\int_{s}^t \left(\mathcal{U}_{u, t, \boldsymbol{\sigma}} \circ   Q_t\circ  \mathcal{E}^{(M)}_{u, \boldsymbol{\sigma}}\right)_{\textbf{a}} du 
   \nonumber \\ 
     \; + \; &\; (W\ell_t\eta_t)^n\int_{s}^t 
     \left(\mathcal{U}_{u, t, \boldsymbol{\sigma}} 
     \circ \left( \big[{\cal Q}_u , \varTheta_{u,\boldsymbol{\sigma}} \big]\circ (\mathcal{L} - \mathcal{K}) _{u, \boldsymbol{\sigma} }\right) \right)_{\textbf{a}} du
        \\\nonumber
 \; + \; &\;(W\ell_t\eta_t)^n\int_{s}^t 
     \left(\mathcal{U}_{u, t, \boldsymbol{\sigma}} 
     \circ \left[ {\cal P} \circ
      \left(\mathcal{L} - \mathcal{K}\right)_{u, \boldsymbol{\sigma}}
        \cdot \dot \vartheta_{u } \right]\right)_{\textbf{a}} du.
 \end{align}

Next, we estimate the last line in \eqref{sahwmis}. Since ${\cal P}  \circ \vartheta_{t,  \textbf{a}}=1$, we have ${\cal P}   \circ \dot  \vartheta_{t,  \textbf{a}}=0$ and 
$$
 {\cal P}  \circ \left[ {\cal P} \circ
      \left(\mathcal{L} - \mathcal{K}\right)_{u, \boldsymbol{\sigma}}
        \cdot \dot \vartheta_{u } \right]_{\textbf{a}}
        =\left({\cal P} \circ
      \left(\mathcal{L} - \mathcal{K}\right)_{u, \boldsymbol{\sigma}}\right)_{a_1} {\cal P}  \circ \dot \vartheta_{u,  \textbf{a}}=0.
$$
 Therefore, this term also has the sum zero property. From its  definition,  $\vartheta$ has a fast decay property. Then with \eqref{sum_res_2} (case 2), we have 
\begin{align}\label{jfasiuu}
 \int_{s}^t 
     \left(\mathcal{U}_{u, t, \boldsymbol{\sigma}} 
     \circ \left[ {\cal P} \circ
      \left(\mathcal{L} - \mathcal{K}\right)_{u, \boldsymbol{\sigma}}
        \cdot \dot \vartheta_{u } \right]\right)_{\textbf{a}} du  
         \;   \prec \;&\;\int_s^t \left(\frac{\ell_u\eta_u}{\ell_t\eta_t}\right)^{n}\cdot 
          \max_{a} \left|\left[{\cal P} \circ
      \left(\mathcal{L} - \mathcal{K}\right)_{u, \boldsymbol{\sigma}}\right]_a\right|
      \max_{\textbf{a}}\left| 
          \dot \vartheta_{u,\textbf{a} } \right|du .
\end{align}
We claim that for non-constant $\boldsymbol{\sigma}$, i.e., $\{\sigma_1, \cdots, \sigma_n\} = \{+, -\}$, the following holds:  
\begin{align}\label{jywiiwsoks}
    \left[{\cal P} \circ
    \left(\mathcal{L} - \mathcal{K}\right)_{u, \boldsymbol{\sigma}}\right]_a \prec (W\eta_u)^{-n} \ell_u^{-1} \cdot \Xi^{(\mathcal{L}-\mathcal{K})}_{u, n-1}
\end{align}
for any loop length $n$.   To see this, for a fixed non-constant $\boldsymbol{\sigma}$, there exists $1 < k \leq n$ such that $\sigma_k = \overline{\sigma}_{k+1}$ (i.e., a pair of opposite charges). By Ward's identity (Lemma \ref{lem_WI_K}), we can sum $\mathcal{L} - \mathcal{K}$ of rank $n$ over the index $a_k$ and express the resulting sum in terms of $\mathcal{L} - \mathcal{K}$ of rank $n-1$ with a multiplicative factor $(W \eta_u)^{-1}$. The summation over the remaining indices contributes an additional factor of $\ell_u^{n-2}$ due to the fast decay property.

 On the other hand, with the definition of $\vartheta$ and $\Theta^{(B)}$, we have 
$$
   \max_{\textbf{a}}\left| 
       \dot \vartheta_{u,\textbf{a} } \right|\prec \ell_u^{-n+1}\cdot \eta_u^{-1}.  
$$
 Inserting them back to \eqref{jfasiuu}, we obtain that
\begin{align}\label{kkuuwsaf}
  \int_{s}^t 
     \left(\mathcal{U}_{u, t, \boldsymbol{\sigma}} 
     \circ \left[ {\cal P} \circ
      \left(\mathcal{L} - \mathcal{K}\right)_{u, \boldsymbol{\sigma}}
        \cdot \dot \vartheta_{u } \right]\right)_{\textbf{a}} du  \prec (W\ell_t\eta_t)^{-n}\max_{u\in [s,t]} \Xi^{(\cal L-K)}_{u,\; n-1}. 
 \end{align}
Similarly, we can estimate the second line of \eqref{sahwmis}.  By definition of $\varTheta$ and Lemma \ref{lem_+Q},    $ {\varTheta}_u \circ {\cal A}$ and ${\cal Q}_u  \circ {\cal A}$
 are fast decay  if ${\cal A}_u$ is fast decay. Since $(\mathcal{L}-\mathcal{K})_u$ has fast decay property at time $u$,  so does $\left[\mathcal{Q}_u, \Theta_{u, \sigma}\right] \circ(\mathcal{L}-\mathcal{K})$.  On the other hand,  we just showed  that $
\left[\mathcal{Q}_u, \Theta_{u, \sigma}\right] \circ(\mathcal{L}-\mathcal{K})
$ also has sum zero property from \eqref{pqthlk}.  
Therefore,  with  \eqref{sum_res_2} (case 2), we have 
\begin{align}\label{jfasiuu222}
 \int_{s}^t 
   \left(\mathcal{U}_{u, t, \boldsymbol{\sigma}} 
     \circ \left( \big[{\cal Q}_u , \varTheta_{u,\boldsymbol{\sigma}} \big]\circ (\mathcal{L} - \mathcal{K}) _{u, \boldsymbol{\sigma} }\right) \right)_{\textbf{a}} du  
         \;   \prec \;&\;\int_s^t \left(\frac{\ell_u\eta_u}{\ell_t\eta_t}\right)^{n}\cdot 
          \max_{a} 
          \left|\left(\big[{\cal Q}_u , \varTheta_{u,\boldsymbol{\sigma}} \big]\circ (\mathcal{L} - \mathcal{K}) _{u, \boldsymbol{\sigma} }\right)_{\textbf{a}}\right|du.
\end{align}
By definition, we can bound 
\begin{align}
   \big[{\cal Q}_u , \varTheta_{u,\boldsymbol{\sigma}} \big]\circ (\mathcal{L} - \mathcal{K}) _{u, \boldsymbol{\sigma}, \textbf{a}}
   \;=\; & \; \left[ \left({\cal P} \circ \left(\varTheta_{u, \boldsymbol{\sigma}} \circ (\mathcal{L} - \mathcal{K})\right)\right) \cdot \vartheta_u \right]_{\textbf{a}} 
    - \varTheta_{u, \boldsymbol{\sigma}} \circ \left[ {\cal P} \circ \left( \mathcal{L} - \mathcal{K} \right) \cdot \vartheta_t \right]_{\textbf{a}}.
\\\nonumber
 \;\prec \; & \;  \|\vartheta_u\|_{\max}\|\cdot \eta_u^{-1}\max_a\left[ {\cal P} \circ \left(  \mathcal{L} - \mathcal{K} \right)_{u,\boldsymbol{\sigma}} \right]_{a} .
\end{align} 
Inserting it back to \eqref{jfasiuu222} and using \eqref{jywiiwsoks}, we obtain that 
\begin{align}\label{jfasiuu333}
 \int_{s}^t 
   \left(\mathcal{U}_{u, t, \boldsymbol{\sigma}} 
     \circ \left( \big[{\cal Q}_u , \varTheta_{u,\boldsymbol{\sigma}} \big]\circ (\mathcal{L} - \mathcal{K}) _{u, \boldsymbol{\sigma} }\right) \right)_{\textbf{a}} du  
         \;   \prec  (W\ell_t\eta_t)^{-n}\max_{u\in [s,t]} \Xi^{(\cal L-K)}_{u,\; n-1}.
\end{align}
Furthermore, we can write the left hand side  of \eqref{sahwmis} as 
\begin{align}\label{kolkisaf} 
{\cal Q}_t\circ   (\mathcal{L} - \mathcal{K})_{t, \boldsymbol{\sigma}, \textbf{a}}
 =(\mathcal{L} - \mathcal{K})_{t, \boldsymbol{\sigma}, \textbf{a}}-{\cal P} \circ   (\mathcal{L} - \mathcal{K})_{t, \boldsymbol{\sigma}, \textbf{a}}\cdot \vartheta_{\textbf{a}}
 =(\mathcal{L} - \mathcal{K})_{t, \boldsymbol{\sigma}, \textbf{a}}
 +O_{\prec} \left((W\ell_t\eta_t)^{-n}  \Xi^{(\cal L-K)}_{t,\; n-1}\right).
\end{align}
Inserting \eqref{kkuuwsaf}, \eqref{jfasiuu333} and \eqref{kolkisaf} into \eqref{sahwmis}, we obtain 
 \begin{align}\label{sahwmis2ws}
  (W\ell_t\eta_t)^n\cdot    (\mathcal{L} - \mathcal{K})_{t, \boldsymbol{\sigma}, \textbf{a}} \;\prec\; \; &
  \; 1+\max_{u\in [s,t]}
  \left(\; \max_{k<n }\Xi^{({\cal L-\cal K})}_{u,k}+\max_{k:\;2\le k\le n }\Xi^{({\cal L-\cal K})}_{u,k}\cdot \Xi^{({\cal L-\cal K})}_{u,n-k+2}\cdot(W\ell_u\eta_u)^{-1}+\Xi^{({\cal L})}_{u,n+1}\right)
  \nonumber \\
 \; + \; & \;(W\ell_t\eta_t)^n\cdot\int_{s}^t \left(\mathcal{U}_{u, t, \boldsymbol{\sigma}} \circ   {\cal Q}_t\circ  \mathcal{E}^{(M)}_{u, \boldsymbol{\sigma}}\right)_{\textbf{a}} du .
 \end{align}
For the martingale term, one can easily derive that 
\begin{align} \label{UEUEdif+Q}
  \left[\int \left(\mathcal{U}_{u, t, \boldsymbol{\sigma}} \circ {\cal Q}_u\circ \mathcal{E}^{(M)}_{u, \boldsymbol{\sigma}}\right)_{\textbf{a}} \right ]_t
\;=\; &  
  \int_s^t
\sum_{\alpha}\left|\sum_{k=1}^n \mathcal{U}_{u, t, \boldsymbol{\sigma}} \circ {\cal Q}_u\circ\mathcal{E}^{(M)}_{u, \boldsymbol{\sigma}}(\alpha, k)
 \right|^2 du
 \nonumber \\
 \;\le \; & C_n\;  \int_s^t
\sum_{\alpha}\sum_{k=1}^n\left| \mathcal{U}_{u, t, \boldsymbol{\sigma}} \circ {\cal Q}_u\circ\mathcal{E}^{(M)}_{u, \boldsymbol{\sigma}}(\alpha, k)
 \right|^2 du
\nonumber \\
 \;=  \; & C_n\;  \int_{s}^t
 \left( \left(
   \mathcal{U}_{u, t,  \boldsymbol{\sigma}}
   \otimes 
   \mathcal{U}_{u, t,  \overline{\boldsymbol{\sigma}}}
   \right)\circ 
\left( \mathcal{Q}_u\otimes \mathcal{Q}_u\right) \circ \left( \mathcal{E}\otimes  \mathcal{E} \right)_{u, \,\boldsymbol{\sigma}}\right)_{\textbf{a}, \textbf{a}}du, 
\end{align}
where 
\begin{align}
\left(\left( \mathcal{Q} _u\otimes \mathcal{Q}_u \right)\circ {\cal A}\right)_{\textbf{a}, \textbf{b}} 
 \;=\; & \; 
{\cal A }_{\textbf{a}, \textbf{b} }-
\delta_{a_1', a_1}\sum_{a_2'\cdots a_n'}{\cal A }_{ \textbf{a}' , \,\textbf{b}}\cdot \vartheta_{u, \textbf{a}}
-\delta_{b_1', b_1}
\sum_{b_2'\cdots b_n'}{\cal A }_{\textbf{a}, \textbf{b}'}\cdot \vartheta_{u, \textbf{b} }
\nonumber \\ \;+\; & \; 
\delta_{a_1', a_1}
\delta_{b_1', b_1}
\sum_{a'_2\cdots a'_n}\sum_{b_2'\cdots b_n'}{\cal A }_{\textbf{a}', \textbf{b}'}\cdot \vartheta_{u, \textbf{a}}\cdot \vartheta_{u, \textbf{b}}  , 
\end{align}
where 
$
\textbf{a}=(a_1,\cdots, a_n), \quad \textbf{a}'=(a'_1,\cdots, a'_n)$ and similarly for 
 $\textbf{b}$ and $\textbf{b}'$. By definition of   $\vartheta$, it is easy to check that 
$$
\sum_{a_2,\cdots,a_n}\sum_{b_1\cdots b_n}\left(\left( \mathcal{Q} _u\otimes \mathcal{Q}_u \right)\circ {\cal A}\right)_{\textbf{a}, \textbf{b}}=0.
$$
Using \eqref{jsaudyfa} on the fast decay tensor $\mathcal{E}\otimes  \mathcal{E} $, we have 
\begin{align}
  &  \left( \left(
   \mathcal{U}_{u, t,  \boldsymbol{\sigma}}
   \otimes 
   \mathcal{U}_{u, t,  \overline{\boldsymbol{\sigma}}}
   \right)\circ 
\left( \mathcal{Q}_u\otimes \mathcal{Q}_u\right) \circ \left( \mathcal{E}\otimes  \mathcal{E} \right)_{u, \,\boldsymbol{\sigma}}\right)_{\textbf{a}, \textbf{a}}
\;\prec\; \; \left(\frac{\ell_u \eta_u}{\ell_t \eta_t}\right)^{2n}\max_{\textbf{a},\textbf{a}'}
\left(  
\left( \mathcal{Q}_u\otimes \mathcal{Q}_u\right) \circ \left( \mathcal{E}\otimes  \mathcal{E} \right)_{u, \,\boldsymbol{\sigma}}\right)_{\textbf{a}, \textbf{a}'}
\nonumber \\\;\prec\;&\;
 \left(\frac{\ell_u \eta_u}{\ell_t \eta_t}\right)^{2n}\max_{\textbf{a},\textbf{a}'}
\left(     \left( \mathcal{E}\otimes  \mathcal{E} \right)_{u, \,\boldsymbol{\sigma}}\right)_{\textbf{a}, \textbf{a}'} \;\prec\;\;
 \left(W \ell_t \eta_t\right)^{-2n}\cdot \eta_u^{-1}\cdot \Xi^{\cal L}_{u,2n+2}, 
\end{align}
where we have used \eqref{CalEbwXi}. Inserting  it back to \eqref{UEUEdif+Q} and   using  BDG inequality and the  assumption $\Xi^{\cal L}_{u,2n+2}\prec \Lambda$,  we  obtain that  
$$
\left(W \ell_t \eta_t\right)^n \cdot \int_s^t\left(\mathcal{U}_{u, t, \boldsymbol{\sigma}} \circ {\cal Q}_u \circ \mathcal{E}_{u, \boldsymbol{\sigma}}^{(\mathrm{M})}\right)_{\mathbf{a}} d u\prec \Lambda ^{1/2}.
$$
This completes the proof of Lemma \ref{lem:STOeq_Qt}. 
 
\end{proof}
 
 \subsection{Proof of Theorem \ref{lem:main_ind}, step 3}
Recall  that in this step, in addition to the assumptions in Theorem \ref{lem:main_ind}, we also have the local law \eqref{Gt_bound_flow} and the decay property for $\cal L-\cal K$ \eqref{Eq:Gdecay_w}. With these inputs, we obtained \eqref{am;asoi222} %nd \eqref{sadfuy} 
in Lemma \ref{lem:STOeq_Qt}.  In this step,  we aim to prove the correct bound \eqref{Eq:LGxb} on $G$-loop. 
We  have proved in  Step 1 the following bounds 
\begin{align}\label{sef8w483r324}
  \Xi^{(\cal L)}_{u,n}=(\ell_u/\ell_s)^{n-1}, \quad 
 \Xi^{({\cal L-K})}_{u, n}\prec (W\ell_u\eta_u)\cdot (\ell_u/\ell_s)^{n-1} .
\end{align}
Using  the bound \eqref{eq:bcal_k} on $\cal K$, we can relate $\Xi^{(\cal L)}$ with $\Xi^{(\cal L-\cal K)}$ as follows:
\begin{align}\label{rela_XILXILK}
    \Xi^{(\cal L)}_{u,n}\prec 1+(W\ell_u\eta_u)^{-1}\cdot \Xi^{({\cal L-\cal K})}_{u, n},\quad \quad \Xi^{({\cal L-\cal K})}_{u, n}
\prec (W\ell_u\eta_u) \left(\Xi^{(\cal L)}_{u,n}+1\right).
\end{align}
Denote by 
\begin{equation}\label{adsyzz0s8d6}
      \Psi(n,k,s,u,t):=  (W\ell_s\eta_s)^{1/2}+(\ell_t/\ell_s)^{n-1}\times
    \begin{cases}
        (W\ell_u\eta_u)^{1-k/4} &  k=0\\
        (W\ell_s\eta_s)^{1-k/4} &  k\ge 1 
    \end{cases},\quad u\in [s,t] .
\end{equation}
We say that the estimate $ {\cal S}(n,k,s,u,t)$ holds if $\Xi^{({\cal L-K})}_{ u,n}\prec   \Psi(n,k,s,u,t)$.
We will prove that for any $n \ge 3$ and $k \ge 1$ %{aysdof}
\begin{align}\label{St3MnAsm} 
{\cal S}(m, l,s,u,t)  \text{ holds for }  \{(m, l) : l=k \text{ and }  m \le n-1, \text{ or } l=k-1,   m \le n+2 \}  
\implies 
{\cal S}(n,k,s,u,t)  \text{ holds}.
\end{align} 
Suppose the last statement  holds. We now prove  \eqref{Eq:LGxb}. 

Before delving into the detailed proof, we note that a heuristic argument will be presented  after the proof. This heuristic provides a high-level, intuitive explanation of the underlying ideas and complements the rigorous derivation offered here. Readers are encouraged to refer to these paragraphs, either before or after reading the detailed proof, for additional insights. 

\begin{proof}[Proof of \eqref{Eq:LGxb}]
With the bound \eqref{lRB1} for $\cal L$ and \eqref{eq:bcal_k_2} for $\cal K$, 
${\cal S}(m, l,s,u,t)$  holds for $l=0$ and any  $m\ge 1$. 
By  \eqref{Eq:Gdecay_w}, 
${\cal S}(m, l,s,u,t)$ holds for any $l$ and $m\le 2$. By \eqref{St3MnAsm}, ${\cal S}(m, l,s,u,t)$ holds for $(3, 1)$. 
Then we apply  \eqref{St3MnAsm} again and ${\cal S}(m, l,s,u,t)$ holds for $(4, 1)$. We can continue this procedure until 
${\cal S}(m, l,s,u,t)$ holds for  $(n, 1)$ for any $n$. We can now repeat this process and prove that ${\cal S}(m, l,s,u,t)$ holds for  $(n, 2)$ for any $n$. Eventually, the induction implies that ${\cal S}(n, k,s,u,t)$ for any $n, k$. 
By   condition \eqref{con_st_ind}, for any fixed $n$  there is a large enough $k$ so that 
$$
 (\ell_t/\ell_s)^{n-1}(W\ell_s\eta_s)^{1 -k/4} \ll  1.
$$
Thus the fact that $ {\cal S}(n,k,s,u,t)$ holds for such $n, k$ implies that 
$\Xi^{({\cal L-K})}_{ u,n}\prec (W\ell_s\eta_s)^{1/2}$. Together with   \eqref{rela_XILXILK}, we have proved   \eqref{Eq:LGxb}. 

Now we only need to prove \eqref{St3MnAsm}. Recall 
at the time $s$ that the following initial bound holds: 
\begin{equation}\label{inasfua}
    \Xi_{s,n }^{(\mathcal{L}-\mathcal{K})} \prec 1.
\end{equation}
We claim that 
 \begin{align} \label{xiu2n+2psi}
    \Xi^{({\cal L})}_{u, 2n+2}
 \prec \Psi(n,k,s,u,t)^2.
\end{align}
Using  this bound as an input, from  \eqref{am;asoi222} we obtain that 
\begin{align} \label{sadui_w0}
 \Xi_{t',\,n }^{(\mathcal{L}-\mathcal{K})}  \prec\;  &\; \max_{v\in [s,t']}
  \left(\Psi(n,k,s,v,t')+\; \max_{m<n }\Xi^{({\cal L-\cal K})}_{v,m}+\max_{m:\;2\le m\le n }\Xi^{({\cal L-\cal K})}_{v,m}\cdot \Xi^{({\cal L-\cal K})}_{v,n-m+2}\cdot(W\ell_v\eta_v)^{-1}+\Xi^{({\cal L})}_{v,n+1}\right)
 \end{align}
 for any $s \le t' \le t$.  We aim to prove the conclusion of  \eqref{St3MnAsm} for a fixed $(n, k)$.  Using \eqref{rela_XILXILK}, the last term $ \Xi^{({\cal L})}_{v,n+1}$ can be bounded by $ \Xi^{{(\cal L-\cal K})}_{v,n+1}$ with a  small prefactor.  
Inserting  this  bound and the assumption \eqref{St3MnAsm} into the last display,  we have 
\begin{align}\label{sadui-w}
    \Xi_{t',\,n }^{(\mathcal{L}-\mathcal{K})} \; \prec\; \max_{v\in[s,t']}\Psi(n,k ,s,v,t').
\end{align}
Here we have used $k\ge 1$.  Setting $t'=u$ and using the monotonicity of $\Psi$ in the $t$ variable, we have proved  the conclusion of  \eqref{St3MnAsm}. 

We now  prove \eqref{xiu2n+2psi} for $\Xi^{(\cal L)}_{u, {2n+2}}$ under the assumption in \eqref{St3MnAsm}.  First, the long loop can be bounded with  short chains.  Let 
\[
\boldsymbol{\sigma} = (\sigma_1, \sigma_2, \ldots, \sigma_n), \quad \textbf{a} = (a_1, a_2, \ldots, a_{n-1}), \quad \sigma_k \in \{+, -\}, \quad a_k \in \mathbb{Z}_L.
\]
We temporally define a $G$-chain as
\begin{equation}\label{defC=GEG}
    {\cal C}_{t, \boldsymbol{\sigma}, \textbf{a}} = G_t(\sigma_1) E_{a_1} G_t(\sigma_2) E_{a_2} \cdots E_{a_{n-1}} G_t(\sigma_n).
\end{equation}
 Given an arbitrary rank $(2n+2)$ loop  ${\cal L}^{(2n+2)}_{u,\boldsymbol{\sigma},\ba}$ with 
$$\boldsymbol{\sigma}=(\sigma, \sigma_1,\ldots, \sigma_n, \sigma',\sigma_1',\ldots, \sigma_n'),\quad \ba=(b, a_1 ,\ldots, a_n , b'  , a_1' ,\ldots, a_n' ),$$ 
we can write it is 
\begin{align*}
{\cal L}^{(2n+2)}_{u,\boldsymbol{\sigma},\ba}=W^{-2d}\sum_{i\in{\cal I}_b,\,j\in{\cal I}_{b'}} \left({\cal C}^{(n+1)}_{u,\boldsymbol{\sigma}_1,\ba_1}\right)_{ij} \left({\cal C}^{(n+1)}_{u,\boldsymbol{\sigma}_2,\ba_2}\right)_{ji} ,  
\end{align*}
where $\boldsymbol{\sigma}_1=(\sigma, \sigma_1,\ldots, \sigma_n, \sigma')$, $\boldsymbol{\sigma}_2=(\sigma_1',\ldots, \sigma_n',\sigma)$, $\ba_1=(a_1,\ldots, a_n )$, and $\ba_2=( a_1' ,\ldots, a_n' )$. Applying the Cauchy-Schwarz inequality, we obtain that 
\begin{equation}\label{suauwiioo1}
\max_{\ba }\max_{\boldsymbol{\sigma}\in \{+,-\}^{2n+2}}\left|{\cal L}^{(2n+2)}_{u,\boldsymbol{\sigma},\ba}\right|
\le 
\max_{\ba_1}\max_{\boldsymbol{\sigma}_1 \in\{+,-\}^{n+1}} W^{-2 }\sum_{i\in{\cal I}_b,\,j\in{\cal I}_{b'}}
\left|\left({\cal C}^{(n+1)} _{u,\boldsymbol{\sigma}_1,\ba_1}\right)_{ij} 
\right|^2 .
\end{equation}
We now split the $(n+1)$ chain further into an $l_1$-$G$ chain and an $l_2$-$G$ chain, where $l_1=\lfloor(n+1)/{2}\rfloor$ and $l_2=n+1-l_1$. 
Using the Cauchy-Schwarz inequality again, we get that  \begin{align}\label{suauwiioo2}
    W^{-2 }\sum_{i\in{\cal I}_b,\,j\in{\cal I}_{b'}}
\left|\left({\cal C}^{(n+1)} _{u,\boldsymbol{\sigma}_1,\ba_1}\right)_{ij} 
\right|^2 &\le W^{-2 }\sum_{i\in{\cal I}_b,\,j\in{\cal I}_{b'}} \left({\cal C}^{(2l_1)}_{u,\boldsymbol{\sigma}_1',\ba_1'}\right)_{ii} \left({\cal C}^{(2l_2)}_{u,\boldsymbol{\sigma}_2',\ba_2'}\right)_{jj}  \;=\; {\cal L}^{(2l_1)}_{u,\boldsymbol{\sigma}_1',\wt\ba_1}\cdot {\cal L}^{(2l_2)}_{u,\boldsymbol{\sigma}_2',\wt\ba_2},
\end{align}
where $\boldsymbol{\sigma}_1'$, $\ba_1'$, $\boldsymbol{\sigma}_2'$, $\ba_2'$ are defined by  
\begin{align*}
\boldsymbol{\sigma}_1'=(\sigma_1,\ldots,\sigma_{l_1},-\sigma_{l_1},\ldots,-\sigma_1),\quad  &\ba_1'=( a_1 ,\ldots,  a_{l_1-1} , a_{l_1} , a_{l_1-1} ,\ldots,  a_1 ),\\
\boldsymbol{\sigma}_2'=(-\sigma_{n+1},\ldots,-\sigma_{l_1+1},\sigma_{l_1+1},\ldots,\sigma_{n+1}),\quad &\ba_2'=( a_{n} ,\ldots,  a_{l_1+1} , a_{l_1} , a_{l_1+1} ,\ldots,  a_{n} ),
\end{align*}
$\wt\ba_1$ is obtained by adding $ b $ to $\ba_1'$, and $\wt\ba_2$ is obtained by adding $ b $ to $\ba_2'$. Plugging \eqref{suauwiioo2} into \eqref{suauwiioo1} and using that $l_1+l_2=n+1$, we obtain
\begin{align} \label{asu-2w}
   \max_{\ba }\max_{\boldsymbol{\sigma}\in \{+,-\}^{2n+2}}\left|{\cal L}^{(2n+2)}_{u,\boldsymbol{\sigma},\ba}\right|\le 
    \left(\max_{\ba }\max_{\boldsymbol{\sigma}\in \{+,-\}^{2l_1}}\left|{\cal L}^{(2l_1)}_{u,\boldsymbol{\sigma},\ba}\right|\right)\cdot \left(\max_{\ba }\max_{\boldsymbol{\sigma}\in \{+,-\}^{2l_2}}\left|{\cal L}^{(2l_2)}_{u,\boldsymbol{\sigma},\ba}\right|\right), 
\end{align}
i.e.
\begin{align} \label{asuwmpwpisu}
\Xi^{({\cal L})}_{u, 2n+2}\le \Xi^{({\cal L})}_{u, 2l_1}\cdot \Xi^{({\cal L})}_{u, 2l_2}\cdot (W\ell_u\eta_u).
\end{align}  
Since   $\max(2l_1, 2l_2)\le n+2$, with the assumption of \eqref{St3MnAsm} and \eqref{asuwmpwpisu}, we can bound $\Xi^{({\cal L})}_{u, 2n+2}$ as follows
\begin{align}
 \Xi^{({\cal L})}_{u, 2n+2}
\; \prec \; &(W\ell_u\eta_u)\cdot \prod_{m=1}^2\left(1+(W\ell_u\eta_u)^{-1}\cdot \Psi(2l_1, k-1, s,u,t)\right) . 
 \end{align} 
 Then with $l_1+l_2=n+1$, and $|l_1-l_2|\le 1$,  we obtain  that
 \begin{align}
\Xi^{({\cal L})}_{u, 2n+2}\; \prec \; &\left((W\ell_s\eta_s)^{1/2}+(\ell_t/\ell_s)^{n-1}(W\ell_s\eta_s)^{1-k/4}\right)^2
\; = \; &\Psi(n,k,s,u,t)^2.
 \end{align} 
Hence we have proved  \eqref{xiu2n+2psi}. This completes the proof of \eqref{Eq:LGxb}, i.e., the Step 3 of  proving Theorem \ref{lem:main_ind}.

\end{proof}

 Roughly speaking,  an  $n-{\cal L}_u$ loop is of order $A_u^{-n+1}$ with $A_u \sim  W\ell_u\eta_u$. 
 Assuming $n$ is odd for simplicity,  from \eqref{asuwmpwpisu} we  have that
 \begin{equation} 
  \Xi^{({\cal L})}_{u,2n+2} 
\le \big ( \Xi^{({\cal L})}_{u,n+1} \big )^2 A_u 
\end{equation}
Hence    \eqref{am;asoi222} takes the form 
\begin{align} %\label{sadui_w0}
 \Xi_{t',\,n }^{(\mathcal{L}-\mathcal{K})}  \prec\;  \; \max_{v\in [s,t']}
  \left(   \Xi^{({\cal L})}_{v,n+1}  A_v^{1/2} +\; \max_{m<n }\Xi^{({\cal L-\cal K})}_{v,m}+\max_{m:\;2\le m\le n }\Xi^{({\cal L-\cal K})}_{v,m}\cdot \Xi^{({\cal L-\cal K})}_{v,n-m+2}A_v^{-1}+\Xi^{({\cal L})}_{v,n+1}\right)
 \end{align}
 The third term on the right hand side is of lower order while the second term is bounded by induction on $n$. We are left with the first and the last terms which involving  $n+1$ (and $n+2$ in case $n$ is even) $\cal L$ loops. By \eqref{rela_XILXILK}, 
 \begin{align} 
    \Xi^{(\cal L)}_{v,n+1}\prec 1+A_v^{-1} \Xi^{({\cal L-\cal K})}_{v, n+1}.
\end{align}
Therefore, up to  terms which are either negligible or can  be bounded by induction, we have 
\begin{align} %\label{sadui_w0}
 \Xi_{t',\,n }^{(\mathcal{L}-\mathcal{K})}  \prec\;  \; \max_{v\in [s,t']}
  \left(    A_v^{1/2} +A_v^{-1/2} \Xi^{({\cal L-\cal K})}_{v, n+1} +A_v^{-1} \Xi^{({\cal L-\cal K})}_{v, n+1}\right)
\end{align}
Although we cannot bound  $ \Xi_{n }^{(\mathcal{L}-\mathcal{K})}$ by $ \Xi_{n+1 }^{(\mathcal{L}-\mathcal{K})}$ by induction, 
we can derive rough bounds on  $ \Xi_{n }^{(\mathcal{L}-\mathcal{K})}$ for all $n$ and then bootstrap the argument. This is basically our procedure to arrive at $ \Xi_{t,\,n }^{(\mathcal{L}-\mathcal{K})}  \prec \;    A_t^{1/2}$.

\subsection{Proof of  Theorem \ref{lem:main_ind}, step 4 and 5.}

\begin{proof}[Step 4: Proof of  \eqref{Eq:L-KGt-flow} ] 
 Recall the key inequalities \eqref{xiu2n+2psi} and  \eqref{sadui_w0} in the proof of step 3. 
We can now use \eqref{Eq:LGxb} to bound the $\Xi^{(\cal L)}_{2n+2}$ in \eqref{xiu2n+2psi} and $\Xi^{(\cal L)}_{n+1}$ in \eqref{sadui_w0}.    Then we obtain 
 \begin{align} \label{saww02}
 \Xi_{t,\,n }^{(\mathcal{L}-\mathcal{K})}  \prec\;  &\; 1+\max_{u\in [s,t]}
  \left(\; \max_{k<n }\Xi^{({\cal L-\cal K})}_{u,k}+\max_{k:\;2\le k\le n }\Xi^{({\cal L-\cal K})}_{u,k}\cdot \Xi^{({\cal L-\cal K})}_{u,n-k+2}\cdot(W\ell_u\eta_u)^{-1} \right).
\end{align}
 By   \eqref{Eq:Gdecay_w} and \eqref{GavLGEX}  for $(\cal L-\cal K)$-loops or length $1$ and $2$ and 
 the  condition \eqref{con_st_ind}, we have 
 $$
  \Xi_{t,\,1 }^{(\mathcal{L}-\mathcal{K})}\prec 1, \quad 
   \Xi_{t,\,2 }^{(\mathcal{L}-\mathcal{K})}\prec (W\ell_u\eta_u)^{1/4}.
 $$
Using  \eqref{saww02} and induction on $n$, one can easily prove   
 $ \Xi_{t,\,n }^{(\mathcal{L}-\mathcal{K})}\prec 1
 $ for any fixed $n$. This completes the proof of  \eqref{Eq:L-KGt-flow}. 
 Notice that the prefactor $(\eta_s/\eta_t)^2$  in Step 2 was eliminated in Lemma \ref{lem:STOeq_Qt}  partly by using the sum zero property. 
 It is important that this factor is eliminated so it will not  accumulate from    the time splitting argument  in the proof of Theorem \ref{lem:main_ind}.
\end{proof}

\begin{proof}[Step 5: Proof of \eqref{Eq:Gdecay_flow}]   
Our goal is to prove ${\cal J}^*_{t,D} \prec  1$. To this end,  note that 
we have  proved  in Step 4 that 
$$
\left({\cal L}-{\cal K}\right)_{t,\, \boldsymbol{\sigma},  \,\textbf{a}}\prec (W\ell_t\eta_t)^{-2} 
$$
for the ${\cal L}-{\cal K}$-loop of length $2$.  It implies that 
$$
 \left({\cal L}-{\cal K}\right)_{t,\, \boldsymbol{\sigma},  \,\textbf{a}}\Big/{\cal T}_{t, D}(|a_1-a_2|) \prec 1, \quad \text{ for } |a_1-a_2|=O(\ell_t^*).
$$
It remains to consider  the case $|a_1-a_2| > 6 \ell_t^*$. But \eqref{Eq:Gdecay_flow} is a consequence of \eqref{53} in this case. 
This completes the proof of Step 5. 

\end{proof}

 \subsection{Proof of Theorem \ref{lem:main_ind}, step 6}\label{sec:step6E}
In this step, we have the estimates in the assumption \eqref{Eq:Gtlp_exp+IND} and all  results from step 4 and 5, i.e., \eqref{Eq:L-KGt-flow} and \eqref{Eq:Gdecay_flow}. Our goal is to prove \eqref{Eq:Gtlp_exp_flow}. 
We first  prove the following bound on   $\mathbb E{(\cal L-\cal K)}$ which improves the $1$ loop bound on  ${(\cal L-\cal K)}$.
\begin{lemma}\label{lem_ELK_n=1}
  \begin{align}\label{res_ELK_n=1}
  \max_a \left|\mathbb E({\cal L-\cal K})_{u, \;(+),\; (a)} \right|=  \max_a  \left|\mathbb E\langle (G_u-m) E_a\rangle\right|
  \prec (W\ell_u\eta_u)^{-2}  .
  \end{align} 
\end{lemma}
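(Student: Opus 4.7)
The approach I would take is to derive and solve a scalar self-consistent linear ODE for $f_u(a) := \mathbb{E}(\mathcal{L}-\mathcal{K})_{u,+,a} = \mathbb{E}\langle G_u E_a\rangle - m$. Specializing the loop hierarchy Lemma~\ref{lem:SE_basic} to $n=1$, the quadratic sum is empty since it runs over $1 \le k < l \le 1$, so
\[
d\mathcal{L}_{u,+,a} = \mathcal{E}^{(M)}_{u,+,a} + \mathcal{E}^{(\widetilde G)}_{u,+,a},
\]
with $\mathcal{E}^{(\widetilde G)}_{u,+,a}/du = W\sum_{a',b'}\langle\widetilde G_u E_{a'}\rangle S^{(B)}_{a'b'}\mathcal{L}_{u,(+,+),(a,b')}$. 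Taking expectation kills the martingale term, and splitting $\mathcal{L} = \mathcal{K} + (\mathcal{L}-\mathcal{K})$ isolates a deterministic linear piece: using $\mathcal{K}_{u,(+,+),(a,b')} = W^{-1}m^2(\Theta^{(B)}_{um^2})_{a,b'}$ together with $\sum_{a'}S^{(B)}_{a'b'}=1$ and $\sum_{b'}(\Theta^{(B)}_{um^2})_{a,b'} = (1-um^2)^{-1}$, one obtains
\[
\frac{d f_u}{du} = \frac{m^2}{1-um^2}\,f_u + g(u,a), \qquad g(u,a) := W\sum_{a',b'}S^{(B)}_{a'b'}\,\mathbb{E}\!\left[\langle\widetilde G_u E_{a'}\rangle\,(\mathcal{L}-\mathcal{K})_{u,(+,+),(a,b')}\right].
\]

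Next I would solve this ODE via the integrating factor $(1-um^2)$, giving the Duhamel representation
\[
f_t = \frac{1-sm^2}{1-tm^2}\,f_s + \frac{1}{1-tm^2}\int_s^t (1-um^2)\,g(u,a)\,du.
\]
Since $|m|=1$ and $|E|\le 2-\kappa$, the quantity $|1-um^2|$ is uniformly bounded above and below by positive constants on $u\in[0,1]$, so the homogeneous propagator is $O(1)$. To get a genuine $(W\ell_t\eta_t)^{-2}$ bound without accumulating $(\ell_t/\ell_s)$-type factors across iterations of Theorem~\ref{lem:main_ind}, I would integrate from $u=0$ rather than from $s$, using that $G_0 = m\cdot I$ gives $f_0=0$. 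The rank-one and rank-two $\mathcal{L}-\mathcal{K}$ bounds needed to estimate $g(u)$ are available on the full interval $[0,t]$ by iterating Steps~1--5 of Theorem~\ref{lem:main_ind} across the time discretization from Lemma~\ref{lem:main_ind}'s proof.

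The remaining work is to bound $g(u,a) \prec W^{-2}\ell_u^{-2}\eta_u^{-3}$, and this is where the main technical obstacle lies. The naive Cauchy--Schwarz estimate $|\mathbb{E}[XY]|\prec (W\ell_u\eta_u)^{-1}\cdot(W\ell_u\eta_u)^{-2}$ combined with the trivial $\sum_{a',b'}S^{(B)}_{a'b'} = L$ is insufficient — one loses a factor of $L$ and falls short. The gain must come from restricting the $b'$-sum to the scale $\ell_u W^{\tau}$ via decay of the rank-two $(+,+)$ loop $(\mathcal{L}-\mathcal{K})_{u,(+,+),(a,b')}$ in $|a-b'|$. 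For $\mathcal{K}_{(+,+)}$ this decay follows from exponential decay of $\Theta^{(B)}_{um^2}$ (since $|1-um^2|\ge c(\kappa)>0$); for $\mathcal{L}_{(+,+)}$ it follows from Lemma~\ref{lem_decayLoop} (which combines $G_{ij}$ decay via \eqref{GijGEX} with the Step~5 decay of the $(+,-)$ loop). With this restriction the sum picks up only $O(\ell_u)$ terms and yields $|g(u)|\prec W\cdot\ell_u\cdot(W\ell_u\eta_u)^{-3} = W^{-2}\ell_u^{-2}\eta_u^{-3}$. Integrating, one checks in both regimes $\ell_u = \eta_u^{-1/2}$ and $\ell_u = L$ that $\int_0^t W^{-2}\ell_u^{-2}\eta_u^{-3}\,du \prec (W\ell_t\eta_t)^{-2}$, matching the target bound exactly. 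A subsidiary point to verify is that the high-probability bounds used in Cauchy--Schwarz upgrade to moment bounds — standard given the polynomial bounds on $\|G_u\|$ on the good event plus negligible bad-event contributions.
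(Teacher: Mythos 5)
Your proposal is correct in outline but takes a genuinely different route from the paper. The paper's proof of Lemma~\ref{lem_ELK_n=1} is entirely static: starting from the identity $G-m=-m(H+m)G$ at a fixed time $u$, a single Gaussian integration by parts produces a linear equation for $\mathbb E\langle (G-m)E_a\rangle$ whose inhomogeneity is $um\sum_b\mathbb E\big[\langle(G-m)E_b\rangle S^{(B)}_{ba'}\langle(G-m)E_{a'}\rangle\big]$ convolved with $\Theta^{(B)}_{um^2}$. Since both factors in the product are rank-one loops of size $\prec(W\ell_u\eta_u)^{-1}$, the quadratic term is already $\prec(W\ell_u\eta_u)^{-2}$ pointwise, and the sums over $b,a'$ cost only $O(1)$ because $S^{(B)}$ has range $1$ and $\Theta^{(B)}_{um^2}$ is exponentially localized at scale $O(1)$ — no decay property of any length-two loop is ever invoked. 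Your dynamical approach instead integrates the $n=1$ loop hierarchy from $u=0$; the price is that the inhomogeneity $g(u,a)$ couples a rank-one $\mathcal L-\mathcal K$ to a rank-two one, so the $b'$-sum must be cut off at scale $W^\tau\ell_u$ using the decay property from Lemma~\ref{lem_decayLoop}, and you must have the Step 4 bound \eqref{Eq:L-KGt-flow} available on the whole interval $[0,t]$ rather than only on $[s,t]$ as Theorem~\ref{lem:main_ind} literally supplies. Your remarks about this — integrating from $0$ because there is no hypothesis on $\mathbb E(\mathcal L-\mathcal K)_{s,(+),(a)}$, and appealing to the fact that Steps 1--5 have been established on all earlier subintervals of the time discretization — are correct, but they do require reformulating the induction in Theorem~\ref{lem:main_ind} (or arguing around it) so that bounds over the entire past are formally available. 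The static argument sidesteps all of this and is considerably shorter. The $\ell_u$ power counting in your estimate for $g$, the translation-invariance observation that makes the linear part a scalar multiplier $\frac{m^2}{1-um^2}$, and the integrability check in both regimes $\ell_u=\eta_u^{-1/2}$ and $\ell_u=L$ are all right.
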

\begin{proof}[Proof of Lemma \ref{lem_ELK_n=1}]
We will denote $G_u$ by $G$ in this proof.  
Recall that $G - m = -m(H + m)G$. Using Gaussian integration by parts, we have  
\begin{align}
    \mathbb{E} \langle (G - m) E_a \rangle &= m \mathbb{E} \langle (-H - m) G E_a \rangle \nonumber \\
    &= u m \sum_b \mathbb{E} \langle (G - m) E_b \rangle W \cdot S^{(B)}_{bb'} \langle E_{b'} G E_a \rangle \nonumber \\
    &= u m \sum_b \mathbb{E} \langle (G - m) E_b \rangle S^{(B)}_{ba} \langle G E_a \rangle.
\end{align}
Writing $\langle G E_a \rangle = m + \langle (G - m) E_a \rangle$, we obtain  
\begin{align}
    \mathbb{E} \langle (G - m) E_a \rangle 
    &= \sum_{a'} \left((1 - um^2 S^{(B)})^{-1}\right)_{aa'} \mathbb{E} \left( um \sum_b \langle (G - m) E_b \rangle S^{(B)}_{ba'} \langle (G - m) E_{a'} \rangle \right) \nonumber \\
    &= \sum_{a'} \left(\Theta^{(B)}_{um^2}\right)_{aa'} \cdot \mathbb{E} \left( um \sum_b \langle (G - m) E_b \rangle S^{(B)}_{ba'} \langle (G - m) E_{a'} \rangle \right).
\end{align}
From \eqref{Eq:L-KGt-flow}, we know that $\langle (G - m) E_{a'} \rangle \prec (W \ell_u \eta_u)^{-1}$. Substituting this into the above estimate, we obtain \eqref{res_ELK_n=1} and  have completed the proof of Lemma \ref{lem_ELK_n=1}.
\end{proof}

\begin{proof}[Proof of \eqref{Eq:Gtlp_exp_flow}]
Taking expectation of both side of  \eqref{int_K-LcalE} for the case $n=2$, we have  
\begin{align}\label{Eexpint_K-L}
 \mathbb E\;    (\mathcal{L} - \mathcal{K})_{t, \boldsymbol{\sigma}, \textbf{a}} \;=
 \; \; &
    \left(\mathcal{U}_{s, t, \boldsymbol{\sigma}} \circ  \mathbb E\, (\mathcal{L} - \mathcal{K})_{s, \boldsymbol{\sigma}}\right)_{\textbf{a}} \\ 
    &+ \;  \int_{s}^t \left(\mathcal{U}_{u, t, \boldsymbol{\sigma}}  \circ \mathbb E\,\mathcal{E}^{((\mathcal{L} - \mathcal{K}) \times (\mathcal{L} - \mathcal{K}))}_{u, \boldsymbol{\sigma}}\right)_{\textbf{a}} du \\
    &+   \int_{s}^t \left(\mathcal{U}_{u, t, \boldsymbol{\sigma}} \circ  \mathbb E\, \mathcal{E}^{(\widetilde{G})}_{u, \boldsymbol{\sigma}}\right)_{\textbf{a}} du.
\end{align} 
By assumption, we have 
\begin{align}\label{jdasfuao01}
  \mathbb E\;  (\mathcal{L} - \mathcal{K})_{s, \boldsymbol{\sigma},\textbf{a}}\prec (W\ell_s\eta_s)^{-3}.
\end{align}
Similarly ${\cal E}^{( (L-K)\times(L-K) )}_{u, \boldsymbol{\sigma}}$ can be bounded as the product of two ${\cal (L-K)}$, and with the fast decay property, we obtain  
\begin{align}\label{jdasfuao02}
 {\cal E}^{( (L-K)\times(L-K) )}_{u, \boldsymbol{\sigma}}\prec W\ell_u (W\ell_u\eta_u)^{-4}\prec (\eta_u)^{-1} (W\ell_u\eta_u)^{-3}
\end{align}
Similarly to the arguments used in proving \eqref{jiizziy}, we can bound $\mathbb E \mathcal{E}^{(\widetilde{G})}$ as   the product of a 
$1$-${\cal (L-K)}$ loop and a $3$-$\cal L$ loop. Writing  ${\cal L}={\cal K}+({\cal L} -{\cal K})$, we  obtain  
\begin{align}
    \mathbb E \mathcal{E}^{(\widetilde{G})}_{u, \boldsymbol{\sigma},\textbf{a}}
   \; \prec \; & \;
W\cdot\ell_u\cdot  \max_a \max_{\textbf{a}}\max_{ \boldsymbol{\sigma}\in \{+,-\}^3} \Big|\mathbb E
\Big [ \langle (G-M) E_a\rangle \cdot {\cal L}_{u,\textbf{a}, \boldsymbol{\sigma} } \Big ] \Big|
 \nonumber \\
  \; \prec \; & \;
W\cdot\ell_u\cdot \max_a  \big|\mathbb E \langle (G-M) E_a\rangle \big|
 \cdot 
 \max_{\textbf{a}}\max_{ \boldsymbol{\sigma}\in \{+,-\}^3} {\cal K}_{u,\textbf{a}, \boldsymbol{\sigma} }
\nonumber \\
 + \; \;& W\cdot\ell_u\cdot \max_a \big|{({\cal L-\cal K})}_{u,(+), (a),   }\rangle \big|
 \cdot 
 \max_{\textbf{a}} \max_{ \boldsymbol{\sigma}\in \{+,-\}^3} {({\cal L-\cal K})}_{u,\textbf{a}, \boldsymbol{\sigma} }.
 \end{align}
Using  \eqref{res_ELK_n=1} for $\mathbb E \langle (G-M) E_a\rangle$, \eqref{eq:bcal_k} for $\cal K$, 
and \eqref{Eq:L-KGt-flow} for $\cal L-\cal K$, we have   
 \begin{align} \label{jdasfuao03}
  \mathbb E \mathcal{E}^{(\widetilde{G})}_{u, \boldsymbol{\sigma},\textbf{a}} \prec \; \;& (\eta_u)^{-1} (W\ell_u\eta_u)^{-3}. 
 \end{align}
One can easily see that all tensors discussed above  have  fast decay. Applying \eqref{sum_res_1} to bound  $\max\to \max$ norm of ${\cal U}_{u,t}$ and using   \eqref{jdasfuao01}, \eqref{jdasfuao02} and \eqref{jdasfuao03}, we can estimate the right hand side  of  \eqref{Eexpint_K-L} by 
 \begin{align}
   (W\ell_t\eta_t)^{3}\cdot   \mathbb E\;    (\mathcal{L} - \mathcal{K})_{t, \boldsymbol{\sigma}, \textbf{a}} \;\prec 
 (\ell_t/\ell_s)\frac{\ell_t\eta_t}{\ell_s\eta_s}+
  \int_{s}^t(\ell_t/\ell_u)\frac{\ell_t\eta_t}{\ell_u\eta_u}\cdot \eta_u^{-1}
 du 
 \prec 1, 
\end{align}
where  we have used 
 $$
u\le t\implies\frac{\ell^2_t\eta_t}{\ell^2_u\eta_u}\le 1.
 $$
This  completes the proof of \eqref{Eq:Gtlp_exp_flow}. 
\end{proof}

\section{Continuity Estimates on Loops}\label{sec:Inh_LE}
In this section, we  prove  Lemma \ref{lem_ConArg}. Recall  $t_1\le t_2\le 1, \; G_{t_1}=(H_{t_1}-z_{t_1})^{-1}$ and  $G_{t_2}=(H_{t_2}-z_{t_2})^{-1} $.
In this section, we will use the notation 
$$
\widetilde{ G}_{t_1}: = 
\left(  H_{t_2}-
 \widetilde z_{t_1}\right)^{-1},\quad 
 \widetilde z_{t_1}:=\left(\frac{t_2}{t_1}\right)^{1/2}z_{t_1}
$$
and denote by   \(\widetilde{\cal L}_{t_1}\)  the loops   with resolvent  \(\widetilde{G}_{t_1}\).
By scaling, we have 
\begin{equation}\label{calLcalL}
  \widetilde{\cal L}_{t_1}\sim \left(\frac{t_1}{t_2}\right)^{n/2} {\cal L}_{t_1}, \quad  
  \widetilde{ G}_{t_1}\sim \left(\frac{t_1}{t_2}\right)^{1/2}\cdot { G}_{t_1} \quad \hbox{in distribution}.  
\end{equation}
One can check that $|z_{t_2}-\widetilde{z}_{t_1}|\le C(1-t_1)$ for  $t_1>c>0$.   From now on,  we drop the $t$ subscript 
and use the following notations 
\begin{align}\label{GGLLzz}
G_{t_2} \longleftrightarrow  G,\quad \widetilde{G}_{t_1} \longleftrightarrow   \widetilde{G},\quad\quad
{\cal L}_{t_2} \longleftrightarrow  {\cal L},\quad \widetilde{{\cal L}}_{t_1} \longleftrightarrow   \widetilde{{\cal L}},
\quad\quad
{z}_{t_2} \longleftrightarrow  {z},\quad \widetilde{{z}}_{t_1} \longleftrightarrow   \widetilde{{z}}.
\end{align}
With these notations and the resolvent formula, 
\begin{equation}\label{GGZGG}
  G=\widetilde{G}+(z-\widetilde{z})\cdot G\cdot \widetilde{G}  .
\end{equation}
We first recall a basic linear algebra fact. 

\begin{lemma}\label{lem_L2Loc}
Let $v$ and $w_i$ ($1 \le i \le m$) be vectors in a Hilbert space $H$ and  $A_{ij} = (w_i, w_j)$.
Then for any $p \ge 1$,
$$
\sum_i |(v, w_i)|^2 \le \|v\|_2^2 \cdot (\operatorname{tr} A^p)^{1/p}.
$$
\end{lemma}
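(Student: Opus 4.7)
The plan is to interpret the sum on the left as the squared norm of the image of $v$ under the adjoint of the synthesis operator associated with $\{w_i\}$, and then compare the operator norm of the resulting Gram operator with its Schatten $p$-norm.

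First, I would define the bounded linear map $W \colon \mathbb{C}^m \to H$ by $W e_i = w_i$, where $\{e_i\}$ is the standard basis of $\mathbb{C}^m$. Then by construction $W^* W = A$ as an $m \times m$ matrix, and for any $v \in H$ the vector $W^* v \in \mathbb{C}^m$ has components $\overline{(v, w_i)}$ (or $(v, w_i)$, depending on convention). Therefore
\begin{equation*}
\sum_{i=1}^m |(v, w_i)|^2 \;=\; \|W^* v\|^2 \;=\; (v, W W^* v).
\end{equation*}

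Second, I would apply Cauchy--Schwarz and the definition of the operator norm to obtain
\begin{equation*}
(v, W W^* v) \;\le\; \|v\|_2^2 \cdot \|W W^*\|_{\mathrm{op}}.
\end{equation*}
Since $WW^*$ and $W^*W = A$ share the same nonzero eigenvalues (standard fact from polar decomposition / singular value decomposition), $\|WW^*\|_{\mathrm{op}} = \|A\|_{\mathrm{op}}$.

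Third, since $A$ is positive semidefinite, its eigenvalues $\lambda_1 \ge \lambda_2 \ge \cdots \ge \lambda_m \ge 0$ satisfy
\begin{equation*}
\|A\|_{\mathrm{op}} \;=\; \lambda_1 \;\le\; \Bigl(\sum_{j=1}^m \lambda_j^p\Bigr)^{1/p} \;=\; (\operatorname{tr} A^p)^{1/p}
\end{equation*}
for every $p \ge 1$. Combining these three steps yields the claimed bound. There is no real obstacle here; this is a routine fact whose only slightly subtle point is the identification of $\|WW^*\|_{\mathrm{op}}$ with $\|A\|_{\mathrm{op}}$, which follows from the standard SVD of $W$.
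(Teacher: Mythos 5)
Your proof is correct and is essentially the same as the paper's: you define the synthesis operator $W$ (the paper uses its adjoint, the analysis operator $T = W^*$), identify $\sum_i |(v,w_i)|^2$ as $\|W^* v\|^2 = (v, WW^*v)$, pass from $\|WW^*\|_{\mathrm{op}}$ to $\|W^*W\|_{\mathrm{op}} = \|A\|_{\mathrm{op}}$, and then bound the operator norm by the Schatten $p$-norm. If anything, your write-up is slightly cleaner than the paper's, which has a small typo in the chain of equalities (the denominator should be $\|v\|_2^2$, and one of the equalities is really an inequality).
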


\begin{proof}
Define a linear operator $T: H \to \mathbb{C}^m$ by $w \to \sum_i (w, w_i) e_i$. Then
$$
\frac{\sum_i |(v, w_i)|^2}{\|v\|_2} = \frac{\|Tv\|^2}{\|v\|^2} = \|T^* T\|_{l_2 \to l_2} = \|TT^*\|_{l_2 \to l_2} = \|A\|_{l_2 \to l_2} \le (\operatorname{tr} A^p)^{1/p}.
$$
\end{proof}

\begin{proof}[Proof of Lemma \ref{lem_ConArg}]
Using the Cauchy-Schwarz inequality, we know that the $G$ loop with length $2m+1$ ($m\ge 1$) can be bounded by the $G$ loop with lengths $2m$ and $2m+2$, i.e.,
\begin{align}\label{adummzps}
    \max_{\textbf{a}, \boldsymbol{\sigma}} \left|\mathcal{L}^{(\text{length} = 2m+1)}_{t, \boldsymbol{\sigma}, \textbf{a}}\right|^2 \le \left|\max_{\textbf{a}, \boldsymbol{\sigma}} \mathcal{L}^{(\text{length} = 2m)}_{t, \boldsymbol{\sigma}, \textbf{a}} \cdot \max_{\textbf{a}, \boldsymbol{\sigma}} \mathcal{L}^{(\text{length} = 2m+2)}_{t, \boldsymbol{\sigma}, \textbf{a}}\right|.  
  \end{align}
Therefore, we will only prove \eqref{res_lo_bo_eta} for even loop lengths. Using the Cauchy-Schwarz inequality again, we can assume that the loop is symmetric (recall $G$ chain $\cal C$ defined in \eqref{defC=GEG}), i.e., 
\begin{align}\label{sym_calL}
        \mathcal{L}_{t, \boldsymbol{\sigma}', \textbf{a}'} = & \langle E_{a_0} \cdot \mathcal{C}_{t, \boldsymbol{\sigma}, \textbf{a}} \cdot E_{a_m} \cdot \mathcal{C}^\dagger_{t, \boldsymbol{\sigma}, \textbf{a}} \rangle, \quad \boldsymbol{\sigma} = (z_1, z_2, \ldots, z_m), \quad \textbf{a} = (a_1, a_2, \ldots, a_{m-1}) \nonumber \\
 \boldsymbol{\sigma}' = & (z_1, z_2, \ldots, z_m, \bar{z}_m, \bar{z}_{m-1}, \ldots, \bar{z}_1), \quad \textbf{a}' = ( a_1, a_2, \ldots, a_{m-1}, a_m, a_{m-1}, \ldots, a_1, a_0).
\end{align}
We only need to prove that for any fixed $m\ge 1$, the following holds 
\begin{equation}\label{eq_main_ConAr}
        {\bf 1}_{\Omega}\cdot  \mathcal{L}_{t_2, \boldsymbol{\sigma}', \textbf{a}'}
        \prec
        \left( W \cdot \ell_{t_1}\cdot \eta_{t_2} \right)^{-2m+1},
\end{equation}
under the inductive assumption that this bound holds for $  m' \le m-1$  for all $\textbf{a}$ and $\boldsymbol{\sigma}$.
Recall  the identity 
\begin{equation}\label{abform1}
        \prod_{k=1}^m (a_k + b_k) = \prod_{k=1}^m a_k + \sum_{l=1}^m \left(\prod_{j=1}^{l-1} (a_j + b_j)\right) b_l \left(\prod_{j=l+1}^m a_j\right). 
\end{equation}
With the notations $ G_i = G_{t_2}(\sigma_i), \quad \widetilde{G}_i = {\widetilde G}_{t_1}(\sigma_i)$, 
we apply this identity with $  a_i+b_i\to G_i,\quad a\to \widetilde{G}_i$.
Together with  the resolvent identity \eqref{GGZGG}, we have
\begin{align}
        G_1 E_{a_1} \cdots G_{m-1} E_{a_{m-1}} G_m = & \; \widetilde{G}_1 E_{a_1} \cdots \widetilde{G}_{m-1} E_{a_{m-1}} \widetilde{G}_m 
        \nonumber \\
        & + (z - \widetilde{z}) \cdot \sum_{l=1}^m G_1 E_{a_1} \cdots G_{l-1} E_{a_{l-1}} \cdot \left( G_l \cdot \widetilde{G}_l\right)
        E_{a_l} \widetilde{G}_{l+1} \cdots \widetilde{G}_m.
\end{align}
In the last term, there are \( l \) instances of \( G \) and \( m - l + 1 \) instances of \( \widetilde{G} \)
and there is no matrix $E$ between $G_l$ and $\widetilde{G}_l$. 
It implies that for any $i$ and $j$,
    \begin{align}\label{GwGtC}
        \left| \left( G_1 E_{a_1} \cdots G_{m-1} E_{a_{m-1}} G_m \right)_{ij} \right|^2 \le C_m & \;
        \left| \left( \widetilde{G}_1 E_{a_1} \cdots \widetilde{G}_{m-1} E_{a_{m-1}} \widetilde{G}_m \right)_{ij} \right|^2 \nonumber \\
        & + C_m \left|z_{t_2} - \widetilde{z}_{t_1}\right|^2 \cdot \sum_{l=1}^m \left| \left( G_1 E_{a_1} \cdots G_l \cdot \widetilde{G}_l E_{a_l} \cdots \widetilde{G}_m \right)_{ij} \right|^2.
    \end{align}
For any  $l, j$, denote by $ v^{(l)} \in \mathbb C^{N}, \; w^{(l)}_j\in \mathbb C^{N}$  the vectors with components given by  
$$
    v^{(l)}(k) = \left( G_1 E_{a_1} \cdots E_{a_{l-1}} G_l \right)_{ik}, \quad 
   \overline{  w^{(l)}_j(k)} = \left( \widetilde{G}_l E_{a_l} \cdots E_{a_{m-1}} \widetilde{G}_m \right)_{kj}.
$$
Here $i$ is treated as a given  index and it will not play any active role in the following proof. Denote by  $A^{(l)}$   the matrix with  elements given by 
$$
    A^{(l)}_{jj'} = \left( w^{(l)}_j, w^{(l)}_{j'} \right), \quad j, j' \in \mathcal{I}_{a_m}.
$$
Applying Lemma \ref{lem_L2Loc} to the last term in \eqref{GwGtC}, we have 
\begin{align}\label{jziwmas}
      \sum_{j \in \mathcal{I}_{a_m}} \left| \left( G_1 E_{a_1} \cdots G_l \cdot \widetilde{G}_l E_{a_l} \cdots \widetilde{G}_m \right)_{ij} \right|^2
    = \sum_{j \in \mathcal{I}_{a_m}} \left| \left( v^{(l)}, w^{(l)}_j \right)\right|^2
    \le \|v^{(l)}\|_2^2 \left( \operatorname{tr} \left( A^{(l)} \right)^p \right)^{1/p}.  
\end{align}
 
With  the identity $\widetilde{G} \cdot \widetilde{G}^\dagger = \frac{\widetilde{G} - \widetilde{G}^\dagger}{2 \operatorname{Im} \widetilde{z}}$ 
$  A^{(l)}_{jj'}$ can be written as $\widetilde{\cal C}$ chain of length $2m-2l+1$, namely,   
$$
     A^{(l)}_{jj'}
     =  \frac{1}{2 \operatorname{Im}\widetilde{z}}
   \left[ \left(
    \widetilde{G}_m  E_{a_{m-1}}  \cdots E_{a_l}\widetilde{G}_l \overline{ E_{a_l} \cdots E_{a_{m-1}} \widetilde{G}_m}
  \right)_{j' j}   
  -
    \left(
    \widetilde{G}_m  E_{a_{m-1}}  \cdots E_{a_l}\overline{\widetilde{G}_l E_{a_l} \cdots E_{a_{m-1}} \widetilde{G}_m} \right)_{j' j} 
    \right].
$$
Hence we can write $\left( \frac{2 \operatorname{Im} \widetilde{z}}{W} \right)^p \cdot \operatorname{tr} \left( A^{(l)} \right)^p$
as a sum of $2^p$  $\widetilde{\cal L}_{t_1}-$loops of Using  $2p(m-l) + p$. With \eqref{calLcalL}, we can bound 
 $\widetilde{\cal L}_{t_1}$ with $ {\cal L}_{t_1}$.  Using  the inductive assumption on $ {\cal L}_{t_1}$, we have 
    $$
    \left( \operatorname{tr} \left( A^{(l)} \right)^p \right)^{1/p} 
    \prec \frac{W}{\operatorname{Im} \widetilde{z}} 
    \left( W \ell_{t_1} \eta_{t_1} \right)^{-2(m-l)-1+1/p}, \quad \im \widetilde{z}_{t_1}\sim \im z_{t_1}. 
    $$
Inserting  this bound  into \eqref{jziwmas} and \eqref{GwGtC} and averaging  over $i \in \mathcal{I}_{a_0}$ and $j \in \mathcal{I}_{a_m}$,  we can bound \eqref{sym_calL} by 
\begin{align}\label{LLVWL}
        \mathcal{L}_{t_2, \boldsymbol{\sigma}', \textbf{a}'} 
        & \; = \left\langle E_{a_0} \cdot \mathcal{C}_{t, \boldsymbol{\sigma}, \textbf{a}} \cdot E_{a_m} \cdot \mathcal{C}^\dagger_{t, \boldsymbol{\sigma}, \textbf{a}} \right\rangle 
     \nonumber    \\
         & \;  \prec \; \widetilde{\mathcal{L}}_{t_1 , \;  \boldsymbol{\sigma}' , \textbf{a}'} 
         +\frac{|z_{t_2}-\widetilde{z}_{t_1}|^2}{W\operatorname{Im} \widetilde{z}_{t_1}}
         \sum_{l=1}^m 
         \left(\sum_{i \in \mathcal{I}_{a_0}}  \|v^{(l)}\|_2^2\right)\cdot 
         \left( W \ell_{t_1} \eta_{t_1} \right)^{-2(m-l)-1}.
\end{align} 
Here we have taken $p$ to be large enough so that  $1/p$ can be absorbed dropped in  the $\prec$ notation.  
By definition, one can easily check that (here  we use the assumption that   $t_1>c>0$ for some $c$)
$$
    |z_{t_2}-\widetilde{z}_{t_1}|^2\le C |t_2-t_1|^2\le C\left(\eta _{t_1}\right)^2,\quad \frac{|z_{t_2}-\widetilde{z}_{t_1}|^2}{\operatorname{Im} \widetilde{z}_{t_1}}
    \le C \eta _{t_1} .
$$
Using Ward's  identity $G \cdot G^\dagger = \frac{G - G^\dagger}{2 \operatorname{Im} z}$ again, we have 
\begin{align} 
 &   \frac{1}{W} \sum_{i \in \mathcal{I}_{a_0}} \|v^{(l)}\|_2^2  = \frac{1}{2 \operatorname{Im} z} \left( \mathcal{L}_{t, \boldsymbol{\sigma}_l^{(1)}, \textbf{a}_l} - \mathcal{L}_{t, \boldsymbol{\sigma}_l^{(2)}, \textbf{a}_l} \right), \quad 
     \textbf{a}_l = (a_0, a_1, \ldots, a_{l-1}, a_{l-1}, \ldots, a_1), \nonumber \\
& \boldsymbol{\sigma}_l^{(1)}  = (\sigma_1, \sigma_2, \ldots, \sigma_{l-1}, \sigma_l, \bar{\sigma}_{l-1}, \ldots, \bar{\sigma}_2, \bar{\sigma}_1),\quad 
\boldsymbol{\sigma}_l^{(2)}  = (\sigma_1, \sigma_2, \ldots, \sigma_{l-1}, \bar{\sigma}_l, \bar{\sigma}_{l-1}, \ldots, \bar{\sigma}_2, \bar{\sigma}_1).    
\end{align} 
Here the  $\cal L$ loops are length $2l-1$. 
We can bound these loops by  induction so that 
$$
  {\bf 1}_{\Omega}\cdot   \frac{1}{W} \sum_{i \in \mathcal{I}_{a_0}} \|v^{(l)}\|_2^2 \prec \frac{1}{\operatorname{Im} z} \left( W \ell_{t_1} \eta_{t_2} \right)^{-2l+2}, \quad l < m,\quad z=z_{t_2}. 
$$
Inserting  this bound  to \eqref{LLVWL},   we have proved that
    \begin{equation}\label{LWLIMZ}
      {\bf 1}_{\Omega}\cdot    \mathcal{L}_{t_2, \boldsymbol{\sigma}', \textbf{a}'} \prec \left( W \ell_{t_1} \eta_{t_2} \right)^{-2m+1} + \frac{\eta_{t_1}}{\eta_{t_2}}
        \cdot
        \left(
        \mathcal{L}_{t, \boldsymbol{\sigma}_m^{(1)}, \textbf{a}_m} - \mathcal{L}_{t, \boldsymbol{\sigma}_m^{(2)}, \textbf{a}_m} \right) \left( W \ell_{t_1}\eta_{t_1} \right)^{-1}.
\end{equation}
Here $\mathcal{L}_{t_2, \boldsymbol{\sigma}_m^{(1)}, \textbf{a}_m}$ and $\mathcal{L}_{t_2, \boldsymbol{\sigma}_m^{(2)}, \textbf{a}_m}$ are   $G$ loops with the length $2m-1$ and   $\boldsymbol{\sigma}_m^{(1)}, \; \boldsymbol{\sigma}_m^{(2)}\in \{+,-\}^{2m-1}$.

For  $m=1$, by definition \eqref{usuayzoo}, 
\[
      {\bf 1}_{\Omega}\cdot  \mathcal{L}_{t_2, \boldsymbol{\sigma}_m^{(1)}, \textbf{a}_m},\; \;
         {\bf 1}_{\Omega}\cdot \mathcal{L}_{t_2, \boldsymbol{\sigma}_m^{(2)}, \textbf{a}_m}
    \prec 1, \quad m=1,\quad \boldsymbol{\sigma}_m^{(1)}, \;\boldsymbol{\sigma}_m^{(2)}\in\{+,-\}.
\]
Inserting  these bounds  to \eqref{LWLIMZ}, we obtain \eqref{eq_main_ConAr} in the case $m=1$.  
For $m>1$,  applying \eqref{adummzps} again,  we can bound $2m-1$-$G$-loop by a $G-$loop of length $2m-2$ and a $G-$loop of length $2m$. Furthermore, we can use assumption \eqref{eq_main_ConAr} to  bound this  $G$ loop with length $2m-2$. Then we obtain that 
$$
   {\bf 1}_{\Omega}\cdot  \mathcal{L}_{t_2, \boldsymbol{\sigma}_m^{(1,2)}, \textbf{a}_m} \prec \left[ \left( W \ell_{t_1} \eta_{t_2} \right)^{-2m+3} \max_{\boldsymbol{\sigma} , \textbf{a} } \left| \mathcal{L}^{\text{length=2m}}_{t_2, \boldsymbol{\sigma} , \textbf{a} } \right| \right]^{1/2}.
    $$
Inserting  this bound to \eqref{LWLIMZ}, we obtain 
$$
      {\bf 1}_{\Omega}\cdot\max_{\boldsymbol{\sigma}' , \textbf{a}' } \left| \mathcal{L}^{\text{length=2m}}_{t_2, \boldsymbol{\sigma}' , \textbf{a}' } \right|
    \prec \left( W \ell_{t_1} \eta_{t_2} \right)^{-2m+1 } 
    +\left( W \ell_{t_1} \eta_{t_2} \right)^{-m+1/2}
     \max_{\boldsymbol{\sigma}' , \textbf{a}' } \left| \mathcal{L}^{\text{length=2m}}_{t_2, \boldsymbol{\sigma}' , \textbf{a}' } \right|^{1/2}. 
$$
This  implies  \eqref{eq_main_ConAr} and we have thus proved  Lemma \ref{lem_ConArg}.
\end{proof}

\section{Auxiliary proofs and technical estimates}

\subsection{Evolution kernel estimates}\label{ks}

We first prove a simple $L_{\max\to \max}$ bound  for $\cal U$ \ref{def_Ustz} in the following lemma. 

\begin{lemma}[$\|{\cal U}\|_{\max\to \max}$ estimate]\label{lem:sum_Ndecay}
Let $\cal A$ be a tensor $\mathbb Z_L^n\to \mathbb C$ and $n\ge 2$.  
Then 
\begin{align}\label{sum_res_Ndecay}
   \| {\cal U}_{s,t,\boldsymbol{\sigma}}\;\circ {\cal A}\|_{\max } \prec \|{\cal A}\|_{\max}
    \cdot \left(  \eta_s/ \eta_t\right)^{n } ,\quad s<t. 
\end{align}
\end{lemma}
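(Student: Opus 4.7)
Proof proposal: The key observation is that the kernel of $\mathcal{U}_{s,t,\boldsymbol{\sigma}}$ defined in \eqref{def_Ustz} has a pure tensor-product structure in the variables $(a_i,b_i)$. Writing
$$
K_i(a,b) := \left(\frac{1-s\, m_i m_{i+1}\, S^{(B)}}{1-t\, m_i m_{i+1}\, S^{(B)}}\right)_{ab}, \qquad \xi_i := m_i m_{i+1},
$$
the kernel is exactly $\prod_{i=1}^n K_i(a_i,b_i)$, so a crude bound yields
$$
\|\mathcal{U}_{s,t,\boldsymbol{\sigma}}\circ\mathcal{A}\|_{\max} \;\le\; \|\mathcal{A}\|_{\max}\;\prod_{i=1}^n \max_{a}\sum_{b}|K_i(a,b)|.
$$
Hence the task reduces to showing that each one-variable row-sum is bounded by $C\,\eta_s/\eta_t$.

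For this I would invoke the identity \eqref{def_Ustz_2} in the form
$$
K_i \;=\; I - (s-t)\,\xi_i\,\Theta^{(B)}_{t\xi_i}\,S^{(B)},
$$
and split into two cases according to whether $\sigma_i=\sigma_{i+1}$ or $\sigma_i\neq\sigma_{i+1}$. In the first (short-range) case $\xi_i\in\{m^2,\bar m^2\}$; since $|E|\le 2-\kappa$ one has $|1-t\xi_i|\ge c_\kappa>0$ uniformly in $t\in[0,1]$, and Lemma~\ref{lem_propTH} (the decay bound \eqref{prop:ThfadC} together with the fact that summing $|\Theta^{(B)}_{t\xi_i}|_{ab}$ over $b$ collapses one factor of $\hat\ell(t\xi_i)$) gives $\sum_b|(\Theta^{(B)}_{t\xi_i})_{ab}|=O(1)$, so $\sum_b|K_i(a,b)|=O(1)$. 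In the second (long-range) case $\xi_i=|m|^2=1$, $\Theta^{(B)}_t$ has non-negative entries with exact row sum $(1-t)^{-1}$, and consequently
$$
\sum_b |K_i(a,b)| \;\le\; 1 + (t-s)\cdot\frac{1}{1-t} \;=\; \frac{1-s}{1-t} \;=\; \frac{\eta_s}{\eta_t}.
$$

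Finally, since $s\le t$ implies $\eta_s/\eta_t\ge 1$, both cases are uniformly absorbed into the bound $C\,\eta_s/\eta_t$ per factor, and taking the product over $i=1,\dots,n$ produces $(\eta_s/\eta_t)^n$, yielding \eqref{sum_res_Ndecay}. The only nontrivial point is the sign-change case, where the potential blow-up of $\Theta^{(B)}_t$ as $t\to 1$ is precisely cancelled by the prefactor $(t-s)$ from \eqref{def_Ustz_2}, producing a telescoped ratio that is the source of the $(\eta_s/\eta_t)^n$ factor in the conclusion. This is a mild obstacle rather than a serious one; no cancellation or sum-zero property is needed here.
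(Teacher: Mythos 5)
Your proof is correct and follows essentially the same route as the paper's: both use the algebraic identity \eqref{def_Ustz_2} to write each factor of the product kernel as $I$ plus a term of size $(t-s)\,\xi_i\,\Theta_{t\xi_i}\,S^{(B)}$, and then control the $\ell^1$ row sum of $\Theta_{t\xi_i}$. The paper's proof (written out for $n=2$) expands the $n$-fold product into $2^n$ terms and bounds each one using the uniform bound $\sum_b |\Theta^{(B)}_{t\xi}|_{ab}=O(1/\eta_t)$, while you keep the kernel factored and bound each one-variable row sum individually. Your additional case split according to $\sigma_i=\sigma_{i+1}$ or not is a refinement rather than a necessity here — the paper simply uses the worst-case $O(1/\eta_t)$ for both the short-range ($\xi_i\in\{m^2,\bar m^2\}$) and long-range ($\xi_i=1$) factors, then observes $(t-s)/\eta_t \lesssim \eta_s/\eta_t$. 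Your version makes it transparent that the factor $\eta_s/\eta_t$ comes exclusively from the sign-alternating edges, which is a useful observation, but it is not needed to reach the stated bound. One small quibble: you write $|1-t\xi_i|\ge c_\kappa$ for the short-range case; this holds, but is best justified by the explicit formula $m^2=(E^2-2)/2 - iE\sqrt{4-E^2}/2$, which shows the imaginary part of $1-tm^2$ is bounded away from $0$ on $|E|\le 2-\kappa$, $t\in[0,1]$ (except at $E=0$, where the real part is $\ge 1$). Otherwise the argument is sound.
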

\begin{proof} 
Recall that ${\cal U}_{s, t}$ was defined in \ref{def_Ustz}.  We  write  
$$
\left({\cal U}_{s, t}\circ {\cal A}^{\cal T}\right)_{\textbf{a}} =\sum_{\textbf{b}} \prod_{i=1}^2 (\Theta_{t  }\Theta^{-1}_{s })_{a_ib_i}
{\cal A}_{\textbf{b}}.
$$
With $ \Theta_t\cdot \Theta_{s}^{-1}=1-(t-s)\cdot S^{(B)}\cdot \Theta_t $,  we have 
\begin{align}\nonumber
  \prod_{i=1}^2  (\Theta_{t  }\Theta^{-1}_{s })_{a_ib_i}
  = &  \prod_{i=1}^2 \delta_{a_ib_i}
  -(t-s) \delta_{a_1b_1}\cdot \left(S^{(B)}\cdot \Theta_t\right)_{a_2b_2}
 -(t-s) \delta_{a_2b_2}\cdot \left(S^{(B)}\cdot \Theta_t\right)_{a_1b_1}
 \\\label{ajzuis}
 + &(t-s)^2   \left(S^{(B)}\cdot \Theta_t\right)_{a_1b_1}\left(S^{(B)}\cdot \Theta_t\right)_{a_2b_2}  
 := {\cal V} ^{(1)}_{\textbf{a}, \textbf{b}}+ {\cal V} ^{(2)}_{\textbf{a}, \textbf{b}}+ {\cal V} ^{(3)}_{\textbf{a}, \textbf{b}}+ {\cal V} ^{(4)}_{\textbf{a}, \textbf{b}}.
\end{align}
Lemma \ref{lem:sum_Ndecay} follows from this decomposition and the simple fact 
$$
\sum_{b}\left|\Theta^{(B)}_{t}\right|_{ab}=O\left(1/\eta_t\right). 
$$
\end{proof}

If $\cal A $ decays on the scale $\ell_s$, then ${\cal U}_{s,t, (+,-)}\circ \cal A$ decays on the scale $\ell_t$.

\begin{lemma}[Tail Estimates]\label{TailtoTail}
Recall  
 $${\cal T}_{t}(\ell) := \frac{1}{(W\ell_t\eta_t)^2} \exp \left(- \left| \ell /\ell_t\right|^{1/2} \right).
$$
For $\boldsymbol{\sigma}=(+,-)$, assume that for ${\cal A}_{\textbf{a}}$, $\textbf{a}\in \mathbb Z^2_L$ and some large $D>0$, we have 
$$
{\cal A}_{\textbf{a}}\le {\cal T}_{s }(a_1-a_2)+W^{-D}. 
$$
Recalling  $\cal U$ defined in Definition  \ref{def_Ustz}, we have 
\begin{align}
\label{neiwuj} 
\left({\cal U}_{s,t,\boldsymbol{\sigma}} \circ 
{\cal A}\right)_{\textbf{a}} & \prec
 {\cal T}_{t}( a_1-a_2)+W^{-D} \cdot(\eta_s/\eta_t)^2, \quad {\rm if}\quad  |a_1-a_2|\ge \ell_t^* :=(\log W)^{3/2}\ell_t . 
\end{align}
 \end{lemma}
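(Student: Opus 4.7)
The plan is to decompose the kernel using the identity $\Theta_t\Theta_s^{-1} = I - (t-s)S^{(B)}\Theta_t$ from \eqref{def_Ustz_2}. For $\boldsymbol{\sigma}=(+,-)$ we have $m_1 m_2 = m_2 m_1 = |m|^2 = 1$ in both slots, so both factors of ${\cal U}_{s,t,\boldsymbol{\sigma}}$ involve this same operator, and the tensor product kernel splits exactly as in \eqref{ajzuis} into four pieces ${\cal V}^{(1)}+{\cal V}^{(2)}+{\cal V}^{(3)}+{\cal V}^{(4)}$. I will also split the hypothesis ${\cal A}_\mathbf{b}\le{\cal T}_s(b_1-b_2)+W^{-D}$ into its stretched-exponential tail part and its constant part, and handle them separately.

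For the constant $W^{-D}$ component, the action of $\Theta_t\Theta_s^{-1}$ on the constant vector is explicit: $\Theta_t\mathbf{1}=(1-t)^{-1}\mathbf{1}$ and $\Theta_s^{-1}\mathbf{1}=(1-s)\mathbf{1}$ give $\Theta_t\Theta_s^{-1}\mathbf{1}=(\eta_s/\eta_t)\mathbf{1}$. Applying the two-index kernel therefore produces exactly $W^{-D}(\eta_s/\eta_t)^2$, which matches the second term of the claimed bound.

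For the tail component, the identity piece ${\cal V}^{(1)}$ immediately yields ${\cal T}_s(a_1-a_2)$, which is pointwise bounded by ${\cal T}_t(a_1-a_2)$ using $\ell_s\le\ell_t$ and the identity $\ell_u\eta_u=\eta_u^{1/2}$, which gives $\ell_s\eta_s\ge\ell_t\eta_t$. The mixed pieces ${\cal V}^{(2)}$ and ${\cal V}^{(3)}$ reduce, after translation, to a one-variable convolution $(t-s)(K*{\cal T}_s)(x)$ with $x=a_1-a_2$ and $K_{0,y}=(S^{(B)}\Theta_t)_{0,y}$ exponentially decaying at scale $\ell_t$. I analyze this by Laplace evaluation of the exponent $\phi(y)=c|y|/\ell_t+|x-y|^{1/2}/\ell_s^{1/2}$: since $\phi$ is concave on $(0,x)$, its minimum is at an endpoint, equal to $\min(x^{1/2}/\ell_s^{1/2},cx/\ell_t)$. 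In the regime $x\gtrsim\ell_t^2/\ell_s$ the minimum is at $y=0$, giving $(K*{\cal T}_s)(x)\lesssim\eta_t^{-1}{\cal T}_s(x)$; combined with $(t-s)/\eta_t\le\eta_s/\eta_t$ and the pointwise bound ${\cal T}_s(x)\le(\eta_t/\eta_s){\cal T}_t(x)$, this yields a clean bound by ${\cal T}_t(a_1-a_2)$. In the complementary regime the minimum is at $y=x$, giving $(K*{\cal T}_s)(x)\lesssim\exp(-cx/\ell_t)(W\ell_s\eta_s)^{-2}\ell_s/(\eta_t\ell_t)$, and the hypothesis $|x|\ge\ell_t^*=(\log W)^{3/2}\ell_t$ makes $\exp(-cx/\ell_t)\le\exp(-c(\log W)^{3/2})\le W^{-D}$ for any $D$, absorbable into $W^{-D}(\eta_s/\eta_t)^2$. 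The double piece ${\cal V}^{(4)}$ is treated by the same Laplace argument applied to a double convolution $(K*K*{\cal T}_s)$.

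The main obstacle lies in the ${\cal V}^{(4)}$ bound, where the naive convolution estimate carries an extra factor of $\eta_s/\eta_t$ relative to ${\cal T}_t$; this excess must be absorbed by the stretched-exponential gap $\exp(-x^{1/2}(\ell_s^{-1/2}-\ell_t^{-1/2}))$. The hypothesis $|x|\ge\ell_t^*=(\log W)^{3/2}\ell_t$ is calibrated precisely so that this gap beats $\log(\eta_s/\eta_t)=2\log(\ell_t/\ell_s)$ uniformly in the ratio $\ell_t/\ell_s$: the exponent $3/2$ ensures both that $\Theta_t$ is super-polynomially small at scale $\ell_t^*$ (so the far region in each split contributes only to the $W^{-D}(\eta_s/\eta_t)^2$ error) and that the stretched-exponential factor $\exp(-(\log W)^{3/4})$ from the ${\cal T}_s$-tail is absorbable by the $\prec$ slack. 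The remaining work is routine bookkeeping of the $\eta$-prefactors using $\ell_u\eta_u=\eta_u^{1/2}$ and the inequality $\eta_s/\eta_t\ge 1$.
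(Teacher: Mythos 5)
Your proposal is correct and takes essentially the same route as the paper: the same splitting of ${\cal A}$ into a stretched-exponential tail piece and a $W^{-D}$ piece, the same four-term kernel decomposition \eqref{ajzuis} via $\Theta_t\Theta_s^{-1}=I-(t-s)S^{(B)}\Theta_t$, and the same key mechanism whereby the restriction $|a_1-a_2|\ge\ell_t^*=(\log W)^{3/2}\ell_t$ lets the exponent $3/2$ absorb the polylogarithmic excess in the $\prec$ slack. The paper handles the tail part slightly differently in presentation: it first restricts the $\textbf{b}$-sum to the box $|a_i-b_i|\le\frac14\ell_t^*$, $|b_1-b_2|\le(\log W)^3\ell_s$, observes this box is empty when $\ell_s\le\ell_t(\log W)^{-2}$ (so that case is outright negligible), and then in the remaining regime $\ell_t\prec\ell_s$ compares the exponents pointwise via $\max_{\textbf{b}}^*\left(|a_1-a_2|^{1/2}-|b_1-b_2|^{1/2}\right)\le C(\log W)^{3/4}\ell_t^{1/2}$, as in \eqref{jssfgyz2}--\eqref{maxbaabb}. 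Your Laplace endpoint analysis of $\phi(y)=c|y|/\ell_t+|x-y|^{1/2}/\ell_s^{1/2}$ and the two-regime split at $x\sim\ell_t^2/\ell_s$ is a more explicit version of the same calculation, and your identification of the ${\cal V}^{(4)}$ excess $(\eta_s/\eta_t)$ as the crux, to be killed by $\exp(-x^{1/2}(\ell_s^{-1/2}-\ell_t^{-1/2}))$, is exactly what the paper's $\ell_t\prec\ell_s$ reduction plus \eqref{maxbaabb} accomplishes; both arrangements work. One small caution: "$\ell_u\eta_u=\eta_u^{1/2}$" is only valid when $\ell_u=\eta_u^{-1/2}<L$; near the saturation $\ell_u=L$ one has $\ell_u\eta_u=L\eta_u$ instead. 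This does not damage the argument — the uniform prefactor in the excess is $(\ell_t/\ell_s)^2$, and the bound $(\ell_t/\ell_s)^2\exp\bigl(-(\log W)^{3/4}((\ell_t/\ell_s)^{1/2}-1)\bigr)\le 1$ holds for all $\ell_t/\ell_s\ge 1$ — but the bookkeeping should be phrased in terms of $\ell_t/\ell_s$ rather than $\eta_s/\eta_t$ to cover the $\ell=L$ branch.
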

We first give a heuristic argument for the proof. Notice that the key term in ${\cal U}_{s,t,\boldsymbol{\sigma}}$ is 
\[
(t-s)^2    \Theta_{t, a_1b_1} \Theta_{t, a_2b_2}
\sim  (\eta_s/\eta_t)^2 \omega_{t}( a_1-b_1)\omega_{t}( a_2-b_2);  \quad 
\omega_{t}(a-b) =  \eta_t  \Theta_{t, ab}.
\]
With this definition, we have the normalization $ \sum_b \omega_{t} (b) = O(1)$.
The corresponding term in $\left({\cal U}_{s,t,\boldsymbol{\sigma}} \circ 
{\cal A}\right)_{\textbf{a}} $ is bounded by 
\begin{align}
(t-s)^2   \sum_{b_1, b_2}  \Theta_{t, a_1b_1} \Theta_{t, a_2b_2} {\cal T}_{s }(b_1-b_2)
\sim (\eta_s/\eta_t)^2  \sum_{b_1, b_2} \omega_{t}( a_1-b_1) \omega_{t}( a_2-b_2) {\cal T}_{s }(b_1-b_2).
\end{align}
Using $\omega_{t} \ast  \omega_{t} \sim  \omega_{t}$ and denoting $a_1-a_2 = x$, the last line is bounded by 
\begin{align}\label{72} 
& (\eta_s/\eta_t)^2  (W\ell_s\eta_s)^{-2}  \ell_t^{-1} \sum_{b}  \exp \left(- | x-b |/\ell_t - \sqrt { | b |/\ell_s  } \right) \nonumber \\
& \le   (W\ell_t\eta_t)^{-2}  \big ( \ell_t \ell_s^{-2} ) \sum_{b}  \exp \left(- | x-b |/\ell_t - \sqrt { | b |/\ell_s  } \right) .
\end{align}
We can also assume that $b \le \ell_s (\log W)^{2+1/4}$ . Otherwise, the last line  is exponentially  small.   By assumption,  $x \ge (\log W)^{3/2}\ell_t$.  If  $(\log W) \ell_s \le \ell_t$,   then  $ |x-b| \ge (\log W)^{3/2}\ell_t/2$  and the last line is exponentially small.  Hence we can assume that  $(\log W) \ell_s \ge \ell_t$. In this case,  we use the trivial bound  $-\sqrt { | b |/\ell_s  } \le -\sqrt { | b |/\ell_t  }$   and perform the $b$ summation to bound   \eqref{72}  by 
\begin{align}
(\log W)^C  (W\ell_t\eta_t)^{-2}    \exp \left( - \sqrt {  x/\ell_t } \right). 
\end{align}

\begin{proof}[Proof of Lemma \ref{TailtoTail}] We split $\cal A$ into two parts:
$$
{\cal A}={\cal A}^{\cal T} + {\cal A}^{D},\quad {\cal A}^{\cal T}_{\textbf{a}}\le {\cal T}_{s }(a_1-a_2),\quad {\cal A}^{D}\le W^{-D}.
$$ 
Since $\cal U$ is linear, we only need to prove that 
 \begin{align}\label{AAbound}
&\left({\cal U}_{s,t,\boldsymbol{\sigma}} \circ 
{\cal A}^{\cal T}\right)_{\textbf{a}}  \prec
 \; {\cal T}_{t}( a_1-a_2)+W^{-D}, 
\quad 
& &\left({\cal U}_{s,t,\boldsymbol{\sigma}} \circ  
{\cal A}^{D}\right)_{\textbf{a}} \prec 
W^{-D} \cdot(\eta_s/\eta_t)^2.
\end{align}
One can easily use the  $\cal U_{\max\to \max}$ bound in Lemma \ref{lem:sum_Ndecay} to prove
the estimate on ${\cal A}^D$. It remains  to prove the ${\cal A}^{\cal T}$ part in \eqref{AAbound}. 
Recall \eqref{ajzuis} and the bound 
\begin{align}\label{asfawwws}
  \|\Theta^{(B)}_t\|_{\max}\prec (\ell_t\eta_t)^{-1} . 
\end{align}
 Due to  the decay of  $\Theta_t$ and  ${\cal A}^{\cal T}$, we can restrict $\textbf{b}$ in $\sum_{\textbf{b}}$ to 
\begin{equation}\label{UATSI}
\left({\cal U}_{s, t}\circ {\cal A}^{\cal T}\right)_{\textbf{a}} =\sum_{b_1,\,b_2}^*\;\prod_{i=1}^2 (\Theta_{t  }\Theta^{-1}_{s })_{a_ib_i}
{\cal A}_{\textbf{b}}+W^{-D},
\end{equation}
where 
\begin{align}\label{condbbb}
\sum_{b_1,\,b_2}^*=\sum_{b_1,\,b_2}{\bf 1}\left(  |a_i-b_i|\le \frac14  \ell^*_t\right)\cdot {\bf 1}\left(|b_1-b_2|\le \log^3 W \cdot \ell_s\right).
\end{align}
By assumption $|a_1-a_2|\ge \ell^*_t=(\log W)^{3/2}\cdot  \ell_t$, the above summation is nontrivial only if 
$$
\ell_s\ge \ell_t \cdot (\log  W)^{-2}.
$$
In another words,  if $|a_1-a_2|\ge \ell_t^*$, then for any $D>0$,
$$
\ell_s\le \ell_t \cdot (\log  W)^{-2}\implies \left(\mathcal{U}_{s, t, \sigma} \circ \mathcal{A}\right)_a\le W^{-D}.
$$
From now on, we  assume that % $\ell_t\prec \ell_s$  
$$\ell_t\prec \ell_s,\quad\sum_{b_1,\,b_2}^* 1 \prec \ell_t\ell_s, \quad \sum_{b_1,\,b_2}^* \delta_{a_1b_1}\prec \ell_s.
$$ 
Together with \eqref{ajzuis}, \eqref{asfawwws},  ${\cal A}^{\cal T}_{b_1b_2}\prec {\cal T}_{s}(b_1-b_2)$, and $t-s=O(\eta_s)$),  we  have 
\begin{samepage}
\begin{align}\label{jssfgyz2}
    \left( \sum_{\textbf{b}}^* {\cal V}^{(1,2,3,4)}\circ
{\cal A}^{\cal T}_{\textbf{b}}\right)\Bigg/{\cal T}_{t }(a_1-a_2)
 \prec   &\max_{\textbf{b}}^*\;  \exp\left(  \ell_t^{-1/2}\left( |a_1-a_2|^{1/2}-|b_1-b_2|^{1/2} \right)\right),  
\end{align}
\end{samepage}
where $\max_{\textbf{b}}^*$ satisfies the condition in \eqref{condbbb}. Under this condition, we have 
$$
 |a_1-a_2|-|b_1-b_2|\le \sum_i|a_i-b_i|\le \frac12 (\log W)^{3/2}\ell_t. 
$$
Note that there is no absolute values on the left hand side. 
It is easy to prove that 
\begin{equation}\label{maxbaabb}
     \max_{\textbf{b}}^*\left( |a_1-a_2|^{1/2}-|b_1-b_2|^{1/2}\right) \le C\log^{3/4 } W\cdot \ell^{1/2}_t.
\end{equation}
Together with \eqref{jssfgyz2} and \eqref{UATSI},  %we obtain the l.h.s. of  \eqref{AAbound}, and 
we have proved  Lemma \ref{TailtoTail}.  
 \end{proof}
 
 \bigskip
 
\begin{lemma}[${\cal U}_{s,t}$ on fast decay tensor]\label{lem:sum_decay}
%Recall $\cal U$ defined in \ref{def_Ustz}. 
Let $\cal A$ be a tensor $\mathbb Z_L^n\to \mathbb R$, $n\ge 2$. We say $\cal A$ is $(\tau, D)$ decay at time $s$ if  for some fixed small $\tau>0$ and large $D>0$,   
\begin{equation}\label{deccA0}
    \max_i\|a_i-a_j\|\ge \ell_s W^{\tau} \implies  {\cal A}_{\textbf{a}}=O(W^{-D}),\quad \textbf{a}=(a_1,a_2\cdots, a_n) .
\end{equation}
Then we have the following $\max\to \max$ norm for ${\cal U}_{s,t,\boldsymbol{\sigma}}$ for  $n\ge 2$: 
\begin{align}\label{sum_res_1}
    \left({\cal U}_{s,t,\boldsymbol{\sigma}} \circ {\cal A}\right)_\textbf{a} \le C_n W^{C_n\tau}\cdot   \|{\cal A}\|_{\max}
    \cdot \left(\frac{\ell_t}{\ell_s}\right) 
    \cdot \left(\frac{\ell_s\eta_s}{\ell_t\eta_t}\right)^{n} +W^{-D+C_n}.
\end{align}
Suppose either one of the following two assumptions hold: 
Case 1: For some $k:$ $1\le k\le n$,  
$$ 
\sigma_k=\sigma_{k-1}, \quad \boldsymbol{\sigma}=(\sigma_1,\cdots, \sigma_n).
$$
Case 2: ${\cal A}_{\textbf{b}}$ has the sum zero property in the sense that 
\begin{align}\label{sumAzero}
 \sum_{a_2,\,\cdots,\, a_n}{\cal A}_{\textbf{a}}=0, \quad \forall a_1 \; .
\end{align}
Then we have the following  stronger bound % on $\max\to \max$ norm for ${\cal U}_{s,t}$
\begin{align}\label{sum_res_2}
    \left({\cal U}_{s,t,\boldsymbol{\sigma}} \circ {\cal A}\right)_\textbf{a} \le W^{C_n\tau}\cdot   \|{\cal A}\|_{\max} 
    \cdot \left(\frac{\ell_s\eta_s}{\ell_t\eta_t}\right)^{n } +W^{-D+C_n}. 
\end{align} 
\end{lemma}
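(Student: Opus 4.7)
The plan is to expand the kernel of $\mathcal{U}_{s,t,\boldsymbol{\sigma}}$ using the identity \eqref{def_Ustz_2}, which writes the $i$th factor as $(\Theta_t\Theta_s^{-1})_{a_ib_i} = \delta_{a_i b_i} - (t-s)\xi_i (\Theta^{(B)}_{t\xi_i} S^{(B)})_{a_i b_i}$ with $\xi_i = m_i m_{i+1}$. Taking the product over $i=1,\ldots,n$ produces $2^n$ contributions $\mathcal{I}_T$ indexed by subsets $T \subseteq \{1,\ldots,n\}$, with $i \in T$ selecting the second term. The inputs for bounding each $\mathcal{I}_T$ are: $|t-s| \leq C\eta_s$; the pointwise and summed bounds on $\Theta^{(B)}_{t\xi}$ from Lemma~\ref{lem_propTH}, distinguishing ``long'' edges $\xi = |m|^2$ (max norm $\sim 1/(\ell_t\eta_t)$, exponential decay at scale $\ell_t$) from ``short'' edges $\xi = m^2,\bar m^2$ (max norm $O(1)$, decay at scale $O(1)$); and the fast decay of $\mathcal{A}$, which restricts the nonzero contribution of $\mathcal{A}_{\mathbf{b}}$ to $\max_{i,j}|b_i-b_j| \leq \ell_s W^\tau$ modulo an exponentially small $W^{-D}$ remainder.

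For \eqref{sum_res_1}, I expect the dominant contribution to come from $T = \{1,\ldots,n\}$. After restricting the $\mathbf{b}$-sum to the fast-decay support of $\mathcal{A}$, I will pick $b_1$ as an ``anchor'' whose range is dictated by the exponential decay of $\Theta_t$ around $a_1$: the marginal sum $\sum_{b_1}|(\Theta^{(B)}_t S^{(B)})_{a_1 b_1}|$ is $\lesssim 1/\eta_t$. Each of the remaining $n-1$ indices is pinned to a ball of radius $\ell_s W^\tau$ around $b_1$ by the fast decay of $\mathcal{A}$ and carries a $\Theta_t$-weight, so each contributes at most $\ell_s/(\ell_t\eta_t)$ (in the regime $\ell_s \leq \ell_t$; the opposite regime is strictly easier). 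Combined with the prefactor $\eta_s^n$, this yields
\[
|\mathcal{I}_{[n]}| \lesssim W^{C_n\tau}\cdot \eta_s^n \cdot \frac{1}{\eta_t}\cdot \left(\frac{\ell_s}{\ell_t\eta_t}\right)^{n-1} \|\mathcal{A}\|_{\max} = W^{C_n\tau} \cdot \frac{\ell_t}{\ell_s}\left(\frac{\ell_s\eta_s}{\ell_t\eta_t}\right)^n \|\mathcal{A}\|_{\max},
\]
which is exactly \eqref{sum_res_1}. Subsets with $|T|<n$ have strictly fewer $\Theta$ factors and hence smaller $\eta_s/\eta_t$ weight; a routine power count shows they never dominate, and the $W^{-D}$ remainder from the fast-decay cutoff absorbs any $W^{C_n}$ losses.

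The stronger bound \eqref{sum_res_2} asks us to save the extra $\ell_t/\ell_s$ appearing in \eqref{sum_res_1}. In Case 1 (non-alternating), some $\xi_k \in \{m^2,\bar m^2\}$ is short, so $\sum_{b_k}|(t-s)\xi_k(\Theta_{t\xi_k}S^{(B)})_{a_k b_k}|$ is only $O(\eta_s)$ rather than $O(\eta_s/\eta_t)$. Re-anchoring $\mathcal{I}_{[n]}$ at index $k$ (so $b_k$ is effectively $a_k$ up to $O(1)$ by the short-range kernel) pins the remaining $n-1$ long-edge sums to balls of radius $\ell_s$ around $a_k$, each contributing $\eta_s \ell_s/(\ell_t\eta_t)$, for a total bound $\eta_s \cdot (\ell_s\eta_s/(\ell_t\eta_t))^{n-1}$; this is dominated by the right-hand side of \eqref{sum_res_2} since $\ell_t\eta_t \leq \ell_s$ holds automatically ($\ell_u\eta_u \leq \im m$ and $\ell_s \geq 1$). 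In Case 2 (sum zero), I will instead exploit cancellation: fix $b_1$ and use the discrete-derivative decomposition $(\Theta_t S^{(B)})_{a_ib_i} = (\Theta_t S^{(B)})_{a_i b_1} + \delta_i(b_i,b_1)$ for each $i\geq 2$, where $|\delta_i(b_i,b_1)|\lesssim |b_i-b_1|/(\ell_t\eta_t^{1/2})$ by the derivative bound \eqref{prop:BD1}. Expanding $\prod_{i\geq 2}[\,(\Theta_t S^{(B)})_{a_i b_1}+\delta_i\,]$, the unique all-constants term is independent of $b_2,\ldots,b_n$, so its sum $\sum_{b_2,\ldots,b_n}\mathcal{A}_{\mathbf{b}}$ vanishes by the sum-zero hypothesis \eqref{sumAzero}; every other term contains at least one $\delta_i$, and the extra factor $|b_i-b_1|/\ell_t \leq (\ell_s/\ell_t)W^\tau$ (using $\eta_t^{1/2}\sim 1/\ell_t$) produces exactly the desired saving $\ell_s/\ell_t$.

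The main obstacle I anticipate is the combinatorial bookkeeping for \eqref{sum_res_2} across all subsets $T$ simultaneously. For $T = [n]$ the argument above is clean, but subsets $T \subsetneq [n]$ with $1 \notin T$ change the natural anchor, and in Case 2 the sum-zero decomposition must be recentered around whichever $b_i$ plays the role of the unsummed variable. One must verify case-by-case that every non-leading term either has strictly fewer $\Theta$ factors, inherits a short-edge saving in Case 1, or admits a sum-zero cancellation in Case 2, and that these savings are preserved in both regimes $\ell_s \leq \ell_t$ and $\ell_s \geq \ell_t$. Controlling the accumulation of $W^{C_n\tau}$ losses requires keeping $n$ fixed and $\tau$ small, but no single step is genuinely deep once the expansion above is in hand.
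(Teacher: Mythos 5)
Your proposal follows the same route as the paper: expand the kernel via \eqref{def_Ustz_2} into the $2^n$ terms indexed by the subset $T$ of indices where the $(t-s)\xi_i(S^{(B)}\Theta_{t\xi_i})$ factor is chosen, do a power count using $|t-s|\lesssim\eta_s$, $\|\Theta_t\|_{\max}\lesssim(\ell_t\eta_t)^{-1}$, $\|\Theta_t\|_1\lesssim\eta_t^{-1}$ and the fast decay of $\mathcal{A}$, then improve the $T=[n]$ term in Case 1 via the $O(1)$ $L^1$ norm of a short $\Theta$ edge and in Case 2 via a difference decomposition $(\Theta_tS^{(B)})_{a_ib_i}=(\Theta_tS^{(B)})_{a_ib_1}+\delta_i$ with $\sum_{b_2,\dots,b_n}\mathcal{A}_{\mathbf{b}}=0$ killing the all-constants piece and \eqref{prop:BD1} giving the extra $\ell_s/\ell_t$. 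All the ingredients you list match the paper's.

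The obstacle you flag at the end actually resolves more cleanly than you suggest, and you should not need the case-by-case analysis you anticipate. Redo the power count for a generic $T$ with $|T|=k$: anchoring the $\Theta$-sum gives $\eta_s/\eta_t$, each of the remaining $k-1$ summed indices is pinned to a ball of radius $\ell_sW^\tau$ by the decay of $\mathcal{A}$ and carries max norm $\eta_s/(\ell_t\eta_t)$, so $\mathcal{I}_T\lesssim W^{C\tau}\frac{\eta_s}{\eta_t}(\frac{\ell_s\eta_s}{\ell_t\eta_t})^{k-1}=W^{C\tau}\frac{\ell_t}{\ell_s}(\frac{\ell_s\eta_s}{\ell_t\eta_t})^{k}$. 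For $k\le n-1$, since $\ell_t^2\eta_t\le\ell_s^2\eta_s$ (i.e.\ $\ell_t/\ell_s\le\ell_s\eta_s/(\ell_t\eta_t)$) and $\ell_s\eta_s/(\ell_t\eta_t)\ge 1$, this is already bounded by $(\ell_s\eta_s/(\ell_t\eta_t))^n$, i.e.\ every non-leading $T$ satisfies the \emph{strong} bound \eqref{sum_res_2} unconditionally. In particular you never need to recenter the sum-zero decomposition or worry whether sum-zero is preserved after fixing some $b_j=a_j$ (it generally is not) — only $T=[n]$ requires the Case 1/Case 2 improvement. The paper packages this observation as an induction on $n$ (applying the weak bound for dimension $n-1$ to the $\prod\psi-\prod\Xi$ remainder and then absorbing the stray $\ell_t/\ell_s$ via $\ell_t^2\eta_t\le\ell_s^2\eta_s$), which is perhaps the cleaner bookkeeping device, but your direct power count is equivalent and closes.

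One small index slip: if $\sigma_k=\sigma_{k-1}$ then the short parameter is $\xi_{k-1}=m(\sigma_{k-1})m(\sigma_k)=m(\sigma_k)^2$, so the short edge is indexed $k-1$ (not $k$); this doesn't affect the argument, just relabel.
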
 

Lemma \ref{lem:sum_decay} can be understood as follows. The evolution kernel is approximately given by 
\[
(\eta_s/\eta_t)^n  {\otimes^n}   \omega_{t}
\]
where $\omega_{t}$ is an $L_1$ normalized convolution kernel of width $\ell_t$. 
So operator norm of this kernel in $L_\infty$ is $(\eta_s/\eta_t)^n $.  The decay length of 
$A$ is $ \ell_s < \ell_t$. Hence we gain a factor $\ell_s/\ell_t$ for each summation restricted by $A$. Since there are $n-1$ summations restricted by $A$, we gain $ (\ell_s/\ell_t)^{n-1}$
and this explains \eqref{sum_res_1}. For the sum zero case, we gain an extra 
$(\ell_s/\ell_t)$ as we can sum by parts once  and the ratio of smoothness between $\omega_t$ and $A$ is $(\ell_s/\ell_t)$. For the case that  $\sigma_k=\sigma_{k-1}$, the $\Theta$ operator has become significantly smaller and we also gain  an extra factor. The details will be given in the proof. 

\begin{proof}[Proof of Lemma \ref{lem:sum_decay}]
We first prove \eqref{sum_res_1}.  By definition of ${\cal U}_{s, t}$ in \eqref{def_Ustz} and \eqref{def_Ustz_2}, we have 
\begin{align}
      \left(\mathcal{U}_{s, t, \boldsymbol{\sigma}} \circ \mathcal{A}\right)_a=\sum_{b_1, \ldots, b_n} \prod_{i=1}^n\psi_i  \cdot \mathcal{A}_{\boldsymbol{b}}, \quad \boldsymbol{b}=\left(b_1, \ldots, b_n\right),
\end{align}
\begin{align}\label{def_psixi}
     \psi_i=\delta_{a_ib_i}+\Xi_i,  \quad \quad 
\Xi_i:=-(s-t)\xi_i \cdot \left(S^{(B)}\cdot \Theta^{(B)}_{t \xi_i}\right)_{a_ib_i},\quad\quad \xi_i=m(\sigma_i)m(\sigma_{i+1}).
 \end{align} 
Recall the following identity for all $\textbf{x}$ and $\textbf{y}$:
$$
\prod_i(x_i)-\prod_i (x_i-y_i) 
=-\sum_{\emptyset\ne A\subset[[1,n]]}\left(\prod_{j\in A^c}x_j \right)
\left(\prod_{j\in A }(-y_i)\right).
$$
Choosing   $x_i=\psi_i$ and $y_i=\delta_{a_ib_i}$, we have  $x_i-y_i=\Xi_i$ and  
 \begin{align}
    \prod_{i=1}^n  \psi_i 
    -\prod_{i=1}^n \ \Xi_i = \sum_{\emptyset\ne A\subset[[1,n]]}\left(\prod_{j\in A^c}\psi_j \right)
(-1)^{|A|+1}\left(\prod_{j\in A }\delta_{a_jb_j} \right).
\end{align}
Assume that we have proved Lemma \ref{lem:sum_decay} for any $k<n$.  Then by  inductive assumption, 
\begin{align}\label{iukwjn}
\sum_{\textbf{b}} \left(\prod_{i=1}^n  \psi_i- \prod_{i=1}^n  \Xi_i\right) \cdot 
{\cal A}_{\textbf{b}} 
 \; \le \; & C_n W^{C_n\tau}\|{\cal A}\|_{\max}\cdot \left(\frac{\ell_t}{\ell_s}\right) 
    \cdot \left(\frac{\ell_s\eta_s}{\ell_t\eta_t}\right)^{n-1 } +W^{-D+C_n}
    \\\nonumber
  \; \le \; &  C_n W^{C_n\tau}\|{\cal A}\|_{\max}\left(\frac{\ell_s\eta_s}{\ell_t\eta_t}\right)^{n  }    +W^{-D+C_n} ,  
\end{align}
where we have used $\ell_t/\ell_s\le \left( \ell_s\eta_s\right)/\left(\ell_t\eta_t\right)$. 

For \eqref{sum_res_1} we only need to bound
\begin{align}
    \sum_{\textbf{b}}  \left(\prod_{i=1}^n  \Xi_i \right) \cdot 
{\cal A}_{\textbf{b}} \le C_n W^{C_n \tau} \cdot\|\mathcal{A}\|_{\max } \cdot\left(\frac{\ell_t}{\ell_s}\right) \cdot\left(\frac{\ell_s \eta_s}{\ell_t \eta_t}\right)^n+W^{-D+C_n}.
\end{align}
By definition of $\Theta^{(B)}$, we have 
$$
\Xi_i=O\left( \eta_s \,\ell_t^{-1}\eta_t^{-1}\right).
$$
By  $\ell_s$-decay property of $\cal A$ in \eqref{deccA0}, 
\begin{align}\label{llswuwg}
  \sum_{\textbf{b}}   \prod_{i=1}^n  \Xi_i  \cdot 
{\cal A}_{\textbf{b}}
\;\le\; & C W^{\tau}\cdot \frac{\ell_s \eta_s}{\ell_t \eta_t} \max_{b_n}
\cdot \left|\sum_{b_1\cdots b_{n-1}} \prod_{i=1}^{n-1}  \Xi_i  \cdot 
{\cal A}_{\textbf{b}}\right|+W^{-D+C}
\\\nonumber
\;\le\; & C_n W^{C_n\tau} \left(\frac{\ell_s \eta_s}{\ell_t \eta_t} \right)^{n-1} \max_{b_2,\cdots, b_n}  \left|\sum_{b_1 }\Xi_1 {\cal A}_{\textbf{b}}\right|+W^{-D+C_n}. 
\end{align}
By the decay of $\Theta^{(B)}_t$, we have 
\begin{align}\label{tyzcsq}
  \left|\sum_{b_1 }\Xi_1 {\cal A}_{\textbf{b}}\right|\le CW^\tau (\eta_s/\eta_t)\|{\cal A}\|_{\max}+W^{-D}  .
\end{align}
 Together with \eqref{llswuwg} and \eqref{iukwjn}, we have proved  \eqref{sum_res_1}. 

Next we prove Case 1 of \eqref{sum_res_2}. Without loss of generality, we assume that $\sigma_1=\sigma_2=+$. In this case, $\xi_1=m^2$, and thus  $\sum_{b_1}|\Xi_1|=O(1)$. this  implies that 
$$
\left|\sum_{b_1 }\Xi_1 {\cal A}_{\textbf{b}}\right|\le  C\|{\cal A}\|_{\max} .
$$
Together with \eqref{llswuwg} and \eqref{iukwjn}, we have proved  \eqref{sum_res_2} in this case.

We finally  prove Case 2 of \eqref{sum_res_2}, i.e., $\cal A$ has the sum zero property \eqref{sumAzero}. 
Since we have proved \eqref{sum_res_2} for the case 1,  we can now assume  that $\sigma_i\ne \sigma_{i+1}$ for any $1\le i\le n$. It implies $\xi_i=|m|^2=1$ (in \eqref{def_psixi}).    For each $\Xi_i$ with $i\ge 2$,  we write it as
$\Xi_i=\Xi^0_i+\Xi^*_i$ where
$$
\frac{\Xi^0_i}{-(s-t)\xi_i }:= \left(S^{(B)}\cdot \Theta^{(B)}_{t \xi_i}\right)_{a_ib_1},\quad
\frac{\Xi^*_i}{-(s-t)\xi_i }:= \left(\left(S^{(B)}\cdot \Theta^{(B)}_{t \xi_i}\right)_{a_ib_i}-\left(S^{(B)}\cdot \Theta^{(B)}_{t \xi_i}\right)_{a_ib_1}\right).
$$
Note that the subscript of $\Xi^0_i$ is $a_ib_1$, but the one for $\Xi_i$ is $a_ib_1$.  Thus 
$$\sum_{\textbf{b}}   \left(\prod_{i=1}^n  \Xi_i\right)  \cdot 
{\cal A}_{\textbf{b}}=\sum_{\textbf{b}} 
\left(\prod_{i=1}^n  \left(\Xi^0_i+  \Xi^*_i \right)\right)\cdot 
{\cal A}_{\textbf{b}}.
$$
After expanding the $\prod_i$, the leading term disappears due to the sum zero property (and the fact that $\Xi_i^0$ is independent of  $b_2,\cdots, b_n$), namely,  
$$
\sum_{\textbf{b}}
\left(\prod_{i=1}^n \Xi^0_i\right)\cdot {\cal A}_{\textbf{b}}=0.
$$
For the other terms, we bound them by 
$$
\left|\Xi_i^0\right|\le C\frac{\eta_s}{\ell_t\eta_t}, \quad \quad   \left|\Xi_i^*\right|\le C\frac{\eta_s}{\ell _t\,\eta_t}\cdot \frac{|b_i-b_1|}{\ell_t}.
$$
By the decay  property of $\cal A$  \eqref{deccA0},  the main contribution tothe last equation comes from $|b_i-b_1|\le W^\tau \ell_s$. Therefore, we obtain an improved bound for  \eqref{llswuwg}, i.e., 
\begin{align}\label{llswuwg2}
  \sum_{\textbf{b}}   \prod_{i=1}^n  \Xi_i  \cdot 
{\cal A}_{\textbf{b}}
\;\le\; & C_n W^{C_n\tau} \left(\frac{\ell_s \eta_s}{\ell_t \eta_t} \right)^{n-1} \cdot \frac{\ell_s}{\ell_t}\cdot  \max_{b_2,\cdots, b_n}  \left|\sum_{b_1 }\Xi_1 {\cal A}_{\textbf{b}}\right|+W^{-D+C_n}. 
\end{align}
Together with \eqref{tyzcsq}, we have proved Case 2 of   \eqref{sum_res_2}. This completes the proof of Lemma \ref{lem:sum_decay}. 
 
\end{proof}

\subsection{Auxiliary proofs for universality}\label{Apu}

In this subsection we prove \eqref{WeakQUE} and \eqref{Weakloclaw} for $H_t$ defined in \eqref{defUnivHt}. This notation is particular for universality proof and different from $H_t$ defined in \eqref{def_mainHT} for the main proof) For the matrix $H_t$, the case $t=0$ corresponds to our primary result for the band matrix resolvent. The cases for $0 < t < t_U$ can be handled similarly to the case $t = t_U$. Thus, for simplicity, we only provide the proof for the case $t = t_U$, i.e., the estimate for $H_{t_U}$. We define the random matrix \begin{equation}\label{defwtHHtu}
     \widetilde{H} := H_{t_U}
\end{equation}
 
 By definition, \( \widetilde{ H} \) has entry variances given by  
\[
\mathbb{E} \big| \widetilde{H}_{xy} \big|^2 =\mathbb{E} \big|  \left(H_{t_U}\right)_{xy} \big|^2= \widetilde{S}_{xy}  = (1-\zeta_U) S_{xy} + \frac{\zeta_U}{N}, \quad \zeta_U \sim  N^{-1+\tau_U}, \quad \zeta_U=1-e^{-t_U}
\]

In our main proof, we use the flow 
 $$
  z^{(E)}_t=E+(1-t) m^{(E)}  ,\quad 0\le t\le 1.   
$$ 
Then for \( z \in \mathbb{C} \), we choose \( t_0 \) and \( E \) such that  
\[
z = t_0^{-1/2} z_{t_0}^{(E)},
\]
and study \( G(z) \) via \( t_0^{1/2} \cdot G^{(E)}_{t_0} \) as in \eqref{GtEGz}. Here for fixed $z$,  we keep the same stochastic flow for \( z^{(E)}_t \), but the  flow for \( \widetilde{H}_t \) is defined by \( \widetilde{H}_0 = 0 \) and  
\[
d\widetilde{H}_{t,ij} =
\begin{cases}
    \sqrt{S_{ij}} \cdot d{\cal B}_{t,ij} & 0\le t \le t_1 := (1-\zeta_U)t_0, \\[6pt]
      N^{-1/2} \cdot d{\cal B}_{t,ij} &   t_1 \le   t\le t_0.
\end{cases} 
\]
It is easy to check that for any fixed $z$ satisfying $z = t_0^{-1/2} z_{t_0}^{(E)}$, 
\begin{align}\label{whheiyslwuw}
(\widetilde{H}-z)^{-1}\; \overset{d}{\sim}\;  t_0^{1/2}(\widetilde{H}_{t_0}-z_{t_0})^{-1} 
\end{align}
Define $\widetilde{G}_t$ and $\widetilde{{\cal L}}_t$ with $\widetilde{H}_t$ as in Definitions \ref{def_flow} and  \ref{Def:G_loop}. By definition for $t<t_1$, 
$$
 \widetilde{H}_t\; \overset{d}{\sim}\; H_t,\quad
\widetilde{{\cal L}}_t\;\overset{d}{\sim}\; {{\cal L}}_t, \quad t\le t_1.
 $$
 Here  $H_t$ and ${{\cal L}}_t$ are defined in \eqref{def_mainHT} and \eqref{Eq:defGLoop} respectively.  We aim to estimate $\widetilde{{\cal L}}_t$ with $\widetilde{{\cal K}}_t$ for $t\in [t_1,t_0]$. Recall  $\widetilde{{\cal K}}$ is the solution to the primitive equation in Definition \ref{Def_Ktza}. For  $t\in  [t_1,t_0]$ the flow equation    becomes 
$$\frac{d}{d t} \mathcal{\widetilde{K}}_{t, \sigma, a}=W \cdot \sum_{1 \leq k<l \leq n} \sum_{a, b}\left(\mathcal{G}_{k, l}^{(a), L} \circ \mathcal{\widetilde{K}}_{t, \sigma, a}\right) \left(S_{GUE}^{(B)}\right)_{a b}\left(\mathcal{G}_{k, l}^{(b), R} \circ \mathcal{\widetilde{K}}_{t, \sigma, a}\right),\quad t\in [t_1,t_0]
$$
with 
$$ \left(S^{(B)}_{GUE}\right)_{ab}=1/L, \quad \mathcal{\widetilde{K}}_{t, \sigma, a}=\mathcal{ {K}}_{t, \sigma, a},\quad t\in [0,t_1].
$$ 
To prove  \eqref{WeakQUE} and \eqref{Weakloclaw}, we only need to focus on the following case: for small enough $\tau_U$ with 
$$
z=E+\eta i, \quad \quad  N^{-1+2\tau_U }\le \eta\le N^{-1+\mathfrak c/3}, 
$$
and $t\in [t_1,t_0]$, we have 
\begin{align} \label{WLKNE1}
\max_{\boldsymbol{\sigma}, \textbf{a}}\left|{\cal \widetilde{L}}_{t , \boldsymbol{\sigma}, \textbf{a}} \right|&\;\prec\; (N\eta_{t })^{-1}, 
\\ \label{WLKNE2}
\max_{\boldsymbol{\sigma}, \textbf{a}}\left|{\cal \widetilde{L}}_{t , \boldsymbol{\sigma}, \textbf{a}}-{\cal \widetilde{K}}_{t , \boldsymbol{\sigma}, \textbf{a}}\right|&\;\prec\; (N\eta_{t })^{-2}, 
\\ \label{WLKNE3}
\max_{\boldsymbol{\sigma}\in\{+,-\}^2}\max_{\textbf{a}}\left|\mathbb E {\cal \widetilde{L}}_{t , \boldsymbol{\sigma}, \textbf{a}}-\mathbb E{\cal \widetilde{K}}_{t , \boldsymbol{\sigma}, \textbf{a}}\right|&\;\prec\; (N\eta_{t })^{-3}.
\end{align} 
In this setting ($\im z= \eta$) 
\begin{equation}\label{seywoim29}
     1-t_0\sim 1-t_1\sim \eta,\quad \quad   t_0-t_1\sim N^{-1+ \tau_U},\quad\quad N^{-1+2\tau_U }\le \eta\le N^{-1+\mathfrak c/3}.
\end{equation}
 Notice that $t_0-t_1 \ll \eta$. By Lemma \ref{ML:Kbound} and \ref{ML:GLoop}, for $t\in [0,t_1]$, 
\begin{align}
 \max_{\boldsymbol{\sigma}, \textbf{a}}\left|{\cal \widetilde{L}}_{t, \boldsymbol{\sigma}, \textbf{a}}-{\cal \widetilde{K}}_{t, \boldsymbol{\sigma}, \textbf{a}}\right|\prec (W\ell_t\eta_t)^{-n},\quad \eta_t:=\im z_t,  %\quad \ell_t:=\ell(z_t)
\\\nonumber
\max_{\boldsymbol{\sigma}, \textbf{a}}\left|{\cal \widetilde{K}}_{t, \boldsymbol{\sigma}, \textbf{a}} \right| \prec (W\ell_t\eta_t)^{-n+1},\quad \quad \max_{\boldsymbol{\sigma}, \textbf{a}}\left|{\cal \widetilde{L}}_{t, \boldsymbol{\sigma}, \textbf{a}} \right|\prec (W\ell_t\eta_t)^{-n+1}. %\quad \eta_t:=\im z_t,\quad \ell_t:=\ell(z_t)    
\end{align} 
With assumption \eqref{seywoim29},  we have $\ell_{t_1}=L$.  
Then for $t=t_1$,
\begin{align}\label{uwi2oo1}
\quad & \max_{\boldsymbol{\sigma}, \textbf{a}}\left|{\cal \widetilde{L}}_{t_1, \boldsymbol{\sigma}, \textbf{a}}-{\cal \widetilde{K}}_{t_1, \boldsymbol{\sigma}, \textbf{a}}\right|\prec (N\eta_{t_1})^{-n}, 
\quad \max_{\boldsymbol{\sigma}, \textbf{a}}\left(\left|{\cal \widetilde{K}}_{t_1, \boldsymbol{\sigma}, \textbf{a}} \right| +\left|{\cal \widetilde{L}}_{t_1, \boldsymbol{\sigma}, \textbf{a}} \right|\right)\prec (N\eta_{t_1})^{-n+1}.  \end{align}

For $  t_1\le t\le t_0$, we will have a new  loop hierarchy and primitive equation. The new version of these equations are the same as the standard versions in  \eqref{eq:mainStoflow} and \eqref{pro_dyncalK} except that  $S^{(B)}$ and $S$ are replaced by $S^{(B)}_{GUE}$ and $S_{GUE}$ respectively, i.e., 
$$
S^{(B)}\to S^{(B)}_{GUE},\quad \quad \left(S^{(B)}_{GUE}\right)_{ab}=1/L,\quad  \quad S \to S _{GUE},\quad \quad \left(S _{GUE}\right)_{xy}=1/N.
$$ 
Thus we can bound  $\mathcal{\widetilde{K}}$ by 
\begin{align}
\left|\frac{d}{d t} \mathcal{\widetilde{K}}_{t, \sigma, a}\right|&\;\le\;  C_nN\max_{ab}\max_{kl}   \left|\mathcal{G}_{k, l}^{(a), L} \circ \mathcal{\widetilde{K}}_{t, \sigma, a}\right|\cdot   \left|\mathcal{G}_{k, l}^{(b), R} \circ \mathcal{\widetilde{K}}_{t, \sigma, a}\right|,\quad t\in [t_1,t_0], 
\\ &
\;\le\; C_n\cdot   N \sum_{2\le k\le n}\left(\max_{\boldsymbol{\sigma}, \textbf{a}}
\left|\mathcal{\widetilde{K}}^{(k)}_{t, \boldsymbol{\sigma}, \textbf{a}}\right|\right)\cdot \left(\max_{\boldsymbol{\sigma}, \textbf{a}}
\left|\mathcal{\widetilde{K}}^{(n-k+2)}_{t, \boldsymbol{\sigma}, \textbf{a}}\right|\right). 
\end{align}
Then for $t\in [t_1,t_0]$, 
\begin{align}
|\mathcal{\widetilde{K}}_{t, \sigma, a}|
\le |\mathcal{\widetilde{K}}_{t_1, \sigma, a}|+C_n N\cdot(t -t_1)
\cdot \sum_{2\le k\le n}\left(\max_{\boldsymbol{\sigma}, \textbf{a}}
\left|\mathcal{\widetilde{K}}^{(k)}_{t, \boldsymbol{\sigma}, \textbf{a}}\right|\right)\cdot \left(\max_{\boldsymbol{\sigma}, \textbf{a}}
\left|\mathcal{\widetilde{K}}^{(n-k+2)}_{t, \boldsymbol{\sigma}, \textbf{a}}\right|\right)
\end{align}
By \eqref{seywoim29},  $t-t_1\le t_0-t_1$ and  
$$
N(t-t_1) \le  N\eta_{t }\cdot N^{-\tau_U} , \quad t\in [t_1,t_0].
$$  
By a continuity argument and the initial estimate  \eqref{uwi2oo1}, we have 
\begin{equation}\label{45} 
 {\cal \widetilde{K}}_{t , \boldsymbol{\sigma}, \textbf{a}}  \prec (N\eta_{t })^{-n+1}, \quad t\in [t_1,t_0]. 
\end{equation}

To bouond $\cal L$, we apply the  main flow estimates from  Lemmas \ref{Sol_CalL} and \ref{lem:DIfREP} with  $s=t_1$ and $t\in [t_1,t_0]$. Together with Lemma \ref{lem:sum_Ndecay} which implies that 
$$
\|{\cal U}_{t_1,t,\boldsymbol{\sigma}}\|_{\max\to\max}\prec (\eta_{t_1}/\eta_{t})^n \prec 1, 
$$
we have 
\begin{align} \label{WLK1}
    (\mathcal{\widetilde{L}} - \mathcal{\widetilde{K}})_{t, \boldsymbol{\sigma}, \textbf{a}} \;\prec \; \; &
    \max_{\boldsymbol{\sigma}, \,\textbf{a}} \left|(\mathcal{\widetilde{L}} - \mathcal{\widetilde{K}})_{t_1, \boldsymbol{\sigma}, \textbf{a}} \right|
    +   \int_{t_1}^t \left(\mathcal{U}_{u, t, \boldsymbol{\sigma}} \circ \mathcal{\widetilde{E}}^{(M)}_{u, \boldsymbol{\sigma}}\right)_{\textbf{a}}  du \nonumber \\
    &+  \int_{t_1}^t \max_{\boldsymbol{\sigma}, \,\textbf{a}}   \left(\sum_{l_\mathcal{K} > 2} \left|\Big[\mathcal{\widetilde{K}} \sim (\mathcal{\widetilde{L}} - \mathcal{\widetilde{K}})\Big]^{l_\mathcal{K}}_{u, \boldsymbol{\sigma},\textbf{a}} \right|+ 
    \left|\mathcal{\widetilde{E}}^{((\mathcal{L} - \mathcal{K}) \times (\mathcal{L} - \mathcal{K}))}_{u, \boldsymbol{\sigma}, \textbf{a}}\right|
+\left|\mathcal{\widetilde{E}}^{(\widetilde{G})}_{u, \boldsymbol{\sigma}, \textbf{a}}\right|\right)du  \end{align}
and 
 \begin{equation} \label{WLK1.5}
   \mathbb{E} \left [    \int_{t_1}^\tau\left(\mathcal{U}_{u, t, \boldsymbol{\sigma}} \circ \mathcal{\widetilde{E}}^{(M)}_{u, \boldsymbol{\sigma}}\right)_{\textbf{a}} du \right ]^{2p} 
   \le C_{n,p} \; 
   \mathbb{E}  \left(
   \int_{t_1}^\tau 
   \max_{\boldsymbol{\sigma}, \,\textbf{a}}   
   \left( \mathcal{\widetilde{E}} \otimes  \mathcal{\widetilde{E}} \right)
   _{u, \,\boldsymbol{\sigma},{\textbf a}, {\textbf a}}du 
   \right)^{p}. 
  \end{equation} 
We  have the following estimates
   \begin{itemize}
       \item Similar to \eqref{def_ELKLK} and \eqref{DefKsimLK},  with $S^{(B)}$ replaced by $S^{(B)}_{GUE}$, we have 
      \begin{equation}
       \label{WLK2}\sum_{l_\mathcal{K} > 2} \left|\Big[\mathcal{\widetilde{K}} \sim (\mathcal{\widetilde{L}} - \mathcal{\widetilde{K}})\Big]^{l_\mathcal{K}}_{u, \boldsymbol{\sigma},\textbf{a}} \right|
       \prec N\cdot
       \sum_{2<k\le n} (N\eta_u)^{k-1}\cdot \max_{\boldsymbol{\sigma}, \,\textbf{a}}   \left|(\mathcal{\widetilde{L}}-\mathcal{\widetilde{K}})^{(n-k+2)}_{t, \boldsymbol{\sigma}, \textbf{a}}\right|
       \end{equation}
        and  \begin{equation}\label{WLK3}\mathcal{\widetilde{E}}^{((\mathcal{L} - \mathcal{K}) \times (\mathcal{L} - \mathcal{K}))}_{u, \boldsymbol{\sigma}, \textbf{a}}\prec N\cdot \sum_{2\le k\le n} \max_{\boldsymbol{\sigma}, \,\textbf{a}}   \left|(\mathcal{\widetilde{L}}-\mathcal{\widetilde{K}})^{(k)}_{t, \boldsymbol{\sigma}, \textbf{a}}\right|\cdot \max_{\boldsymbol{\sigma}, \,\textbf{a}}   \left|(\mathcal{\widetilde{L}}-\mathcal{\widetilde{K}})^{(n-k+2)}_{t, \boldsymbol{\sigma}, \textbf{a}}\right|
        \end{equation}
      where superscript $(k)$ represents the rank, i.e., ${\cal L}^{(n)}$ is an $n$-loop.        \bigskip

 \item For $\mathcal{\widetilde{E}}^{ (\widetilde{G})}$, similarly one can prove \eqref{GavLGEX} for $\widetilde{G}_t$. Then
 \begin{align}\label{WLK4}
       \mathcal{\widetilde{E}}^{ (\widetilde{G})}_{u, \boldsymbol{\sigma}, \textbf{a}}
       \;\prec\; & N\cdot  \max_{\boldsymbol{\sigma}, \,\textbf{a}}   \left| \mathcal{\widetilde{L}} ^{(2)}_{t, \boldsymbol{\sigma}, \textbf{a}}\right|\cdot \max_{\boldsymbol{\sigma}, \,\textbf{a}}  \left|\mathcal{\widetilde{L}} ^{(n+1)} _{t, \boldsymbol{\sigma}, \textbf{a}}\right|
        \end{align}   
 By Cauchy Schwartz's inequality we can bound $n+1$ loop by a product of 
$2$ loop and a $2n$ loop. On the other hand, with \eqref{asu-2w},  a $2n$ loop can be bounded by the square of an $n$ loop. Inserting all these bounds into the previous inequality, we have    
        \begin{align}\label{WLK5} 
        \mathcal{\widetilde{E}}^{ (\widetilde{G})}_{u, \boldsymbol{\sigma}, \textbf{a}}
       \;\prec\; &N\cdot  \max_{\boldsymbol{\sigma}, \,\textbf{a}}   \left| \mathcal{\widetilde{L}} ^{(2)}_{t, \boldsymbol{\sigma}, \textbf{a}}\right|^{3/2}\cdot \max_{\boldsymbol{\sigma}, \,\textbf{a}}   \left|\mathcal{\widetilde{L}} ^{(n )} _{t, \boldsymbol{\sigma}, \textbf{a}}\right|
      \end{align}
      \item For $\mathcal{\widetilde{E}} \otimes  \mathcal{\widetilde{E}}$ term, recall the loops in Definition \ref{def:CALE}. The $S^{(B)}$ in $(\mathcal{\widetilde{E}} \otimes  \mathcal{\widetilde{E}})^{(k)}$ is replaced by $S^{(B)}_{GUE}$ in  $(\mathcal{\widetilde{E}} \otimes  \mathcal{\widetilde{E}})^{(k)}$. Since $S_{GUE}^{(B)}$ is a constant matrix, we can use Ward's identity to sum up the $b$ and $b'$ in the definition of $(\mathcal{\widetilde{E}} \otimes  \mathcal{\widetilde{E}})^{(k)}$ and obtain that  
     \begin{equation}\label{WLK6}
      \left(\mathcal{\widetilde{E}} \otimes  \mathcal{\widetilde{E}} \right)
   _{u, \,\boldsymbol{\sigma},{\textbf a}, {\textbf a}}
   \prec N^{-1}\cdot \eta_u^{-2} \cdot \max_{\boldsymbol{\sigma}, \,\textbf{a}}   \left|\mathcal{\widetilde{L}} ^{(2n )} _{t, \boldsymbol{\sigma}, \textbf{a}}\right|
 \end{equation}
   Then use \eqref{asu-2w}, we can bound $2n $ loop with $n$ loops
   \begin{equation}\label{WLK7}
    \left(\mathcal{\widetilde{E}} \otimes  \mathcal{\widetilde{E}} \right)
   _{u, \,\boldsymbol{\sigma},{\textbf a}, {\textbf a}}\prec  N^{-1}\cdot \eta_u^{-2} \cdot \max_{\boldsymbol{\sigma}, \,\textbf{a}}   \left|\mathcal{\widetilde{L}} ^{( n )} _{t, \boldsymbol{\sigma}, \textbf{a}}\right|^2
\end{equation}
   \end{itemize}
Combining \eqref{uwi2oo1}, \eqref{WLK1}, \eqref{WLK1.5}, \eqref{WLK2}, \eqref{WLK3}, \eqref{WLK5} and \eqref{WLK7} we obtain that 
     \begin{align} \nonumber
    (\mathcal{\widetilde{L}} - \mathcal{\widetilde{K}})_{t, \boldsymbol{\sigma}, \textbf{a}} \;\prec \; \; &
     \left(N|t-t_1|\right)\cdot \sup_{u\in [t_1,t]}\;\sum_{2\le k\le n} \left((N\eta_u)^{k-1}+\max_{\boldsymbol{\sigma}, \,\textbf{a}}   \left|(\mathcal{\widetilde{L}}-\mathcal{\widetilde{K}})^{(k)}_{u, \boldsymbol{\sigma}, \textbf{a}}\right|\right)\cdot \max_{\boldsymbol{\sigma}, \,\textbf{a}}   \left|(\mathcal{\widetilde{L}}-\mathcal{\widetilde{K}})^{(n-k+2)}_{u, \boldsymbol{\sigma}, \textbf{a}}\right| 
    \\\label{WLK8}
    &\;+\;(N\eta_t)^{-n}
    +   \left(N|t-t_1|\right)\cdot \sup_{u\in [t_1,t]} \max_{\boldsymbol{\sigma}, \,\textbf{a}}   \left| \mathcal{\widetilde{L}} ^{(2)}_{u, \boldsymbol{\sigma}, \textbf{a}}\right|^{3/2}\cdot \max_{\boldsymbol{\sigma}, \,\textbf{a}}   \left|\mathcal{\widetilde{L}} ^{(n )} _{u, \boldsymbol{\sigma}, \textbf{a}}\right|
    \\\nonumber
    &\;+\; |t-t_1|^{1/2}\cdot \sup_{u\in [t_1,t]}\left(  N^{-1}\eta_u^{-2}\right)^{1/2} \cdot \max_{\boldsymbol{\sigma}, \,\textbf{a}}   \left|\mathcal{\widetilde{L}} ^{( n )} _{u, \boldsymbol{\sigma}, \textbf{a}}\right| 
    \end{align} 
By writing $\mathcal{\widetilde{L}} 
= (\mathcal{\widetilde{L}} - \mathcal{\widetilde{K}}) + \mathcal{\widetilde{K}} $ and using the bound \eqref{45} on $\mathcal{\widetilde{K}}$, 
the last inequality becomes an inequality on $\mathcal{\widetilde{L}} $.     
By assumption  \eqref{seywoim29}, we have 
    $$
    \eta_u\sim 1-u \ge N^{1-2\tau_U}\gg N^{1- \tau_U}\sim |t_0-t_1|\ge |t-t_1| .
    $$
With  a continuity argument and an  induction on $n\ge 2$, this inequality thus implies the estimate  \eqref{WLKNE1} on $\mathcal{\widetilde{L}}$. 
 
Combining \eqref{uwi2oo1}, \eqref{WLK1}, \eqref{WLK1.5}, \eqref{WLK2}, \eqref{WLK3}, \eqref{WLK4} and \eqref{WLK6},   we obtain that 
     \begin{align} \nonumber
    (\mathcal{\widetilde{L}} - \mathcal{\widetilde{K}})_{t, \boldsymbol{\sigma}, \textbf{a}} \;\prec \; \; &
     \left(N|t-t_1|\right)\cdot \sup_{u\in [t_1,t]}\;\sum_{2\le k\le n} \left((N\eta_u)^{k-1}+\max_{\boldsymbol{\sigma}, \,\textbf{a}}   \left|(\mathcal{\widetilde{L}}-\mathcal{\widetilde{K}})^{(k)}_{u, \boldsymbol{\sigma}, \textbf{a}}\right|\right)\cdot \max_{\boldsymbol{\sigma}, \,\textbf{a}}   \left|(\mathcal{\widetilde{L}}-\mathcal{\widetilde{K}})^{(n-k+2)}_{u, \boldsymbol{\sigma}, \textbf{a}}\right| 
    \\\label{WLK8}
    &\;+\;(N\eta_t)^{-n}
    +   \left(N|t-t_1|\right)\cdot \sup_{u\in [t_1,t]} \max_{\boldsymbol{\sigma}, \,\textbf{a}}   \left| \mathcal{\widetilde{L}} ^{(2)}_{u, \boldsymbol{\sigma}, \textbf{a}}\right| \cdot \max_{\boldsymbol{\sigma}, \,\textbf{a}}   \left|\mathcal{\widetilde{L}} ^{(n+1 )} _{u, \boldsymbol{\sigma}, \textbf{a}}\right|
    \\\nonumber
    &\;+\; |t-t_1|^{1/2}\cdot \sup_{u\in [t_1,t]}\left(  N^{-1}\eta_u^{-2}\right)^{1/2} \cdot \max_{\boldsymbol{\sigma}, \,\textbf{a}}   \left|\mathcal{\widetilde{L}} ^{( 2n )} _{u, \boldsymbol{\sigma}, \textbf{a}}\right| ^{1/2}.
    \end{align}  
 Inserting  the $\cal \widetilde{L}$ bound \eqref{WLKNE1} into the above $\cal \widetilde{L}- \cal \widetilde{K}$ estimate, we have proved that  \eqref{WLKNE2} holds. 
Finally  \eqref{WLKNE3} can be proved by following  the proof of \eqref{Eq:Gtlp_exp_flow} in subsection \ref{sec:step6E}. Since    
  $$
  \ell_u\sim L, \quad \eta_u\sim \eta_{t_0}, \quad t\in [t_1,t_0]$$
in the current setting
 we can prove \eqref{WLKNE3} similarly to  \eqref{Eq:Gtlp_exp_flow} in subsection \ref{sec:step6E}  with very minor changes.

We now prove  \eqref{Weakloclaw} and  \eqref{WeakQUE} for $\widetilde{H}$ (defined in \eqref{defwtHHtu}) by using  \eqref{WLKNE1}, \eqref{WLKNE2}, \eqref{WLKNE3} and the identity \eqref{whheiyslwuw}. First, \eqref{Weakloclaw} clearly follows from  \eqref{whheiyslwuw} and \eqref{WLKNE2}. To prove \eqref{WeakQUE},  by  \eqref{WLKNE3}  we have  the following estimates similar to  \eqref{Meq:QdS1} and \eqref{Meq:QdS2}, namely, with the notations 
$$\widetilde{G}=(\widetilde{H}-z)^{-1},\quad  \im z=\eta=N^{-1+\mathfrak c/ 3}, 
$$ 
and 
 \[
  \widetilde{S}^{(B)}_{xy}  = (1-\zeta_U) S^{(B)}_{xy} + \frac{\zeta_U}{L}, \quad \zeta_U \sim  N^{-1+\tau_U}, \quad \zeta_U=1-e^{-t_U}, 
\]
 we have 
  \begin{align} 
 \max_{a,b }   \left|\mathbb E\tr \widetilde{G} E_a \widetilde{G}^\dagger E_b- W^{-1}\left(\frac{|m|^2}{1-|m|^2 \widetilde{S}^{(B)}} \right)_{ab}\right|\le  \left(N\eta \right)^{-3} \cdot W^\tau,   \\
 \nonumber
 \max_{a,b }   \left|\mathbb E\tr \widetilde{G} E_a \widetilde{G}  E_b- W^{-1}\left(\frac{m^2}{1-m^2 \widetilde{S}^{(B)}} \right)_{ab}\right|\le \left(N\eta \right)^{-3} \cdot W^\tau   .
\end{align}
We have thus proved the  conclusion of Theorem \ref{MR:QDiff} except that $S^{(B)}$ is replaced by $\widetilde{S}^{(B)}$.  We claimed that  the proof for \eqref{Meq:QUE} 
from \eqref{ssfa2} to \eqref{que2} can be used 
to derive the   \eqref{WeakQUE} for $\widetilde{H}$ with only very minor modifications.
%Here the only change is replacing the $S^{(B)}$ above \eqref{me} with  $\widetilde{S}^{(B)}$.
By definition of $\widetilde{S}^{(B)}$, we can remove the zero mode in $\widetilde{S}^{(B)}$ and write
  $$\left| \left(\frac{1}{1-\xi\cdot \widetilde{S}^{(B)}}\right)_{ab}-\left(\frac{1}{1-\xi\cdot \widetilde{S}^{(B)}}\right)_{a'b'}\right|
 =
 \left| \left(\frac{1}{1-\xi\cdot  (1-\zeta_U){S}^{(B)}}\right)_{ab}-\left(\frac{1}{1-\xi\cdot  (1-\zeta_U){S}^{(B)}}\right)_{a'b'}\right|.
 $$
In both  $\xi=m^2$ or $|m^2|$ we have 
$$
|1-\xi |\sim |1-\xi (1-\zeta_U)|.
$$
%Therefore the change of $S^{(B)}$ above \eqref{me} with  $\widetilde{S}^{(B)}$  will not affect the argument  used in the proof of \eqref{Meq:QUE}.
We can now follow the argument from \eqref{ssfa2} to \eqref{que2} to prove 
\eqref{WeakQUE} for $\widetilde{H}$ \eqref{WLKNE3}. This completes the proof of \eqref{WeakQUE}.

\appendix
 \section{\texorpdfstring{$G$}{G}-chains}
%\subsection{\texorpdfstring{$G$}{G} chain estimates}
 Besides the loop of \( G \) entries defined in Definition~\ref{Def:G_loop}, we can also bound certain \textit{chains} of \( G \) entries. Although the following  results were not used in the proof of our main theorem, they might  be useful in the future applications of the methods discussed  in this paper. 

\begin{definition}[$G$-chains]\label{def_Gchain}
Let 
\[
\boldsymbol{\sigma} = (\sigma_1, \sigma_2, \ldots, \sigma_n), \quad \mathbf{a} = (a_1, a_2, \ldots, a_{n-1}), \quad \sigma_k \in \{+, -\}, \quad a_k \in \mathbb{Z}_L.
\]
Following the structure in~\eqref{defC=GEG}, we define a \emph{$G$-chain} as
\begin{equation}
    \mathcal{C}_{t, \boldsymbol{\sigma}, \mathbf{a}} = G_t(\sigma_1) E_{a_1} G_t(\sigma_2) E_{a_2} \cdots E_{a_{n-1}} G_t(\sigma_n).
\end{equation}
By definition, $\mathcal{C}_{t, \boldsymbol{\sigma}, \mathbf{a}}$ is an \( N \times N \) matrix. We classify its entries as
\[
\left( \mathcal{C}_{t, \boldsymbol{\sigma}, \mathbf{a}} \right)_{ij} \sim 
\begin{cases}
    \text{off-diagonal terms of } \mathcal{C}_{t, \boldsymbol{\sigma}, \mathbf{a}}, & \text{if } i \neq j, \\ 
    \text{diagonal terms of } \mathcal{C}_{t, \boldsymbol{\sigma}, \mathbf{a}}, & \text{if } i = j.
\end{cases}
\]
\end{definition}

Before presenting the lemmas for estimating $G$-chains, we provide a heuristic argument for the following bounds on  $G$-chains:
\begin{equation}\label{res:heur_Chain}
    \left( \mathcal{C}_{t, \boldsymbol{\sigma}, \mathbf{a}} \right)_{ii} \prec (W\ell_t\eta_t)^{-n+1}, \quad 
    \left( \mathcal{C}_{t, \boldsymbol{\sigma}, \mathbf{a}} \right)_{ij} \prec (W\ell_t\eta_t)^{-n+1/2}.
\end{equation}
A $G$-chain can be converted into a $G$-loop by multiplication with an $E_a$ operator followed by taking the trace:
\[
\mathcal{L}_{t, \boldsymbol{\sigma}, \mathbf{a}'} = \langle \mathcal{C}_{t, \boldsymbol{\sigma}, \mathbf{a}} E_a \rangle, \quad \mathbf{a}' = (a_1, a_2, \ldots, a_{n-1}, a).
\]
Assuming that \( \left(\mathcal{C}_{t, \boldsymbol{\sigma}, \mathbf{a}}\right)_{ii} \) is of comparable size for all \( i \in \mathcal{I}_a \), we obtain from~\eqref{Eq:LGxb}
\[
\left(\mathcal{C}_{t, \boldsymbol{\sigma}, \mathbf{a}}\right)_{ii} \sim \mathcal{L}_{t, \boldsymbol{\sigma}, \mathbf{a}'} \sim (W\ell_t\eta_t)^{-n+1}.
\]
Similarly, a different $G$-loop can be constructed by multiplying a $G$-chain with two operators $E_a$ and $E_b$ and taking the trace:
\[
\mathcal{L}_{t, \boldsymbol{\sigma}'', \mathbf{a}''} = \langle E_a \cdot \mathcal{C}_{t, \boldsymbol{\sigma}, \mathbf{a}} \cdot E_b \cdot \mathcal{C}^\dagger_{t, \boldsymbol{\sigma}, \mathbf{a}} \rangle,
\]
where
\[
\boldsymbol{\sigma}'' = (\sigma_1, \sigma_2, \ldots, \sigma_n, \bar{\sigma}_n, \bar{\sigma}_{n-1}, \ldots, \bar{\sigma}_1), \quad 
\mathbf{a}'' = (a_1, a_2, \ldots, a_{n-1}, b, a_{n-1}, \ldots, a_1, a).
\]
Assuming that \( \left(\mathcal{C}_{t, \boldsymbol{\sigma}, \mathbf{a}}\right)_{ij} \) has a typical size across all \( i \in \mathcal{I}_a \), \( j \in \mathcal{I}_b \), we deduce from~\eqref{Eq:LGxb} that
\[
\left(\mathcal{C}_{t, \boldsymbol{\sigma}, \mathbf{a}}\right)_{ij} \sim \left( \mathcal{L}_{t, \boldsymbol{\sigma}'', \mathbf{a}''} \right)^{1/2} \sim (W\ell_t\eta_t)^{-n+1/2}.
\]

\begin{lemma}[$n$-chain estimate]\label{lem_GbEXP_n2}  
Under the assumptions of Lemma~\ref{ML:GLoop}, for any fixed \( n \in \mathbb{N} \) and \( \boldsymbol{\sigma} \in \{+, -\}^n \), we have
\begin{align}\label{res_XICdi}
    \max_{\mathbf{a}} \max_i \left| \left( \mathcal{C}_{t, \boldsymbol{\sigma}, \mathbf{a}} \right)_{ii} \right| 
    &\prec \left(W \ell_t \eta_t\right)^{-n+1}, \\\label{res_XICoff}
    \max_{\mathbf{a}} \max_{i \ne j} \left| \left( \mathcal{C}_{t, \boldsymbol{\sigma}, \mathbf{a}} \right)_{ij} \right| 
    &\prec \left(W \ell_t \eta_t\right)^{-n + 1/2}.
\end{align}
\end{lemma}

The proof relies on a standard decomposition widely used for Wigner matrices (see, e.g.,~\cite{erdHos2012rigidity}) and random band matrices (e.g.,~\cite{Average_fluc}).

\begin{proof}[Proof of Lemma~\ref{lem_GbEXP_n2}]
We begin by introducing the following shorthand:
\begin{align}
\Xi_n^{(\mathrm{d})} := \Xi_{t,n}^{(\mathcal{C}, \mathrm{diag})} 
&:= \max_{\boldsymbol{\sigma}, \mathbf{a}} \max_i \left| \left( \mathcal{C}_{t, \boldsymbol{\sigma}, \mathbf{a}} \right)_{ii} \right| 
\cdot (W \ell_t \eta_t)^{n-1} \cdot \mathbf{1}\left( \boldsymbol{\sigma} \in \{+, -\}^n \right), \\
\Xi_n^{(\mathrm{o})} := \Xi_{t,n}^{(\mathcal{C}, \mathrm{off})} 
&:= \max_{\boldsymbol{\sigma}, \mathbf{a}} \max_{i \ne j} \left| \left( \mathcal{C}_{t, \boldsymbol{\sigma}, \mathbf{a}} \right)_{ij} \right| 
\cdot (W \ell_t \eta_t)^{n - 1/2} \cdot \mathbf{1}\left( \boldsymbol{\sigma} \in \{+, -\}^n \right).
\end{align}
For the base case \( n = 1 \), Theorem~\ref{MR:locSC} implies
\[
\Xi_1^{(\mathrm{d})} + \Xi_1^{(\mathrm{o})} \prec 1.
\]
Applying the Cauchy–Schwarz inequality to the diagonal part of a 2-chain yields
\[
\Xi_2^{(\mathrm{d})} \prec \left( \Xi_1^{(\mathrm{d})} \right)^2 + \left( \Xi_1^{(\mathrm{o})} \right)^2 \prec 1.
\]

We now prove the inductive step. Suppose that for all \( n \ge 2 \), we have the bound:
\begin{align}\label{aspchain}
\left( \max_{k \le n - 1} \Xi_k^{(\mathrm{o})} + \max_{k \le 2n - 2} \Xi_k^{(\mathrm{d})} \right) \prec 1 
\quad \Longrightarrow \quad 
\left( \max_{k \le n} \Xi_k^{(\mathrm{o})} + \max_{k \le 2n} \Xi_k^{(\mathrm{d})} \right) \prec 1.
\end{align}
This inductive bound immediately implies the estimates~\eqref{res_XICdi} and~\eqref{res_XICoff}.
To proceed, define the $n$-chain
\begin{equation}\label{defSPChain}
\mathcal{C}_n := G_1 E_{a_1} G_2 \cdots G_{n-1} E_{a_{n-1}} G_n.
\end{equation}
Let \( \mathcal{C}_n^{(i)} \) denote the $n$-chain where all \( G_k \) with \( k \ge 2 \) are replaced by \( G_k^{(i)} \) (defined in~\eqref{def_G(i)}):
\begin{equation}\label{defCn_i}
\mathcal{C}_n^{(i)} := G_1 E_{a_1} G^{(i)}_2 \cdots G^{(i)}_{n-1} E_{a_{n-1}} G^{(i)}_n.
\end{equation}
Similarly, let \( \mathcal{C}_n^{(ii)} \) be the $n$-chain with all \( G_k \) replaced by \( G_k^{(i)} \), including the first:
\begin{equation}\label{defCn_ii}
\mathcal{C}_n^{(ii)} := G^{(i)}_1 E_{a_1} G^{(i)}_2 \cdots G^{(i)}_{n-1} E_{a_{n-1}} G^{(i)}_n.
\end{equation}

Under the assumption of~\eqref{aspchain}, we claim the following perturbation estimates hold for \( n \ge 2 \):
\begin{equation}\label{tancca}
i \ne j, \qquad 
\left( \mathcal{C}_n \right)_{ij}
- \left( \mathcal{C}^{(i)}_n \right)_{ij}
\prec (W \ell \eta)^{-n + 1/2},
\end{equation}
and
\begin{equation}\label{jsjuzggfnw}
\left\langle \mathcal{C}_n^{(ii)} E_a \mathcal{C}_n^{(ii)\;\dagger} E_b \right\rangle
- \left\langle \mathcal{C}_n E_a \mathcal{C}_n^\dagger E_b \right\rangle
\prec \left( \Xi_{2n}^{(\mathrm{d})} + 1 \right) \cdot (W \ell \eta)^{-2n + 1/2}.
\end{equation}
Moreover, if in addition \( \Xi_{2n-1}^{(\mathrm{d})}+\Xi_{2n}^{(\mathrm{d})} \prec 1 \), then we also have
\begin{equation}\label{jsjuzggf}
i \ne j, \qquad
\left( \mathcal{C}_n^{(ii)} E_a \mathcal{C}_n^{(ii)\;\dagger} \right)_{jj} 
- \left( \mathcal{C}_n E_a \mathcal{C}_n^\dagger \right)_{jj} 
\prec \left( \left( \Xi_n^{(\mathrm{o})} \right)^2 + 1 \right) \cdot (W \ell \eta)^{-2n + 1/2}.
\end{equation}
We postpone the proofs of these perturbative estimates to the end of this section.

We now  prove
\begin{equation}\label{juwiuo2pl}
    \Xi^{(\mathrm{d})}_{2n-1} + \Xi^{(\mathrm{d})}_{2n} \prec 1,
\end{equation}
using~\eqref{tancca} and~\eqref{jsjuzggfnw}. By the Cauchy–Schwarz inequality and the assumption in~\eqref{aspchain}, it suffices to control the case \( 2n \). Without loss of generality, we may assume that the $2n$-chain \( \mathcal{C} \) is symmetric and of the form
\[
\left( \mathcal{C}_n E_a \mathcal{C}_n^\dagger \right)_{ii}.
\]
We decompose this expression as
\[
\left( \mathcal{C}_n E_a \mathcal{C}_n^\dagger \right)_{ii}
= W^{-1} \mathcal{C}_{n; ii} \cdot \mathcal{C}^\dagger_{n; ii}
+ \sum_{j \ne i} \mathcal{C}_{n; ij} \cdot E_a(j) \cdot \mathcal{C}^\dagger_{n; ji},
\]
and similarly for \( \mathcal{C}_n^{(i)} E_a \left(\mathcal{C}_n^{(i)}\right)^\dagger \). Then,
\begin{align*}
\left( \mathcal{C}_n E_a \mathcal{C}_n^\dagger \right)_{ii}
- \left( \mathcal{C}_n^{(i)} E_a \left( \mathcal{C}_n^{(i)} \right)^\dagger \right)_{ii}
\;\prec\;& (W \ell \eta)^{-2n + 1} \left( \Xi_n^{(\mathrm{d})} \right)^2
+ \max_{j \ne i} \left| \left( \mathcal{C}_n \right)_{ij} - \left( \mathcal{C}_n^{(i)} \right)_{ij} \right|^2\\
&+ \left( \sum_{j \ne i} \left| \mathcal{C}_{n; ij} \right|^2 E_a(j) \right)^{1/2} \cdot \max_{j \ne i} \left| \left( \mathcal{C}_n \right)_{ij} - \left( \mathcal{C}_n^{(i)} \right)_{ij} \right|.
\end{align*}
Applying~\eqref{tancca} and the assumption in~\eqref{aspchain}, we obtain
\begin{equation}\label{j,mssl}
\left( \mathcal{C}_n E_a \mathcal{C}_n^\dagger \right)_{ii}
- \left( \mathcal{C}_n^{(i)} E_a \left( \mathcal{C}_n^{(i)} \right)^\dagger \right)_{ii}
\prec (W \ell \eta)^{-2n + 1} \cdot \left( \Xi_{2n}^{(\mathrm{d})} + 1 \right)^{1/2}.
\end{equation}

To conclude~\eqref{juwiuo2pl}, it remains to estimate \( \mathcal{C}_n^{(i)} E_a \left( \mathcal{C}_n^{(i)} \right)^\dagger \). Recall the definition of \( \mathcal{C}_n^{(ii)} \) in~\eqref{defCn_ii}. Using~\eqref{GijSCP2}, we write:
\[
\left( \mathcal{C}_n^{(i)} E_a \left( \mathcal{C}_n^{(i)} \right)^\dagger \right)_{ii}
= |G_{ii}|^2 \cdot \left( H \cdot \mathcal{C}_n^{(ii)} E_a \left( \mathcal{C}_n^{(ii)} \right)^\dagger \cdot H \right)_{ii}.
\]
Since \( \mathcal{C}_n^{(ii)} \) is independent of \( \{ H_{ik} \}_{k=1}^N \), we apply Lemma~3.3 from~\cite{erdHos2012rigidity} to obtain
\begin{equation}\label{rejdssaw2s}
\left( \mathcal{C}_n^{(i)} E_a \left( \mathcal{C}_n^{(i)} \right)^\dagger \right)_{ii}
= |G_{ii}|^2 \sum_k S_{ik} \left( \mathcal{C}_n^{(ii)} E_a \left( \mathcal{C}_n^{(ii)} \right)^\dagger \right)_{kk}
+ O_\prec \left( \sum_{kl} S_{ik} \left| \left( \mathcal{C}_n^{(ii)} E_a \left( \mathcal{C}_n^{(ii)} \right)^\dagger \right)_{kl} \right|^2 S_{li} \right)^{1/2}.
\end{equation}
The first term on the right can be bounded as   $2n$-chain loops with $G^{(i)}$ entries, while the second can be interpreted as   $4n$-chain loops with $G^{(i)}$ entries. Similar to \eqref{asuwmpwpisu}, we can bound $4n-G^{(i)}$ loop with two $2n-G^{(i)}$ loops as follows
\[
\Xi_{4n}^{\mathcal{L},(i)} \prec \left( \Xi_{2n}^{\mathcal{L},(i)} \right)^2 \cdot (W \ell \eta).
\]
Using \eqref{jsjuzggfnw}, we have 
\[ 
\Xi_{2n}^{\mathcal{L},(i)} \prec \left( \Xi_{2n}^{(\mathrm{d})} + (W \ell \eta)^{1/2} \right) \cdot (W \ell \eta)^{-2n + 1/2}.
\]
Hence,
\[
\left( \mathcal{C}_n^{(i)} E_a \left( \mathcal{C}_n^{(i)} \right)^\dagger \right)_{ii}
\prec \left( \Xi_{2n}^{(\mathrm{d})} + (W \ell \eta)^{1/2} \right) \cdot (W \ell \eta)^{-2n + 1/2}.
\]
Combining this with~\eqref{j,mssl}, we conclude
\[
\left( \mathcal{C}_n E_a \mathcal{C}_n^\dagger \right)_{ii}
\prec (W \ell \eta)^{-2n + 1} \cdot \left( \left( \Xi_{2n}^{(\mathrm{d})} \right)^{1/2} + \Xi_{2n}^{(\mathrm{d})} \cdot (W \ell \eta)^{-1/2} + 1 \right),
\]
which establishes~\eqref{juwiuo2pl}.

\bigskip

We now  prove
\begin{equation}\label{juwiu123l}
    \Xi^{(\mathrm{o})}_n \prec 1,
\end{equation}
using~\eqref{tancca} and~\eqref{jsjuzggf}. By~\eqref{GijSCP2}, we express \( \left( \mathcal{C}_n^{(i)} \right)_{ij} \) in terms of \( \mathcal{C}_n^{(ii)} \) as
\begin{equation}\label{ajplaos}
\left( \mathcal{C}_n^{(i)} \right)_{ij} = (G_1)_{ii} \cdot \left( H \cdot \mathcal{C}_n^{(ii)} \right)_{ij}.
\end{equation}
Since \( \mathcal{C}_n^{(ii)} \) is independent of \( \{ H_{ik} \}_{k=1}^N \), we again apply Lemma~3.3 of~\cite{erdHos2012rigidity}:
\begin{align}\label{lkjdsaos}
\left( H \cdot \mathcal{C}_n^{(ii)} \right)_{ij}
&\prec \left( \sum_k S_{ik} \left| \left( \mathcal{C}_n^{(ii)} \right)_{kj} \right|^2 \right)^{1/2}
\prec \sum_a \mathbf{1}(|a - [i]| \le 1) \cdot \left( \mathcal{C}_n^{(ii)} E_a \mathcal{C}_n^{(ii)\dagger} \right)_{jj}^{1/2}.
\end{align}
Combining this with~\eqref{ajplaos}, we find that for \( i \ne j \),
\begin{equation}\label{bCimmax}
\left( \mathcal{C}_n^{(i)} \right)_{ij} \prec \max_a \left( \mathcal{C}_n^{(ii)} E_a \mathcal{C}_n^{(ii)\dagger} \right)_{jj}^{1/2}.
\end{equation}
Using this estimate along with~\eqref{jsjuzggf} and~\eqref{tancca}, we obtain
\[
\left( \mathcal{C}_n \right)_{ij} \prec (W \ell \eta)^{-n + 1/2} \left( 1 + \Xi_n^{(\mathrm{o})} \cdot (W \ell \eta)^{-1/4} \right),
\]
which yields the desired bound~\eqref{juwiu123l}.

\medskip
\noindent
\textbf{Proof of~\eqref{tancca}.} Using~\eqref{GijkSCP} to express \( G^{(i)} \) in~\eqref{defCn_i} in terms of \( G \), and bounding \( 1/G_{ii} = O(1) \) via Theorem~\ref{MR:locSC}, we can estimate the difference \( \left( \mathcal{C}_n \right)_{ij} - \left( \mathcal{C}^{(i)}_n \right)_{ij} \) in terms of products of diagonal and off-diagonal chain entries. For example, for \( n = 2 \), we have:
\begin{align}
\left( \mathcal{C}^{(i)}_2 \right)_{ij} - \left( \mathcal{C}_2 \right)_{ij}
&= -\frac{\left( G_1 E_{a_1} G_2 \right)_{ii} \cdot (G_2)_{ij}}{(G_2)_{ii}} 
= O\left( \Xi^{(\mathrm{d})}_2 \cdot \Xi^{(\mathrm{o})}_1 \cdot (W \ell \eta)^{-3/2} \right).
\end{align}

For \( n = 3 \), we similarly write:
\begin{align}
\left( \mathcal{C}^{(i)}_3 \right)_{ij} - \left( \mathcal{C}_3 \right)_{ij}
&= - \frac{\left( G_1 E_{a_1} G_2 \right)_{ii} \cdot \left( G_2 E_{a_2} G_3 \right)_{ij}}{(G_2)_{ii}} 
- \frac{\left( G_1 E_{a_1} G_2 E_{a_2} G_3 \right)_{ii} \cdot (G_3)_{ij}}{(G_3)_{ii}} \\
&\quad + \frac{\left( G_1 E_{a_1} G_2 \right)_{ii} \cdot \left( G_2 E_{a_2} G_3 \right)_{ii} \cdot (G_3)_{ij}}{(G_2)_{ii} (G_3)_{ii}} \notag \\
&= O\left( \Xi^{(\mathrm{d})}_2 \Xi^{(\mathrm{o})}_2 + \Xi^{(\mathrm{d})}_3 \Xi^{(\mathrm{o})}_1 
+ \Xi^{(\mathrm{d})}_2 \Xi^{(\mathrm{d})}_2 \Xi^{(\mathrm{o})}_1 \right) \cdot (W \ell \eta)^{-5/2}.
\end{align}

In general, we obtain the bound:
\begin{align} \label{jasuznns}
\left( \mathcal{C}_n \right)_{ij} - \left( \mathcal{C}^{(i)}_n \right)_{ij}
\prec \sum_{k \ge 1} \sum_{\{n_i\}_{i=1}^k} \left( \prod_{i=1}^k \Xi^{(\mathrm{d})}_{n_i} \right) \Xi^{(\mathrm{o})}_\ell \cdot (W \ell \eta)^{-n + 1/2} \cdot \mathbf{1}\left( \ell + \sum_{i=1}^k n_i = n + k \right),
\end{align}
where \( 2 \le n_i \le n \le 2n - 2 \), and \( 1 \le \ell \le n - 1 \). Applying the induction hypothesis~\eqref{aspchain} to all shorter chains involved, we get:
\[
\Xi^{(\mathrm{d})}_{n_i} + \Xi^{(\mathrm{o})}_\ell \prec 1,
\]
which implies~\eqref{tancca}.

\bigskip
\noindent
\textbf{Proof of~\eqref{jsjuzggfnw}.} Observe that \( \left( \mathcal{C}^{(ii)}_n E_a \mathcal{C}^{(ii)\dagger}_n \right)_{jj} \) is a diagonal entry of a $2n$-$G^{(i)}$ chain, while \( \left( \mathcal{C}_n E_a \mathcal{C}^\dagger_n \right)_{jj} \) is the same object but with \( G^{(i)} \) replaced by \( G \). Applying~\eqref{GijkSCP} to express all \( G^{(i)} \) in terms of \( G \), and again bounding \( 1/G_{ii} = O(1) \), we estimate the difference:
\[
\left\langle \mathcal{C}^{(ii)}_n E_a \mathcal{C}^{(ii)\dagger}_n E_b \right\rangle - \left\langle \mathcal{C}_n E_a \mathcal{C}^\dagger_n E_b \right\rangle.
\]
This difference involves products of diagonal chains, and similar to~\eqref{jasuznns}, we obtain:
\begin{align}\label{asdjzzu,l}
\left\langle \mathcal{C}^{(ii)}_n E_a \mathcal{C}^{(ii)\dagger}_n E_b \right\rangle 
- \left\langle \mathcal{C}_n E_a \mathcal{C}^\dagger_n E_b \right\rangle 
\prec \sum_{k \ge 1} \sum_{\{n_i\}_{i=1}^k} 
\left( \prod_{i=1}^k \Xi^{(\mathrm{d})}_{n_i} \right) \cdot (W \ell \eta)^{-2n} \cdot \mathbf{1}\left( \sum_{i=1}^k n_i = 2n + k \right),
\end{align}
with \( 2 \le n_i \le 2n + 1 \).
By the inductive assumption~\eqref{aspchain}, we can bound:
\[
\left( \prod_{i=1}^k \Xi^{(\mathrm{d})}_{n_i} \right) \cdot \mathbf{1}\left( \sum_{i=1}^k n_i = 2n + k \right)
\; \;\prec \;\;1 + \mathbf{1}_{n=2} \left( \Xi^{(\mathrm{d})}_{2n-1} \right)^2 + \Xi^{(\mathrm{d})}_{2n} + \Xi^{(\mathrm{d})}_{2n+1}.
\]
Applying the Cauchy–Schwarz inequality yields:
\[
\Xi^{(\mathrm{d})}_{2n - 1} \prec \left( \Xi^{(\mathrm{d})}_{2n - 2} \cdot \Xi^{(\mathrm{d})}_{2n} \right)^{1/2} \prec \left( \Xi^{(\mathrm{d})}_{2n} \right)^{1/2}.
\]
Furthermore, splitting a \( 2n + 1 \)-chain into two \( n \)-chains and one 1-chain, and using a triple-product Cauchy–Schwarz inequality (i.e., \( \int f(x) g(y) h(x,y) \le \|f\|_2 \|g\|_2 \|h\|_2 \)), we get:
\[
\Xi^{(\mathrm{d})}_{2n + 1} \prec \Xi^{(\mathrm{d})}_{2n} \cdot (W \ell \eta)^{1/2}.
\]
Putting all of these bounds into~\eqref{asdjzzu,l}, we conclude the proof of~\eqref{jsjuzggfnw}.

\bigskip
\bigskip
\noindent
\textbf{Proof of~\eqref{jsjuzggf}.} Similar to the expansion in~\eqref{asdjzzu,l}, we estimate the difference:
\begin{align}\label{asdjzzu,l2}
&\left( \mathcal{C}^{(ii)}_n E_a \mathcal{C}^{(ii)\, \dagger}_n \right)_{jj} 
- \left( \mathcal{C}_n E_a \mathcal{C}^\dagger_n \right)_{jj}
\\\nonumber
  \;\prec\; &  \sum_{k \ge 0} \sum_{\{n_i\}_{i=1}^k} \sum_{l_1,\, l_2}
\left( \prod_{i=1}^k \Xi^{(\mathrm{d})}_{n_i} \right)
\left( \prod_{j=1}^2 \Xi^{(\mathrm{o})}_{l_j} \right) \cdot (W \ell \eta)^{-2n}
\cdot \mathbf{1} \left( l_1 + l_2 + \sum_{i=1}^k n_i = 2n + k + 1 \right),
\end{align}
where the index ranges satisfy:
\[
2 \le n_i \le 2n, \qquad 1 \le l_1 \le l_2 \le 2n.
\]
Under the assumptions on \( \Xi^{(\mathrm{d})}_{n_i} \), it suffices to bound the product of off-diagonal terms. That is, we only need to show that:
\begin{align}\label{suyww2}
l_1 + l_2 \le 2n + 1 \quad \Longrightarrow \quad 
\Xi^{(\mathrm{o})}_{l_1} \cdot \Xi^{(\mathrm{o})}_{l_2} 
\prec \left( \left( \Xi^{(\mathrm{o})}_n \right)^2 + 1 \right) \cdot (W \ell \eta)^{1/2}.
\end{align}

\medskip

By the inductive assumption, we have 
\[
\Xi^{(\mathrm{o})}_{l_j} \prec
\begin{cases}
1, & l_j \le n - 1, \\[6pt]
\Xi^{(\mathrm{o})}_n, & l_j = n, \\[6pt]
(W \ell \eta)^{1/2}, & n < l_j \le 2n,
\end{cases}
\]
where for \( l_j > n \), the bound follows from applying the Cauchy–Schwarz inequality.
These bounds clearly imply~\eqref{suyww2}, and hence we obtain~\eqref{jsjuzggf}. This completes the proof of Lemma~\ref{lem_GbEXP_n2}.

\end{proof}

\printbibliography 

\end{document}